\definecolor{dkblue}{RGB}{30,90,140} % This is a dark Blue     
\definecolor{mydarkbluett}{RGB}{12,111,174}
\newcommand\reallywidehat[1]{%
\savestack{\tmpbox}{\stretchto{%
  \scaleto{%
    \scalerel*[\widthof{\ensuremath{#1}}]{\kern-.6pt\bigwedge\kern-.6pt}%
    {\rule[-\textheight/2]{1ex}{\textheight}}%WIDTH-LIMITED BIG WEDGE
  }{\textheight}% 
}{0.5ex}}%
\stackon[1pt]{#1}{\tmpbox}%
}
\theoremstyle{plain}
\newtheorem{teor}{Theorem}
\newtheorem{obs}[teor]{Remark}
\newtheorem{prop}[teor]{Proposition}
\newtheorem{coro}[teor]{Corollary}
\newtheorem{lemma}[teor]{Lemma}
\newtheorem{defi}[teor]{Definition}
\numberwithin{equation}{section}
\numberwithin{teor}{section}
\newcommand{\norma}[1]{{\left\vert\kern-0.25ex\left\vert\kern-0.25ex\left\vert #1 
    \right\vert\kern-0.25ex\right\vert\kern-0.25ex\right\vert}}
\def\R{\mathbb{R}}
\def\N{\mathbb{N}_0}
\def\T{\mathbb{T}}
\def\Z{\mathbb{Z}}
\newcommand{\mc}{\mathcal}
\newcommand{\what}{\widehat}
\newcommand{\dx}{ \, {\rm d} x}
\newcommand{\dt}{ \, {\rm d} t}
\newcommand{\ds}{ \, {\rm d} s}
\newcommand{\dy}{ \, {\rm d} y}
\newcommand{\dz}{ \, {\rm d} z}
\renewcommand{\P}{\mathbb{P}}
\DeclareMathOperator*{\argmin}{{\rm argmin}}
\newcommand{\dimitri}[1]{\textcolor{green}{[***Di: #1 ***]}}
\newcommand{\dani}[1]{\textcolor{blue}{[***Da: #1 ***]}}
\begin{document}

\title[Effective Dynamics and Blow up in a Model of Magnetic Relaxation]{Effective Dynamics and Blow up in a Model of Magnetic Relaxation}

\author[D. Cobb]{Dimitri Cobb$^1$}
\address{$^{1}$University of Bonn, Mathematisches Institut. Endenicher Allee 60, D-53115, Bonn, Germany.}
\email{\href{mailto:cobb@math.uni-bonn.de}{cobb@math.uni-bonn.de}}

\author[D.S\'anchez-Sim\'on del Pino]{Daniel S\'anchez-Sim\'on del Pino$^2$}
\email{\href{mailto:sanchez@iam.uni-bonn.de}{sanchez@iam.uni-bonn.de}}

\author[J. J. L. Vel\'azquez]{Juan J. L. Vel\'azquez$^2 $}
\address{$^{2}$University of Bonn, Institut für Angewandte Mathematik. Endenicher Allee 60, D-53115, Bonn, Germany.}
\email{\href{mailto:velazquez@iam.uni-bonn.de}{velazquez@iam.uni-bonn.de}}

\vspace{.2cm}
\date\today
\begin{abstract}
    In this article we study a one dimensional model for Magnetic Relaxation. This model was introduced by Moffatt \cite{Moffatt2015} and describes a low resistivity viscous plasma, in which the pressure and the inercia are much smaller than the magnetic pressure. In the limit of resistivity $\varepsilon\rightarrow 0$, we prove the existence of two time scales for the evolution of the magnetic field: a fast one for times of order $\log(\varepsilon^{-1})$ in which the resistivity plays no role and the energy is dissipated only via viscosity; and a slow one for times of order $\varepsilon^{-1}$ characterized by the influence of the resistivity. We show that in this second time scale, as $\varepsilon\rightarrow 0$, the modulus of magnetic field approaches a function that depends only on time. We also prove that, in this regime, the magnetic field $b_\varepsilon(t,x)$ can be approximated as $\varepsilon \rightarrow 0$ by the solution  of a PDE whose solutions exhibit blow up for some choices of initial data. 
\end{abstract}
\maketitle

{\footnotesize{\textbf{Key words:} Magnetic Relaxation, Magnetohydrodynamics, Effective Dynamics, Singular Perturbation of PDEs, Multi-Scale Analysis.}}

\smallskip

{\footnotesize{\textbf{Mathematics Subject Classification (MSC2020):  35Q35 (main), 76W05, 35B25, 35B40, 35B44 (secondary).}}
\normalsize
\tableofcontents

\section{Introduction}

In this paper we study the following system of PDEs
\begin{equation}\label{eq:magneticrelaxation}
    \begin{cases}
        \partial_t b+\partial_x (ub)=\varepsilon \partial_x^2 b, \\
        \partial_x^2u = \partial_x\left( \frac{1}{2} |b|^2 \right), 
    \end{cases}
\end{equation}
where $b:\R\times\R\rightarrow \R^2$. This model describes the behaviour of the magnetic field generated by a compressible conducting fluid in the regime of low electrical resistivity, which is especially relevant for low density plasmas driven by the magnetic field, such as the solar corona \cite{Moffatt2016}. 

The system \eqref{eq:magneticrelaxation} can be derived from the 3D compressible Magnetohydrodynamic equations (MHD) in a regime dominated by viscosity and after approximating the pressure and the inertia as negligible contributions (\textit{c.f.} Subsection \ref{ss:derivation}).
\begin{comment}
\begin{equation}\label{ieq:Bsymmetry}
    \partial_2 (u, b) = \partial_3 (u, b) \equiv 0,
\end{equation}
\end{comment}
For the magnetofluids we consider, the velocities \textbf{u} and magnetic fields \textbf{B} of the form
\begin{equation}\label{ieq:Usymmetry}
    \textbf{u} = (u, 0, 0) \qquad \text{and} \qquad \textbf{B} = (0, b_1, b_2).
\end{equation}
These fields depend only on time and the first space variable, $x_1$, that we  will denote by $x$ from now on, which we take as an element in the 1D torus $x \in \R / \Z$ with unit measure (\textit{i.e.} we assume periodic boundary conditions). We make this choice of the domain for the sake of simplicity, to avoid difficulties arising from the boundary conditions.  The non-dimensional parameter $\varepsilon > 0$ is proportional to the electrical resistivity of the fluid. 

\medskip

While this is a simplification of the MHD equations, we will see that this model has a non-trivial behaviour, especially in the regime $\varepsilon \rightarrow 0^+$. Originally, equations \eqref{eq:magneticrelaxation} were introduced by H. K. Moffatt \cite{Moffatt2015} as a simplified model in order to obtain some insight about the phenomenon of magnetic relaxation behaviour in a magnetofluid. The concept of magnetic relaxation originates in a 1958 paper \cite{Woltjer} of L. Woltjer. This physical conjecture, which was reformulated by J. B. Taylor \cite{Taylor1974, Taylor1986}, is based on the observation that in ideal MHD, both the total energy and the magnetic helicity are conserved,
\begin{equation*}
    E = \int \Big( |\textbf{u}|^2 + |\textbf{B}|^2 + \mc H(\rho) \Big) \dx \qquad \text{and} \qquad H_m = \int \textbf{B} \cdot \textbf{A} \dx.
\end{equation*}
In this equation, $\textbf{A}$ is the vector magnetic potential and $\mc H(\rho) = \rho \int_1^\rho \frac{P(z)}{z^2}{\rm d}z$ is the contribution of the pressure to the energy (the state equation $P = P(\rho)$ is assumed). The energy and the magnetic helicity scale differently with regards to the magnetic field, \textit{i.e.} one scales like an $L^2$ norm, whereas the second as an $\dot{H}^{-1/2}$ norm. Furthermore in the situation where the viscosity $\kappa$ of the fluid is much larger than the resistivity $\kappa$ (in other words the magnetic Prandtl number is large $Pm = \kappa/\eta \gg 1$), the time scale of decay of the energy is much shorter than the time scale of evolution of the magnetic helicity. Accordingly, Taylor predicted that in a large magnetic Prandtl number regime, the magnetic field should be driven to a state of minimal energy while the magnetic helicity is roughly conserved, that is
\begin{equation}\label{ieq:Minimoze}
    \textbf{B} \approx \argmin_{H_m = H_m(t=0)} \int \frac{1}{2} |\textbf{B}|^2 \dx.
\end{equation}
These Taylor states correspond to force-free fields (or Beltrami states), and satisfy the eigenvalue equation
\begin{equation}\label{eq:Beltrami}
    \textbf{B} = \lambda \nabla \times \textbf{B} \qquad \text{for some } \lambda \in \R.
\end{equation}
The number $\lambda$ should be understood as a Lagrange multiplier associated to the constant helicity constraint in the minimization problem \eqref{ieq:Minimoze}. On the longer time scale related to the evolution of the magnetic helicity, the magnetic field is therefore expected to evolve while remaining close to a Beltrami state, until all available energy finally dissipates due to resistivity. In \cite{Moffatt1969}, H. K. Moffatt pointed out that in the minimization problem \eqref{ieq:Minimoze}, one also has to account for the conservation of all partial magnetic helicities, which are the integrals of the form 
\begin{equation}\label{ieq:HmConstraintSSS}
H_m(V) := \int_{V}\textbf{A}\cdot \textbf{B}\, \dx,
\end{equation}
where $V$ is any subdomain so that $\textbf{B}$ is normal to $\partial V$. In other words, the minimization problem \eqref{ieq:Minimoze} should take into account an infinite number of constraints, so that the resulting force-free state solves the equation
\begin{equation*}\label{ieq:ForceFree}
    \textbf{B} = \lambda(t, x) \nabla \times \textbf{B}, \qquad \text{for some function } \lambda(t, x),
\end{equation*}
and the function $\lambda(t, x)$ should be understood as a family of Lagrange multipliers associated to the partial helicity constraints \eqref{ieq:HmConstraintSSS}. 

\medskip

In general, the phenomenon of magnetic relaxation is far from being understood, and it is to this point not clear whether it occurs in the terms proposed by Taylor. In rigorous mathematical terms, it is known that when we take the resistivity tending to zero, the total magnetic helicity is conserved for solutions of sufficient regularity \cite{FL2020, BT}. On the other hand, it has also been shown, by means of techniques from convex integration, that some finite energy solutions of the MHD equations do not conserve magnetic helicity \cite{BBV}. In the physical literature, there exist different numerical attempts to prove or disprove Taylor's conjecture, \cite{MG, SCTSDB, YK, YRH}, as well as theoretical arguments suggesting that magnetic relaxation might occur (see in particular \cite{QLLS} for a formal argument based on the $L^2$ and $\dot{H}^{-1/2}$ scaling of $E$ and $H_m$ respectively). For a general introduction about the theory of Magnetic Relaxation, we refer to the lecture notes \cite{Yeates} and Chapter 11 in the book \cite{Bellan}.

\medskip

The system \eqref{eq:magneticrelaxation} related to a larger class of system of PDEs envisioned by Moffatt \cite{Moffatt1969, Moffatt2021} to describe the process of Magnetic Relaxation, namely Magneto-Stokes equations. These equations read, for an incompressible fluid,
\begin{equation}\label{ieq:MagnetoStokes}
    \begin{cases}
        \partial_t \textbf{B} + (\textbf{u} \cdot \nabla) \textbf{B} = (\textbf{B} \cdot \nabla) \textbf{u} \\
        (- \Delta)^\alpha \textbf{u} + \nabla \pi = (\textbf{B} \cdot \nabla)\textbf{B}\\
        {\rm div}(\textbf{u}) = 0.
    \end{cases}
\end{equation}
Here, $\pi = P + \frac{1}{2}|\textbf{B}|^2$ is the MHD pressure. Our system \eqref{eq:magneticrelaxation} is a particular case of magneto-Stokes system, albeit for a compressible fluid. The case $\alpha = 1$ is that of a usual Newtonian fluid. The advantage of equations \eqref{ieq:MagnetoStokes} is that they always feature decay of the magnetic energy through viscous dissipation, while preserving the magnetic helicity as a constant:
\begin{equation*}
    \frac{\rm d}{\dt} \int |\textbf{B}|^2 \dx + \int |(- \Delta)^{\alpha/2} \textbf{u}|^{2} \dx = 0, \qquad \text{and} \qquad H_m = H_m(t = 0).
\end{equation*}
Therefore, it is expected that solutions of \eqref{ieq:MagnetoStokes} converge to a force-free state \eqref{ieq:ForceFree} with magnetic helicity $H_m(t = 0)$ \cite{Moffatt1985}. Proving this convergence has not been rigorously done, and for $\alpha=1$, even in 2D existence of global (weak) solutions is still unknown. We refer to \cite{BKS,BFV, KK, Tan} for the mathematical analysis of \eqref{ieq:MagnetoStokes} with different ranges of $\alpha$.

\begin{comment}
In this family of models, the velocity is taken parallel to the magnetic current and is obtained via a constitutive equation of the form 

$$u=(-\Delta)^{-\gamma}|b|^2.$$

In our case, the parameter $\gamma$ is set to be $1$. These equations have attracted attention from the mathematical community, where it remains open to show rigorously whether the magnetic field evolving under such system reaches a force free state as the one in \eqref{eq:Beltrami} \cite{BFV, Tan}. 
\end{comment}

\medskip

From a wider perspective, the study of the long time behaviour and the asymptotics of the solutions to the MHD equations is an active field of research. Some effort has been put in studying the qualitative behaviour of particular solutions with symmetries, as well as the different time regimes that appear when the viscosity and the resistivity vary. In \cite{Knobel1,Knobel2,Knobel3}, the authors study the nonlinear stability of solutions of the incompressible MHD equations around certain particular states and show how the solutions behave depending on whether the viscosity or the resistivity dominate. In \cite{Knobel3} it is shown that inflation of the norms of the magnetic field occurs in different times scales depending on the relative value of the viscosity and the resistivity. 

\medskip

Moffatt \cite{Moffatt2015} performed numerical simulations in the model \eqref{eq:magneticrelaxation} as well as some analytical  computations  in order to understand the relaxation process undergone by the solutions. In this paper, we perform an analytical study of equations \eqref{eq:magneticrelaxation} in order to obtain a general picture of the evolution of solutions. We will prove the existence of two time scales:
\begin{enumerate}
    \item An initial time scale of order  $\log (\varepsilon^{-1})$ influenced in a significant manner by viscosity, where the fluid relaxes to a state of constant modulus $|b| \equiv R(t)$. Therefore, if we consider $b=b_1+ib_2$ as a complex number, we can approximate it by a function of the form  $b = R(t) e^{i \theta(t,x)}$.

    \item A second stage on a time scale of order $1/\varepsilon$, where the evolution of the angular variable is influenced in a significant manner by the electrical resistivity. We will derive a PDE associated to the dynamics of $\theta$ and prove a precise convergence theorem of solutions of \eqref{eq:magneticrelaxation} to solutions of this PDE in the asymptotic regime $\varepsilon \rightarrow 0^+$.
    \begin{comment}
        \item A final stage where the limit dynamics mentioned above break down due, due to dissipation of most of the energy in the system.
    \end{comment}
    
\end{enumerate}

One main feature of this model is that the resulting PDE that describes the limit as $\varepsilon\rightarrow 0$ is not globally well posed. We prove in Section \ref{sec:limiteqandblowup} that it has a blow up in finite time. This opens the door to further study, since the original system \eqref{eq:magneticrelaxation} is globally well posed in time. Therefore, it would be interesting to study how the blow up is regularized if one keeps the original model \eqref{eq:magneticrelaxation}.

It is worth noticing that, even though it is already pointed out in \cite{Moffatt2015} that two time scales exist, we show here that they take place at longer times than the ones predicted by Moffatt. The choice of the constant $\varepsilon$ made in said article  is $\varepsilon=10^{-3}$ (we make a change of notation with respect to \cite{Moffatt2015}, as the quantity that we denote by $\varepsilon$ is denoted there by $\kappa$). Therefore, the relaxation to a state of constant modulus happens in times of order $1$, whereas the second time scales happens in times of order $10^3$. In other words, the numerical simulations described in \cite{Moffatt2015} do not run for long enough to be able to see some of the behaviour we describe in this article.

\subsection{Derivation of the Model}\label{ss:derivation}

    This model can be derived as a particular regime of the compressible MHD Equations. Recall that this system, written for a  compressible magnetofluid of viscosity $\kappa$ and resistivity $\eta$, is

\begin{subequations}\label{eq:MHDsystem}
    \begin{align}
        &\frac{\partial\rho}{\partial t}+\nabla\cdot\left(\rho \textbf{u}\right)=0,\\
        \label{2ndlawnewton}\rho\bigg(&\frac{\partial\textbf{u}}{\partial t}+\textbf{u}\cdot\nabla \textbf{u}\bigg)+\nabla P(\rho) -\left(\nabla\times \textbf{B}\right)\times \textbf{B}=\kappa \Delta \textbf{u},\\
        %&\frac{\partial p}{\partial t}+\textbf{u}\cdot \nabla p+\gamma p\nabla\cdot \textbf{u}=0,\\
        &\label{eq:Faraday}\frac{\partial \textbf{B}}{\partial t}-\nabla\times \left(\textbf{u}\times \textbf{B}\right)=\eta \nabla\times (\nabla\times  \textbf{B}),\\
        &\label{eq:divfree}\nabla\cdot\textbf{ B}=0.
    \end{align}
\end{subequations}

In agreement with the notation defined above, the vector fields $\textbf{u}$ and $\textbf{B}$ are, respectively, the velocity and the magnetic field of the fluid, while the scalar $\rho$ is the density. The quantity $P(\rho)$ is the pressure, and is given as a function of the density through a state equation, for example $P(\rho) = \rho^\gamma$, although this will be irrelevant in the sequel.

\medskip

We consider the special case where the solution of \eqref{eq:MHDsystem} have a particular form. More precisely, we assume that the magnetic field and the velocity only depend on the first space variable $x_1 = \textbf{x}\cdot (1, 0, 0)$, which we denote simply by $x$, and have the form
\begin{equation*}
    \textbf{B}(t, x) = (0, b_1(t, x), b_2(t, x)) \qquad \text{and} \qquad \textbf{u}(t,x) = (u(t, x), 0, 0).
\end{equation*}

Notice that such a magnetic field is automatically divergence-free. It will be convenient to define the 2D vector $b = (b_1, b_2)$, which we will sometimes identify with the complex number $b_1 + ib_2$ (we have removed the bold font for simplicity of notation). In this highly symmetric setting, the Lorentz force $(\nabla\times \textbf{B})\times \textbf{B}$ in \eqref{2ndlawnewton} takes the simpler form

$$(\nabla\times \textbf{B})\times \textbf{B}=\frac{1}{2}\partial_x(|\textbf{B}|^2)e_x.$$

On the other hand, Faraday's law \eqref{eq:Faraday} also simplifies under these assumptions, since

\begin{equation*}
    - \nabla \times (\textbf{u} \times \textbf{B}) - \eta \nabla \times (\nabla \times \textbf{B}) = - \partial_x (u \textbf{B}) - \eta \partial_x^2 \textbf{B}.
\end{equation*}

\begin{comment}
On the other hand, if we only consider a velocity directed perpendicularly to the magnetic field, we find that the electric field given by Faraday's law satisfies 

$$\textbf{E}=\eta \nabla\times (\nabla\times \textbf{B})+\nabla\times (\textbf{u}\times \textbf{B})=\eta \partial_x ^2\textbf{B}-\partial_x(u\textbf{B}),$$
\end{comment}

With this considerations in place, we can write the MHD system \eqref{eq:MHDsystem} as

\begin{subequations}\label{eq:MHDsystem2}
    \begin{align}
        &\frac{\partial\rho}{\partial t}+\partial_x\left(\rho u \right)=0,\\
        \rho\bigg(&\frac{\partial u}{\partial t}+u\partial_x u\bigg) +\frac{1}{2}\partial_x\left( P(\rho) + |b|^2\right)=\partial_x^2u,\\
        %&\frac{\partial p}{\partial t}+u \partial_x p+\gamma p\partial_x u=0,\\
        &\frac{\partial b}{\partial t}+\partial_x(ub)=\eta \partial_x ^2 b
    \end{align}
\end{subequations}

We now make the assumption that the density of the plasma is low enough that the fluid pressure $p$ is negligible compared to the magnetic pressure $\frac{1}{2} |\textbf{B}|^2$, this assumption is known as the low $\beta$ approximation, and is especially relevant for low-density plasmas driven by a large magnetic field. This leads to omitting the pressure terms from the system altogether. We then use reformulate the problem using non-dimensional variables. To this end, define the quantities
\begin{equation*}
    x\mapsto x/L\quad t\mapsto t/(\kappa / |\textbf{B}_0|^2) \quad \rho\mapsto \rho/\rho_0 \quad \textbf{B}\mapsto \textbf{B}/|\textbf{B}_0|\quad u\mapsto u/(|\textbf{B}_0|^2L/\kappa)
\end{equation*}
\begin{comment}
$$x\mapsto x/L\quad t\mapsto t/(\kappa |\textbf{B}_0|^2) \quad \rho\mapsto \rho/\rho_0 \quad \textbf{B}\mapsto \textbf{B}/|\textbf{B}_0|^2\quad u\mapsto u/(|\textbf{B}_0|^2L/\kappa)$$
\end{comment}
so that we obtain

\begin{subequations}
\begin{align}
        \label{eq:MHDsystem2rho}&\frac{\partial\rho}{\partial t}+\partial_x\left(\rho u \right)=0,\\
        \label{eq:MHDsystem2b}\alpha \rho\bigg(&\frac{\partial u}{\partial t}+u\partial_x u\bigg)+\frac{1}{2}\partial_x\left(|b|^2\right)=\partial_x^2u,\\
        \label{eq:MHDsystem2u}&\frac{\partial b}{\partial t}+\partial_x(ub)=\varepsilon \partial_x^2b.
\end{align}
\end{subequations} 

This is the system that was obtained and studied by Moffatt in \cite{Moffatt2015}. The parameters $\alpha=\rho_0B_0^2/\kappa_0^2 $ and $\varepsilon=\eta \kappa_0/ B_0^2L^2$ are assumed to be small, in virtue of the smallness of $\rho$ (diffused gas approximation) and the smallness of $\eta$ (small conductivity assumption). 

Finally, we make the further assumption that $\alpha=0$, which is consistent with the low density assumption (the kinematic viscosity scales as $1/\rho_0$ comparatively to the density, see the remarks in Section 8 of \cite{Moffatt2016}), so that \eqref{eq:MHDsystem2b} reduces to a Stokes-like equation. Then, \eqref{eq:MHDsystem2b} and \eqref{eq:MHDsystem2u} decouple from the density equation \eqref{eq:MHDsystem2rho}, and the density has no bearing on the problem. This leads to our final system \eqref{eq:magneticrelaxation}. Therefore, the system \eqref{eq:magneticrelaxation} corresponds to a situation where the gas is very diluted and has a very low pressure compared to the magnetic pressure.

\medskip

Note that the equations \eqref{eq:magneticrelaxation} have an additional degree of freedom, as the velocity $u$ is only determined by the equation $\partial_x u = \frac{1}{2}|b^2| - \int \frac{1}{2}|b^2|$ up to the addition of a function of time only. Therefore, to get well-posed equations, some additional constraint must be imposed on the velocity, for example one could require that it has zero mean $\int_\T u \dx = 0$, or it fulfils some cancellation condition $u(t,0) = 0$. However, the precise condition we impose on $u$ does not matter, thanks to the invariance of equations \eqref{eq:magneticrelaxation} by change of reference frames. If $g(t)$ is a smooth function of time determining a (non-inertial) change of reference frames $\bar{x} = x + g(t)$, and if $(b, u)$ is a solution of \eqref{eq:magneticrelaxation}, then the functions
\begin{equation}\label{ieq:Galileo}
    \bar{b}(t,\bar{x}) = b\big(t, \bar{x} - g(t)\big) \qquad \text{and} \qquad \bar{u}(t,\bar{x}) = u\big( t, \bar{x} - g(t) \big) + g'(t)
\end{equation}
also define a solution $(\bar{b}, \bar{u})$ of \eqref{eq:magneticrelaxation}. Therefore, by setting $g$ to be a solution of the ODE $u(t,g) + g' = 0$, we find a solution $v$ such that $\bar{u}(t, 0) = 0$. Unless otherwise specified, we will always impose this cancellation condition on the velocity.

The reader should observe that if $g(t) = Vt$, then \eqref{ieq:Galileo} is an inertial change of reference frames, so that equations \eqref{ieq:Galileo} are Galileo invariant. This property is inherited from the Galileo invariance of MHD. We refer to \cite{CobbThese} pp. 28--31 and \cite{Cobb2024} for more on that topic.

\subsection{Main Ideas and Statement of the Results}

The core of the paper is the study of the asymptotic behaviour of the solutions $b_\varepsilon(t,x)$ of \eqref{eq:magneticrelaxation} as $\varepsilon\rightarrow 0$. We now give a heuristic account of how the solution evolves when $\varepsilon \ll 1$. As we have explained above, two time scales are relevant for the analysis, depending on whether the resistive diffusion $\varepsilon \partial_x^2 b_\varepsilon$ has had time to act or not.

\medskip

\textbf{Viscous-driven time scale:} First of all, when the time is small compared to the scale on which viscous diffusion acts, the solutions of \eqref{eq:magneticrelaxation} behave roughly as solutions of the infinitely conducting system 
\begin{equation}\label{ieq:Hyperbolic}
    \begin{cases}
        \partial_t \widetilde{b} + \partial_x (\widetilde{u} \widetilde{b}) = 0,\\
        \partial_x \widetilde{u} = - (\partial_x \widetilde{b})^2 + \int (\partial_x \widetilde{b})^2,
    \end{cases}
\end{equation}
\textit{i.e.}, solutions of \eqref{eq:magneticrelaxation} with $\varepsilon=0$. The approximation $b_\varepsilon \sim \widetilde{b}$ holds roughly for time scales of order $-\log(\varepsilon)$. An important feature of system \eqref{ieq:Hyperbolic} is that it has a %hidden 
damping effect, \textit{i.e.} the mean-free part of the magnetic energy, $\widetilde{\psi} = \frac{1}{2}|\widetilde{b}|^2 - \int \frac{1}{2}|\widetilde{b}|^2$, which is in fact equal to $\partial_x \widetilde{u}$, solves the equation
\begin{equation}\label{ieq:HypEnergy}
    \partial_t \widetilde{\psi} + \widetilde{u} \partial_x \widetilde{\psi} + 2 |\widetilde{b}|^2 \widetilde{\psi} = \int_\T \widetilde{\psi}^2 \dx.
\end{equation}
In the above, the term $2|\widetilde{b}|^2 \widetilde{\psi}$ is the damping term. As long as the magnetic field remains bounded away from zero $|\widetilde{b}| \geq c_0$ it forces the mean-free energy $\widetilde{\psi}(t)$ to decay exponentially fast $O(e^{-c_0t})$. In other words, on a time scale of order $t\sim-\log(\varepsilon)$, the norm of the magnetic field $|b_\varepsilon|$ is expected to be equal to a constant up to a term of order $O(\varepsilon)$,
\begin{equation}\label{ieq:BConstantModulus}
    |b_\varepsilon(t,x)| = R_\varepsilon(t) + O(\varepsilon).
\end{equation}
We illustrate this viscosity-driven relaxation process in Figure \ref{fig:Relaxation}.

\begin{figure}[h!]
    \centering
    \includegraphics[width=0.5\linewidth]{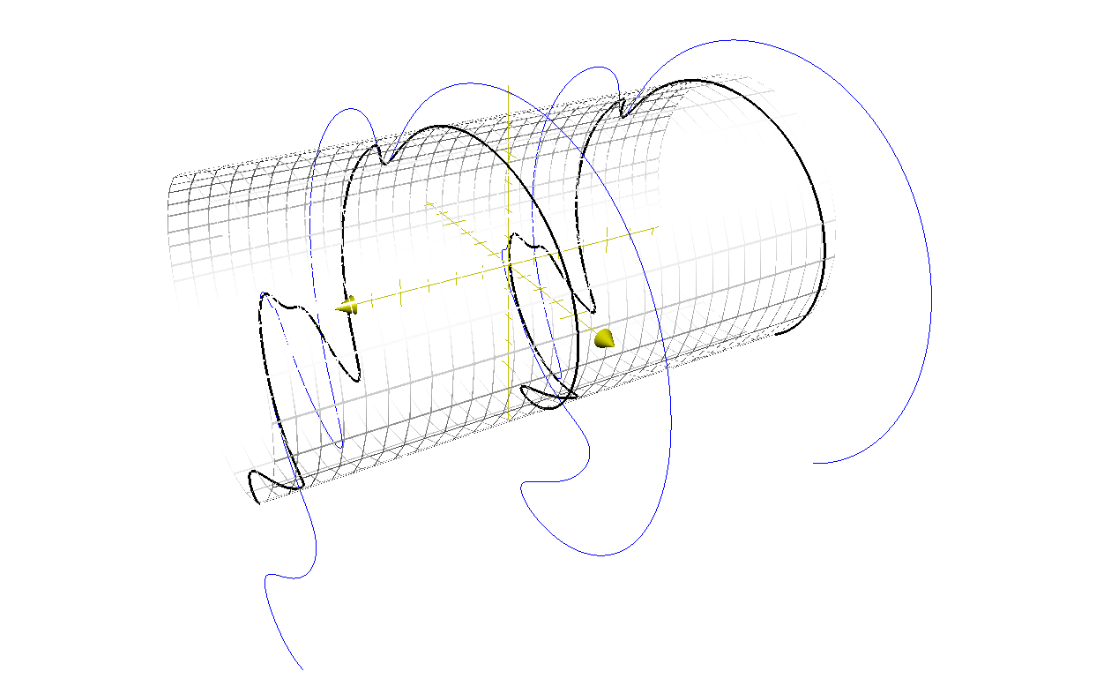}
    \caption{Illustration of the viscosity-driven time scale. The magnetic field is represented as a parametrized curve $x \mapsto (x, b_1(x), b_2(x))$. The initial magnetic field (the thin blue line) relaxes to a state where $x \mapsto |b|$ is constant (the thick black line lies at the surface of a cylinder).}
    \label{fig:Relaxation}
\end{figure}

\medskip

\textbf{Diffusive time scale:} For sufficiently long times, the electrical resistivity starts to matter. On a time scale of order $t\sim \varepsilon^{-1}$, the norm of the magnetic field is already constant up to a $O(\varepsilon)$ correction \eqref{ieq:BConstantModulus}, so that the quantity $U_\varepsilon := \frac{1}{\varepsilon}u_\varepsilon$ is of order $O(1)$. We rescale time accordingly $\tau = \varepsilon t$, and consider the singular perturbation problem
\begin{equation}\label{ieq:SingPert}
    \begin{cases}
        \partial_\tau b_\varepsilon + \frac{1}{\varepsilon} \partial_x (u_\varepsilon b_\varepsilon) = \partial_x^2 b_\varepsilon \\
        \partial_x u_\varepsilon = \frac{1}{2}|b_\varepsilon|^2 - \int \frac{1}{2}|b_\varepsilon|^2,
    \end{cases}
    \qquad \text{as } \varepsilon \rightarrow 0^+.
\end{equation}
In order to understand the behaviour of the magnetic field on the time scale $t \sim 1 / \varepsilon$, we want to show that the quantity $b_\varepsilon(\tau, x)$ has a limit $b(\tau, x)$ as $\varepsilon\rightarrow 0$, and to identify the limit dynamics, a PDE system solved by $b(\tau, x)$. 

\medskip
\begin{comment}
Now, thanks to the heuristic analysis of the non-diffusive time scale, we already know that $b_\varepsilon$ is most likely a $O(1)$ quantity with respect to $\varepsilon$, whereas $\partial_x u_\varepsilon = \frac{1}{2}|b_\varepsilon|^2 - \int \frac{1}{2}|b_\varepsilon|^2$ is a quantity of order $O(\varepsilon)$ by \eqref{ieq:BConstantModulus}. This  means that the term $\frac{1}{\varepsilon}u_\varepsilon$ appearing in \eqref{ieq:SingPert} is of finite order $O(1)$. However, we need additional information to identify the limit dynamics.
\end{comment}
\medskip

Let us focus on the quantity $U_\varepsilon = \frac{1}{\varepsilon}u_\varepsilon$ (which, again, is of order $O(1)$ due to \eqref{ieq:BConstantModulus}). By computations similar to those that led to \eqref{ieq:HypEnergy}, we can show that $\psi_\varepsilon$ solves the equation
\begin{equation*}
    \partial_\tau \psi_\varepsilon+u_\varepsilon\partial_x \psi_\varepsilon+\frac{|b_\varepsilon|^2}{\varepsilon}\psi_\varepsilon-\int_{\T}\psi_\varepsilon^2= \partial_x^2\psi_\varepsilon+ \left(\int_{\T}|\partial_x b_\varepsilon|^2 \dx -|\partial_x b_\varepsilon|^2\right).
\end{equation*}
By discarding all the terms of order $O(\varepsilon)$ or higher, we find %what we were looking for, an equation which identifies the quantity $\partial U_\varepsilon$ up to higher order terms:
formula  for $\partial_x U_\varepsilon$ up to higher order terms:
\begin{equation}\label{ieq:MultiScalePsi}
    \frac{1}{\varepsilon} |b_\varepsilon|^2 \psi_\varepsilon = |b_\varepsilon|^2 \partial_x U_\varepsilon = - |\partial_x b_\varepsilon|^2 + \int_\T |\partial_x b_\varepsilon|^2 + O(\varepsilon).
\end{equation}
%Let us collect the information we have so far.
By putting together the above \eqref{ieq:MultiScalePsi} and \eqref{ieq:BConstantModulus}, we deduce that $b_\varepsilon(t, x) = b(t/\varepsilon, x) + O(\varepsilon)$, where $b(\tau, x)$ solves the PDE system
\begin{equation}\label{ieq:LimitDynamics}
    \begin{cases}
        \partial_\tau b + \partial_x (U b) = \partial_x^2 b \\
        \partial_x U = \frac{1}{|b|^2} \left( - |\partial_x b|^2 + \int_\T | \partial_x b|^2 \right),
    \end{cases}
\end{equation}
and this describes the dynamics of the magnetofluid on the diffusive time scale.

\medskip

\textbf{Study of the limit dynamics.} At this point, we must make an important remark. The heuristic arguments we have given above only hold provided that \eqref{ieq:BConstantModulus} is true with $R_\varepsilon(\tau) \gg \varepsilon$. Otherwise, the leading order terms in \eqref{ieq:MultiScalePsi} do not provide an equation for $U_\varepsilon$. Furthermore, if $|b_\varepsilon| \approx R_\varepsilon(\tau)$ is not positive, then the second equation in \eqref{ieq:LimitDynamics} does not even make sense. Therefore, for our analysis to be justified, it is important to check that the norm $|b_\varepsilon|$ of the magnetic field does not collapse to zero, or equivalently, we can check that the norm $|b|$ of the solution of \eqref{ieq:LimitDynamics} remains non-zero, as $|b| = |b_\varepsilon| + O(\varepsilon)$.

\medskip

By \eqref{ieq:BConstantModulus}, we know that the quantity $R(\tau) = |b_(\tau, x)|$ only depends on $\tau$, so we can write the solution of \eqref{ieq:LimitDynamics} as $b = R(\tau)e^{i \theta (\tau, x)}$, where the angle $\theta$ is defined on the covering $\R$ of the torus $\T = \R / \Z$. The couple $(R, \theta)$ solve
\begin{equation}\label{ieq:AngleVariable}
    \begin{cases}
        \partial_\tau \theta + U \partial_x \theta = \partial_x^2 \theta \\
        \partial_x U = - (\partial_x \theta)^2 + \int (\partial_x \theta)^2,
    \end{cases}
    \qquad \text{and} \qquad R'(\tau) = -R(\tau) \int_0^1 (\partial_x \theta)^2.
\end{equation}
Therefore, as long as the solution of this system is regular enough $\theta \in L^2_t(H^1_x)$, the norm $|b|$ cannot vanish, and our asymptotic analysis is justified. It is therefore crucial to see if solutions of \eqref{ieq:AngleVariable} are regular, and if they can develop singularities in finite time. On the one hand, regular initial data of \eqref{ieq:AngleVariable} give rise to smooth solutions on some time interval $[0, T)$. On the other hand, solutions undergo blow-up for some initial data.

\medskip

\textbf{Statement of the results.} In this article we make the heuristics above rigorous. We now state the different results we have obtained, first concerning the limit system \eqref{ieq:AngleVariable}, and then concerning the asymptotic analysis which depends on it.

\begin{teor}[see Theorems \ref{t:existencelimit}, \ref{t:smallOscillations} and \ref{prop:blowup}]\label{it:Limit}
    The following assertions hold:
    \begin{enumerate}
        \item Consider initial data $R_0 > 0$ and $\theta_0 \in \dot{H}^{1/2}(\R)$. Then there exists a time $T^\star > 0$ such that system \eqref{ieq:AngleVariable} has a unique solution $\theta \in C^\infty((0, T^\star) \times \R)$ associated to that initial datum. In particular, $R(\tau) > 0$ for all $0 \leq \tau < T^\star$.

        \item Under the condition that $\int_0^1 \theta_0 = 0$ and $\max_x \theta_0 - \min_x \theta_0 \leq \frac{1}{\sqrt{3}}$, the lifespan of the solution is infinite $T^\star = \infty$.

        \item There exists initial data (fulfilling a growth condition), such that the lifespan of the solution is finite, since the angle becomes non-smooth in finite time $T^\star< +\infty$ and
        \begin{equation}\label{ieq:BlowUpL4}
            \int_0^{T^\star} \| \partial_x \theta \|_{L^2}^4 \dt = + \infty.
        \end{equation}
    \end{enumerate}
\end{teor}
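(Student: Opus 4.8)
The statement collects three essentially independent results, and I would prove them in the order: local Cauchy theory (assertion (1)), then the $\dot{H}^1$ energy identity and continuation criterion, then global existence for small oscillation (assertion (2)), and finally the construction of blow-up data (assertion (3)).

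\textbf{Local well-posedness and smoothing (assertion (1)).} The space $\dot{H}^{1/2}$ is scaling-critical for \eqref{ieq:AngleVariable}: the system is invariant under $\theta(\tau,x)\mapsto\theta(\lambda^2\tau,\lambda x)$, which preserves $\|\theta\|_{\dot{H}^{1/2}}$. I would run a Fujita--Kato / Picard scheme on the Duhamel formulation
\begin{equation*}
    \theta(\tau)=e^{\tau\partial_x^2}\theta_0-\int_0^\tau e^{(\tau-s)\partial_x^2}\big(U\partial_x\theta\big)(s)\,ds
\end{equation*}
in a time-weighted Kato space (e.g.\ $\sup_{s<T}\|\theta(s)\|_{\dot{H}^{1/2}}+\sup_{s<T}s^{1/4}\|\partial_x\theta(s)\|_{L^2}<\infty$). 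Since $U=-\partial_x^{-1}\big[(\partial_x\theta)^2-\int(\partial_x\theta)^2\big]$, one writes the nonlinearity as $U\partial_x\theta=\partial_x(U\theta)+(\partial_x\theta)^2\theta-\big(\int(\partial_x\theta)^2\big)\theta$, so that the only non–divergence-form piece is the genuinely cubic term $(\partial_x\theta)^2\theta$. The core is the multilinear bound on the Duhamel integral in the Kato norm; because of the cubic nonlinearity this needs Besov / Chemin--Lerner refinements of the $\dot{H}^s$ estimates, together with the parabolic gains $\|e^{t\partial_x^2}f\|_{\dot{H}^{1/2+s}}\lesssim t^{-s/2}\|f\|_{\dot{H}^{1/2}}$ and the 1D embedding $\dot{H}^{1/2+\varepsilon}\hookrightarrow L^\infty$ (for the undifferentiated factor $\theta$). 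Uniqueness follows from an energy estimate on the difference of two solutions, and $C^\infty$-smoothing for $\tau>0$ from the gain $\|\partial_x\theta(s)\|_{L^2}^2\lesssim s^{-1/2}\|\theta_0\|_{\dot{H}^{1/2}}^2$ plus a standard parabolic bootstrap. Finally $R(\tau)=R_0\exp\!\big(-\int_0^\tau\int_0^1(\partial_x\theta)^2\big)>0$ on $[0,T^\star)$ because $\int_0^\tau\|\partial_x\theta\|_{L^2}^2\,ds\lesssim\tau^{1/2}\|\theta_0\|_{\dot{H}^{1/2}}^2<\infty$.

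\textbf{Energy identity and continuation criterion.} Testing the $\theta$-equation with $-\partial_x^2\theta$ and using $\partial_xU=-(\partial_x\theta)^2+\int(\partial_x\theta)^2$ gives
\begin{equation*}
    \frac{d}{d\tau}\|\partial_x\theta\|_{L^2}^2=-2\|\partial_x^2\theta\|_{L^2}^2+\|\partial_x\theta\|_{L^4}^4-\|\partial_x\theta\|_{L^2}^4 .
\end{equation*}
By the 1D Gagliardo--Nirenberg inequality $\|\partial_x\theta\|_{L^4}^4\lesssim\|\partial_x\theta\|_{L^2}^3\|\partial_x^2\theta\|_{L^2}+\|\partial_x\theta\|_{L^2}^4$ and Young's inequality, $y(\tau):=\|\partial_x\theta(\tau)\|_{L^2}^2$ satisfies $y'\le Cy^2(y+1)$, so Grönwall keeps $y$ bounded — and, reinjecting, $\theta\in L^\infty_\tau H^1_x\cap L^2_\tau H^2_x$ — for as long as $\int_0^\tau\|\partial_x\theta\|_{L^2}^4$ is finite; combined with the local theory restarted from $H^1$ data, this yields the continuation criterion $T^\star<\infty\Rightarrow\int_0^{T^\star}\|\partial_x\theta\|_{L^2}^4\,d\tau=+\infty$, which is exactly \eqref{ieq:BlowUpL4}. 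In particular assertion (3) reduces to exhibiting data with finite lifespan. \textbf{Global existence for small oscillation (assertion (2)).} On any interval where $\theta$ is smooth, $U$ is smooth in $x$, so the maximum principle for $\partial_\tau\theta+U\partial_x\theta=\partial_x^2\theta$ gives $\min_x\theta_0\le\theta(\tau,x)\le\max_x\theta_0$; hence $\mathrm{osc}_x\theta(\tau,\cdot)$ is non-increasing and, centering by the midpoint value $c$, $\|\theta(\tau,\cdot)-c\|_{L^\infty}\le\tfrac12\,\mathrm{osc}\,\theta_0$ (the normalization $\int_0^1\theta_0=0$ fixes the irrelevant additive constant). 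Integrating by parts, $\|\partial_x\theta\|_{L^4}^4=-3\int(\theta-c)(\partial_x\theta)^2\,\partial_x^2\theta\le 3\|\theta-c\|_{L^\infty}\|\partial_x\theta\|_{L^4}^2\|\partial_x^2\theta\|_{L^2}$, so $\|\partial_x\theta\|_{L^4}^4\le 9\|\theta-c\|_{L^\infty}^2\|\partial_x^2\theta\|_{L^2}^2\le\tfrac94(\mathrm{osc}\,\theta_0)^2\|\partial_x^2\theta\|_{L^2}^2$. Under the stated bound $\mathrm{osc}\,\theta_0\le1/\sqrt3$ the prefactor is $\le\tfrac34<2$, so the energy identity gives $\tfrac{d}{d\tau}\|\partial_x\theta\|_{L^2}^2\le-\|\partial_x\theta\|_{L^2}^4$; thus $\int_0^\infty\|\partial_x\theta\|_{L^2}^4\le\|\partial_x\theta_0\|_{L^2}^2<\infty$ and the solution is global by the continuation criterion.

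\textbf{Blow-up (assertion (3)) and the main obstacle.} The destabilizing mechanism is the cubic term in the equation $\partial_\tau w+U\partial_x w=\partial_x^2 w+w^3-\big(\int w^2\big)w$ for $w=\partial_x\theta$. I would take initial data (the ``growth condition'' being precisely a largeness requirement) whose derivative $\partial_x\theta_0$ is a single, not-too-narrow spike of height $H$ and width $\delta<1$ with $H\delta\gg1$, placed symmetrically (say even data) so that $U$ is odd, vanishes at the spike's centre, and stays small on it. Tracking the localized energy $E(\tau)=\int(\partial_x\theta)^2\varphi$ for a fixed cut-off $\varphi>0$ near the spike, the cut-off version of the computation above yields $E'=\int(\partial_x\theta)^4\varphi-\big(\int(\partial_x\theta)^2\big)\int(\partial_x\theta)^2\varphi-2\int(\partial_x^2\theta)^2\varphi+(\text{terms carrying }\partial_x\varphi,\partial_x^2\varphi)$; with the mass concentrated, Cauchy--Schwarz gives $\int(\partial_x\theta)^4\varphi\gtrsim\delta^{-1}E^2$, the nonlocal sink is $\gtrsim-E^2$, the diffusive loss is $\lesssim\delta^{-2}E$, and the commutator terms are lower order, so $E'\ge c\,\delta^{-1}E^2$ as long as the spike keeps its shape — which it does on the short lifespan $\tau\sim(H^2\delta)^{-1}$, since diffusion only spreads it on scale $\sqrt\tau\ll\delta$ when $H\delta\gg1$. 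This differential inequality forces $E\to\infty$ in finite time, hence $T^\star<\infty$ and \eqref{ieq:BlowUpL4}. I expect assertion (3) to be the hard part: the nonlocal velocity $U$ (not sign-definite, globally coupled, with $\|U\|_{L^\infty}$ large along the putative blow-up) and the nonlocal sink $\int w^2$ rule out the textbook comparison-principle / Kaplan-eigenfunction arguments, so one is forced into a symmetry class and a genuine bootstrap showing that the $L^2$-mass of $\partial_x\theta$ stays trapped near the spike up to the blow-up time; assertion (1) is also delicate, as the multilinear estimates at the critical $\dot{H}^{1/2}$ regularity with a cubic nonlinearity require Besov / Chemin--Lerner technology rather than plain energy estimates.
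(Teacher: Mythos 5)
Your treatment of assertions (1) and (2) is broadly sound, though not identical to the paper's. For (2) you do essentially what the paper does: maximum principle to propagate the oscillation bound, then the $H^1$ energy identity with the quartic term rewritten by parts as $-3\int(\theta-c)(\partial_x\theta)^2\partial_x^2\theta$ and absorbed into the dissipation (the paper centers at the mean, $\Gamma=\theta-\int\theta$, and uses Young's inequality slightly differently, but the mechanism and the threshold bookkeeping are the same). For (1) you propose a Fujita--Kato iteration in time-weighted norms at the critical regularity $\dot{H}^{1/2}$ with Besov/Chemin--Lerner multilinear estimates; the paper instead runs a Picard scheme directly in $L^4([0,T);H^1(\T))$, with the exponents forced by scaling ($s+2/p=1$, $s-2+6/p=0$, hence $p=4$, $s=1/2$), and the linear estimates are elementary Fourier-series computations plus Minkowski — no paraproduct technology is needed. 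One point your sketch glosses over: the statement allows winding data, so $\theta$ is only periodic up to $2\pi Nx$; the paper first extracts the winding number, works with the periodic part $\zeta$, and its linearization then carries the destabilizing zero-order term $8\pi^2N^2(\zeta-\int\zeta)$, which is why smallness gives global solutions only when $N=0$. Your Duhamel formulation written directly for $\theta$ with the heat semigroup does not account for this; locally in time it can be repaired, but as written it silently assumes $N=0$. Your continuation criterion (Gagliardo--Nirenberg plus Grönwall giving boundedness of $\|\partial_x\theta\|_{L^2}$ as long as $\int\|\partial_x\theta\|_{L^2}^4<\infty$) is a legitimate alternative route to \eqref{ieq:BlowUpL4}.

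The genuine gap is assertion (3). Your plan — a localized energy $E(\tau)=\int(\partial_x\theta)^2\varphi$ on a spike, the inequality $E'\gtrsim\delta^{-1}E^2$, and the claim that "the spike keeps its shape" on the relevant time scale — hinges entirely on a persistence-of-concentration bootstrap that you acknowledge but do not supply: the lower bound $\int(\partial_x\theta)^4\varphi\gtrsim\delta^{-1}E^2$ and the smallness of the exterior mass must be propagated in time against both diffusion and transport by the nonlocal velocity $U$, and nothing in the sketch controls this (the heuristic "diffusion spreads on scale $\sqrt{\tau}\ll\delta$" ignores the advection and the nonlinear sharpening/flattening of the profile). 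As it stands this is a heuristic, not a proof. The paper avoids shape control altogether: it works with $\varphi=\partial_x\theta$, takes $\varphi_0\geq 0$, even, and monotone on $(0,1/2)$, notes that positivity, evenness, monotonicity and the total mass $M=\int\varphi$ are preserved (its Lemma on positivity), and then runs a virial argument on the second moment $V(t)=\int x^2\varphi$: exploiting the sign structure and Cauchy--Schwarz, it derives the closed differential inequality $V'\leq 2M+\tfrac14 M^{5/2}V^{1/2}+\tfrac43\bigl(V^{1/2}M^{1/2}-\tfrac14 M\bigr)^3$, whose cubic term dominates with a negative sign when $M>C\,V(0)$; then $V'\leq\kappa<0$ for all time, so $V$ would become negative in finite time, contradicting $\varphi\geq 0$ — hence finite lifespan, and \eqref{ieq:BlowUpL4} follows from the well-posedness class. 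If you want to complete your route you would have to build precisely the mass-trapping bootstrap you flag as missing; otherwise the second-moment argument is the decisive idea your proposal lacks.
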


The meaning of the condition $\int_0^1 \theta_0$ is that the magnetic field, considered as a plane curve $x \in \T \mapsto (b_1(x), b_2(x)) \in \R^2$ does not make a full turn around the origin, \textit{i.e. }it has index zero. It should be noted that while global in time solutions \textit{may} exist under this condition (as \textit{(2)} states), it is also possible to have $T^\star < +\infty$, as we prove in Section \ref{sec:limiteqandblowup}. Numerical experiments are also provided in Section \ref{sec:numerics}.

\medskip

This results show that the heuristic analysis above can be justified, at least on some time interval $\tau \in (0, T^\star)$. After that time, blow-up of solution of \eqref{ieq:AngleVariable} may occur (and in fact does occur for some initial data, as seen below), and our analysis breaks down. However, note that \eqref{ieq:BlowUpL4} does not in fact prove that $R(T^\star) = 0$. Cancellation of the radius requires the stronger condition $\| \partial_x \theta \|_{L^2([0, T^\star) ; L^2)} = + \infty$. What happens exactly at the time of blow-up is still an open question, and we plan to investigate it in a further paper. In any case, the asymptotic analysis $\varepsilon \rightarrow 0^+$ can be conducted on the time interval $[0, T^\star)$, and we get the following result.

\begin{teor}[See Theorem \ref{teor:main}]\label{it:Convergence}
    Consider $(b_\varepsilon)_{\varepsilon > 0}$ the solutions of the primitive equations \eqref{eq:magneticrelaxation} associated to some regular enough initial datum $b_0$ such that there exists a constant $c_0 > 0$ with $|b_0(x)| > c_0$ for all $x \in \T$. Then the following assertions hold:
    \begin{enumerate}
        \item The solution $\widetilde{b}(t,x)$ of the perfectly conducting system \eqref{ieq:Hyperbolic} associated to the initial datum $b_0$ converges as $t \rightarrow \infty$ to a function $\widetilde{b_\infty} = \lim_{t \rightarrow \infty} \widetilde{b}(t,x)$.

        \item Let $T^\star > 0$ be the maximal time of existence of solutions of \eqref{ieq:LimitDynamics} for the initial datum $R_0e^{i \theta_0(x)} = \widetilde{b_\infty}$, and let $b(\tau, x) = R(\tau)e^{i \theta(\tau, x)}$ be the associated solution. Then the sequence $(b_\varepsilon(\tau, x))_{\varepsilon > 0}$ converges to $b(\tau, x)$ as $\varepsilon \rightarrow 0^+$, where $\tau = \varepsilon t$.
    \end{enumerate}
\end{teor}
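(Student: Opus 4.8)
\emph{Plan of proof.} The statement decouples into two essentially independent tasks: the long-time analysis of the perfectly conducting flow \eqref{ieq:Hyperbolic} (item (1)), which in turn produces the initial datum $\widetilde{b_\infty}$ and the quantitative information feeding into the singular perturbation analysis of \eqref{eq:magneticrelaxation} on the diffusive time scale $\tau = \varepsilon t$ (item (2)).

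For item (1), the plan is to run a bootstrap on the lower bound $|\widetilde b(t,\cdot)| \geq c_0/2$. Granting it, the mean-free energy $\widetilde\psi = \partial_x\widetilde u$ solves \eqref{ieq:HypEnergy}; since $\int_\T\widetilde\psi = 0$, an $L^2$ energy estimate turns the damping term $2|\widetilde b|^2\widetilde\psi \geq \tfrac{c_0^2}{2}\widetilde\psi$ into exponential decay $\|\widetilde\psi(t)\|_{L^2}\lesssim e^{-\sigma t}$, which one then propagates to every Sobolev norm using that \eqref{ieq:Hyperbolic} transports $H^s$ up to the damping; concretely, in Lagrangian coordinates $\widetilde b(t, X(t,a)) = \widetilde b_0(a)\exp(-\int_0^t \widetilde\psi(s,X(s,a))\ds)$ and the Jacobian is $\partial_a X = \exp(+\int_0^t\widetilde\psi)$, both exponents remaining bounded (indeed convergent) because $\widetilde\psi$ is integrable in time. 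As $\widetilde u(t,0)=0$ gives $\|\widetilde u(t)\|_{W^{s,\infty}}\lesssim e^{-\sigma t}$, the maps $X(t,\cdot)$ and $\widetilde b(t,\cdot)$ are Cauchy as $t\to\infty$ and converge in every $H^s$ to a limit; since $\int_0^\infty\|\widetilde\psi\|_{L^\infty}\ds$ can be made as small as desired for $t$ large, the lower bound $c_0/2$ is preserved and the bootstrap closes. Finally $\widetilde\psi(t)\to 0$ forces $|\widetilde b(t,\cdot)|^2$ to converge to its spatial mean, which is positive by the lower bound, so $\widetilde{b_\infty} = R_0 e^{i\theta_0}$ with $R_0\geq c_0/2 > 0$, and Theorem \ref{it:Limit} then provides the smooth solution $b = Re^{i\theta}$ of \eqref{ieq:LimitDynamics} on $[0,T^\star)$ with $R>0$ there.

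For item (2), fix $\delta>0$; we prove $b_\varepsilon\to b$ uniformly on $\tau\in[0,T^\star-\delta]$ in $H^{s'}$ for $s'<s$, and then let $\delta\to 0$. In the original time we first treat a viscous layer $t\in[0,T_\varepsilon]$, $T_\varepsilon = K\log(\varepsilon^{-1})$ with $K$ small (so $\tau_\varepsilon := \varepsilon T_\varepsilon\to 0$): comparing $b_\varepsilon$ with the ideal solution $\widetilde b$, the difference solves a transport--diffusion equation forced by $\varepsilon\partial_x^2\widetilde b$ with $O(1)$ coefficients (using the uniform-in-time $H^s$ bounds from item (1)), giving $\|b_\varepsilon(t)-\widetilde b(t)\|_{H^s}\lesssim \varepsilon e^{Ct}$; combined with $\|\widetilde b(T_\varepsilon)-\widetilde{b_\infty}\|_{H^s}\lesssim e^{-\sigma T_\varepsilon}$ this yields $\|b_\varepsilon(\tau_\varepsilon)-\widetilde{b_\infty}\|_{H^s}\lesssim \varepsilon^\beta$ for some $\beta>0$ (depending on $K,C,\sigma$), with $|b_\varepsilon|\geq c_0/2$ throughout. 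Meanwhile the damping $2|b_\varepsilon|^2\psi_\varepsilon$ in the original-time equation for $\psi_\varepsilon=\partial_x u_\varepsilon$ (damping rate $\gtrsim c_0^2$, forcing $O(\varepsilon)$), reinforced by a short resistive transient of vanishing $\tau$-length in which the $\varepsilon^{-1}|b_\varepsilon|^2\psi_\varepsilon$ damping of the $\tau$-scale $\psi_\varepsilon$-equation preceding \eqref{ieq:MultiScalePsi} acts at rate $\gtrsim\varepsilon^{-1}$ while $b_\varepsilon$ moves by $o(1)$, brings $\psi_\varepsilon$ down to size $O(\varepsilon)$ in $H^s$, i.e.\ $\|\partial_x U_\varepsilon\|_{H^s}=O(1)$, by some time $\tau_\varepsilon'\to 0$. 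Since $b$ is continuous with $b(0)=\widetilde{b_\infty}$, at $\tau=\tau_\varepsilon'$ we have $\|b_\varepsilon(\tau_\varepsilon')-b(\tau_\varepsilon')\|_{H^s}\to 0$ and $\|\psi_\varepsilon(\tau_\varepsilon')\|_{H^s}=O(\varepsilon)$; these are the data from which the bulk estimate starts.

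On $[\tau_\varepsilon',T^\star-\delta]$ we run a continuity argument on the maximal subinterval where $|b_\varepsilon|\geq \tfrac12\inf_{[0,T^\star-\delta]}R$ and $\sup_\tau\|b_\varepsilon(\tau)\|_{H^s}\leq 2\sup_\tau\|b(\tau)\|_{H^s}+1$, establishing: (i) the \emph{slaving estimate} that $\partial_x U_\varepsilon$ agrees, up to $O(\varepsilon)$ in $H^s$, with the quasi-static profile $|b_\varepsilon|^{-2}(-|\partial_x b_\varepsilon|^2+\int_\T|\partial_x b_\varepsilon|^2)$ of \eqref{ieq:MultiScalePsi}, obtained from the $\tau$-scale $\psi_\varepsilon$-equation preceding \eqref{ieq:MultiScalePsi} whose damping term $\varepsilon^{-1}|b_\varepsilon|^2\psi_\varepsilon$ balances the $O(1)$ forcing $-|\partial_x b_\varepsilon|^2+\int_\T|\partial_x b_\varepsilon|^2$ and dominates the transport and diffusion terms; (ii) uniform-in-$\varepsilon$ parabolic $H^s$ bounds for $b_\varepsilon$ by the standard quasilinear energy method, closed by the continuity argument and using that by (i) $\partial_x U_\varepsilon$ is a controlled function of $b_\varepsilon$ and its derivatives; and (iii) the error bound for $e_\varepsilon = b_\varepsilon - b$: subtracting \eqref{ieq:LimitDynamics} from the rescaled \eqref{eq:magneticrelaxation} and writing $U_\varepsilon b_\varepsilon - Ub = U_\varepsilon e_\varepsilon + (U_\varepsilon - U)b$, where by (i) $\partial_x(U_\varepsilon - U)$ is Lipschitz in $e_\varepsilon$ in $H^1$ up to an $O(\varepsilon)$ remainder, the parabolic energy estimate gives $\tfrac{d}{d\tau}\|e_\varepsilon\|_{L^2}^2 + \|\partial_x e_\varepsilon\|_{L^2}^2 \lesssim_\delta \|e_\varepsilon\|_{L^2}^2 + \varepsilon^2$, hence $\sup_\tau\|e_\varepsilon(\tau)\|_{L^2}\lesssim_\delta (\|e_\varepsilon(\tau_\varepsilon')\|_{L^2}+\varepsilon)e^{C(\delta)T^\star}\to 0$; interpolating with (ii) upgrades this to $H^{s'}$ and closes the bootstrap. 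Together with the initial-layer estimates this gives convergence on $[0,T^\star-\delta]$, and $\delta\to 0$ concludes. The main obstacle is the slaving estimate (i) together with its feedback into (ii): one must show that the singularly damped equation for $\psi_\varepsilon=\partial_x u_\varepsilon$ really does slave $\psi_\varepsilon/\varepsilon$ to the profile of \eqref{ieq:MultiScalePsi} in a norm strong enough to control every nonlinearity in \eqref{eq:magneticrelaxation} and in the error equation, with constants uniform in $\varepsilon$ that degenerate only as $\tau\to T^\star$ — which is exactly what forces the restriction to $[0,T^\star-\delta]$ — while the careful organisation and matching of the viscous and resistive layers, and keeping $|b_\varepsilon|$ bounded away from zero at each stage, is the other delicate point.
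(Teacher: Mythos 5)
Your plan reproduces the paper's two--scale architecture, but two of its load-bearing steps are not sound as written. First, in item (1) the bootstrap preserving the lower bound does not close. Under the hypothesis $|\widetilde b|\geq c_0/2$ you obtain $\|\widetilde\psi(t)\|\lesssim\|\widetilde\psi_0\|e^{-\sigma t}$, and along characteristics $|\widetilde b(t,X(t,a))|=|b_0(a)|\exp\bigl(-\int_0^t\widetilde\psi(s,X(s,a))\ds\bigr)$; to recover $|\widetilde b|\geq c_0/2$ you need the exponent bounded by $\log 2$, i.e. roughly $\|\widetilde\psi_0\|_{L^\infty}/\sigma\leq\log 2$, which is a smallness condition on the data, not a consequence of the decay. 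The smallness of the \emph{tail} of $\int\|\widetilde\psi\|_{L^\infty}\ds$ "for $t$ large" does not help, since the loss of the factor $2$ may already occur on the initial interval. The mechanism that actually works is pointwise: at a spatial minimum of $|\widetilde b|^2$ one has $\partial_x\widetilde u=\tfrac12\bigl(|\widetilde b|^2-\int_\T|\widetilde b|^2\bigr)\leq 0$, so the minimum of $|\widetilde b|^2$ cannot decrease; this minimum-principle argument (Proposition \ref{prop:decayunondiff}) gives $|\widetilde b|^2\geq c_0$ for all time with no bootstrap and no constant loss, and the same device (with the $O(\varepsilon)$ diffusive correction) is what controls $|b_\varepsilon|$ through the viscous layer and the bulk in Lemma \ref{lemma:lowerboundb} and Proposition \ref{prop:munchhausen}.

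Second, your step (i) --- that $\partial_x U_\varepsilon$ equals the quasi-static profile of \eqref{ieq:MultiScalePsi} up to $O(\varepsilon)$ in $H^s$, with Lipschitz dependence on $e_\varepsilon$ in $H^1$ --- is asserted rather than proved, and it is precisely the crux. The velocity is not a pointwise function of $b_\varepsilon$ but is slaved through the damped evolution of $\psi_\varepsilon$, so the slaving error is governed by the time modulus of continuity of $|\partial_x b_\varepsilon|^2$ on the $\tau$-scale; quantifying this at $O(\varepsilon)$ in $H^s$ requires uniform-in-$\varepsilon$ bounds on higher derivatives (or on $\partial_\tau\partial_x b_\varepsilon$) that your scheme has not produced, and the paper's own quantitative bound (Proposition \ref{prop:decaysecondderivativeu}) only gives $\|\partial_x^2 u_\varepsilon\|_{L^2}=O(\sqrt{\varepsilon})$ after the transient, so $O(\varepsilon)$ in strong norms is optimistic. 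The paper sidesteps quantification entirely: it proves only the qualitative convergence $\partial_x U_\varepsilon\to|b|^{-2}\bigl(\int_\T|\partial_x b|^2-|\partial_x b|^2\bigr)$ in $L^1_{loc}$ via a Duhamel representation and the approximate-identity Lemma \ref{lemma:approxidentity} (Proposition \ref{prop:limiteq}), extracts limits by Arzel\`a--Ascoli in parabolic H\"older spaces, identifies the limit equation, and then removes the subsequence by the uniqueness theory of the limit problem (Theorem \ref{t:existencelimit}), continuing up to any $\overline T<T^\star$ by a bootstrap analogous to your last step (Proposition \ref{prop:main}). If you insist on the direct Gr\"onwall comparison, you must either establish the quantitative slaving together with its stability in $e_\varepsilon$ (which forces the missing uniform higher-order estimates), or weaken the remainder to $o(1)$ and verify the error estimate still closes; as it stands, the compactness-plus-uniqueness route is the one supported by the estimates you actually have.
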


\subsection{Organization of the Paper and Notation}

We devote Section \ref{sec:epsilonzero} to study the case of a perfectly conducting fluid, \textit{i.e.} the case of $\varepsilon=0$. This will serve as a base case to study the first time scale of the full problem \eqref{eq:magneticrelaxation}. We show global well posedness of classical solutions and convergence to a particular case of force-free field, the case of $b$ with constant modulus. 

In Section \ref{sec:epsilonfix} we show global well posedness for the system \eqref{eq:magneticrelaxation} with a fixed, but non-zero, resistivity $\varepsilon$. This is done by a classical approximation scheme, based on energy estimates of the equation. We also show that this equation exhibits instantaneous regularization. This will prove very useful, as we will be able to work with classical techniques when describing the qualitative behaviour when $\varepsilon\rightarrow 0$.

In Section \ref{sec:Munchhausenestimates} we study the qualitative behaviour of the system. It is here where we show the existence of two time scales. More precisely, we show that for times of order $t=\log(\varepsilon^{-1}))$ (the ``fast'' time scale) the solution $b$ of \eqref{eq:magneticrelaxation} behaves like its zero resistivity counterpart, and that for times up to order $1/\varepsilon$ (the ``slow'' time scale) the magnetic field $b$ can be approximated by the solution of a limiting PDE as $\varepsilon \rightarrow 0^+$. 

Finally, Section \ref{sec:limiteqandblowup} is devoted to the study of the limit dynamics \eqref{ieq:AngleVariable}. The results concerning the well-posedness of the equations, the finite time blow-up of certain solutions, and the numerical experiments are enclosed therein.

\subsubsection{Notation, Conventions and Definitions}
We now introduce some of the notation we will use throughout the paper.

\begin{itemize}
    \item As indicated before, we will work with a periodic domain, namely, $\T=\R/\Z$. For functions defined on $\T$, we will equivalently refer to them as functions $f:\R\rightarrow \T$ which are periodic with period $1$. Often, specially in Section \ref{sec:limiteqandblowup}, it will be useful to treat with functions $f:\T\rightarrow \R$ as functions $f:(-1/2,1/2)\rightarrow \R$ with periodic boundary conditions. 

    \item Given a finite dimensional vector $u\in \R^d$, we denote by $|u|$ its euclidean norm, \textit{i.e.} 
    $$
        |u|:= \left(\sum_{k=1}^du_k^2\right)^{1/2}.
    $$

    \item For any $s \in \R$, we denote $H^s(\T)$ and $\dot{H(\T)}^s$ the Sobolev spaces associated to the norm and semi-norm
    \begin{equation*}
        \| f \|^2_{H^s} = \sum_{k \in \Z} (1 + k^2)^s |\what{f}(k)|^2 \qquad \text{and} \qquad \| f \|^2_{\dot{H}^s} = \sum_{k \in \Z} k^{2s} |\what{f}(k)|^2.
    \end{equation*}
    Very often, we consider functions $f(t,x)$ of time and space as functions of $t$ with values in some Sobolev space $f \in L^p([0, T) ; H^s(\T))$. We will occasionally use the shorthand $L^p_T(H^s)$ for such spaces. Unless otherwise specified, the norms $\| \, \cdot \, \|_{H^s} = \| \, \cdot \, \|_{H^s(\T)}$ and $\| \, \cdot \, \|_{L^q} = \| \, \cdot \, \|_{L^q(\T)}$ refer to the space variable only.
\end{itemize}

Regarding the functional spaces, besides the classical Sobolev and Lebesgue spaces, we will also make use of the so called parabolic Hölder spaces, particularly suitable to derive regularity estimates for parabolic equations. 

\begin{defi}
    Let $\alpha\in (0,1)$ and $T>0$. We say that a function $f:[0,T]\times \T\rightarrow \R^d$ belongs to the space $C^{\alpha/2,\alpha}([0,T]\times \T)$ if is in $C^0([0,T]\times \T)$ and the seminorm 

    \begin{equation}\label{Hoelder}[f]_{\alpha/2,\alpha}:=\underset{(t,x),(t',x')\in [0,T]\times \T}{\sup_{(t,x)\neq (t',x')}}\frac{|f(t,x)-f(t',x')|}{(|t-t'|^{1/2}+|x-x'|)^{\alpha}}
    \end{equation}
    is finite. 
\end{defi}

We can then define the parabolic Hölder space.

\begin{defi}
    Let $\alpha\in (0,1)$, and $T>0$. We define the parabolic Hölder space $C^{\alpha/2,\alpha}([0,T]\times \T)$ as the set of functions $f\in C^0([0,T]\times \T)$ such that the semi-norm \eqref{Hoelder} is bounded. We then define the norm 

    $$\|f\|_{C^{\alpha/2,\alpha}}:=\|f\|_{C^{0}([0,T]\times \T)}+[f]_{\alpha/2,\alpha}.$$
\end{defi}
\begin{obs}
    It is straightforward to see that $C^{\alpha/2,\alpha}$ is a Banach space. 
\end{obs}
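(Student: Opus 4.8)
The plan is to verify first that $\|\cdot\|_{C^{\alpha/2,\alpha}}$ is genuinely a norm, and then to establish completeness by the standard two-step argument: extract a uniform limit using completeness of $C^0$, and afterwards upgrade that convergence to the full parabolic Hölder norm by passing to the limit inside the difference quotients defining the seminorm \eqref{Hoelder}.

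For the normed-space structure, the only point requiring comment is the triangle inequality for the seminorm $[\cdot]_{\alpha/2,\alpha}$. Since for each fixed pair of distinct points $(t,x)\neq(t',x')$ the assignment $f \mapsto \big(f(t,x)-f(t',x')\big)/(|t-t'|^{1/2}+|x-x'|)^\alpha$ is linear, and the absolute value is subadditive, the supremum over all such pairs is subadditive; positive homogeneity is immediate. Together with the fact that $\|\cdot\|_{C^0}$ is a norm on $C^0([0,T]\times\T)$, this shows $\|\cdot\|_{C^{\alpha/2,\alpha}}$ is a norm, with positivity already furnished by the $C^0$ term.

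For completeness, I would take a Cauchy sequence $(f_n)_n$ in $C^{\alpha/2,\alpha}$. Because $\|g\|_{C^0}\leq\|g\|_{C^{\alpha/2,\alpha}}$, this sequence is Cauchy in $C^0([0,T]\times\T)$; as $[0,T]\times\T$ is compact, $\big(C^0([0,T]\times\T),\|\cdot\|_{C^0}\big)$ is complete, so there is $f\in C^0([0,T]\times\T)$ with $f_n\to f$ uniformly. It then remains to show $f\in C^{\alpha/2,\alpha}$ and $\|f_n-f\|_{C^{\alpha/2,\alpha}}\to 0$. Fixing $\varepsilon>0$, choose $N$ so that $[f_n-f_m]_{\alpha/2,\alpha}\leq\varepsilon$ for all $n,m\geq N$. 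For any fixed pair $(t,x)\neq(t',x')$ this gives
$$\frac{|(f_n-f_m)(t,x)-(f_n-f_m)(t',x')|}{(|t-t'|^{1/2}+|x-x'|)^\alpha}\leq\varepsilon,$$
and letting $m\to\infty$ (using the pointwise convergence $f_m\to f$ implied by uniform convergence) yields the same bound with $f_m$ replaced by $f$. Taking the supremum over all admissible pairs gives $[f_n-f]_{\alpha/2,\alpha}\leq\varepsilon$ for $n\geq N$; in particular $f_n-f$ has finite seminorm, so $f=f_n-(f_n-f)\in C^{\alpha/2,\alpha}$. Combined with $\|f_n-f\|_{C^0}\to 0$, this gives $\|f_n-f\|_{C^{\alpha/2,\alpha}}\to 0$, proving completeness.

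The only mildly delicate point—and, as the remark asserts, it is routine—is the exchange of limit and supremum in the seminorm estimate: one must freeze the pair of points \emph{before} passing to the limit $m\to\infty$, and only afterwards take the supremum. This is precisely the lower semicontinuity of $[\cdot]_{\alpha/2,\alpha}$ under pointwise convergence, and it requires no uniformity in the choice of points and no compactness beyond that of the domain, which was already used to obtain the $C^0$-limit.
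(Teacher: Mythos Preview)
Your proof is correct and is the standard argument for completeness of Hölder-type spaces; the paper itself gives no proof for this remark, simply declaring the fact straightforward. There is nothing to compare against beyond noting that your write-up supplies exactly the routine verification the authors chose to omit.
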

\section{The Case of a Perfectly Conducting Fluid}\label{sec:epsilonzero}

We start by examining the behavior of solutions when the fluid is perfectly conducting, that is, when the resistivity parameter $\varepsilon$ is zero. In that case, we find that the damping produced by the viscosity forces the magnetic field to converge to a state with constant modulus $|b|$ as $t \rightarrow \infty.$ \begin{comment}, provided that the initial magnetic field never cancels $b_0(x) \neq 0$ for all $x \in \T$. \end{comment}

\medskip

In the case of a perfectly conducting fluid, $\varepsilon = 0$, equations \eqref{eq:magneticrelaxation} can be written as
\begin{equation}\label{eq:hyperbolicSystem}
    \begin{cases}
        \partial_t b + \partial_x (ub) = 0, \\
        \partial_x^2 u = \partial_x \left( \frac{1}{2}|b|^2 \right).
    \end{cases}
\end{equation}
%Observe that, contrary to \eqref{eq:magneticrelaxation}, the evolution equation in \eqref{eq:hyperbolicSystem} is hyperbolic, so that we do not expect solutions to vanish in large time. 
These equations describe to first stage of the evolution as $\varepsilon \ll 1$. The main theorem of this section reads as follows:

\begin{teor}\label{teor:hyperbolicmain}
    Let $b_0\in C^1(\T)$. Then, there is a unique solution $\tilde{b}\in C^1([0,\infty)\times \T)$ for the system \eqref{eq:hyperbolicSystem} with initial data $b_0$. The solution $\tilde{b}(t,\cdot)$ tends in  $L^2(\T)$, as $t\rightarrow \infty$, to a constant $R$. Assume further than $|b_0(x)|\geq c_0>0$ for every $x\in\T$. Then, the solution $\tilde{b}(t,\cdot)$ converges uniformly, as $t\rightarrow \infty$, to a function $S(b_0)\in C^0(\T)$ with a constant modulus.
\end{teor}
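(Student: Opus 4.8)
The plan is to decouple the system by passing to Lagrangian (characteristic) coordinates, which turns the hyperbolic transport problem for $b$ into a family of ODEs along characteristics, and to establish the decay of the mean-free magnetic energy $\widetilde\psi = \frac12|b|^2 - \int \frac12|b|^2$ via the damping structure \eqref{ieq:HypEnergy}. First I would set up the flow map $X(t,y)$ solving $\partial_t X = u(t, X)$, $X(0,y) = y$, with the normalization $u(t,0) = 0$ so that $X(t,\cdot)$ is a diffeomorphism of $\T$; the Jacobian $J(t,y) = \partial_y X$ satisfies $\partial_t J = (\partial_x u)(t,X) J = \big(\tfrac12|b|^2 - \overline{\tfrac12|b|^2}\big)(t,X)\, J$. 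Since the first equation is $\partial_t b + u\partial_x b = -(\partial_x u) b$, in Lagrangian variables $b(t,X(t,y))$ solves $\tfrac{d}{dt}\big(b(t,X(t,y))\big) = -(\partial_x u)(t,X(t,y))\, b(t,X(t,y))$, which integrates to $b(t,X(t,y)) = b_0(y)\exp\!\big(-\int_0^t (\partial_x u)(s,X(s,y))\,ds\big)$. Crucially this is a scalar multiple of $b_0(y)$, so the \emph{direction} of $b$ is frozen along characteristics and only the modulus evolves: $|b(t,X(t,y))| = |b_0(y)| / J(t,y)$ (using $\partial_t J = (\partial_x u) J$ and the ODE above). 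This gives local existence and uniqueness in $C^1$ by a standard Picard/continuation argument on the closed system for $X$ (with $\partial_x u$ expressed through $|b|^2$, which is itself determined by $X$ and $b_0$), and it shows the solution stays $C^1$ as long as $J$ stays bounded above and below.

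Next I would prove the key damping estimate. Writing $\psi = \partial_x u = \tfrac12|b|^2 - \int \tfrac12|b|^2$, equation \eqref{ieq:HypEnergy} gives $\partial_t \psi + u\partial_x\psi + 2|b|^2\psi = \int_\T \psi^2$. The plan is to get a closed differential inequality for $\|\psi(t)\|_{L^2}^2$ (or for $\int \psi^2$): multiplying by $\psi$ and integrating, the transport term contributes $-\tfrac12\int (\partial_x u)\psi^2 = -\tfrac12\int \psi^3$, the forcing contributes $\big(\int\psi^2\big)\big(\int\psi\big) = 0$ since $\psi$ is mean-free, and the damping contributes $-2\int |b|^2\psi^2 \le -2c^2\int\psi^2$ provided $|b|\ge c>0$. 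So $\frac{d}{dt}\|\psi\|_{L^2}^2 \le -4c^2\|\psi\|_{L^2}^2 + C\|\psi\|_{L^3}^3$; controlling the cubic term requires an a priori $L^\infty$ bound on $\psi$ (equivalently on $|b|$), which one bootstraps from the Lagrangian formula together with the decay one is trying to prove — this is the delicate circular point. The cleanest route: first show $\|b(t)\|_{L^\infty}$ is a priori bounded for all time (energy $\int|b|^2$ is non-increasing by the $\varepsilon=0$ version of the energy identity, and an $L^\infty$ bound follows from the maximum-principle-type structure of the $|b|$ equation along characteristics, since $\frac{d}{dt}\log|b(t,X(t,y))| = -\psi(t,X(t,y))$ and $\psi$ is mean-free hence $\int_\T \log|b|$ is conserved, bounding $\max\log|b|$ in terms of $\min\log|b|$ and the decay of oscillation). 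Once $\|\psi\|_{L^\infty}$ is bounded, the differential inequality gives exponential decay $\|\psi(t)\|_{L^2} \lesssim e^{-c^2 t}$, hence $|b(t,\cdot)|^2 \to 2\int\tfrac12|b|^2 = \text{const}$ in $L^2$; and since $\int|b|^2$ is conserved along the $\varepsilon=0$ flow this constant is $\int|b_0|^2$. Writing $R = \big(\int_\T |b_0|^2\big)^{1/2}$ identifies the limit modulus.

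For the final, stronger statement under $|b_0|\ge c_0>0$: the exponential decay of $\psi$ makes $\int_0^\infty \|\partial_x u(s)\|_{L^\infty}\,ds < \infty$ (interpolating the $L^2$ decay with a fixed higher bound, or directly from an $L^\infty$ decay estimate for $\psi$ obtained the same way via the characteristic ODE $\frac{d}{dt}\psi(t,X(t,y)) = -2|b|^2\psi + \int\psi^2$ and a comparison argument), so the flow map converges: $X(t,y) \to X_\infty(y)$ uniformly, with $X_\infty$ a $C^1$ (in fact homeomorphism, and the logarithmic-derivative bound keeps $J$ bounded away from $0$ and $\infty$) limit. Then $b(t,X(t,y)) = b_0(y)\exp(-\int_0^t \psi(s,X(s,y))ds) \to b_0(y)\exp(-\int_0^\infty \psi(s,X(s,y))ds)$ uniformly in $y$, and composing with $X_\infty^{-1}$ gives the uniform limit $S(b_0)(x) := b_0(X_\infty^{-1}(x))\exp(-\int_0^\infty \psi(s, X(s,X_\infty^{-1}(x)))ds) \in C^0(\T)$, whose modulus is the constant $R$ by the $L^2$ convergence already proved (an $L^2$-limit that is also a uniform limit must coincide, and a continuous function equal a.e.\ to a constant is that constant).

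The main obstacle I anticipate is the circularity in the a priori bounds: the exponential damping rate $2|b|^2$ is only useful once $|b|$ is known to stay bounded below, while keeping $|b|$ bounded below over an infinite time interval is exactly what the decay of $\psi$ is meant to guarantee. Breaking this loop is the technical heart of the argument; the conservation of $\int_\T \log|b(t,\cdot)|$ (a consequence of $\frac{d}{dt}\log|b(t,X(t,y))| = -\psi(t,X(t,y))$, $J$ transporting Lebesgue measure, and $\psi$ being mean-free) together with conservation of $\int_\T|b|^2$ pins $|b|$ into a compact range of positive values uniformly in $t$, provided the oscillation of $\log|b|$ is controlled — and that oscillation is itself governed by $\int_0^t\|\psi\|_{L^\infty}$, closing the bootstrap via a continuity/maximal-time argument. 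I would organize the write-up so that this bootstrap is isolated as a single lemma before deducing decay and the two convergence statements.
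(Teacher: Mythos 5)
Your Lagrangian set-up and the final step (decay of $\|\partial_x u\|_{L^\infty}$, convergence of the flow map, composition with $X_\infty^{-1}$) are in line with the paper's proof of Proposition \ref{prop:binfty}, but the mechanism you propose to break the ``circularity'' contains two genuine errors. First, $\int_\T \log|b| \dx$ is \emph{not} conserved: the characteristic flow is compressible ($\partial_x u = \psi \not\equiv 0$), so $J\not\equiv 1$ and $\int_\T \psi(t,X(t,y))\,{\rm d}y \neq 0$ in general; in Eulerian form one finds $\frac{d}{dt}\int_\T \log|b| \dx = \int_\T \psi \log|b| \dx$, which has no sign or cancellation. The quantity that \emph{is} conserved is $\int_\T |b| \dx$ (equivalently $|b(t,X(t,y))|\,J(t,y)=|b_0(y)|$, a formula you in fact wrote down), since $\partial_t |b| + \partial_x(u|b|)=0$. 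This is exactly the paper's key observation (Proposition \ref{prop:L2convergencehyperbolic}): by H\"older, $\int_\T |b(t)|^2 \geq \big(\int_\T |b(t)|\big)^2 = \|b_0\|_{L^1}^2$, so in the $L^2$ energy estimate for $\psi$ the damping coefficient is bounded below uniformly in time with \emph{no} pointwise lower bound on $|b|$ and no bootstrap — which is essential, because the first part of the theorem assumes nothing like $|b_0|\geq c_0$, and if $b_0$ vanishes somewhere your pinning argument cannot even start ($\log|b_0|$ need not be integrable). The cubic term you worry about is also harmless: since $\psi=\tfrac12(|b|^2-\int|b|^2)$, one has $-\tfrac12\int\psi^3=-\tfrac14\int|b|^2\psi^2+\tfrac14\big(\int|b|^2\big)\big(\int\psi^2\big)$, and the estimate closes exactly, with no need for an $L^\infty$ bound on $\psi$.

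Second, your identification of the limit constant is wrong: $\int_\T|b|^2$ is \emph{not} conserved along the $\varepsilon=0$ flow — it dissipates, $\frac{d}{dt}\int_\T|b|^2 = -2\int_\T(\partial_x u)^2$ (you yourself correctly note earlier that the energy is non-increasing; it is strictly decreasing unless $|b|$ is already constant). The limiting modulus is $R=\|b_0\|_{L^1(\T)}$, identified through the conserved $L^1$ norm, not $\big(\int|b_0|^2\big)^{1/2}$. Finally, the circularity you anticipate in the last part is absent in the paper's route: under $|b_0|\geq c_0$, the pointwise bound $|b(t,x)|^2\geq c_0$ propagates directly by a maximum-principle argument (at a spatial minimum of $|b|^2$ one has $\partial_x u=\tfrac12(|b|^2-\int|b|^2)\leq 0$, hence $\partial_t |b|^2\geq 0$ there), with no appeal to the decay of $\psi$; once this and the $L^\infty$ decay of $\psi$ (obtained by comparison together with $\int_\T\psi^+=\int_\T\psi^-$) are in place, your convergence-of-characteristics argument for the uniform limit goes through essentially as in the paper.
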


The proof of global well posedness of \eqref{eq:hyperbolicSystem} is in Proposition \ref{prop:wellposednesshyp}. In \ref{prop:L2convergencehyperbolic} we show that the modulus of $b(t,\cdot)$ converges in $L^2$ to a constant and, in the case of $|b_0(x)|^2\geq c_0$, we show that $b(t,\cdot)$ converges uniformly to a function of constant modulus in \ref{prop:decayunondiff}.
\begin{prop}\label{prop:wellposednesshyp}
    Let $b_0\in C^1(\T)$. Then, there exists a unique global in time solution $b\in C^1(\R_+\times \T)$ of \eqref{eq:hyperbolicSystem}.
\end{prop}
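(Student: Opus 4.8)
The plan is to treat the system as a transport equation for $b$ coupled to an elliptic relation for $u$, and to close everything via the method of characteristics together with the damping structure already highlighted in the introduction. First I would make the coupling explicit: since $\partial_x^2 u = \partial_x(\tfrac12 |b|^2)$, the function $\partial_x u$ has zero mean and therefore equals $\psi := \tfrac12|b|^2 - \int_\T \tfrac12|b|^2\dx$; imposing the normalization $u(t,0)=0$ (legitimate by the change-of-frame invariance \eqref{ieq:Galileo}), one recovers $u$ from $b$ by $u(t,x) = \int_0^x \psi(t,y)\dy$. Thus $u$ is determined by $b$ through a bounded linear-type operator: if $b \in C^1(\T)$ then $u \in C^1(\T)$ with $\|u\|_{C^1} \lesssim \|b\|_{C^1}\|b\|_{C^0}$, and the map $b \mapsto u$ is locally Lipschitz from $C^1$ to $C^1$. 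Local existence and uniqueness of a $C^1$ solution on a maximal interval $[0,T^\star)$ then follows from a standard Picard/fixed-point argument for the quasilinear transport equation $\partial_t b + u\partial_x b = -(\partial_x u) b$, using the flow map $\Phi_t$ of $\dot X = u(t,X)$; one also gets that along characteristics $b$ solves the linear ODE $\frac{d}{dt} b(t,\Phi_t(x)) = -\psi(t,\Phi_t(x))\, b(t,\Phi_t(x))$.

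The core of the argument is the \emph{a priori} bound showing $T^\star = \infty$. For this I would use exactly the damping computation sketched after \eqref{ieq:Hyperbolic}: multiplying the $b$-equation by $b$ gives the local energy identity $\partial_t(\tfrac12|b|^2) + u\partial_x(\tfrac12|b|^2) + |b|^2\partial_x u = 0$, and integrating yields $E'(t) + \|\psi\|_{L^2}^2 = 0$ where $E(t) = \int_\T \tfrac12|b|^2$; in particular $E$ is nonincreasing, so $\|b(t)\|_{L^2} \le \|b_0\|_{L^2}$ for all time, and also $\|b(t)\|_{L^\infty} \le \|b_0\|_{L^\infty}$ since along each characteristic $|b|$ satisfies $\partial_t |b| = -\psi|b|$ and $\psi \ge -E(t) \ge -E(0)$ is bounded below — actually the cleaner route is: $\psi \geq -E(0)$ gives $\frac{d}{dt}|b(t,\Phi_t(x))| \leq E(0)|b(t,\Phi_t(x))|$, hence $|b(t,x)| \leq \|b_0\|_{L^\infty} e^{E(0)t}$, which already rules out finite-time blow-up of $\|b\|_{L^\infty}$ on any bounded interval. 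Then $\psi$ itself solves $\partial_t\psi + u\partial_x\psi + |b|^2\psi = \int_\T \psi^2\dx$, and Grönwall on characteristics (using $\int\psi^2 \leq \|\psi\|_{L^\infty}\|\psi\|_{L^1} \lesssim \|\psi\|_{L^\infty}\|b_0\|_{L^2}^2$ and the lower bound on $|b|^2 \geq 0$) gives an a priori bound on $\|\psi(t)\|_{L^\infty}$, hence on $\|\partial_x u(t)\|_{L^\infty}$, finite on every bounded time interval.

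With $\|\partial_x u\|_{L^\infty}$ controlled on $[0,T]$, the flow $\Phi_t$ and its inverse stay Lipschitz with exponentially-in-$T$ bounded constants, so the characteristics do not cross and remain well-defined; differentiating the equation once and running Grönwall controls $\|\partial_x b(t)\|_{L^\infty}$ as well (the equation for $\partial_x b$ is linear in $\partial_x b$ with coefficients bounded in terms of $\|b\|_{C^1}$, $\|u\|_{C^1}$, $\|\partial_x^2 u\|_{L^\infty}$, and $\partial_x^2 u = \partial_x(\tfrac12|b|^2) = b\cdot\partial_x b$ is itself controlled). These bounds are finite on any $[0,T]$, so no $C^1$ norm blows up in finite time; the standard continuation criterion then forces $T^\star = \infty$. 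The main obstacle is the bookkeeping in this last step — verifying that the nonlinear feedback $b \to u \to \Phi_t \to b$ closes without a genuine loss of derivatives — but because $u$ gains a derivative over $b$ via the elliptic relation while $\partial_x u = \psi$ is algebraic in $b$, there is in fact no derivative loss and every estimate closes by Grönwall; the bounds are merely exponential in $T$, which is harmless for global existence. Uniqueness follows by the same energy method applied to the difference of two solutions.
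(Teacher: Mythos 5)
Your plan is correct and follows essentially the same route as the paper: local existence and uniqueness by a fixed-point/characteristics argument for the transport equation with $\partial_x u=\psi=\tfrac12|b|^2-\int_\T\tfrac12|b|^2$, then global existence by an a priori $L^\infty$ bound on $b$, the resulting bound on $\psi=\partial_x u$, and a Grönwall estimate along characteristics for $\partial_x b$ (where, as in the paper's identity \eqref{eq:estimatederivative}, the key point is that $(\partial_x^2 u)\,b=(b\cdot\partial_x b)\,b$ is linear in $\partial_x b$ with coefficient $|b|^2$, so no derivative is lost), closed by the standard continuation criterion. The only cosmetic difference is that you bound $\|b(t)\|_{L^\infty}$ exponentially via $\psi\geq -E(0)$, whereas the paper gets the sharper uniform bound $\|b(t)\|_{L^\infty}\leq\|b_0\|_{L^\infty}$ by a maximum-principle argument; both suffice for globality.
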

\begin{proof}
    In order to prove local existence and uniqueness, we use a classical argument by means of Banach fixed point theorem. Note that, for a given map $f\in C^1([0,T]\times \T;\R^2)$, the equation 

    \begin{equation}\label{eq:linearizedhyperbolic}
    \left\{\begin{array}{ll}
    \partial_t g+u(\partial_x g)+(\partial_x u) g=0, & (t,x)\in [0,T]\times \T\\
    \partial_x u=\frac{1}{2}\left(|f|^2-\|f\|^2_{L^2}\right)\\
    g(0,x)=b_0
    \end{array}\right.
    \end{equation}
    has a unique solution in $C^1([0,T]\times \T)$. This is an application of the method of characteristics. Now, for a given $f\in C^1([0,T]\times \T;\R^2)$, we denote by $F[f]=:g$ the solution of the problem \eqref{eq:linearizedhyperbolic}. It is then immediate that a solution of \eqref{eq:hyperbolicSystem} is a fixed point of $F$. We can prove by means of standard arguments that, for any given $C_0>0$, if $T=T(\|b_0\|_{C^1},C_0)>0$ is small enough,  $F$ admits a unique fixed point in the space $(X_{C_0},\|\cdot\|_{C^0})$, with 

    \begin{equation*}
    \begin{split}
    X_{C_0}:=\{f\in C^0([0,T]\times \T)\cap L^\infty([0,T];\text{Lip}(\T))\,&: \|f\|_{C^0([0,T]\times \T)}+\|f\|_{L^\infty([0,T];\text{Lip}(\T)}\leq (C_0+1)\|b_0\|_{C^1}\\
    &\text{ and }f(0,\cdot)=b_0(\cdot)\}.
    \end{split}
    \end{equation*}

    Note that the space $X_{C_0}$ is a closed subspace of $C^0([0,T]\times \T)$. Moreover, since the initial data is $C^1(\T)$, it follows that the solution is $C^1([0,T]\times \T)$. Then, the choice of $X_{C_0}$ implies that at the maximum time $T^\star$ of existence of the solution $b$ % as a function in $C([0,T)\times \T)\cap L^\infty([0,T);\text{Lip}(\T))$ 
    it holds that

    $$\limsup_{t\rightarrow (T^{\star})^-}
    \left(\|b(t,\cdot)\|_{L^\infty(\T)}+\|b(t,\cdot)\|_{\text{Lip}(\T)}\right)=\infty.$$
    
    Therefore, if we show that both $\|b(t,\cdot)\|_{L^\infty}$ and $\|b(t,\cdot)\|_{C^1}$ remain bounded in $[0,T]$, then the solution can be prolonged beyond $T$. The proof can be adapted to show that there exists also a unique solution in $[-T,0]$ for \eqref{eq:hyperbolicSystem}. We now prove that the solution is global in time.

    Note that functions with constant modulus independent of time are solutions of \eqref{eq:hyperbolicSystem}. Therefore, by uniqueness, it is clear that if $b(t,\cdot)$ has modulus equal to constant for some $t>0$, then it is independent of time and global existence is clear. We then assume in the sequel that $b(t,\cdot)$ does never have constant modulus for any $t\geq 0$. 
    
    In order to prove that the solution is global, we begin proving that the $L^\infty$ norm of $b$ does not blow up. To that end, we prove that the solution $b$ satisfies the estimate
    
    $$\|b\|_{L^\infty([0,T]\times \T)}\leq \|b_0\|_{L^\infty}$$

        This follows from a maximum principle argument. Indeed, at the point $(t_\star,x_\star)\in [0,T]\times \T$ where $|b|^2$ reaches its maximum, it holds that $\partial_x (|b|^2)(t_\star,x_\star)=0$. Therefore, if $t_\star>0$,

        $$0\leq \partial_t |b|^2=-\frac{1}{2}\left(|b|^2-\int_\T |b|^2\right)|b|^2<0\quad \text{at }(t_\star,x_\star),$$
         which leads to a contradiction. We can also prove an estimate for the derivative of $b$, namely 
         
         \begin{equation}\label{eq:boundderivativeb} 
         \|\partial_x b(t,\cdot)\|_{L^\infty(\T)}^2\leq e^{\|b_0\|_{L^\infty}^2}\| b_0\|^2_{L^\infty}\exp\left(T\|b_0\|^2_{L^\infty}\right).
         \end{equation}

        To do so, we use the representation formula that follows from the method of characteristics. Note that both $u$ and $\partial_x u$ are $C^1([0,T]\times \T)$ functions, so we can write the solution $b$ as $b=Z\circ \Psi^{-1}(t,x)$, with $\Psi:=(t,\Phi_t(\xi))$ and $\Phi_t$ is the one parameter family of diffeomorphisms given by 
        
        \begin{equation} \label{eq:defiphi} 
        \left\{\begin{array}{ll}
            \frac{\partial}{\partial t}\Phi_t (\xi)=u(t,\Phi_t(\xi)) & t>0 \\
            \Phi_0(\xi)=\xi & \xi\in \T 
        \end{array}\right.,
        \end{equation} 
        and $Z$ is the solution of the ODE
        $$\left\{\begin{array}{ll}
            \partial_t Z=-(\partial_x u\circ \Phi_t (\xi))Z & t>0 \\
             Z(0,\xi)=b_0(\xi)& \xi\in \T  
        \end{array}\right..$$

    Note that, since $\partial_x u=\frac{1}{2}\left(|b|^2-\int|b|^2\right)$, it holds that   

    $$\partial_t Z(\xi,t)=-\frac{1}{2}\left(|Z(\xi,t)|^2-\int|b|^2\right)Z.$$

    As a result, we can take the derivative with respect the parameter $\xi$ and multiply times $\partial_\xi Z$, leading to 

    \begin{equation} \label{eq:estimatederivative}
    \frac{1}{2}\partial_t |\partial_\xi Z(t,\xi)|^2=-(Z\cdot\partial_\xi Z)^2-\frac{1}{2}\left(|Z(\xi,t)|^2-\int|b|^2\right)|\partial_\xi Z|^2.
    \end{equation}

    Therefore, 

    $$|\partial_\xi Z(t,\xi)|\leq |\partial_\xi b_0(\xi)|\exp\left(T\|b_0\|^2_{L^\infty}\right).$$
   Now, $b$ equals $Z(t,\xi(t,x))$, with $\xi$ defined via
   
   \begin{equation} \label{eq:eqxi}
   \Phi_t(\xi(t,x))=x.
   \end{equation} 
   
    Therefore, $\partial_x \xi$ equals 

    $$\frac{\partial \xi}{\partial x}(t,x)=\left(\frac{\partial \Phi_t}{\partial \xi}(\xi(t,x))\right)^{-1}=\left(\exp\left(-\int_0^t\partial_x u(t,x )\ds\right)\right).$$

    Since $|\partial_x u(t,x)|\leq 2\|b\|_{L^\infty}\leq 2\|b_0\|_{L^\infty}$, and since $\partial_x b(t,x)=\partial_\xi Z(t,\xi(t,x))\partial_x \xi(t,x)$, the bound \eqref{eq:boundderivativeb} follows.
    We then conclude that the norm $\|b(t,\cdot)\|_{C^1}$ does not blow up in finite time. Therefore, the solution is global. 
\end{proof}

	We now study the long time behaviour of the solutions of \eqref{eq:hyperbolicSystem}. It turns out that  solutions for \eqref{eq:hyperbolicSystem} converges to a function  $b$ with $|b|$ constant. This is the content of the following proposition,

    \begin{prop}\label{prop:L2convergencehyperbolic}
    Consider an initial datum $b_0\in C^1(\T)$ that is not identically equal to zero. Then, the solution $b$ of \eqref{eq:hyperbolicSystem} satisfies that 
    
    \begin{equation} \label{eq:L2convergencetozero}
    \|\partial_x u(t,\cdot)\|_{L^2}\leq Ce^{-(t\|b_0\|^2_{L^1})/2},
    \end{equation}
    and $|b(t,\cdot)|\longrightarrow \|b_0\|_{L^1(\T)}$ in $L^2(\T)$ as $t\rightarrow\infty$.
\end{prop}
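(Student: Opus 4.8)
The plan is to combine the viscous energy dissipation with a hidden conservation law. Throughout I write $E(t):=\int_\T \tfrac12|b|^2\dx$ and $\psi:=\tfrac12|b|^2-E(t)$; integrating the second equation of \eqref{eq:hyperbolicSystem} once and using periodicity of $u$ gives $\partial_x u=\psi$, so \eqref{eq:L2convergencetozero} is really a decay statement for $\psi$, and all manipulations below are legitimate since Proposition \ref{prop:wellposednesshyp} provides a global $C^1$ solution. First I would record two identities. Multiplying the first equation of \eqref{eq:hyperbolicSystem} by $b$ and integrating in $x$ gives the energy balance $E'(t)=-\int_\T\psi^2\dx\le 0$, so $E$ is nonincreasing. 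Next, $\|b(t,\cdot)\|_{L^1}$ is conserved: formally because $|b|$ solves the scalar conservation law $\partial_t|b|+\partial_x(u|b|)=0$, and rigorously (as $|b|$ fails to be differentiable where $b$ vanishes) because along the flow $\Phi_t$ of \eqref{eq:defiphi} the quantity $|b(t,\Phi_t(\xi))|\,\partial_\xi\Phi_t(\xi)$ is constant in $t$ and equal to $|b_0(\xi)|$ — a one-line computation from $\partial_t Z=-(\partial_x u\circ\Phi_t)Z$ and $\partial_t\log\partial_\xi\Phi_t=\partial_x u\circ\Phi_t$ (see the proof of Proposition \ref{prop:wellposednesshyp}) — so that changing variables $x=\Phi_t(\xi)$ conserves the $L^1$ norm. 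Cauchy--Schwarz on the unit-measure torus then yields the crucial lower bound $2E(t)=\|b(t,\cdot)\|_{L^2}^2\ge\|b(t,\cdot)\|_{L^1}^2=\|b_0\|_{L^1}^2>0$.

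For the exponential decay I would derive and use the equation satisfied by $\psi$. Substituting $\tfrac12|b|^2=E+\psi$ into the local balance obtained by multiplying the $b$-equation by $b$ yields $\partial_t\psi+u\partial_x\psi+|b|^2\psi=\int_\T\psi^2\dx$. Testing this against $\psi$, integrating the advection term by parts (using $\partial_x u=\psi$, which turns $\int_\T\psi\,u\,\partial_x\psi$ into $-\tfrac12\int_\T\psi^3$), and replacing $|b|^2=2(\psi+E)$, one obtains
\[
\frac{d}{dt}\int_\T\psi^2\dx \;=\; -\int_\T\Big(\tfrac32|b|^2+E\Big)\psi^2\dx \;\le\; -E(t)\int_\T\psi^2\dx \;\le\; -\tfrac12\|b_0\|_{L^1}^2\int_\T\psi^2\dx,
\]
where the first inequality discards the nonnegative term $\tfrac32\int_\T|b|^2\psi^2$ (equivalently, uses $\psi\ge -E$, i.e. $\tfrac12|b|^2\ge0$) and the second uses the lower bound on $E(t)$. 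Grönwall's lemma then gives the exponential decay of $\|\partial_x u(t,\cdot)\|_{L^2}=\|\psi(t,\cdot)\|_{L^2}$ asserted in \eqref{eq:L2convergencetozero}.

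It then remains to identify the limit of $|b|$. Since $E$ is nonincreasing and bounded below by $\tfrac12\|b_0\|_{L^1}^2>0$, it converges to some $E_\infty\ge\tfrac12\|b_0\|_{L^1}^2>0$. Writing $|b(t,\cdot)|^2=2E(t)+2\psi(t,\cdot)$ and using the $L^2$ decay of $\psi$, we get $|b(t,\cdot)|^2\to 2E_\infty$ in $L^2(\T)$; since $E_\infty>0$, the pointwise estimate $\big||b|-\sqrt{2E_\infty}\big|\le\big||b|^2-2E_\infty\big|/\sqrt{2E_\infty}$ upgrades this to $|b(t,\cdot)|\to\sqrt{2E_\infty}$ in $L^2(\T)$. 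Integrating over $\T$ gives $\|b(t,\cdot)\|_{L^1}\to\sqrt{2E_\infty}$, but the left-hand side is identically $\|b_0\|_{L^1}$ by the conservation established above, so $\sqrt{2E_\infty}=\|b_0\|_{L^1}$, i.e. $|b(t,\cdot)|\to\|b_0\|_{L^1}$ in $L^2(\T)$, which is the remaining claim.

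The step I expect to be the real obstacle is closing the Grönwall inequality without any pointwise lower bound on $|b|$: unlike in Proposition \ref{prop:decayunondiff}, here $b_0$ is only assumed not identically zero, so $b$ may vanish on a set of positive measure and the damping coefficient $|b|^2$ in the $\psi$-equation degenerates there. The way around this is to run the whole argument through integral quantities — the conserved $L^1$-mass bounds the monotone energy $E(t)$ from below, and together with the single algebraic inequality $\psi\ge -E$ this is exactly what is needed. The same conservation law then pins the limiting energy $E_\infty$ to the value $\tfrac12\|b_0\|_{L^1}^2$, and hence forces $|b|$ to converge to that particular constant rather than to a larger one.
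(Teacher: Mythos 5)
Your argument is correct and follows essentially the same route as the paper: conservation of the $L^1$ norm of $b$ plus Cauchy--Schwarz gives the lower bound on the energy, your identity $\frac{d}{dt}\int_\T\psi^2 = -\int_\T\bigl(\tfrac32|b|^2+E\bigr)\psi^2$ is exactly the paper's estimate \eqref{eq:energyestimatepsi}, Grönwall yields the decay of $\|\partial_x u\|_{L^2}$, and the conserved mass identifies the limit $\sqrt{2E_\infty}=\|b_0\|_{L^1}$. The only differences are cosmetic: you justify $L^1$ conservation through the characteristics rather than the a.e.\ transport equation for $|b|$, and you replace the paper's dominated-convergence step by the pointwise bound $\bigl||b|-\sqrt{2E_\infty}\bigr|\le \bigl||b|^2-2E_\infty\bigr|/\sqrt{2E_\infty}$ (both you and the paper incur the same harmless factor of $2$ in the exponent when passing from $\int_\T\psi^2$ to $\|\psi\|_{L^2}$).
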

\begin{proof}
    The proof of the statement is based on the conservation of the $L^1$ norm for the magnetic field $b$. Since our solution is $C^1(\R_+\times \T)$, it is Lipschitz, so it is absolutely continuous, and its a.e. derivative satisfies 

    $$\partial_t (|b|)+\partial_x (u|b|)=0 \quad \text{a.e. } (t,x)\in \R_+\times \T.$$
    Therefore, it holds that 

    \begin{equation} \label{eq:massconservation}
    \frac{d}{dt}\int_\T |b(t,x)|\dx=0 \quad \text{a.e.}\, t\geq 0.
    \end{equation}

    As a result, the $L^1$ norm of $|b(t,\cdot)|$ is conserved.

    On the other hand, due to Hölder's inequality (recall that the torus is assumed to have measure one), $\|b(t,\cdot)\|_{L^1}\leq \|b(t,\cdot)\|_{L^2}.$ We now derive an energy estimate for $b$. First, we test \eqref{eq:magneticrelaxation} against $b$ and integrate with respect to $x$, so we obtain 
    
    \begin{equation}\label{eq:energyestimateb} 
    \frac{1}{2}\frac{d}{dt}\int_\T |b(t,x)|^2\dx+\int_\T \psi^2(t,x)\dx=0.
    \end{equation}

    We can use this to derive an equation for $\psi:=\partial_x u$ that reads  

    $$\partial_t\psi +u\partial_x \psi+2\psi |b|^2=\int_\T \psi^2 \dx.$$

    After testing against $\psi$ and integrating by parts, we obtain the following energy estimate for $\psi$:

    \begin{equation} \label{eq:energyestimatepsi} 
    \frac{1}{2}\frac{d}{dt}\int_\T \psi^2(t,x)\dx+\frac{1}{4}\left(\int_\T |b(t,x)|^2\dx\right)\left(\int_\T \psi^2(t,x)\dx\right)+\frac{3}{4}\int_\T|b(t,x)|^2\psi^2(t,x)\dx=0.
    \end{equation}

    Therefore, we obtain the inequality 

    $$\frac{1}{2}\frac{d}{dt}\int_\T \psi^2(t,x)\dx+\frac{1}{4}\|b_0\|_{L^1}^2\left(\int_\T \psi^2(t,x)\dx\right)\leq 0,$$
    and by means of Gronwall's lemma,

    $$\int_\T \psi^2(t,x)\dx\leq \|\psi_0\|^2_{L^2}\exp\left(-\frac{\|b_0\|^2_{L^1}}{2}t\right),$$
    so \eqref{eq:L2convergencetozero} holds. Now, we can prove convergence of $b$. To do so, note that \eqref{eq:energyestimateb} implies that $\|b(t,\cdot)\|_{L^2}^2$ is decreasing. Therefore, $\|b(t,\cdot)\|_{L^2}^2$ has a limit $\ell\geq 0$ as $t\rightarrow \infty$.  $|b(t,\cdot)|^2$ tends to $\ell$ in $L^1$ as $t\rightarrow \infty$. Indeed, $|b(t,x)|^2=2\psi+\|b(t,\cdot)\|_{L^2}^2$. The first summand tends to $0$ in $L^2$ (and, thus, in $L^1$), and the second tends to $\ell$ by construction. 

    An immediate application of the Theorem of Dominated Convergence implies that, actually, $|b(t,\cdot)|-\sqrt{\ell}$ tends to $0$ in $L^2$ as $t\rightarrow \infty$. Consequently, and since the $L^1$ norm is conserved, it holds that the $L^2$-limit of $|b(t,\cdot)|$ equals $\|b_0\|_{L^1}$. 
\end{proof}
	In the previous proposition we obtained a convergence theorem that holds with respect to the $L^2$ norm. If we further assume that $b_0$ does not vanish anywhere in $\T$, we can obtain convergence in stronger norms.
	\begin{prop}\label{prop:decayunondiff}
		Let $b$ be a $C^1([0,T]\times \T)$ solution of \eqref{eq:hyperbolicSystem}. Assume that the initial value $b_0(x)$ satisfies that 
        
        $$|b_0(x)|^2\geq c_0>0\quad \forall\,x\in \T$$
        for some $c_0>0$. Then, $|b(t,x)|^2\geq c_0$ for every $(t,x)\in \R_+\times \T$, and there exists a constant $C>0$ satisfying that  
		
		\begin{equation} \label{eq:expdecayu}
        \|\partial_x u(t,\cdot)\|_{L^\infty}\leq Ce^{-c_0t}.
        \end{equation}
	\end{prop}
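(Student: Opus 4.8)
The plan is to establish the two assertions in sequence: first the pointwise lower bound $|b(t,x)|^2 \ge c_0$, which then unlocks the exponential decay of $\psi := \partial_x u = \tfrac12\big(|b|^2 - \|b\|_{L^2}^2\big)$. For the lower bound I would run a minimum principle on $m(t) := \min_{x\in\T} |b(t,x)|^2$, in direct parallel with the maximum principle used in the proof of Proposition~\ref{prop:wellposednesshyp}. Dotting the first equation of \eqref{eq:hyperbolicSystem} with $b$ and using that $\psi$ is mean-free gives the transport equation $\partial_t|b|^2 + u\,\partial_x|b|^2 + 2\psi\,|b|^2 = 0$. At a spatial minimizer $x_\star = x_\star(t)$ one has $\partial_x|b(t,x_\star)|^2 = 0$, while $2\psi(t,x_\star) = |b(t,x_\star)|^2 - \int_\T|b(t)|^2 = m(t) - \int_\T|b(t)|^2 \le 0$ since a spatial average dominates the spatial minimum. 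Hence $\tfrac{d}{dt}\big|_{x=x_\star}|b|^2 = -2\psi(t,x_\star)\,|b(t,x_\star)|^2 \ge 0$, and a standard Danskin-type differentiation of the Lipschitz function $m$ shows it is non-decreasing, so $|b(t,x)|^2 \ge m(t) \ge m(0) \ge c_0$ for all $t \ge 0$. (If the minimum over $[0,T]\times\T$ were attained at a time $t_\star > 0$ where $|b(t_\star,\cdot)|$ is spatially constant, then by uniqueness the solution is stationary and there is nothing to prove.)

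With $|b|^2 \ge c_0$ in hand, I would next upgrade the $L^2$ decay of $\psi$. Dropping the nonnegative term $\tfrac14\big(\int_\T|b|^2\big)\big(\int_\T\psi^2\big)$ in the energy identity \eqref{eq:energyestimatepsi} and bounding $|b|^2 \ge c_0$ in the remaining quadratic term yields $\tfrac{d}{dt}\int_\T\psi^2 \le -\tfrac32 c_0 \int_\T\psi^2$, so that Gronwall's lemma gives
\begin{equation*}
    \int_\T \psi^2(t,x)\dx \le \|\psi_0\|_{L^2}^2\, e^{-\frac32 c_0 t}.
\end{equation*}
The point that matters here is that the rate $\tfrac32 c_0$ is \emph{strictly larger} than $c_0$.

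To obtain the $L^\infty$ bound, I would use the transport equation satisfied by $\psi$, namely $\partial_t\psi + u\,\partial_x\psi + |b|^2\psi = \int_\T\psi^2$ (obtained by inserting $|b|^2 = 2\psi + \|b\|_{L^2}^2$ into the equation for $|b|^2$, as in the proof of Proposition~\ref{prop:L2convergencehyperbolic}), and evaluate it along the characteristic flow $\Phi_t$ of $u$ from \eqref{eq:defiphi}. By Proposition~\ref{prop:wellposednesshyp}, $\Phi_t$ is a diffeomorphism of $\T$, so $p_\xi(t) := \psi(t,\Phi_t(\xi))$ solves the scalar ODE $p_\xi' = -|b(t,\Phi_t(\xi))|^2\,p_\xi + \int_\T\psi^2(t)$ and $\sup_\xi|p_\xi(t)| = \|\psi(t,\cdot)\|_{L^\infty}$. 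Using the integrating factor together with $|b|^2 \ge c_0$ and the previous step,
\begin{equation*}
    |p_\xi(t)| \le e^{-c_0 t}\,\|\psi_0\|_{L^\infty} + \int_0^t e^{-c_0(t-s)}\,\|\psi(s)\|_{L^2}^2\ds \le e^{-c_0 t}\,\|\psi_0\|_{L^\infty} + \|\psi_0\|_{L^2}^2\, e^{-c_0 t}\int_0^t e^{-\frac12 c_0 s}\ds,
\end{equation*}
and the last integral is at most $2/c_0$. Taking the supremum over $\xi$ gives $\|\partial_x u(t,\cdot)\|_{L^\infty} = \|\psi(t,\cdot)\|_{L^\infty} \le C e^{-c_0 t}$ with $C = \|\psi_0\|_{L^\infty} + 2\|\psi_0\|_{L^2}^2/c_0$, which is \eqref{eq:expdecayu}.

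The main obstacle is the nonlocal source term $\int_\T\psi^2$ in the transport equation for $\psi$: it rules out deducing the $L^\infty$ decay directly from a bare maximum principle, and it forces the two-step structure above, in which one first establishes the $L^2$ decay \emph{with a rate strictly larger than $c_0$} precisely so that the Duhamel integral in the last step does not spoil the exponent $e^{-c_0t}$. The remaining ingredients --- the minimum principle (a routine Danskin argument since $b \in C^1$) and the fact that the characteristic curves exhaust $\T$ --- are soft.
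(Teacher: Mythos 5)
Your proof is correct, and its first half (the lower bound $|b|^2\geq c_0$, obtained from the transport equation for $|b|^2$ and the sign of $\partial_x u$ at spatial minimizers) is essentially the paper's own argument, with your monotonicity-of-$m(t)$ formulation replacing the paper's contradiction at a space-time minimum and thereby dispensing with the separate treatment of constant-modulus states. The second half, however, takes a genuinely different route. The paper first runs a one-sided maximum principle on the $\psi$-equation to get $\psi\geq-\|\psi_0\|_{L^\infty}e^{-c_0t}$, then uses the zero-mean structure $\int_\T\psi^+=\int_\T\psi^-$ together with $\|\psi\|_{L^\infty}\leq\|b_0\|_{L^\infty}^2$ to convert this one-sided bound into $\|\psi(t)\|_{L^2}^2\lesssim e^{-c_0t}$ (the chain \eqref{eq:decaylinftynormpsi}), and finally inserts that into the Duhamel-type upper bound \eqref{eq:bound}. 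You instead obtain the $L^2$ decay directly from the energy identity \eqref{eq:energyestimatepsi}, which becomes usable precisely because $|b|^2\geq c_0$ is already known, and this gives the strictly faster rate $\tfrac{3}{2}c_0$; a single characteristics/Duhamel step then yields \eqref{eq:expdecayu} with the clean rate $c_0$ and an explicit constant. Your variant buys something concrete: since your source $\|\psi(s)\|_{L^2}^2$ decays strictly faster than the semigroup rate $c_0$, the Duhamel integral remains $O(e^{-c_0t})$, whereas in the paper's chain the source and the exponential kernel decay at the same rate $c_0$, which strictly speaking produces a factor $te^{-c_0t}$ that the write-up glosses over (and its concluding display contains evident typos). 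Conversely, the paper's route relies only on pointwise maximum-principle information and the mean-free structure of $\psi$, never invoking the higher-order energy identity. Both arguments use the same two pillars — the lower bound on $|b|^2$ and the damped transport equation for $\psi$ with the non-local source $\int_\T\psi^2$ — so the difference is in how that non-local term is tamed, and your resolution of it is the tighter one.
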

	\begin{proof} 
		
		We begin showing that $|b(t,x)|^2\geq c_0$ for every $(t,x) \in \R_+\times \T$. We write an equation for $|b(t,\cdot)|^2$, that can be obtained simply by multiplying the equation times $b$, leading to 
		
		\begin{equation}\label{eq:bnodifusive}
			\frac{1}{2}\partial_t (|b|^2)+\frac{1}{2}u\partial_x (|b|^2)+\partial_x u |b|^2=0\quad \text{with}\quad \partial_x u=\frac{1}{2}\left(|b|^2-\int_{\T}|b|^2\,\dx\right).
		\end{equation}
		
		As noted out in the proof of Theorem \ref{prop:wellposednesshyp}, the function $|b|^2$ constant is trivially a solution for this equation. The proposition holds for this case, so we may assume that $|b|^2$ is not identically constant. 
        
        Now, for the case when $|b_0|^2$ is not identically constant, we can argue by contradiction. If $|b(t,x)|^2$ is smaller than $c_0$ for some $t>0$, then  the minimum of $|b|$ in $[0,t]\times \T$ is attained in some $(t_\star,x_\star)$ with $t_\star>0$. Then, $\partial_t (|b|^2)(t_\star,x_\star)\leq 0$. Note that $x_\star$ is a minimum of the function $|b(t_\star,\cdot)|^2$  too. Therefore, at such $t_\star$, $\partial_x u=\left(|b|^2-\|b\|_{L^2}^2\right)/2$ attains a minimum at $x_\star$. Since $|b|^2$ is not identically constant, $\partial_x u(t_\star,x_\star)<0$. As a result, and since $|b(t_\star,x_\star)|>0$ by definition of $t_\star$,  
		
		$$0>\frac{1}{2}\partial_t (|b|^2)+\frac{1}{2}u\partial_x (|b|^2)+\partial_x u (|b|^2)=0 \quad \text{on }(t_\star,x_\star),$$
	which is a contradiction. As a result, $|b(t,x)|^2\geq c_0$ for every positive time.
		
		We now prove the decay estimate \eqref{eq:expdecayu}. Integrating \eqref{eq:bnodifusive} we can deduce an equation for $\partial_x u$, that reads 
		
		$$\partial_t (\partial_x u)+u\partial_x (\partial_x u)+|b|^2\partial_x u=\int_{\T}(\partial_x u)^2\,\dx\geq 0.$$
		
		Using the maximum principle and the fact that $|b(t,x)|^2\geq c_0$,  we can estimate $\partial_x u$ from below by 
		
		$$\partial_x u\geq -\|\partial_x u(0,\cdot)\|_{L^\infty}e^{-c_0t}.$$
		
		On the other hand, since $\|b\|_{L^\infty}^2\leq \|b_0\|^2_{L^\infty}$, we can bound $\partial_x u$ from above by 
		
		\begin{equation} \label{eq:bound}
        \partial_x u\leq  \|\partial_x u(0,\cdot)\|_{L^\infty}e^{-c_0 t}+\int_0^te^{-c_0(t-s)}\|\partial_x u(s,\cdot)\|_{L^2}\ds.
        \end{equation}
		
		Now, since the integral of $\partial_x u$ equals zero, we find that 

        $$\int_\T (\partial_x u)^+=\int_\T (\partial_x u)^-.$$
        Therefore, and due to the fact that $\|\partial_x u\|_{L^\infty}\leq \|b\|_{L^\infty}^2\leq \|b_0\|^2_{L^\infty}$, we find that 
		
		\begin{equation} \label{eq:decaylinftynormpsi}
			\begin{split} 
			\|\partial_x u(t,\cdot)\|_{L^2}^2&=\int_{\T}|\partial_x u|^2\dx\\
			&\leq \|b_0\|_{L^\infty}^2\int_\T |\partial_x u|\\
			&=    \|b_0\|_{L^\infty}^2\int_\T ((\partial_x u^+)+(\partial_x u)^-)\\
			&\leq 2\|b_0\|_{L^\infty}^2\int_\T (\partial_x u)^-\\
			&\leq 2\|b_0\|_{L^\infty}^2\|\partial_x u(t,\cdot)\|_{L^\infty}e^{-c_0t}.
			\end{split} 
		\end{equation}
	
	As a result, plugging this into \eqref{eq:bound}, we obtain 
	
	$$\|\partial_x u(t,\cdot)\|_{L^\infty}\leq 2\|b_0\|_{L^\infty}^2\|\partial_x u(t,\cdot)\|_{L^\infty}e^{-c_0t}.$$
	
	Thus, the result follows.
	\end{proof}

	This, together with the a priori estimates, leads to the following conclusion
	
	\begin{coro}
		Let $b$ be a $C^1([0,\infty),\T)$ solution of \eqref{eq:hyperbolicSystem} satisfying $|b_0|^2\geq c_0$ for some $c_0>0$. Then,
        
		$$|b(t,\cdot)|^2\longrightarrow \|b_0\|^2_{L^1}$$
		uniformly as $t\longrightarrow\infty$. Furthermore, there is a positive constant $C=C(\|b_0\|_{L^\infty})$ independent of $c_0$ such that 
		
		$$||b(t,\cdot)|^2-\|b_0\|^2_{L^1}|\leq \frac{C}{c_0}e^{-c_0t}.$$
	\end{coro}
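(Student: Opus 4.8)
The plan is to combine the three preceding results: Proposition \ref{prop:decayunondiff} gives the exponential decay $\|\partial_x u(t,\cdot)\|_{L^\infty} \leq C e^{-c_0 t}$ when $|b_0|^2 \geq c_0$, and we want to upgrade the $L^2$-convergence $|b(t,\cdot)| \to \|b_0\|_{L^1}$ from Proposition \ref{prop:L2convergencehyperbolic} to uniform convergence with a quantitative rate. First I would note that $\partial_x u = \psi = \tfrac12(|b|^2 - \|b\|_{L^2}^2)$, so the oscillation of $|b(t,\cdot)|^2$ in space is controlled directly: $\big| |b(t,x)|^2 - \|b(t,\cdot)\|_{L^2}^2 \big| = 2|\psi(t,x)| \leq 2\|\partial_x u(t,\cdot)\|_{L^\infty} \leq 2C e^{-c_0 t}$. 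This already shows that $|b(t,\cdot)|^2$ is within $O(e^{-c_0 t})$ of the (time-dependent) constant $\|b(t,\cdot)\|_{L^2}^2$, uniformly in $x$.

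The remaining step is to show that $\|b(t,\cdot)\|_{L^2}^2$ itself converges to $\|b_0\|_{L^1}^2$ at an exponential rate. From the energy balance \eqref{eq:energyestimateb}, $\tfrac{d}{dt}\|b(t,\cdot)\|_{L^2}^2 = -2\int_\T \psi^2 \dx$, and by the bound just obtained $\int_\T \psi^2 \leq \|\psi(t,\cdot)\|_{L^\infty}^2 \leq C^2 e^{-2c_0 t}$ (or, more carefully, using \eqref{eq:decaylinftynormpsi}, $\int_\T \psi^2 \leq 2\|b_0\|_{L^\infty}^2 \|\partial_x u(t,\cdot)\|_{L^\infty} e^{-c_0 t} \leq C' e^{-2 c_0 t}$). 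Integrating from $t$ to $\infty$ gives $\big| \|b(t,\cdot)\|_{L^2}^2 - \ell \big| \leq \tfrac{C''}{c_0} e^{-2 c_0 t}$ where $\ell = \lim_{s\to\infty}\|b(s,\cdot)\|_{L^2}^2$; and Proposition \ref{prop:L2convergencehyperbolic} already identifies $\ell = \|b_0\|_{L^1}^2$ (since $|b(t,\cdot)| \to \sqrt{\ell}$ in $L^2$ and the $L^1$ norm is conserved). Combining the two estimates via the triangle inequality,
\begin{equation*}
    \big| |b(t,x)|^2 - \|b_0\|_{L^1}^2 \big| \leq \big| |b(t,x)|^2 - \|b(t,\cdot)\|_{L^2}^2 \big| + \big| \|b(t,\cdot)\|_{L^2}^2 - \|b_0\|_{L^1}^2 \big| \leq \frac{C}{c_0} e^{-c_0 t}
\end{equation*}
uniformly in $x$, which is exactly the claimed bound; in particular the constant $C$ depends only on $\|b_0\|_{L^\infty}$ (all the $c_0$-dependence has been factored out), and uniform convergence follows.

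The one point requiring a little care — and the only place where something could go wrong — is tracking the $c_0$-dependence of the constants so that the final $C$ depends on $\|b_0\|_{L^\infty}$ alone and not on $c_0$. In Proposition \ref{prop:decayunondiff} the decay constant in the exponent is $c_0$, but the prefactor in \eqref{eq:expdecayu} must be examined: unwinding \eqref{eq:decaylinftynormpsi} and \eqref{eq:bound} one sees the prefactor is controlled by $\|\partial_x u(0,\cdot)\|_{L^\infty} \leq 2\|b_0\|_{L^\infty}^2$ together with a harmless absolute factor, so it is indeed bounded in terms of $\|b_0\|_{L^\infty}$ only. Then the $1/c_0$ in the statement arises precisely from the time-integration $\int_t^\infty e^{-2c_0 s}\ds = \tfrac{1}{2c_0}e^{-2c_0 t}$ in the second step. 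Everything else is a direct quotation of the two preceding propositions, so no genuinely new estimate is needed.
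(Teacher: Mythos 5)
Your proof is correct and follows essentially the same route as the paper: split $|b(t,x)|^2-\|b_0\|_{L^1}^2$ into $(|b|^2-\|b\|_{L^2}^2)+(\|b\|_{L^2}^2-\|b_0\|_{L^1}^2)$, bound the first term by $2\|\partial_x u\|_{L^\infty}\leq Ce^{-c_0t}$ from Proposition \ref{prop:decayunondiff}, and control the second by integrating the energy balance \eqref{eq:energyestimateb} over $[t,\infty)$, which produces the $\frac{1}{c_0}$ factor. Your extra care in tracking that the prefactor depends only on $\|b_0\|_{L^\infty}$ is a faithful elaboration of what the paper leaves implicit.
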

	\begin{proof}
		Notice that $\|b\|_{L^2}^2$ converges to $\|b_0\|_
        {L^1}^2$ due to Proposition \ref{prop:L2convergencehyperbolic}. On other hand, $\partial_x u=\frac{1}{2}(|b|^2-\|b\|_{L^2}^2)$ satisfies that 
		
		$$\|\partial_x u(t,\cdot)\|_{L^\infty}\leq Ce^{-c_0t}.$$
		
		As a result, since $|b|^2-\|b_0\|_
        {L^1}^2=|b|^2-\|b\|_{L^2}^2+\|b\|_{L^2}^2-\|b_0\|_
        {L^1}^2$, we obtain that $|b(t)|$ converges uniformly to $\|b_0\|_{L^1}$ as $t\rightarrow \infty$. To obtain the precise rate of convergence, we just need to integrate \eqref{eq:energyestimateb} with respect to time, so 
		$$|\|b(t,\cdot)\|_{L^2}^2-\|b_0\|_
        {L^1}^2|\leq \int_t^\infty \left|\frac{d}{dt}\|b\|^2_{L^2}\right|\ds\leq C\int_t^\infty e^{-c_0s}\ds\leq \frac{C}{c_0}e^{-c_0t},$$
		and the result follows.
	\end{proof}

	Finally, we can prove that the solution $b$ itself converges to a function with constant modulus.
	\begin{prop}\label{prop:binfty}
		Let $b$ be a $C^1([0,\infty),\T)$ solution of \eqref{eq:hyperbolicSystem}. Assume that for every $x\in \T$, $|b_0(x)|^2\geq c_0$ for some $c_0>0$. Then, $b$ converges uniformly as $t\rightarrow \infty$ to a function $b_\infty\in \text{Lip}(\T)$ with $|b_\infty(x)|$ independent of $x$. Furthermore, 
		
		$$\|b(t,\cdot)-b_\infty\|_{L^\infty}\leq Ce^{-{c_0}t}.$$
	\end{prop}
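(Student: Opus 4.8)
The goal is to upgrade the uniform convergence $|b(t,\cdot)|^2 \to \|b_0\|_{L^1}^2$ of the previous corollary to convergence of $b$ itself to a limit $b_\infty$ with constant modulus, with exponential rate. The natural idea is to show that $t \mapsto b(t,\cdot)$ is Cauchy in $L^\infty(\T)$ (in fact uniformly Cauchy in $x$) by controlling $\partial_t b$ in uniform norm. From the first equation in \eqref{eq:hyperbolicSystem}, $\partial_t b = -u\partial_x b - (\partial_x u) b$, so $\|\partial_t b(t,\cdot)\|_{L^\infty} \leq \|u(t,\cdot)\|_{L^\infty}\|\partial_x b(t,\cdot)\|_{L^\infty} + \|\partial_x u(t,\cdot)\|_{L^\infty}\|b(t,\cdot)\|_{L^\infty}$. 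By Proposition \ref{prop:decayunondiff}, $\|\partial_x u(t,\cdot)\|_{L^\infty} \leq Ce^{-c_0 t}$, and since $u$ has zero mean (by our normalization) $\|u(t,\cdot)\|_{L^\infty} \leq \|\partial_x u(t,\cdot)\|_{L^1} \leq \|\partial_x u(t,\cdot)\|_{L^\infty} \leq Ce^{-c_0 t}$ as well. So both factors multiplying the (a priori only finite) quantities $\|\partial_x b\|_{L^\infty}$ and $\|b\|_{L^\infty}$ decay exponentially — provided we know $\|\partial_x b(t,\cdot)\|_{L^\infty}$ stays bounded in time.

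The main obstacle is therefore the time-\emph{uniform} bound on $\|\partial_x b(t,\cdot)\|_{L^\infty}$: the estimate \eqref{eq:boundderivativeb} obtained in Proposition \ref{prop:wellposednesshyp} grows like $e^{CT}$ and is not good enough. To fix this, I would revisit the characteristic ODE for $\partial_\xi Z$. From \eqref{eq:estimatederivative} we have $\frac{1}{2}\partial_t |\partial_\xi Z|^2 = -(Z\cdot\partial_\xi Z)^2 - \partial_x u\, |\partial_\xi Z|^2$ along characteristics (using $\partial_x u = \frac12(|Z|^2 - \int|b|^2)$ evaluated along the flow). Dropping the nonpositive term $-(Z\cdot\partial_\xi Z)^2$ gives $\partial_t |\partial_\xi Z|^2 \leq -2(\partial_x u)\,|\partial_\xi Z|^2$, hence $|\partial_\xi Z(t,\xi)|^2 \leq |\partial_\xi b_0(\xi)|^2 \exp\!\big(-2\int_0^t \partial_x u(s,\Phi_s(\xi))\,ds\big)$. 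Now $\int_0^t \partial_x u(s, \Phi_s(\xi))\,ds \geq -\int_0^t \|\partial_x u(s,\cdot)\|_{L^\infty}\,ds \geq -\int_0^\infty Ce^{-c_0 s}\,ds = -C/c_0$, uniformly in $t$ and $\xi$. Combining this with the bound $|\partial_x \xi(t,x)| = \exp(-\int_0^t \partial_x u(s,x)\,ds) \leq e^{C/c_0}$ (again using the exponential decay) and $\partial_x b = (\partial_\xi Z)(\partial_x \xi)$, we obtain a time-uniform bound $\|\partial_x b(t,\cdot)\|_{L^\infty} \leq C(\|b_0\|_{C^1}, c_0)$ for all $t \geq 0$.

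With this in hand the proof closes quickly: $\|\partial_t b(t,\cdot)\|_{L^\infty} \leq Ce^{-c_0 t}$, so for $t_2 > t_1$, $\|b(t_2,\cdot)-b(t_1,\cdot)\|_{L^\infty} \leq \int_{t_1}^{t_2} \|\partial_t b(s,\cdot)\|_{L^\infty}\,ds \leq \frac{C}{c_0}e^{-c_0 t_1}$, which is Cauchy; hence $b(t,\cdot)$ converges uniformly to some $b_\infty \in C^0(\T)$, and letting $t_2 \to \infty$ yields the rate $\|b(t,\cdot) - b_\infty\|_{L^\infty} \leq \frac{C}{c_0}e^{-c_0 t}$. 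The time-uniform Lipschitz bound on $b(t,\cdot)$ passes to the limit, so $b_\infty \in \mathrm{Lip}(\T)$. Finally, $|b_\infty(x)|$ is constant in $x$: indeed $\big||b_\infty(x)|^2 - \|b_0\|_{L^1}^2\big| = \lim_{t\to\infty}\big||b(t,x)|^2 - \|b_0\|_{L^1}^2\big| = 0$ pointwise by the preceding corollary, so $|b_\infty(x)|^2 \equiv \|b_0\|_{L^1}^2$. This completes the proof.
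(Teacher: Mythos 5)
Your proof is correct and rests on the same ingredients as the paper's: the exponential decay of $u$ and $\partial_x u$ from Proposition \ref{prop:decayunondiff}, together with the time-uniform bound on $\partial_\xi Z$ (hence on $\|\partial_x b(t,\cdot)\|_{L^\infty}$) obtained from \eqref{eq:estimatederivative} by dropping the sign-definite term and applying Gr\"onwall along characteristics, plus the bound on $\partial_x \xi$. The only difference is organizational: the paper deduces uniform convergence by estimating the time increments of $Z(\xi,\cdot)$ and $\xi(\cdot,x)$ separately, whereas you bound $\|\partial_t b(t,\cdot)\|_{L^\infty}\leq Ce^{-c_0 t}$ directly from the equation and conclude that $b(t,\cdot)$ is Cauchy in $L^\infty$; both routes give the same rate, the same Lipschitz bound passing to the limit, and the same identification of the constant modulus via the preceding corollary.
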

	\begin{proof}
		We may use that, since $b$ is $C^1$, it can be represented using the method of characteristics. Indeed, due to the regularity of the solution $b$, $u\in C^0([0,\infty),\text{Lip}(\T))$. Furthermore, due to Proposition \ref{prop:decayunondiff} there is some $C>0$ such that the Lipschitz norm of $u$ is bounded from above by   $Ce^{-c_0t}$. Now, recall that $b$ can be obtained uniquely as $b=Z(\xi(t,x),t)$, where $\xi(t,x)$ is defined via \eqref{eq:eqxi}
	and $Z$ satisfies 
	
	$$\left\{\begin{array}{ll}
	\partial_tZ(t,\xi)=-Z(t,\xi)\partial_xu(t,\Phi_t(\xi)) & t>0,\, \xi\in \T\\
		Z(\xi,0)=b_0(\xi)&
	\end{array}\right..$$

    Now, we know that $\|b(t,x)\|_{L^\infty}\leq \|b_0\|_{L^\infty}.$ As a result, for $0\leq s<t$, 
	
	$$|\Phi_t (\xi)-\Phi_s(\xi)|\leq \int_s^t|u(\Phi_\tau(\xi),\tau)|d\tau\leq \frac{C}{c_0}\left(e^{-c_0s}-e^{-c_0t}\right),$$
	and 
	\begin{equation} \label{eq:convcharacteristics}
    |Z(\xi,t)-Z(\xi,s)|\leq C\frac{\|b_0\|_{L^\infty}}{c_0}\left(e^{-c_0s}-e^{-c_0t}\right).
    \end{equation}
	
	We can also derive an estimate for the functions $\xi(t,x)$. Indeed, differentiating \eqref{eq:defiphi} with respect to $\xi$, and by means of the chain rule, we find that $\xi$ satisfies the ODE 

    $$\left\{\begin{array}{ll}
        \frac{\partial \xi}{\partial t}(t,x)=-\left(\frac{\partial \Phi_t}{\partial \xi}(\xi(t,x))\right)^{-1}u(t,\Phi_t(\xi(t,x))). &  (t,x)\in \R_+\times \T \\
        \xi(0,x)=x &  
    \end{array}\right.$$
    Since 
    $$\frac{\partial\Phi_t}{\partial\xi}=\exp\left(\int_0^t\partial_x u(t,\Phi_t(\xi))\right),$$
    we find that 

    \begin{equation}\label{eq:convinverse} 
    \left|\xi(t,x)-\xi(s,x)\right|\leq C(e^{-c_0s}-e^{-c_0t}).
    \end{equation}
    Then, both estimates \eqref{eq:convcharacteristics} and \eqref{eq:convinverse} imply that there exists a limit function $b_\infty\in C^0(\T)$ such that $b(t,\cdot)\longrightarrow b_\infty(\cdot)$ uniformly as $t\rightarrow \infty$.

    Finally, in order to show that the limit function is Lipschitz we can use the decay of $\partial_x u$ to prove that, out of \eqref{eq:estimatederivative}, the derivative of $b(t,\cdot)$ remains bounded. Recall the formula \eqref{eq:estimatederivative} for the derivative of $\partial_\xi Z$:

    \begin{equation*} 
    \frac{1}{2}\partial_t |\partial_\xi Z(t,\xi)|^2=-(Z\cdot\partial_\xi Z)^2-\frac{1}{2}\left(|Z(\xi,t)|^2-\int|b|^2\right)|\partial_\xi Z|^2\leq -\frac{1}{2}\partial_x u(t,\Phi_t(\xi))\cdot |\partial_\xi Z(t,\xi)|^2.
    \end{equation*}

    Due to Proposition \ref{prop:decayunondiff}, and by means of Grönwall's lemma, we deduce that 

    $$|\partial_\xi Z(t,x)|\leq C,$$
    for some constant $C>0$ depending only on $b_0$. Since $b(t,x)=Z(t,\xi(t,x))$, 

    $$\partial_x b(t,x)=\partial_\xi Z(t,\xi(t,x))\cdot \partial_x\xi(t,x).$$

    Due to the chain rule,

    $$\partial_x\xi(t,x)=\left(\partial_\xi \Phi_t(\xi(t,x))\right)^{-1}=\exp\left(-\int_0^t\partial_x u(s,x)\ds\right).$$

    Again, due to Proposition \ref{prop:decayunondiff}, we conclude that there is a positive constant $C>0$, independent of time, such that $|\partial_x b(t,x)|\leq C$. As a result, the limit function $b_\infty$ lies in $\text{Lip}(\T)$.
    
	\end{proof}

    Out of this proposition we infer that for any choice of initial data with a modulus that never vanishes, there is a unique constant function $b_\infty$ so that $b(t,\cdot)\rightarrow b_\infty$ uniformly exponentially fast. In other words, for times of order $\log(\varepsilon^{-1})$, the function $b(t,\cdot)$ approaches $b_\infty$ up to a correction of order $\varepsilon$. This will be used in the sequel to give an initial condition to a limit problem. 

    \begin{defi}\label{defi:nonlinearoperator}
        Let us define the following sets of functions
        
        $$X:=\{f\in C^1(\T;\R^2)\,:\, |f(x)|>0\text{ for every }x\in \T\}$$
        and 

        $$Y:=\{f\in \text{Lip}(\T;\R^2)\, :\, |f(x)|\text{ is constant.}\}.$$

        Then, we define the map $S$ as 

        \begin{equation}\label{eq:defiSb}
        \begin{array}{rcl}
            S: X & \longrightarrow & Y \\
            b_0(\cdot) & \mapsto & S(b_0)(\cdot):=\lim_{t\rightarrow \infty}b(t,\cdot)
       \end{array}
       \end{equation}
        where $b(t,\cdot)$ is the solution of \eqref{eq:hyperbolicSystem} with initial condition $b_0$. 
    \end{defi}
    \begin{obs}
        Note that the limit in \eqref{eq:defiSb} is a well defined as a limit in the uniform topology and the limit belongs to $Y$, due to Proposition \ref{prop:binfty}.
    \end{obs}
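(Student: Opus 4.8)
\emph{Proof strategy.} The remark contains two assertions: that for every $b_0 \in X$ the solution $b(t,\cdot)$ of \eqref{eq:hyperbolicSystem} has a limit in the uniform topology as $t \to \infty$, and that this limit lies in $Y$. Both are direct consequences of the results already proved in this section, and the only genuine point to check is that membership in $X$ — pointwise positivity of $|b_0|$ — implies the apparently stronger hypothesis $|b_0|^2 \geq c_0 > 0$ under which Propositions \ref{prop:decayunondiff} and \ref{prop:binfty} were established. The plan is therefore to reduce the statement to Proposition \ref{prop:binfty}.

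First I would fix $b_0 \in X$ and use compactness of the torus: since $x \mapsto |b_0(x)|^2$ is continuous and strictly positive on the compact set $\T$, it attains a positive minimum, so that $c_0 := \min_{x \in \T} |b_0(x)|^2 > 0$ and $b_0$ satisfies $|b_0(x)|^2 \geq c_0$ for all $x \in \T$. Next, by Proposition \ref{prop:wellposednesshyp}, the Cauchy problem for \eqref{eq:hyperbolicSystem} with datum $b_0 \in C^1(\T)$ has a unique global solution $b \in C^1(\R_+ \times \T)$, so the expression $\lim_{t\to\infty} b(t,\cdot)$ appearing in \eqref{eq:defiSb} refers to an unambiguous trajectory.

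Finally I would invoke Proposition \ref{prop:binfty}, whose hypotheses are now verified: it produces a function $b_\infty \in \text{Lip}(\T;\R^2)$ with $|b_\infty(x)|$ independent of $x$ such that $b(t,\cdot) \to b_\infty$ uniformly as $t \to \infty$ (indeed with exponential rate $\|b(t,\cdot)-b_\infty\|_{L^\infty} \leq C e^{-c_0 t}$). This is exactly the claim that $S(b_0)$ is well defined as a limit in the uniform topology, and since $b_\infty$ is Lipschitz with constant modulus it belongs to $Y$ by the very definition of $Y$; hence $S$ maps $X$ into $Y$ as asserted. I do not expect any substantial obstacle here: the statement is essentially a bookkeeping remark, the only nontrivial ingredient being the convergence result of Proposition \ref{prop:binfty}, which is already established, and the single elementary step to be spelled out is the passage from pointwise positivity of $|b_0|$ to a uniform lower bound via compactness of $\T$.
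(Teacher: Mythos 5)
Your argument is correct and is essentially the paper's own: the remark is just an appeal to Proposition \ref{prop:binfty}, and your only added step — extracting a uniform lower bound $c_0=\min_{x\in\T}|b_0(x)|^2>0$ from the pointwise positivity in the definition of $X$ via compactness and continuity — is exactly the elementary bookkeeping the paper leaves implicit.
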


\begin{comment}
In the previous proposition, in order to obtain exponentially fast convergence to a state of constant modulus, we required our initial magnetic field $b_0$ to satisfy $|b_0(x)|>0$ for every $x\in \T$. It turns out that we can still obtain this convergence if we drop this last hypothesis, but the norm in which $b(t)$ converges is weaker. 
\end{comment}

\section{Global Well Posedness for positive Resistivity}\label{sec:epsilonfix}

 In order to obtain well posedness results for the equation, we shall study some of the basic energy estimates. These are obtained first formally assuming that the solution exists, and they will serve as a foundation for the rigorous proof of well-posedness. The simplest one is obtained after multiplying the equation times $b$, leading to 

$$\frac{1}{2}\frac{d}{dt}\int_\T |b(t,x)|^2\dx+\int_\T \partial_x (ub)b=-\varepsilon \int_\T |\partial_x b(t,x)|^2.$$

Using that $\partial_x u:=\frac{1}{2}\left(|b|^2-\int_{\T}|b|^2\dx\right)$, as well as integration by parts, we find 

\begin{equation} \label{eq:apriorib}
\frac{1}{2}\frac{d}{dt}\int_\T|b(t,x)|^2+\int_\T (\partial_x u)^2+\varepsilon \int_\T |\partial_x b(t,x)|^2 \dx=0.
\end{equation}

We can perform similar estimates for the derivative. Differentiating \eqref{eq:magneticrelaxation}, we obtain 

$$\partial_t \partial_x b+u\partial_x ^2 b+2\partial_x u\partial_x b+\partial_x ^2 u b=\varepsilon \partial_x^2(\partial_x b).$$

Again, multiplying the equation by $\partial_x b$, we obtain 

\begin{equation}\label{eq:aprioriparb}
\frac{1}{2}\frac{d}{dt}\int_\T |\partial_x b|^2+\frac{3}{2}\int_\T\partial_x u |\partial_x b|^2+\int_\T (b\cdot \partial_x b)^2+\varepsilon \int_\T |\partial_x ^2b|^2=0.
\end{equation}

\begin{comment}
Finally, we can also compute an \textit{a priori} estimate for the second derivative employing the same philosophy. We just need to take yet another derivative, and find that 

$$\partial_t \partial_x^2 b+u\partial_x (\partial_x^2b)+3\partial_x u\partial_x^2b+3\partial_x^2 u\partial_x b+\partial_x^3 u b=\varepsilon^2(\partial^2_x b).$$
Once again, we multiply times $\partial_x^2b$ and perform integration by parts repeatedly. There are a number of terms that need to be handled with care. These are the last two summands on the left hand side. Once we multiply times $\partial_x^2 b$ and integrate in $\T$, we obtain that  

$$\int \partial_x^2 u(\partial_x b\cdot \partial_x^2b)=\frac{1}{2}\int \partial_x^2 u \partial_x (|\partial_x b|^2)=-\frac{1}{2}\int \partial_x^3 u |\partial_x b|^2, $$
as well as 

$$\int \partial_x^3 u(b\cdot \partial_x^2b)=\frac{1}{2}\int \partial _x^3 u \partial_x^2(|b|^2)-\int \partial_x^3 u |\partial_x b|^2=\frac{1}{2}\int (\partial _x^3 u)^2-\int \partial_x^3 u |\partial_x b|^2.$$

Thus, we conclude that 

\begin{equation}\label{eq:aprioripar2b}
\frac{1}{2}\frac{d}{dt}\int |\partial_x^2 b|^2+\frac{5}{2}\int \partial_x u|\partial_x^2 b|^2+\frac{1}{2}\int (\partial_x^3 u)^2+\varepsilon \int |\partial_x^3b|^2=\frac{5}{2}\int \partial_x^3 u |\partial_x b|^2 
\end{equation}
\end{comment}
 
\begin{teor}\label{teor:wellposednesspar}
   Let $b_0\in H^2(\T)$. Then, for $T>0$, there exists a unique solution $b\in L^2([0,T];H^2)\cap C([0,T];L^2)$ of the magnetic relaxation system \eqref{eq:magneticrelaxation}. Furthermore, the solution is in $C^\infty((0,T)\times \T)$.
\end{teor}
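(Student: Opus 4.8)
The plan is to build the solution by a Galerkin or frequency-truncation approximation scheme, pass to the limit using the \textit{a priori} energy estimates \eqref{eq:apriorib} and \eqref{eq:aprioriparb}, and then bootstrap to smoothness using parabolic regularity. First I would set up the approximate problem: for each $N$, let $\Pi_N$ be the projection onto Fourier modes $|k| \leq N$, and solve the truncated system
\begin{equation*}
    \partial_t b_N + \Pi_N \partial_x(u_N b_N) = \varepsilon \partial_x^2 b_N, \qquad \partial_x u_N = \tfrac{1}{2}\big(|b_N|^2 - \textstyle\int_\T |b_N|^2\big),
\end{equation*}
with $b_N(0) = \Pi_N b_0$. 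This is a locally Lipschitz ODE on the finite-dimensional space of trigonometric polynomials of degree $N$ (note $u_N$ is determined from $b_N$ by integration, using the cancellation condition $u_N(t,0)=0$, and depends smoothly on $b_N$), so it has a unique local solution. The energy identity \eqref{eq:apriorib}, which survives the truncation because $\Pi_N$ is self-adjoint and commutes with $\partial_x^2$, gives a uniform bound on $\|b_N\|_{L^\infty_T L^2} + \sqrt{\varepsilon}\|\partial_x b_N\|_{L^2_T L^2}$; this in particular prevents finite-time blow-up of the ODE, so $b_N$ is global. The $H^1$ estimate \eqref{eq:aprioriparb} is handled similarly: the only dangerous term is $\tfrac32\int \partial_x u_N |\partial_x b_N|^2$, which is controlled by $\|\partial_x u_N\|_{L^\infty} \lesssim \|b_N\|_{L^\infty}^2 \lesssim \|b_N\|_{H^1}^2$ (in 1D, $H^1 \hookrightarrow L^\infty$), closing a Gr\"onwall inequality on $\|\partial_x b_N\|_{L^2}^2$ on a time interval depending only on $\|b_0\|_{H^1}$ and $\varepsilon$; one must check this interval can be taken to be all of $[0,T]$, either by absorbing the cubic term using the good sign of $-\varepsilon \int |\partial_x^2 b_N|^2$ together with interpolation, or by iterating. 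The $H^2$ bound then follows from the analogous (commented-out) estimate \eqref{eq:aprioripar2b} on $\partial_x^2 b_N$, again uniformly in $N$.

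With uniform bounds $b_N$ bounded in $L^\infty_T H^2 \cap L^2_T H^2$ and $\partial_t b_N$ bounded in $L^2_T L^2$ (read off from the equation, since $\partial_x^2 b_N$, $\partial_x(u_N b_N)$ are controlled), the Aubin--Lions lemma gives a subsequence converging strongly in $C_T L^2$ (indeed in $C_T H^s$ for $s<2$) and weakly-$*$ in $L^\infty_T H^2$. Strong $L^2$ convergence plus $H^2$ bounds is more than enough to pass to the limit in the nonlinear term $\partial_x(u_N b_N)$ — note that $u_N \to u$ strongly in $C_T C^1$ since $\partial_x u_N$ is a quadratic function of $b_N$ — so the limit $b$ solves \eqref{eq:magneticrelaxation} in the sense of distributions and lies in $L^2_T H^2 \cap C_T L^2$. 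Uniqueness is proved by a standard energy argument on the difference $w = b^{(1)} - b^{(2)}$ of two solutions: $w$ satisfies a linear equation with coefficients controlled in $L^\infty$ by the $H^2$ norms, and a Gr\"onwall estimate on $\|w\|_{L^2}^2$ forces $w \equiv 0$; the velocity difference is again controlled since $\partial_x(u^{(1)} - u^{(2)}) = \tfrac12(|b^{(1)}|^2 - |b^{(2)}|^2) - (\text{mean})$ is quadratic.

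For the smoothing $b \in C^\infty((0,T)\times \T)$, I would view \eqref{eq:magneticrelaxation} as a linear heat equation $\partial_t b - \varepsilon \partial_x^2 b = -\partial_x(ub) =: f$ with a right-hand side that, thanks to the regularity already established, lies in $L^2_T H^1 \cap L^\infty_T H^1$. Standard parabolic regularity (or the explicit heat semigroup / Duhamel formula on the torus, using that $e^{\varepsilon t \partial_x^2}$ maps $L^2 \to H^\sigma$ with norm $\lesssim (\varepsilon t)^{-\sigma/2}$) upgrades $b$ to, say, $C^{\alpha/2,\alpha}_{loc}((0,T]\times\T)$ and then to arbitrarily high Sobolev regularity on any $[\delta, T]$; feeding the improved regularity of $b$ back into $f = -\partial_x(ub)$ and iterating (a standard bootstrap) yields $b \in C^\infty((0,T)\times\T)$. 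The main obstacle I anticipate is the \textit{a priori} $H^1$ (and $H^2$) estimate: one must genuinely use the viscous dissipation $-\varepsilon\int|\partial_x^2 b|^2$ to absorb the cubic coercivity-breaking term $\int \partial_x u\,|\partial_x b|^2$, rather than merely closing a short-time Gr\"onwall loop, in order to get bounds on the full interval $[0,T]$ independent of $N$; everything else is routine.
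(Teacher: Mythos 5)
Your proposal is correct and follows essentially the same route as the paper: a Fourier--Galerkin scheme, the energy identities \eqref{eq:apriorib}--\eqref{eq:aprioriparb} for uniform bounds, Aubin--Lions compactness to pass to the limit in the nonlinearity, a Gr\"onwall argument for uniqueness, and a bootstrap for interior smoothness (the paper runs the bootstrap through higher-order Galerkin energy estimates rather than the heat semigroup, which is an inessential difference). The obstacle you flag at the end is in fact harmless: since $\partial_x u = \tfrac12\big(|b|^2 - \|b\|_{L^2}^2\big)$, the cubic term $\tfrac32\int_\T \partial_x u\,|\partial_x b|^2$ splits into a nonnegative piece $\tfrac34\int_\T |b|^2|\partial_x b|^2$ plus $-\tfrac34\|b\|_{L^2}^2\|\partial_x b\|_{L^2}^2$ with $\|b\|_{L^2}$ nonincreasing, so Gr\"onwall closes the $H^1$ estimate on all of $[0,T]$ without invoking the viscous dissipation or the $H^1\hookrightarrow L^\infty$ embedding.
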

\begin{proof}
    Let us fix $T>0$ and define $I=[0,T]$. We shall proceed in a rather standard way, exploiting inequalities \eqref{eq:apriorib}-\eqref{eq:aprioriparb} by means of an approximation procedure. We use a Galerkin scheme, by finding approximate solutions in finite dimensional vector spaces. Consider $X_n$ the space of $C^\infty(\T)$ functions defined by 

    $$X_n:=\text{span}\{e^{ikx}\,:\, k\in\Z,\,|k|\leq n\}.$$
    Now, we shall study an approximate version of the system. We define the approximate system, \eqref{eq:approxsystem}, as 

    \begin{equation}\label{eq:approxsystem}
        \left\{\begin{split}
            &\partial_t b_n+\mathbb{P}_n\partial_x (u_n b_n)=\varepsilon\partial_x^2b_n\\
            &2\partial_x u_n=|b_n|^2-\|b_n\|_{L^2}^2\\
            &b_n(0)=\mathbb{P}_nb_0
        \end{split}\right.,\tag{$P_{n}$}
    \end{equation}
    where $\mathbb{P}_n:L^2(\T)\longrightarrow X_n$ is the orthogonal projection onto $X_n$ with respect to the $L^2$ inner product. It is easy to see, by means of Picard-Lindelöf's Theorem, that this problem admits a unique solution in $C^1(I, X_n)$. We then check that the sequence $(b_n)_{n=1}^\infty$ converges in an appropriate sense to a solution $b$ of our problem. Again, by multiplying the equation by $b_n$, we obtain the following version of the a priori estimate \eqref{eq:apriorib}

    \begin{equation} \label{eq:aprioriaprox}
    \frac{1}{2}\frac{d}{dt}\int_\T |b_n|^2+\int_\T (\partial_x u_n)^2=-\varepsilon \int_\T|\partial_x b_n|^2.
    \end{equation}

    This means that $\|b_n\|_{L^2}$ is bounded in $L^\infty(I,L^2(\T))$ by a constant independent of $\varepsilon$ or $n$. As a result, up to the choice of a subsequence, $b_n\xrightharpoonup{\star} b$ in $L^\infty(I;L^2(\T))$. Analogously, we also find that $\partial_x b_n$ is also bounded in $L^2(I;L^2(\T))$ by a constant (this time, depending on $\varepsilon$), so (again, up to subsequence) $b_n\rightharpoonup b$ in $L^2(I,\dot{H}^2(\T))$. Regarding the velocity $u$, as it has zero mean, and as $b_n$ is bounded in $L^\infty(I;L^2(\T))$, we deduce immediately that $u_n$ is bounded in $L^\infty(I;H^1(\T))$. To deal with these non-linear terms, we resort to the Aubin-Lions' Lemma \cite{Aubin1963 }. Note that we have a chain of compact embeddings $H^1(\T)\hookrightarrow L^2(\T)\hookrightarrow H^{-1}(\T)$. Furthermore, if $\varphi$ is an arbitrary element in $H^1$, then 

    $$\int_{\T}\partial_t b(t,x) \varphi(t,x)\,\dx=-\varepsilon\int_{\T} \partial_x b(t,x)\partial_x \varphi(t,x)\,\dx-\int_{\T} u_n(t,x) b_n(t,x)\partial_x \varphi(t,x)\,\dx.$$

    Therefore, 

    $$\|\partial_t b\|_{H^{-1}}\leq \varepsilon \|\partial_x b_n\|_{L^2}+\|u_n\|_{L^\infty}\|b_n\|_{L^2}.$$

    Since $\|u_n\|_{L^\infty}\leq \|b_n\|_{L^2}$, and $\|b_n(t,\cdot)\|_{L^2}^2$ is a decreasing function, we conclude that 

    $$\|\partial_t b_n\|_{L^2(I;H^{-1})}\leq C,$$
    with $C$ only depending on $T$ and $\varepsilon$, not on $n$. Due to Aubin-Lions Lemma, up to subsequence, $b_n\rightarrow b$ in $L^2(I;L^2(\T))$. This also implies the convergence $u_n\rightarrow u$ in $L^2(I;H^1(\T))$.  This means that the limit is a weak solution $b\in L^\infty(I;L^2(\T))\cap L^2(I;H^1(\T))$ with derivative $\partial_t u\in L^2(I;H^{-1}(\T))$.
    
    Notice that, since we are dealing with more regular initial data, we can obtain higher regularity. Since $\mathbb{P}_n$ and $\partial_x$ commute, we obtain again the same a priori estimate as in \eqref{eq:aprioriparb}. Since $\|b_n\|_{L^2}^2$ is decreasing, we find that 

    $$\frac{d}{dt}\|\partial_x b_n\|^2_{L^2}\leq \frac{3}{2}\|b_{n,0}\|_{L^2}\|\partial_x b\|_{L^2}^2.$$

    Therefore, for a fixed time $T$, we find that 
    $\partial_xb_n$ is bounded in $L^\infty((0,T);L^2)$ by a constant depending only on $T$. Therefore, since $f\in\{ L^\infty((0,T);L^2(\T))\,:\,\|f\|_{L^\infty((0,T);L^2(\T)}\leq C\}$ is a closed convex subset of $L^2((0,T);L^2)$, by Mazur's Lemma (\textit{c.f. } \cite[Theorem 3.7]{Brezis}), the function $\partial_x b$ lies in $L^\infty((0,T);L^2)$. As we did to obtain \eqref{eq:aprioriaprox}, we can obtain a version of the same estimate as in estimate \eqref{eq:aprioriparb}, and we find that $\partial_x^2 b_n$ is bounded in $L^2((0,T);L^2)$ by a constant only depending on $T$ and $\varepsilon$. We can go further, and see that if we use $\partial_t b_n$ as a test function in \eqref{eq:approxsystem}, we obtain

    $$\|\partial_t b_n\|_{L^2}^2+ \frac{\varepsilon}{2}\frac{d}{dt}\|\partial _xb_n\|^2_{L^2}\leq \|b_0\|_{L^2}^2\|\partial_x b\|_{L^2}\|\partial_t b\|_{L^2}.$$

    By means of Young's inequality, we find that $\partial_t b_n$ is bounded in $L^2(I;L^2(\T))$, and that $\partial_x b_n$ is bounded in $L^\infty(I;L^2(\T))$. In conclusion, we obtain that the function $b$ obtained as the limit of a subsequence of $b_n$ is in $L^2([0,T];H^2(\T))$. Furthermore, it holds that the derivatives $\partial_t b$ lies in $L^2([0,T];L^2(\T))$. Therefore, due to Lions-Magenes Theorem \cite[Theorem II.5.12]{BF2013}, $b\in C([0,T];L^2(\T))$. 

    In order to prove uniqueness, assume that we have two solutions for \eqref{eq:magneticrelaxation}, $b^1$ and $b^2$, in $C(I;L^2(\T))\cap L^2(I;H^2(\T))$. Then, integrating by parts
\small
    $$\frac{1}{2}\frac{d}{dt}\int_{\T}|b^1(t,x)-b^2(t,x)|^2\dx= -\varepsilon \int_{\T}|\partial_x (b^1-b^2)|^2\dx+\int_{\T}u^2(b^1-b^2)(\partial_x b^1-\partial_x b^2)-\int_{\T}(u^1-u^2)b^2\partial_x(b^1-b^2).$$
    \normalsize

    Now, $u^2(t,x)$ vanishes at $x=0$, and $\partial_x u^2=\frac{1}{2}\left(|b|^2-\|b(t,\cdot)\|_{L^2(\T)}\right)$. Therefore, 

    $$\int_{\T}u^2(b^1-b^2)(\partial_x b^1-\partial_x b^2)\leq C\|b^2(t,\cdot)\|_{L^2(\T)}\|b^1-b^2\|_{L^2(\T)}\|\partial_x b^1-\partial_x b^2\|_{L^2(\T)}$$
    Analogously, 

    $$\int_{\T}(u^1-u^2)b^2\partial_x(b^1-b^2).\leq \|b^2\|_{L^2}\|b^1-b^2\|_{L^2}\|b^1+b^2\|_{L^2}\|\partial_xb^1-\partial_xb^2\|_{L^2}.$$
We then conclude, using Young's inequality, that 

    $$\frac{1}{2}\frac{d}{dt}\int_{\T}|b^1(t,x)-b^2(t,x)|^2\dx\leq C\|b^2\|_{L^2}^2\|b^1+b^2\|_{L^2}^2\|b^1-b^2\|_{L^2}^2.$$

    By means of Gronwall's lemma, uniqueness follows. 
    
    In order to prove higher regularity, we can perform higher order estimates for the solutions of \eqref{eq:approxsystem}. Differentiating the equation, we find that 

    $$\partial_t \partial_x^kb_n=\varepsilon\partial_x^{k+2}b_n-\P_n\partial_x^{k+1} \left(u_n b_n\right).$$
    We now test against $\partial_x^kb_n$ and integrate by parts, resulting in 

    $$\frac{1}{2}\frac{d}{dt}\|\partial_x^kb_n\|_{L^2}^2+\varepsilon\|\partial_x^{k+1}b_n\|_{L^2}^2=\int_{\T}\partial_x^{k+1} b_n\cdot \partial_x^{k}(u_nb_n)\dx.$$

    Due to the embedding $W^{1,1}(\T)\hookrightarrow L^\infty$, we conclude that 

    $$\frac{1}{2}\frac{d}{dt}\|\partial_x^kb_n\|_{L^2}^2+C\|\partial_x^{k+1}b_n\|_{L^2}^2\leq \|b_n\|_{H^k}^2\|b_n\|_{H^{k-1}}^2.$$
    In integral form, the inequality above reads 

    \begin{equation}\label{eq:aprioriderivatives}
    \frac{1}{2}\|\partial_x^kb_n(t,\cdot)\|_{L^2(\T)}^2-\frac{1}{2}\|\partial_x^kb_n(t',\cdot)\|_{L^2(\T)}^2+C\int_{t'}^t\|\partial_x ^{k+1}b_n(s,\cdot)ds\leq \int_{t'}^t\|b_n(s,\cdot)\|_{H^k}^2\|b_n(s,\cdot)\|_{L^2(\T)}^{k-1}
    \end{equation}
    We can now prove by induction that $b\in C^\infty((0,T)\times \T)$. To that end, take $t>0$. Since $b\in L^2(I;H^2(\T))$, there is some $t_2<t$ such that $b(t_2,\cdot)\in H^2(\T)$. Now, due to \eqref{eq:aprioriderivatives} for $k=3$, and since $\partial_x b_n$ is bounded in $L^2(I;L^2(\T))\cap L^\infty (I;L^2(\T))$, we find that the limit $b$ is bounded in $L^2([t_2,T];H^3(\T))\cap L^\infty([t_2,T];H^2(\T))$. Therefore, there exists some $t_2<t_3<t$ such that $b(t_3,\cdot)\in H^3(\T)$. We can then repeat the argument inductively to find that $b\in L^\infty_{loc}(I;H^k(\T))$ for every $k\in \mathbb{N}$. Finally, to obtain regularity in time, we notice that 

    $$\partial_t b_n=\varepsilon \partial_x^2b_n-\P_n\partial_x (u_nb_n).$$

    Since the right hand side is bounded in $L^{\infty}_{loc}(I;H^k(\T))$ for every $k\in\mathbb{N}$, we deduce that $b\in W^{1,\infty}_
    {loc}(I;H^k(\T))$ for all $k\in \mathbb{N}$. It is then immediate, repeatedly differentiating the equation  above with respect to $t$, that $b\in C^\infty((0,T)\times \T)$. 
\end{proof}

	\section{Convergence to the Limit Problem}\label{sec:Munchhausenestimates}

    In this section we prove the existence of a limit function when we take $\varepsilon$ tending to zero. We will also give a characterization of such function in terms of the PDE it satisfies. One interesting feature of this limit equation is the fact that its solutions can exhibit blow up in finite time, as we will show in Section \ref{sec:limiteqandblowup}. 

    The main Theorem in this section reads as follows: 

    \begin{teor}\label{teor:main}
        Let $b_0\in H^2(\T)$ such that $|b_0|>c_0>0$ and let $b_\varepsilon$ be the global solution of the system \eqref{eq:magneticrelaxation} with initial data $b_0$. Consider the time-rescaled functions $b_\varepsilon (\tau,x):=b_\varepsilon (\varepsilon t,x)$ and denote them by $b_\varepsilon$, in a slight abuse of notation. Then, the sequence $(b_\varepsilon)_{\varepsilon>0}$ converges in $C^1_{loc}((0,T^\star)\times \T)$ to the solution $b$ of the system \eqref{ieq:LimitDynamics} with initial value $S(b_0)$, with the operator $S$ as in Definition \ref{defi:nonlinearoperator}, and $T^\star$ the maximum time of existence of such solution.
    \end{teor}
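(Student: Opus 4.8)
The plan is to prove convergence by treating the two time scales separately and gluing the resulting estimates. Since $b_0 \in H^2(\T)$, Theorem \ref{teor:wellposednesspar} gives a global solution $b_\varepsilon$, smooth for positive times, so all the manipulations below are legitimate. Fix a small constant $K>0$ (to be constrained below) and set $t_\varepsilon = K\log(\varepsilon^{-1})$, so that in the rescaled variable $\tau_\varepsilon := \varepsilon t_\varepsilon = \varepsilon K \log(\varepsilon^{-1}) \to 0$ while $t_\varepsilon \to \infty$. On the \emph{viscous} interval $[0,t_\varepsilon]$ we compare $b_\varepsilon$ with the solution $\widetilde b$ of the perfectly conducting system \eqref{ieq:Hyperbolic}, which by Proposition \ref{prop:binfty} relaxes exponentially fast to $S(b_0)$; on the \emph{diffusive} interval $\tau\in[\tau_\varepsilon,\overline T]$ (for an arbitrary fixed $\overline T<T^\star$) we run the singular-perturbation analysis of \eqref{ieq:SingPert} with data that, thanks to the first step, is $o(1)$-close to $S(b_0)$.

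\textbf{Step 1 (viscous scale, M\"unchhausen bootstrap).} I would first run a bootstrap on $[0,t_\varepsilon]$: assume \emph{a priori} that $|b_\varepsilon(t,x)| \ge c_0/2$ there. Then the equation for $\psi_\varepsilon = \partial_x u_\varepsilon$, which is the $\varepsilon$-perturbed analogue of \eqref{ieq:HypEnergy}, has damping coefficient $2|b_\varepsilon|^2 \ge c_0^2/2$ and $O(\varepsilon)$ forcing (coming from $\varepsilon\partial_x^2 b_\varepsilon$, controlled once $\|b_\varepsilon\|_{H^2}$ is bounded), so that $\|\psi_\varepsilon(t)\|_{L^2}$ and, via the analogue of Proposition \ref{prop:decayunondiff}, $\|\psi_\varepsilon(t)\|_{L^\infty}$ decay exponentially with rate $\gtrsim c_0^2$ down to an $O(\varepsilon)$ reservoir. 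Integrating \eqref{eq:apriorib} and the $H^1$ analogue \eqref{eq:aprioriparb} — where the only dangerous term $\tfrac32\int\psi_\varepsilon|\partial_x b_\varepsilon|^2$ is now controlled because $\int_0^\infty\|\psi_\varepsilon\|_{L^\infty}\,dt$ is bounded uniformly in $\varepsilon$ — one gets $\|b_\varepsilon\|_{L^\infty_t H^2}$ bounded uniformly on $[0,t_\varepsilon]$ and $\bigl||b_\varepsilon(t,x)|^2 - \|b_0\|_{L^1}^2\bigr| < c_0^2/2$, which closes the bootstrap since $\|b_0\|_{L^1} > c_0$. With these uniform bounds in hand, the difference $d_\varepsilon = b_\varepsilon - \widetilde b$ satisfies an $L^2$ energy inequality of the form $\frac{d}{dt}\|d_\varepsilon\|_{L^2}^2 \le C\|d_\varepsilon\|_{L^2}^2 + \varepsilon\|\partial_x^2 b_\varepsilon\|_{L^2}\|d_\varepsilon\|_{L^2}$ with $C$ \emph{fixed} (thanks precisely to the uniform $H^1$/decay control, so the Grönwall constant does not itself grow in time); since $d_\varepsilon(0)=0$ and $\varepsilon\int_0^{t_\varepsilon}\|\partial_x^2 b_\varepsilon\|_{L^2}^2 \le C$, Grönwall gives $\|d_\varepsilon(t_\varepsilon)\|_{L^2} \le C\,\varepsilon^{1/2-CK}(\log\varepsilon^{-1})^{1/2}$, which tends to $0$ provided $K < 1/(2C)$. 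Combining this with $\|\widetilde b(t_\varepsilon,\cdot)-S(b_0)\|_{L^2} \lesssim e^{-c_0 t_\varepsilon} = \varepsilon^{c_0 K}$ from Proposition \ref{prop:binfty},
\begin{equation*}
    \|b_\varepsilon(t_\varepsilon,\cdot) - S(b_0)\|_{L^2} \le \|d_\varepsilon(t_\varepsilon)\|_{L^2} + \|\widetilde b(t_\varepsilon,\cdot) - S(b_0)\|_{L^2} \xrightarrow[\varepsilon\to 0]{} 0,
\end{equation*}
with $\|b_\varepsilon(t_\varepsilon,\cdot)\|_{H^2}$ bounded and $|b_\varepsilon(t_\varepsilon,\cdot)| \ge c_0/2$.

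\textbf{Step 2 (diffusive scale, uniform bounds and compactness).} In the rescaled time $\tau$ the system is \eqref{ieq:SingPert}; write $U_\varepsilon = \varepsilon^{-1}u_\varepsilon$, $\Psi_\varepsilon = \partial_x U_\varepsilon = \varepsilon^{-1}\psi_\varepsilon$. Starting from the data at $\tau=\tau_\varepsilon$ provided by Step 1, I would establish on $[\tau_\varepsilon,\overline T]$ the uniform-in-$\varepsilon$ estimates: (i) $|b_\varepsilon| \ge c_0/3$, by a maximum-principle argument on $|b_\varepsilon|^2$ as in Proposition \ref{prop:decayunondiff} together with the fact that the limiting modulus $R(\tau)$ stays bounded away from $0$ on $[0,\overline T]$ because $\overline T < T^\star$ and $R' = -R\int(\partial_x\theta)^2$ with $\theta \in L^2_\tau H^1_x$ there (Theorem \ref{it:Limit}); (ii) the \emph{slaving} estimate: testing the (rescaled) $\psi_\varepsilon$-equation against $\psi_\varepsilon$ and using the damping $\varepsilon^{-1}|b_\varepsilon|^2\psi_\varepsilon$ with (i) shows that $\Psi_\varepsilon = \varepsilon^{-1}\psi_\varepsilon = \tfrac1{|b_\varepsilon|^2}\bigl(-|\partial_x b_\varepsilon|^2 + \int|\partial_x b_\varepsilon|^2\bigr) + O(\varepsilon)$ in $L^\infty_\tau L^2_x$, i.e. exactly \eqref{ieq:MultiScalePsi}, so in particular $U_\varepsilon$ is bounded in $L^\infty_\tau H^1_x$; (iii) uniform parabolic energy bounds $\|b_\varepsilon\|_{L^\infty_\tau H^2_x} + \|b_\varepsilon\|_{L^2_\tau H^3_x} \le C(\overline T)$, obtained as in Theorem \ref{teor:wellposednesspar} — the transport term $\partial_x(U_\varepsilon b_\varepsilon)$ is now harmless precisely because $U_\varepsilon = O(1)$ — and bootstrapped to $L^\infty_{loc}((\,0,\overline T\,];H^k)$ for all $k$ by parabolic smoothing. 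With (i)–(iii) and $\partial_\tau b_\varepsilon$ bounded in $L^2_\tau H^{-1}_x$, Aubin–Lions yields a subsequence with $b_\varepsilon \to b$ strongly in $C([\delta,\overline T];H^s)$ for every $\delta\in(0,\overline T)$ and $s<2$, and $U_\varepsilon \to U$ in $L^2_\tau H^1_x$. Passing to the limit in \eqref{ieq:SingPert} and in \eqref{ieq:MultiScalePsi} identifies $(b,U)$ as a solution of \eqref{ieq:LimitDynamics} on $(0,\overline T]$ with $|b| = R(\tau)$ and initial value $S(b_0)$ (since $\tau_\varepsilon\to 0$ and $b_\varepsilon(\tau_\varepsilon,\cdot)\to S(b_0)$ from Step 1). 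By the uniqueness statement in Theorem \ref{it:Limit} the limit does not depend on the subsequence, so the whole family converges. Finally, the uniform $L^\infty_{loc}((\,0,\overline T\,];H^k)$ bounds promote the convergence to $C^0_{loc}$ in space; using the equation $\partial_\tau b_\varepsilon = \partial_x^2 b_\varepsilon - \partial_x(U_\varepsilon b_\varepsilon)$ to control $\partial_\tau b_\varepsilon$ gives $C^1_{loc}((\,0,\overline T\,]\times\T)$ convergence, and since $\overline T<T^\star$ was arbitrary, $C^1_{loc}((0,T^\star)\times\T)$ convergence, as claimed.

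\textbf{Main obstacle.} The delicate point is Step 1: reconciling the $O(\varepsilon)$ size of the resistive perturbation with the exponential-in-time amplification inherent to Grönwall over a window of length $\log(\varepsilon^{-1})$. This works only because the damping rate in \eqref{ieq:HypEnergy}–\eqref{eq:energyestimatepsi} is bounded below by a positive multiple of $c_0^2$: the bootstrap keeps $\|b_\varepsilon\|_{H^1}$ and $\|\psi_\varepsilon\|_{L^1_t L^\infty_x}$ uniformly bounded, which in turn keeps the stability constant $C$ fixed, so the amplification $e^{Ct_\varepsilon}=\varepsilon^{-CK}$ is beaten by the resistive error $\varepsilon^{1/2}$ as soon as $K$ is chosen small enough (while still $t_\varepsilon\to\infty$, which any $K>0$ ensures). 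A secondary, structural difficulty is the degeneration of the coefficient $1/|b|^2$ in \eqref{ieq:LimitDynamics} as $\tau\uparrow T^\star$, which forces us to work on compact subintervals $[\delta,\overline T]\subset(0,T^\star)$ where $R(\tau)$ is bounded away from $0$ — this is why the conclusion is local convergence on $(0,T^\star)$ rather than global.
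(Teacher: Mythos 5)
Your overall architecture (viscous-scale comparison with the hyperbolic system to produce the datum $S(b_0)$, then uniform bounds, compactness, identification of the limit system, and uniqueness to dispense with subsequences on the diffusive scale) is the same as the paper's, and your Step 1 is essentially the content of Lemma \ref{lemma:initialvalue} and Proposition \ref{prop:initialvalue} (the paper in fact gets the cleaner bound $C\varepsilon t$ on all of $[0,\delta/\varepsilon]$ by exploiting the exponential damping, so no restriction to a logarithmic window is needed). However, two of your key steps are not justified as stated. First, the slaving estimate (ii): testing the rescaled $\psi_\varepsilon$-equation against $\psi_\varepsilon$ only yields $\frac{1}{\varepsilon}\int_0^{\overline{T}}\int_\T |b_\varepsilon|^2\psi_\varepsilon^2\lesssim 1$, i.e. $\Psi_\varepsilon = O(\varepsilon^{-1/2})$ in $L^2_{\tau,x}$, which is far weaker than the claimed identity $\Psi_\varepsilon=\frac{1}{|b_\varepsilon|^2}\bigl(\int|\partial_x b_\varepsilon|^2-|\partial_x b_\varepsilon|^2\bigr)+O(\varepsilon)$ in $L^\infty_\tau L^2_x$. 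Solving the $\Psi_\varepsilon$-equation algebraically, the error contains $\varepsilon\bigl(\partial_\tau\Psi_\varepsilon-\partial_x^2\Psi_\varepsilon+u_\varepsilon\partial_x\Psi_\varepsilon\bigr)$, and you have no uniform control of $\partial_\tau\Psi_\varepsilon$ or $\partial_x^2\Psi_\varepsilon$; this is precisely why the paper identifies the limit of $\partial_x U_\varepsilon$ through Duhamel's formula for $\Psi_\varepsilon$ combined with the approximate-identity Lemma \ref{lemma:approxidentity}, which only uses the $C^0$ convergence of $\partial_x b_\varepsilon$ and the decay bounds of Proposition \ref{prop:munchhausen}. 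Some argument of this type (or an equivalent boundary-layer analysis) is indispensable; plain energy testing does not deliver \eqref{ieq:MultiScalePsi}.

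Second, your uniform estimates (i)--(iii) on the whole interval $[\tau_\varepsilon,\overline{T}]$ cannot be obtained in one stroke, and your justification of (i) is circular: you invoke the positivity of the limiting modulus $R(\tau)$ on $[0,\overline{T}]$ to control $|b_\varepsilon|$ from below, but the limit only exists once the uniform bounds (hence compactness) are available. Moreover, for $\varepsilon>0$ the equation \eqref{eq:b2} for $|b_\varepsilon|^2$ carries the source $-2\varepsilon|\partial_x b_\varepsilon|^2$, so the maximum principle alone does not preserve the lower bound: one needs $\|\partial_x b_\varepsilon\|_{L^\infty}$, which needs the decay of $\psi_\varepsilon$, which needs the lower bound on $|b_\varepsilon|$ — this interlocking is what the Münchhausen bootstrap of Proposition \ref{prop:munchhausen} resolves, and it only yields the bounds on a rescaled interval of length $\delta$ determined by the data, not up to $\overline{T}$. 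To reach an arbitrary $\overline{T}<T^\star$ you must interleave: prove convergence on the $\delta$-interval, use the limit solution's bounds $|b|\geq d_0$ and $\|\partial_x b\|_{L^\infty}\leq K$ on $[0,\overline{T}]$ (this is where $\overline{T}<T^\star$ enters) to restart the bootstrap near the end of that interval, and iterate via a continuation/contradiction argument on the infimum of exit times, as in Proposition \ref{prop:main}. Your proposal gestures at the right ingredients but omits this structure, and without it the passage from the local interval to $(0,T^\star)$ does not go through.
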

    written with
    
    In order to prove the result, we need to improve the bounds we obtain from the estimates in \eqref{eq:apriorib} \eqref{eq:aprioriparb}. We do this in Proposition \ref{prop:munchhausen}, where we prove that $b_\varepsilon$ is bounded in $L^\infty((0,\delta)\times \T)$ for some small $\delta>0$. This will provide compactness to prove the existence of a limit object for times up to $\delta$. As we show in Proposition \ref{prop:limiteq}, such limit object is a solution of the system \eqref{ieq:LimitDynamics}. Finally, we show in Prop \ref{prop:main} that this convergence can be extended up to the time of blow up, thus finishing the proof of Theorem \ref{teor:main}
    
    \begin{lemma}\label{lemma:lowerboundb}
		Consider $b$ the solution of the magnetic relaxation problem \eqref{eq:magneticrelaxation} with initial data $b_0\in H^2(\T)$ satisfying that $c_0:=\min\{|b_0(x)|^2\}>0$. Now, assume that there exists a time $T_\varepsilon>0$ and a constant $C_\varepsilon>0$, depending on $\varepsilon$, such that $\|\partial_x b(t,x)\|^2_{L^\infty}\leq C_\varepsilon$ for $t<T_\varepsilon$. Then, for 
		
	\begin{equation}\label{eq:timelapse}
    t\leq \textit{\textsf{T}}:=\min\left\{T_\varepsilon,\frac{c_0}{4C_\varepsilon\varepsilon}\right\},
    \end{equation}
		the magnetic field $b(t,x)$ satisfies
		
		$$|b(t,x)|^2\geq c_0/2.$$
	\end{lemma}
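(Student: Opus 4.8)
The plan is to derive a closed scalar parabolic equation for $w:=|b|^2$ and then run a pointwise minimum principle. Testing the first equation in \eqref{eq:magneticrelaxation} against $b$ and using the identities $\partial_x w = 2\,b\cdot\partial_x b$ and $b\cdot\partial_x^2 b = \tfrac12\partial_x^2 w - |\partial_x b|^2$, together with $\partial_x u = \tfrac12\big(w - \|b\|_{L^2}^2\big)$, one obtains
$$\partial_t w + u\,\partial_x w + 2(\partial_x u)\,w = \varepsilon\,\partial_x^2 w - 2\varepsilon\,|\partial_x b|^2 .$$
Since $b\in C^\infty((0,T)\times\T)$ by Theorem \ref{teor:wellposednesspar}, on every compact subinterval of $(0,\mathsf{T})$ the envelope $m(t):=\min_{x\in\T} w(t,x)$ is Lipschitz in $t$, hence differentiable a.e.; moreover, at a.e.\ $t$ one has $m'(t)=\partial_t w(t,x_t)$ for any point $x_t$ realizing the minimum, while $\partial_x w(t,x_t)=0$ and $\partial_x^2 w(t,x_t)\ge 0$ since the torus has no boundary.

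The crucial point is the sign of the zeroth-order term at $x_t$. Because $x_t$ is a spatial minimum, $w(t,x_t)=m(t)\le \int_\T w(t,\cdot)\dx = \|b(t,\cdot)\|_{L^2}^2$, so $\partial_x u(t,x_t)=\tfrac12\big(m(t)-\|b(t,\cdot)\|_{L^2}^2\big)\le 0$, whence $-2(\partial_x u)(t,x_t)\,w(t,x_t)\ge 0$. Evaluating the equation above at $(t,x_t)$ and dropping the nonnegative terms $\varepsilon\,\partial_x^2 w(t,x_t)$ and $-2(\partial_x u)(t,x_t)\,w(t,x_t)$ leaves
$$m'(t)\ \ge\ -2\varepsilon\,|\partial_x b(t,x_t)|^2\ \ge\ -2\varepsilon\,\|\partial_x b(t,\cdot)\|_{L^\infty}^2\ \ge\ -2\varepsilon C_\varepsilon \qquad\text{for }\ 0<t<T_\varepsilon,$$
by hypothesis. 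Integrating in time gives $m(t)\ge m(0^+) - 2\varepsilon C_\varepsilon\, t$.

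It then remains to identify $m(0^+)=c_0$. The family $\{b(t,\cdot)\}_{0<t<T_\varepsilon}$ is equi-Lipschitz in $x$ (with constant $\sqrt{C_\varepsilon}$, by the hypothesis on $\partial_x b$) and $L^2$-bounded, hence uniformly bounded; by Arzelà--Ascoli every sequence $t_n\to 0^+$ has a subsequence along which $b(t_n,\cdot)$ converges uniformly, and that limit must coincide with the $L^2$-limit $b_0$ since $b\in C([0,T];L^2)$. Thus $b(t,\cdot)\to b_0$ uniformly as $t\to 0^+$, so $m(t)\to \min_{x}|b_0(x)|^2 = c_0$. Combining, $|b(t,x)|^2\ge m(t)\ge c_0 - 2\varepsilon C_\varepsilon t$ for all $0\le t<T_\varepsilon$, and for $t\le \mathsf{T}=\min\{T_\varepsilon,\tfrac{c_0}{4\varepsilon C_\varepsilon}\}$ this yields $|b(t,x)|^2\ge c_0 - 2\varepsilon C_\varepsilon\cdot\tfrac{c_0}{4\varepsilon C_\varepsilon} = c_0/2$, which is the claim. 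I expect the only delicate points to be the two ``soft'' steps — the a.e.\ differentiability of $m$ with $m'(t)=\partial_t w(t,x_t)$, and the uniform convergence $b(t,\cdot)\to b_0$ as $t\to0^+$ — rather than the computation, whose entire content is that the \textit{a priori} sign-indefinite term $2(\partial_x u)w$ becomes favourable exactly at a spatial minimum of $|b|^2$.
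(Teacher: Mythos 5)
Your proof is correct and follows essentially the same route as the paper: both derive the scalar equation for $|b|^2$, evaluate it at a spatial minimum where $\partial_x u=\tfrac12\big(|b|^2-\|b\|_{L^2}^2\big)\le 0$, use the hypothesis $|\partial_x b|^2\le C_\varepsilon$ to obtain the linear lower bound $|b|^2\ge c_0-2\varepsilon C_\varepsilon t$, and then restrict to $t\le c_0/(4\varepsilon C_\varepsilon)$. The only difference is technical: the paper runs the minimum principle with the barrier $f(t)=c_0-(\alpha+2\varepsilon C_\varepsilon)t$ and a first-touching-point contradiction, letting $\alpha\to 0$, whereas you differentiate the envelope $m(t)=\min_x |b|^2$ almost everywhere (Hamilton's trick) and identify $m(0^+)=c_0$ by continuity up to $t=0$; both implementations are standard and equally valid here.
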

	\begin{proof}
		First of all, we study the equation for $|b(t,x)|^2$. To that end, we only need to multiply the system by $b$. Then, $|b|^2$ satisfies
		
		\begin{equation}\label{eq:b2}
			\partial_t |b|^2+u\partial_x( |b|^2)+2\partial_x u|b|^2=\varepsilon \partial_x^2(|b|^2)-2\varepsilon|\partial_x b|^2.
		\end{equation}
		
		Take $\alpha>0$ and define the function $f$ by
		
		$$f(t)=c_0-(\alpha+2\varepsilon C_\varepsilon)t.$$
		Now,   $|b|^2-f$ satisfies the PDE 
		
		\begin{equation}\label{eq:subsol} 
			\partial_t (|b|^2-f)+u\partial_x (|b|^2-f)+2\partial_x u|b|^2=\varepsilon \partial_x ^2(|b|^2-f)+2\varepsilon(C_\varepsilon-|\partial_x b|^2)+\alpha> \partial_x ^2(|b|^2-f).
		\end{equation}
		
		Now, we prove that that $|b|^2-f\geq 0$ for every positive time $t\leq\textit{\textsf{T}}$. We argue by contradiction. Therefore, since $|b|^2\geq f$ at time $t=0$, there is  $(t_\star,x_\star)$ with $\textit{\textsf{T}}\geq t_\star>0$ such that 
        
        $$|b(t_\star,x_\star)|^2=\text{argmin}\{|b(t,x)|^2\,:\, t\leq t_\star\}<f(t_\star).$$ 
        Now, at such point, it holds that 
		
		\begin{itemize}
			\item $\partial_t (|b|^2-f)\leq 0$.
			\item $\partial_x(|b|^2-f)=0.$
			\item $\partial_x^2(|b|^2-f)\geq 0.$
			\item If $(t_\star,x_\star)$ is a minimum, then $x_\star$ is a minimum for $|b(t_\star,\cdot)|^2$, so it also holds that $\partial_x u\leq 0$ at $(t_\star,x_\star)$.
		\end{itemize}
	
		Furthermore, since we are at $t\leq \textit{\textsf{T}}\leq T_\varepsilon$, it holds that $|\partial_x b(t_\star,x_\star)|^2\leq C_\varepsilon$. As a result, due to \eqref{eq:subsol}, we obtain $0>0$, which yields a  contradiction. 
		
		Thus, $|b|^2+(\alpha+2\varepsilon C)t-c_0\geq 0$. Taking $\alpha\rightarrow 0$, we find that $|b|^2\geq -2\varepsilon C_\varepsilon t +c_0$ for $t<\textit{\textsf{T}}$. As the right hand side is a decreasing function, we find that for every $t\leq \frac{c_0}{4C_\varepsilon\varepsilon}$, it holds that $|b|^2\geq c_0/2$.
	\end{proof}
	\begin{lemma}\label{lemma:decaypsi}
		Consider $b$ the solution of \eqref{eq:magneticrelaxation}  with initial $b_0\in H^2(\T)$ satisfying $c_0:=\min\{|b_0(x)|^2\}>0$. Assume further that there exists a time $T_\varepsilon>0$ and a constant $C_\varepsilon>0$, depending on $\varepsilon$, such that $\|\partial_x b\|_{L^\infty}\leq C_\varepsilon$ for $t<T_\varepsilon$ and that $|b(t,x)|\geq c_0$ for $t\leq T_\varepsilon$. 
		
		Then, for $t\leq T_\varepsilon$,  $\psi:=\partial_x u$ satisfies
		
		\begin{equation} \label{eq:psidecay}
			-\|\psi_0\|_{L^\infty}e^{-\frac{c_0}{2}t}-\frac{2\varepsilon C_\varepsilon}{c_0}\leq \psi\leq \|\psi_0\|_{L^\infty}e^{-\frac{c_0}{2}t}+\int_0^te^{-(t-s)\frac{c_0}{2}}\left(2\|\psi(s,\cdot)\|_{L^2}^2+\varepsilon \|\partial_x b(s,\cdot)\|_{L^2}^2\right)\ds.
		\end{equation}
	\end{lemma}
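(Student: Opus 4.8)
The plan is to derive a scalar parabolic equation for $\psi=\partial_x u$ and then to trap $\psi$ between two spatially homogeneous sub‑ and supersolutions tailored to the two sides of \eqref{eq:psidecay}. \textbf{Step 1 (equation for $\psi$).} Since $\partial_x u=\psi=\frac12\big(|b|^2-\int_\T|b|^2\dx\big)$ one has $\partial_x\psi=\frac12\partial_x(|b|^2)$ and $\partial_x^2\psi=\frac12\partial_x^2(|b|^2)$. Halving the pointwise balance \eqref{eq:b2} and substituting the global energy identity \eqref{eq:apriorib}, which gives $\frac12\frac{d}{dt}\int_\T|b|^2\dx=-\int_\T\psi^2\dx-\varepsilon\int_\T|\partial_x b|^2\dx$, yields
\begin{equation*}
    \partial_t\psi+u\partial_x\psi+|b|^2\psi=\varepsilon\partial_x^2\psi+\int_\T\psi^2\dx+\varepsilon\Big(\int_\T|\partial_x b|^2\dx-|\partial_x b|^2\Big).
\end{equation*}
By Theorem~\ref{teor:wellposednesspar} the solution $b$ is smooth on $(0,T_\varepsilon)\times\T$, and the standing hypothesis $\|\partial_x b\|_{L^\infty}\le C_\varepsilon$ together with $b\in C([0,T];L^2)$ makes $b$ (hence $\psi$) continuous up to $t=0$; thus this identity holds classically and the parabolic comparison principle with the nonnegative zeroth order coefficient $|b|^2$ is available on $[0,T_\varepsilon)$.

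\textbf{Step 2 (lower bound).} Discard the nonnegative terms $\int_\T\psi^2\dx$ and $\varepsilon\int_\T|\partial_x b|^2\dx$ and use $|\partial_x b|^2\le C_\varepsilon$: then $\psi$ is a supersolution of $\partial_t v+u\partial_x v+|b|^2 v-\varepsilon\partial_x^2 v=-\varepsilon C_\varepsilon$. Consider the spatially constant $\underline v(t):=-\|\psi_0\|_{L^\infty}e^{-c_0 t/2}-\frac{2\varepsilon C_\varepsilon}{c_0}\le 0$. It satisfies $\underline v(0)\le-\|\psi_0\|_{L^\infty}\le\psi_0$, and since $\underline v\le 0$ while $|b|^2\ge c_0/2$ (the lower bound furnished by Lemma~\ref{lemma:lowerboundb}, which is the content of the hypothesis), we have $|b|^2\underline v\le\frac{c_0}{2}\underline v$, so
\begin{equation*}
    \partial_t\underline v+u\partial_x\underline v+|b|^2\underline v-\varepsilon\partial_x^2\underline v\le\frac{c_0}{2}\|\psi_0\|_{L^\infty}e^{-c_0 t/2}+\frac{c_0}{2}\underline v=-\varepsilon C_\varepsilon,
\end{equation*}
i.e. $\underline v$ is a subsolution of the same equation. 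Applying the maximum principle to $\psi-\underline v$ on $[0,T_\varepsilon)\times\T$ gives $\psi\ge\underline v$, the left inequality in \eqref{eq:psidecay}.

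\textbf{Step 3 (upper bound).} Now keep only the nonpositive term $-\varepsilon|\partial_x b|^2\le 0$ out of the right‑hand side: $\psi$ is a subsolution of $\partial_t v+u\partial_x v+|b|^2 v-\varepsilon\partial_x^2 v=g(t)$, with $g(t):=\int_\T\psi^2\dx+\varepsilon\int_\T|\partial_x b|^2\dx\le 2\|\psi(t,\cdot)\|_{L^2}^2+\varepsilon\|\partial_x b(t,\cdot)\|_{L^2}^2$ a nonnegative function of $t$ only. Let $\overline v(t)$ denote the right‑hand side of \eqref{eq:psidecay}, i.e. the solution of $\dot{\overline v}+\frac{c_0}{2}\overline v=2\|\psi\|_{L^2}^2+\varepsilon\|\partial_x b\|_{L^2}^2$ with $\overline v(0)=\|\psi_0\|_{L^\infty}$; then $\overline v\ge 0$, $\overline v(0)\ge\psi_0$, and since $|b|^2\overline v\ge\frac{c_0}{2}\overline v$,
\begin{equation*}
    \partial_t\overline v+u\partial_x\overline v+|b|^2\overline v-\varepsilon\partial_x^2\overline v\ge\dot{\overline v}+\frac{c_0}{2}\overline v\ge g(t),
\end{equation*}
so $\overline v$ is a supersolution; the maximum principle applied to $\psi-\overline v$ gives $\psi\le\overline v$, the right inequality in \eqref{eq:psidecay}.

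\textbf{Main difficulty.} The argument is just two applications of the comparison principle, so the only delicate points are the bookkeeping in Step~1 (combining the pointwise law \eqref{eq:b2} with the global law \eqref{eq:apriorib}, and matching the sign of $|b|^2 v$ against the signs of the barriers $\underline v\le 0$, $\overline v\ge 0$) and the justification of the comparison principle at $t=0$: a priori $b$ is only known to be in $C([0,T];L^2)\cap L^2(0,T;H^2)$, and it is precisely the assumption $\|\partial_x b\|_{L^\infty}\le C_\varepsilon$ on $[0,T_\varepsilon)$ — which also bounds $\|b\|_{L^\infty}$ and hence gives $b\in C([0,T_\varepsilon);C(\T))$ — that yields the continuity needed to start the comparison at $t=0$; alternatively one runs the estimates on $[\delta,T_\varepsilon)$ and lets $\delta\to 0^+$, using $\psi_0\in L^\infty$.
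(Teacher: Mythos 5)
Your proposal is correct and follows essentially the same route as the paper: you derive the same equation for $\psi$ by combining the pointwise balance \eqref{eq:b2} with the integrated energy identity, and then trap $\psi$ between spatially homogeneous barriers solving ODEs with decay rate $c_0/2$ (your $\overline v$ is exactly the paper's $f$, and your $\underline v$ is the constant-shifted variant of the paper's $g$, which still yields the stated bound), concluding by the same comparison/maximum-principle argument, with the lower bound $|b|^2\geq c_0/2$ used exactly as in the paper's proof. Your closing remarks on regularity up to $t=0$ are, if anything, more careful than what the paper records.
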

	
	\begin{proof}
		As in Lemma \ref{lemma:lowerboundb}, this follows from maximum principle arguments. We will not delve into the full proof, as it follows the same ideas as in Lemma  \ref{lemma:lowerboundb}. Once we have the equation \eqref{eq:b2} for $|b|^2$, we can obtain an equation for $\psi$. Indeed, integrating \eqref{eq:b2} with respect to the space variable, we find that 
		
		\begin{equation}\label{eq:apriori}
			\frac{d}{dt}\int_\T |b|^2+\int_\T \psi^2+\varepsilon\int_\T |\partial_x b|^2=0.
		\end{equation}
		We now substract this to \eqref{eq:b2}, leading to 
		
		$$\partial_t \psi+u\partial_x \psi+\psi|b|^2-\int_\T \psi^2=\varepsilon \partial_x^2 \psi+\varepsilon \left(\int_\T |\partial_x b|^2-|\partial_x b|^2\right).$$
		
		Since $|b|^2$ is bounded from below by $c_0/2$ in $t\leq T_\varepsilon$, we expect a fast decay for $\psi$. We define now the functions $f$ and $g$, as the solutions of the following ODEs
		
		$$\left\{\begin{array}{l}
			f'+\frac{c_0}{2}f=2\|\psi\|^2_{L^2}+\varepsilon \|\partial_x b\|_{L^2}^2    \\
			f(0)=\|\psi_0\|_{L^\infty}  
		\end{array}\right.\quad \text{and}\quad \left\{\begin{array}{l}
			g'+\frac{c_0}{2}g=-\varepsilon C_\varepsilon   \\
			g(0)=-\|\psi_0\|_{L^\infty}  
		\end{array}\right.$$
		This solutions can be computed explicitly, and they are given by
		
		\begin{equation*}
			\begin{split}
				f(t)&=\|\psi_0\|_{L^\infty}e^{-\frac{c_0}{2}t}+\int_0^te^{-(t-s)c_0/2}\left(2\|\psi(s,\cdot)\|_{L^2}^2+\varepsilon \|\partial_x b(s,\cdot)\|_{L^2}^2\right)\ds\\
				g(t)&=-\|\psi_0\|_{L^\infty}e^{-\frac{c_0}{2}t}-\frac{2\varepsilon C_\varepsilon}{c_0}\left(1-e^{-\frac{c_0}{2}t}\right).
			\end{split}
		\end{equation*}
		To conclude the statement, we only need to use the same arguments based on the maximum principle as those in Lemma \ref{lemma:lowerboundb}, which leads to the inequalities
		
		$$g(t)\leq \psi(t,x)\leq f(t),$$
		for every $t\leq T_\varepsilon$. 
	\end{proof}
	\begin{obs}
	 Note that Lemma \ref{lemma:decaypsi} above implies that $\|\psi\|_{L^\infty}$ remains bounded in $[0,\textit{\textsf{T}}\,]$, since due to the estimates \eqref{eq:apriorib}-\eqref{eq:aprioriparb},
     
     $$\int_0^t\left(\|\psi(s,\cdot)\|_{L^2}^2+\varepsilon \|\partial_x b(s,\cdot)\|_{L^2}^2\right) \ds\leq \|b_0\|_{L^2}^2.$$
	\end{obs}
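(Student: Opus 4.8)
The statement to establish is that $\|\psi(t,\cdot)\|_{L^\infty}$ stays bounded uniformly on the interval $[0,\textit{\textsf{T}}\,]$ from \eqref{eq:timelapse}, and the plan is simply to feed the energy identity \eqref{eq:apriorib} into the two-sided pointwise bound \eqref{eq:psidecay} supplied by Lemma \ref{lemma:decaypsi}. First I would observe that on $[0,\textit{\textsf{T}}\,]$ the hypotheses of Lemma \ref{lemma:decaypsi} are in force: the bound $\|\partial_x b\|_{L^\infty}\le C_\varepsilon$ is one of the standing assumptions, and the lower bound on $|b|$ needed to invoke that lemma is exactly what Lemma \ref{lemma:lowerboundb} provides on $[0,\textit{\textsf{T}}\,]$ (this is the very reason $\textit{\textsf{T}}$ is defined as a minimum involving $c_0/(4C_\varepsilon\varepsilon)$). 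Hence \eqref{eq:psidecay} holds there, and it suffices to bound the two explicit barriers $f$ and $g$.

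The lower barrier is harmless: directly from its closed form,
\[
 |g(t)|=\|\psi_0\|_{L^\infty}e^{-\frac{c_0}{2}t}+\frac{2\varepsilon C_\varepsilon}{c_0}\big(1-e^{-\frac{c_0}{2}t}\big)\le\|\psi_0\|_{L^\infty}+\frac{2\varepsilon C_\varepsilon}{c_0}\qquad\text{for all }t\ge0,
\]
and $\|\psi_0\|_{L^\infty}=\tfrac12\big\||b_0|^2-\|b_0\|_{L^2}^2\big\|_{L^\infty}<\infty$ because $b_0\in H^2(\T)\hookrightarrow L^\infty(\T)$. For the upper barrier $f$ I would bound the kernel by $e^{-(t-s)c_0/2}\le1$ and then control the time integral of its forcing by \eqref{eq:apriorib}: integrating that identity on $[0,t]$ gives
\[
 \int_0^t\Big(\|\psi(s,\cdot)\|_{L^2}^2+\varepsilon\|\partial_x b(s,\cdot)\|_{L^2}^2\Big)\ds=\tfrac12\|b_0\|_{L^2}^2-\tfrac12\|b(t,\cdot)\|_{L^2}^2\le\tfrac12\|b_0\|_{L^2}^2,
\]
so that $\int_0^t\big(2\|\psi\|_{L^2}^2+\varepsilon\|\partial_x b\|_{L^2}^2\big)\ds\le\|b_0\|_{L^2}^2$, whence $f(t)\le\|\psi_0\|_{L^\infty}+\|b_0\|_{L^2}^2$. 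Combining the two barriers, $\|\psi(t,\cdot)\|_{L^\infty}\le\|\psi_0\|_{L^\infty}+\|b_0\|_{L^2}^2+\tfrac{2\varepsilon C_\varepsilon}{c_0}$ for every $t\in[0,\textit{\textsf{T}}\,]$, which is the claim; the $\varepsilon$-dependence is immaterial since $\textit{\textsf{T}}$ is itself $\varepsilon$-dependent.

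In this form there is essentially no obstacle: the only point worth noticing is that the right-hand sides appearing in \eqref{eq:psidecay} are precisely the quantities dissipated in the basic energy balance \eqref{eq:apriorib}, so nothing beyond \eqref{eq:apriorib} is needed. The genuine work is in Lemma \ref{lemma:decaypsi} itself; if one wanted to reprove \eqref{eq:psidecay} from scratch the plan would mirror Lemma \ref{lemma:lowerboundb}. Namely, one derives the transport--diffusion equation
\[
 \partial_t\psi+u\partial_x\psi+\psi|b|^2-\int_\T\psi^2=\varepsilon\partial_x^2\psi+\varepsilon\Big(\int_\T|\partial_x b|^2-|\partial_x b|^2\Big)
\]
for $\psi=\partial_x u$, exploits the favourable sign of the damping term $\psi|b|^2$ (whose coefficient $|b|^2$ is bounded below by $c_0/2$ on $[0,\textit{\textsf{T}}\,]$), and compares $\psi$ with the ODE solutions $f$ (from above) and $g$ (from below) by a maximum principle. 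The delicate step there is handling the nonlocal forcing: at an interior space--time extremum one discards the nonnegative terms $\int_\T\psi^2$ and $\varepsilon\int_\T|\partial_x b|^2$ for the subsolution, absorbs them into the forcing defining $f$ for the supersolution, and bounds $-\varepsilon|\partial_x b|^2\ge-\varepsilon C_\varepsilon$ using the standing hypothesis, with an auxiliary term $\pm\alpha t$ added to make the contradiction strict before letting $\alpha\to0^+$.
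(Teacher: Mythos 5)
Your argument is correct and is exactly the justification the paper intends: integrate the energy identity \eqref{eq:apriorib} in time to bound $\int_0^t\left(\|\psi\|_{L^2}^2+\varepsilon\|\partial_x b\|_{L^2}^2\right)\ds$ by $\tfrac12\|b_0\|_{L^2}^2$, then insert this into the two barriers of \eqref{eq:psidecay}, bounding the exponential kernel by $1$. Your observation that only \eqref{eq:apriorib} (and not \eqref{eq:aprioriparb}) is actually needed is also consistent with how the bound is used in the paper.
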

	Note that the lower bound on $\psi$ implies an exponential decay to $O(\varepsilon)$ of $\psi^-$. We can use this to prove that $\|\psi\|_{L^\infty}$ also decays exponentially, by means of the same kind of argument as the one used to derive \eqref{eq:decaylinftynormpsi}.

    In these lemmas, we have assumed that $\partial_x b$ satisfies an $L^\infty$ bound. In the following one, we will assume the opposite, \textit{i.e}. that as long as the conclusions of Lemmas \ref{lemma:lowerboundb} and  \ref{lemma:decaypsi} hold, and prove that this implies an $L^\infty$ bound for $\partial_x b$.
	
	\begin{lemma}\label{lemma:boundpartialb}
		Assume that $b$ is a solution of the magnetic relaxation system \eqref{eq:magneticrelaxation} . Assume that there exists a time $T_\varepsilon$ satisfying  that $|b(t,x)|^2\geq c_0/2$ for every $t\in [0,T_\varepsilon]$, $x\in \T$ and 
		
		$$\psi(t,x)\geq -\|\psi_0\|_{L^\infty}e^{-\frac{c_0}{2}t}-\frac{2C_\varepsilon\varepsilon}{c_0}$$
		for $t\leq T_\varepsilon$. Then, $\partial_x b$ satisfies 
		
		\begin{equation}\label{eq:boundpartialb}
        \|\partial_x b(t,\cdot)\|_{L^\infty}^2\leq \|\partial_x b_0(\cdot)\|_{L^\infty}^2\exp\left(4\varepsilon t \frac{2C}{c_0}+\frac{8}{c_0}\|\psi_0\|_{L^\infty}\right) \quad \text{for }t\leq T_\varepsilon.
        \end{equation}
	\end{lemma}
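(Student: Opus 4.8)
The plan is to follow the same template as Lemmas \ref{lemma:lowerboundb} and \ref{lemma:decaypsi}: derive a scalar parabolic inequality for $w:=|\partial_x b|^2$ in which the two standing hypotheses (the lower bound on $|b|^2$ and the lower bound on $\psi$) enter only through a favourable, integrable zeroth-order coefficient, and then conclude by comparison with a space-independent supersolution. Throughout I would work on $(0,T_\varepsilon]\times\T$, where $b$ is smooth by Theorem \ref{teor:wellposednesspar}, and recover the endpoint value at $t=0$ (namely $\|\partial_x b(0,\cdot)\|_{L^\infty}=\|\partial_x b_0\|_{L^\infty}$, legitimate since $b_0\in H^2(\T)\hookrightarrow C^1(\T)$) by continuity.

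First I would rewrite the first equation of \eqref{eq:magneticrelaxation} as $\partial_t b+u\partial_x b+\psi b=\varepsilon\partial_x^2 b$ with $\psi=\partial_x u$, differentiate it in $x$, and use the identity $\partial_x\psi=\partial_x^2 u=b\cdot\partial_x b$ to get
\begin{equation*}
\partial_t(\partial_x b)+u\,\partial_x(\partial_x b)+2\psi\,\partial_x b+(b\cdot\partial_x b)\,b=\varepsilon\,\partial_x^2(\partial_x b).
\end{equation*}
Taking the Euclidean inner product with $\partial_x b$ and using $\partial_x b\cdot\partial_x^2(\partial_x b)=\tfrac12\partial_x^2 w-|\partial_x^2 b|^2$, I would obtain the pointwise identity
\begin{equation*}
\partial_t w+u\,\partial_x w+4\psi\,w+2(b\cdot\partial_x b)^2+2\varepsilon|\partial_x^2 b|^2=\varepsilon\,\partial_x^2 w,
\end{equation*}
which is the pointwise counterpart of the energy identity \eqref{eq:aprioriparb}, and then discard the two nonnegative terms $2(b\cdot\partial_x b)^2$ and $2\varepsilon|\partial_x^2 b|^2$ to reach $\partial_t w+u\,\partial_x w+4\psi\,w\le\varepsilon\,\partial_x^2 w$. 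At this stage the hypothesis $\psi(t,x)\ge-\|\psi_0\|_{L^\infty}e^{-c_0 t/2}-\tfrac{2C_\varepsilon\varepsilon}{c_0}$ is inserted in the form $-4\psi\le h(t):=4\|\psi_0\|_{L^\infty}e^{-c_0 t/2}+\tfrac{8C_\varepsilon\varepsilon}{c_0}$, giving
\begin{equation*}
\partial_t w+u\,\partial_x w\le\varepsilon\,\partial_x^2 w+h(t)\,w\qquad\text{on }(0,T_\varepsilon]\times\T.
\end{equation*}

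To finish, I would introduce the $x$-independent function $\overline w(t):=\|\partial_x b_0\|_{L^\infty}^2\exp\!\big(\int_0^t h(s)\ds\big)$, which satisfies $\partial_t\overline w+u\,\partial_x\overline w-\varepsilon\,\partial_x^2\overline w-h\,\overline w=\overline w'-h\overline w=0$ and $w(0,\cdot)\le\overline w(0)$, and compare $w$ with $\overline w$ by the same maximum-principle argument as in Lemma \ref{lemma:lowerboundb} (perturb $\overline w$ by $+\delta t$ to make the inequality strict, examine a first contact point, then let $\delta\to0$). This yields $\|\partial_x b(t,\cdot)\|_{L^\infty}^2=\|w(t,\cdot)\|_{L^\infty}\le\overline w(t)$ for $t\le T_\varepsilon$, and since
\begin{equation*}
\int_0^t h(s)\ds=\frac{8}{c_0}\|\psi_0\|_{L^\infty}\big(1-e^{-c_0 t/2}\big)+\frac{8C_\varepsilon\varepsilon}{c_0}\,t\le\frac{8}{c_0}\|\psi_0\|_{L^\infty}+4\varepsilon t\,\frac{2C_\varepsilon}{c_0},
\end{equation*}
this is precisely the claimed bound \eqref{eq:boundpartialb}. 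The only place where care is really needed is this last comparison: since $u$ is merely bounded (Lipschitz in $x$), the maximum principle has to be run via the strict-inequality perturbation and first-time-of-contact argument used in Lemmas \ref{lemma:lowerboundb}--\ref{lemma:decaypsi}; one must also make sure the differentiation in $x$ of \eqref{eq:magneticrelaxation} is justified, which holds on $(0,T_\varepsilon]$ by the smoothing property of Theorem \ref{teor:wellposednesspar}, the $t=0$ bound being obtained by passing to the limit. Everything else is the routine energy-type bookkeeping already used to derive \eqref{eq:apriorib}--\eqref{eq:aprioriparb}.
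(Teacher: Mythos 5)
Your proposal is correct and follows essentially the same route as the paper: derive the pointwise equation \eqref{eq:abspartialb} for $|\partial_x b|^2$, discard the sign-favourable terms $2(b\cdot\partial_x b)^2$ and $2\varepsilon|\partial_x^2 b|^2$, insert the hypothesised lower bound on $\psi$, and compare with the space-independent supersolution solving $f'=h(t)f$, $f(0)=\|\partial_x b_0\|_{L^\infty}^2$, which after bounding $\int_0^t h$ gives exactly \eqref{eq:boundpartialb}. Your added remarks on justifying the differentiation via the smoothing of Theorem \ref{teor:wellposednesspar} and on running the comparison through a strict-inequality perturbation are details the paper leaves implicit, but they do not change the argument.
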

	\begin{proof}
		Again, we derive an equation for $|\partial_x b|^2$. Then, we find a suitable subsolution to such equation to estimate $|\partial_x b|^2$.
        
        Differentiating the equation for the magnetic relaxation, we infer 
		
		$$\partial_t(\partial_x b)+u\partial_x (\partial_x b)+2\partial_x u\,\partial_x b+\partial_x^2 u \, b=\varepsilon \partial_x^2 (\partial_x b).$$
		We may now multiply by $\partial_x b$ to deduce that 
		
		\begin{equation}\label{eq:abspartialb}
			\partial_t (|\partial_x b|^2)+u\partial_x (|\partial_x b|^2)+4\psi |\partial_x b|^2+2(b\cdot \partial_x b)^2=\varepsilon \partial_x ^2|\partial_x b|^2-2\varepsilon |\partial_x^2 b|^2.
		\end{equation}
		
		Note that $(b\cdot \partial_x b)^2$ is non-negative, so we can bound  $|\partial_x b(t,x)|^2$ from above by the solution of the ODE 
		
		$$\left\{\begin{array}{ll}
			f'(t)-4\left(\|\psi_0\|_{L^\infty}e^{-\frac{c_0}{2}t}+\frac{2C\varepsilon}{c_0}\right)f=0 & 0<t<T_\varepsilon \\
			f(0)=\|\partial_x b_0\|_{L^\infty}^2
		\end{array}\right..$$
		
		The result then follows applying the maximum principle, as $f$ is a supersolution of \eqref{eq:abspartialb} that equals the right hand side of \eqref{eq:boundpartialb}
	\end{proof}
	
	We can now improve the estimates in Lemmas \ref{lemma:lowerboundb}-\ref{lemma:boundpartialb}. More precisely, we can show that there is decay for $\psi$, boundedness of $\partial_x b$ and a lower bound for $|b|(t,x)|^2$ for times of order $1/\varepsilon$. 
	
	\begin{prop}\label{prop:munchhausen}
		Let $b$ be a  solution of the magnetic relaxation system \eqref{eq:magneticrelaxation}  with $|b(0,x)|^2\geq c_0>0$, for any $x\in \T$. Then, there exist $\delta>0$, $C_1$ and $C_2$ positive constants, depending only on $c_0,\, \|b_0\|_{L^\infty},$ and $ \|\partial_x b_0\|_{L^\infty}$  such that, for every $\varepsilon\geq 0$ and $t\leq \delta/\varepsilon$, the following estimates hold true
		\begin{itemize}
			\item $|b(t,x)|^2\geq c_0/2$
			\item $\psi(t,x)\geq -C_2(\|\psi_0\|_{L^\infty}e^{-\frac{c_0}{2}t}+\varepsilon).$
			\item $\|\partial_x b\|_{L^\infty}^2\leq C_1$.
		\end{itemize}
	\end{prop}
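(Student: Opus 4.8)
The plan is to run a continuity (bootstrap) argument that feeds Lemmas \ref{lemma:lowerboundb}, \ref{lemma:decaypsi} and \ref{lemma:boundpartialb} into one another in a closed loop. The case $\varepsilon = 0$ is already contained in Section \ref{sec:epsilonzero} (Propositions \ref{prop:decayunondiff} and \ref{prop:binfty}), so I would assume $\varepsilon > 0$. By Theorem \ref{teor:wellposednesspar} the solution $b$ is global and smooth for $t>0$, and by parabolic smoothing $\partial_x b$ extends continuously up to $t=0$; in particular $t\mapsto\|\partial_x b(t,\cdot)\|_{L^\infty}$ is continuous on $[0,+\infty)$. Since $\partial_x u = \tfrac12\big(|b|^2 - \int_\T |b|^2\big)$ one has $\|\psi_0\|_{L^\infty}\le\|b_0\|_{L^\infty}^2$, so all constants below can be expressed through $c_0$, $\|b_0\|_{L^\infty}$ and $\|\partial_x b_0\|_{L^\infty}$ only.

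First I would fix the constants in the right order. Set
\begin{equation*}
    C_1 := 4\,\|\partial_x b_0\|_{L^\infty}^2 \exp\!\Big(\tfrac{8}{c_0}\|\psi_0\|_{L^\infty}\Big), \qquad C_2 := \max\Big\{1,\ \tfrac{2C_1}{c_0}\Big\},
\end{equation*}
so that $\|\partial_x b_0\|_{L^\infty}^2 < C_1$ strictly, and \emph{then} choose $\delta > 0$ small enough that $\tfrac{8\delta C_1}{c_0}\le\ln 2$ and $\delta\le\tfrac{c_0}{4C_1}$. Define
\begin{equation*}
    T_\varepsilon := \sup\Big\{ t\in[0,\delta/\varepsilon] \ :\ \|\partial_x b(s,\cdot)\|_{L^\infty}^2 \le C_1 \text{ for all } s\in[0,t]\Big\}.
\end{equation*}
By continuity of $t\mapsto\|\partial_x b(t,\cdot)\|_{L^\infty}$ and the strict inequality at $t=0$, one gets $T_\varepsilon > 0$, and the defining set is the closed interval $[0,T_\varepsilon]$, on which the bootstrap hypothesis $\|\partial_x b\|_{L^\infty}^2\le C_1$ holds.

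Then I would run the loop on $[0,T_\varepsilon]$. Lemma \ref{lemma:lowerboundb} with $C_\varepsilon = C_1$ — whose associated time threshold $\tfrac{c_0}{4C_1\varepsilon}\ge\tfrac{\delta}{\varepsilon}\ge T_\varepsilon$ by the choice of $\delta$ — gives $|b(t,x)|^2\ge c_0/2$ for $t\le T_\varepsilon$, which is the first bullet. Feeding this together with $\|\partial_x b\|_{L^\infty}^2\le C_1$ into Lemma \ref{lemma:decaypsi} (whose hypotheses are met with $|b|^2\ge c_0/2$) yields $\psi(t,x)\ge -\|\psi_0\|_{L^\infty}e^{-c_0 t/2} - \tfrac{2\varepsilon C_1}{c_0} \ge -C_2\big(\|\psi_0\|_{L^\infty}e^{-c_0 t/2}+\varepsilon\big)$ for $t\le T_\varepsilon$, which is the second bullet. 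Finally, Lemma \ref{lemma:boundpartialb}, applied with $|b|^2\ge c_0/2$ and this lower bound on $\psi$, gives for $t\le T_\varepsilon$
\begin{equation*}
    \|\partial_x b(t,\cdot)\|_{L^\infty}^2 \le \|\partial_x b_0\|_{L^\infty}^2\exp\!\Big(\tfrac{8C_1}{c_0}\,\varepsilon t + \tfrac{8}{c_0}\|\psi_0\|_{L^\infty}\Big) \le 2\,\|\partial_x b_0\|_{L^\infty}^2\exp\!\Big(\tfrac{8}{c_0}\|\psi_0\|_{L^\infty}\Big) = \frac{C_1}{2},
\end{equation*}
using $\varepsilon t\le\delta$ and $\tfrac{8\delta C_1}{c_0}\le\ln 2$ in the middle step.

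To conclude, I would show $T_\varepsilon = \delta/\varepsilon$: if $T_\varepsilon < \delta/\varepsilon$, then by the definition of the supremum and the continuity of $t\mapsto\|\partial_x b(t,\cdot)\|_{L^\infty}$ one would have $\|\partial_x b(T_\varepsilon,\cdot)\|_{L^\infty}^2 = C_1$, contradicting the bound $\|\partial_x b(T_\varepsilon,\cdot)\|_{L^\infty}^2\le C_1/2$ just obtained; hence $T_\varepsilon = \delta/\varepsilon$ and all three estimates hold on $[0,\delta/\varepsilon]$ (the third being $\|\partial_x b\|_{L^\infty}^2\le C_1$). The main obstacle — and essentially the only delicate point — is making the constants self-consistent: $C_1$ must be frozen \emph{before} $\delta$, so that the exponential amplification coming from Lemma \ref{lemma:boundpartialb} over a time window of length $\delta/\varepsilon$ stays below a factor $2$ uniformly in $\varepsilon$. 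A secondary technical point is the continuity of $\|\partial_x b(t,\cdot)\|_{L^\infty}$ up to $t=0$, which follows from parabolic smoothing for \eqref{eq:magneticrelaxation} with $H^2$ data.
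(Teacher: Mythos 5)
Your proposal is correct and takes essentially the same route as the paper: a continuity/bootstrap argument that chains Lemmas \ref{lemma:lowerboundb}, \ref{lemma:decaypsi} and \ref{lemma:boundpartialb}, freezing the derivative bound $C_1$ before choosing $\delta\sim c_0/C_1$ so that the exponential amplification over $[0,\delta/\varepsilon]$ stays below a fixed factor and the bound improves strictly (the paper's set $\mathcal{A}^\varepsilon_\gamma$ tracks both $|b|^2\geq c_0/2$ and the derivative bound, while you track only the latter and recover the former inside the loop, which is an immaterial difference). One cosmetic fix: define $C_1$ with, say, $\|\partial_x b_0\|_{L^\infty}^2+1$ in place of $\|\partial_x b_0\|_{L^\infty}^2$ (as the paper's constant $(5\|\partial_x b_0\|_{L^\infty}^2+1)e^{\gamma}$ does), so the strict inequality at $t=0$ also covers the degenerate case $\partial_x b_0\equiv 0$.
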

	\begin{proof}
		Consider the set 
		
		$$\mathcal{A}^\varepsilon_\gamma:=\left\{t>0\,:\, \min_{x\in \T}|b(t,x)|^2\geq c_0/2 \,\text{ and }\, \|\partial_xb(t)\|_{L^\infty}^2\leq (5\|\partial_x b_0\|^2_{L^\infty}+1)\exp\left(\gamma\right)\right\},$$
		where $\gamma$ is a positive real number to be fixed later. Note that, due to continuity of both $b$ and $\partial_x b$, it  holds that $\mathcal{A}^\varepsilon_\gamma$ is non-empty and closed. Now, let $t_\star^{\gamma,\varepsilon}$ be the largest positive time such that $[0,t_\star^{\varepsilon,\gamma}]\subset \mathcal{A}^\varepsilon_\gamma$, \textit{i.e.}
        
        $$t_\star^{\gamma,\varepsilon}:=\sup\{t>0\,:\, [0,t]\subset \mathcal{A}^\varepsilon_\gamma\}.$$
        
        Note that $t_\star^{\varepsilon,\gamma}$ depends on $\varepsilon$. Due to Lemma \ref{lemma:decaypsi}, for $t\leq t_\star^{\gamma,\varepsilon}$, we can obtain the bound 
		
		\begin{equation}\label{eq:decaypsiMunchhausen}
			-\|\psi_0\|_{L^\infty}e^{-\frac{c_0}{2}t}-\frac{2\varepsilon }{c_0}(5\|\partial_x b_0\|^2_{L^\infty}+1)\exp\left(\gamma\right)\leq \psi.
		\end{equation}

		Now, we use Lemma \ref{lemma:boundpartialb} to conclude that, if $t<t_\star^{\gamma,\varepsilon}$, 
		
		$$\|\partial_x b(t,\cdot)\|^2_{L^\infty}\leq \|\partial_x b_0\|_{L^\infty}^2\exp\left(\frac{8\varepsilon t_\star^{\gamma,\varepsilon}}{c_0}(5\|\partial_x b_0\|_{L^\infty}+1)\exp(\gamma)+\frac{8}{c_0}\|\psi_0\|_{L^\infty}\right).$$
		Now, we use this to prove that $t_\star^{\gamma,\varepsilon}$ is actually of order $1/\varepsilon$, which concludes the proof. Choose $\gamma=2+\frac{16}{c_0}\|\partial_x u_0\|_{L^\infty}$.  Let $t_0:=\delta/\varepsilon$, with $\delta$ given by
		
		$$\delta:=\frac{c_0}{8(5\|\partial_xb_0\|_{L^\infty}+1)\exp(\gamma)}.$$ 
		
		We show now that $t_\star^{\gamma,\varepsilon}\geq t_0$ $\varepsilon>0$. We argue by contradiction. Assume that $t_0>t_\star^{\varepsilon,\gamma}$ for some $\varepsilon_0$. Since $|b|\geq c_0/2$ for $t\leq t_\star^{\varepsilon,\gamma}< t_0$, and the inequality \eqref{eq:decaypsiMunchhausen} holds for $t\leq t_\star^{\gamma,\varepsilon}$, then Lemma \ref{lemma:boundpartialb} implies the following bound for the derivative of $b$ for $t\leq  t_\star^{\varepsilon,\gamma}<\delta/\varepsilon$.
		
		$$\|\partial_x b(t,\cdot)\|^2_{L^\infty}< \|\partial_x b_0\|_{L^\infty}^2\exp\left(\frac{8\delta}{c_0}(5\|\partial_x b_0\|^2_{L^\infty}+1)\exp\left(\gamma\right)+\frac{8}{c_0}\|\psi_0\|_{L^\infty}\right)\leq (5\|\partial_x b_0\|_{L^\infty}+1)\exp(\gamma).$$
	
		As a result, due to continuity, there is some $\alpha>0$ such that for $t\in (0, t_\star^{\varepsilon,\gamma}+\alpha)$, the derivative of the magnetic field satisfies $\|\partial_x b(t)\|_{L^\infty}^2\leq (5\|\partial_x b_0\|_{L^\infty}+1)\exp(\gamma)$. Now, due to Lemma \ref{lemma:lowerboundb}, $|b(t,x)|\geq c_0/2$ for times 
		
		$$t\leq \min\left\{ t_\star^{\varepsilon,\gamma}+\alpha,\frac{\delta}{\varepsilon} \right\}= t_\star^{\varepsilon,\gamma}+\alpha$$
		for $\alpha$ small enough, since $ t_\star^{\varepsilon,\gamma}<\delta/\varepsilon$. This implies that, then, $(0, t_\star^{\varepsilon,\gamma}+\alpha)\subset \mathcal{A}^\varepsilon_\gamma$. But this contradicts the definition of $t_\star^{\varepsilon,\gamma}$. 
	\end{proof}
	
	We can use these statements to find suitable estimates for higher derivatives of both $u$ and $b$ that hold in weaker norms. This will be useful when finding compactness arguments that will lead to convergence of $b$ to weak solutions of a certain limit equation. 
	
	\begin{prop}\label{prop:decaysecondderivativeu}
		Let $b$ and $\delta$ be as in Proposition \ref{prop:munchhausen}. Then, for $\varepsilon<\varepsilon_0$ with $\varepsilon_0$ small enough, there is some $C_3$ such that, for every $t\leq \delta/\varepsilon$, the inequality 
		
		$$\|\partial_x ^2 u(t,\cdot)\|_{L^2}^2\leq C_3(e^{-\frac{c_0}{2}t}+\varepsilon).$$
	\end{prop}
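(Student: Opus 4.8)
The plan is to reduce everything to a scalar differential inequality for $y(t):=\|\partial_x^2 u(t,\cdot)\|_{L^2}^2$. The starting point is the identity $\partial_x^2 u=\partial_x\psi=b\cdot\partial_x b$ (where $\psi=\partial_x u=\tfrac12(|b|^2-\int_\T|b|^2)$), so that $y(t)=\|\partial_x\psi(t,\cdot)\|_{L^2}^2$ and, in particular, $\partial_x(|b|^2)=2\partial_x\psi$, an identity used repeatedly below. Two inputs from the previous estimates are available for $t\le\delta/\varepsilon$ and $\varepsilon$ small: the pointwise bounds $|b|^2\ge c_0/2$ and $\|\partial_x b\|_{L^\infty}^2\le C_1$ of Proposition \ref{prop:munchhausen}, and the decay of $\psi$ in uniform norm,
\begin{equation*}
\|\psi(t,\cdot)\|_{L^\infty}\le C\big(\|\psi_0\|_{L^\infty}e^{-c_0 t/2}+\varepsilon\big).
\end{equation*}
The latter is the statement announced in the remark after Lemma \ref{lemma:decaypsi}: since $\int_\T\psi=0$ one has $\|\psi\|_{L^1}=2\int_\T\psi^-$, which the lower bound of Proposition \ref{prop:munchhausen} controls by $O(\|\psi_0\|_{L^\infty}e^{-c_0 t/2}+\varepsilon)$; inserting $\|\psi\|_{L^2}^2\le\|\psi\|_{L^\infty}\|\psi\|_{L^1}$ into the Duhamel upper bound of Lemma \ref{lemma:decaypsi} and running a Grönwall argument (the resulting coefficient being integrable over $[0,\delta/\varepsilon]$ with a bound uniform in $\varepsilon$) yields it.

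Next I would differentiate in $x$ the equation for $\psi$ obtained in the proof of Lemma \ref{lemma:decaypsi}, test against $\partial_x\psi$, and integrate by parts over $\T$. The transport term contributes $\tfrac12\int_\T\psi(\partial_x\psi)^2\dx$; the damping term contributes $\int_\T|b|^2(\partial_x\psi)^2\dx+2\int_\T\psi(\partial_x\psi)^2\dx$ after using $\partial_x(|b|^2)=2\partial_x\psi$; the diffusion term gives $-\varepsilon\|\partial_x^2\psi\|_{L^2}^2$; and the resistive source term becomes $\varepsilon\int_\T|\partial_x b|^2\,\partial_x^2\psi\dx$. Altogether,
\begin{equation*}
\tfrac12 y' + \int_\T|b|^2(\partial_x\psi)^2\dx + \tfrac52\int_\T\psi(\partial_x\psi)^2\dx + \varepsilon\|\partial_x^2\psi\|_{L^2}^2 = \varepsilon\int_\T|\partial_x b|^2\,\partial_x^2\psi\dx.
\end{equation*}
Using $|b|^2\ge c_0/2$, the bound $\int_\T\psi(\partial_x\psi)^2\dx\ge -\|\psi\|_{L^\infty}y$, and Young's inequality $\varepsilon\int_\T|\partial_x b|^2\partial_x^2\psi\dx\le\varepsilon\|\partial_x^2\psi\|_{L^2}^2+\tfrac14\varepsilon\||\partial_x b|^2\|_{L^2}^2$ together with $\||\partial_x b|^2\|_{L^2}^2=\int_\T|\partial_x b|^4\dx\le\|\partial_x b\|_{L^\infty}^2\int_\T|\partial_x b|^2\dx\le C_1^2$, the diffusion terms cancel and one is left with
\begin{equation*}
y'(t)+c_0\,y(t)\le 5\|\psi(t,\cdot)\|_{L^\infty}\,y(t)+\tfrac12\varepsilon C_1^2 .
\end{equation*}

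Finally I would integrate this differential inequality by Grönwall's lemma. Put $P(t)=\int_0^t\big(5\|\psi(s,\cdot)\|_{L^\infty}-c_0\big)\ds$; the $L^\infty$-decay of $\psi$ gives $\int_0^t 5\|\psi(s,\cdot)\|_{L^\infty}\ds\le \tfrac{10C\|\psi_0\|_{L^\infty}}{c_0}+5C\delta=:K$ for every $t\le\delta/\varepsilon$, so $P(t)\le K-c_0 t$ and $P(t)-P(s)\le K-c_0(t-s)$ for $0\le s\le t\le\delta/\varepsilon$. Then
\begin{equation*}
y(t)\le y(0)e^{P(t)}+\tfrac12\varepsilon C_1^2\int_0^t e^{P(t)-P(s)}\ds\le y(0)e^{K}e^{-c_0 t}+\frac{\varepsilon C_1^2 e^{K}}{2c_0},
\end{equation*}
and since $y(0)=\|b_0\cdot\partial_x b_0\|_{L^2}^2\le\|b_0\|_{L^\infty}^2\|\partial_x b_0\|_{L^2}^2$ while $K$, $C_1$ and $\|\psi_0\|_{L^\infty}\le\|b_0\|_{L^\infty}^2$ depend only on $c_0$, $\|b_0\|_{L^\infty}$, $\|\partial_x b_0\|_{L^\infty}$, this yields $y(t)\le C_3(e^{-c_0 t/2}+\varepsilon)$ after using $e^{-c_0 t}\le e^{-c_0 t/2}$. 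All differential manipulations are licit because $b\in C^\infty((0,T)\times\T)$ by Theorem \ref{teor:wellposednesspar}; one runs the estimate on $[t',\delta/\varepsilon]$ and lets $t'\to0^+$, using continuity of $b$ and $\partial_x b$ up to $t=0$, which holds since $b_0\in H^2$. The step I expect to be the main obstacle is precisely the term $5\|\psi\|_{L^\infty}y$: there is no a priori smallness of $\|\psi_0\|_{L^\infty}$, so it cannot be absorbed by $c_0 y$ pointwise in time. It is harmless once integrated, because $\|\psi\|_{L^\infty}$ is integrable over the whole window $[0,\delta/\varepsilon]$ with a bound independent of $\varepsilon$ — the exponentially decaying part contributes $O(1)$, and the $O(\varepsilon)$ part contributes $O(\varepsilon\cdot\delta/\varepsilon)=O(\delta)$. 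This is also the reason the $L^\infty$-decay of $\psi$ has to be established first, and why the statement restricts to $\varepsilon<\varepsilon_0$.
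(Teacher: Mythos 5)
Your proof is correct and follows essentially the same route as the paper: differentiate the equation for $\psi=\partial_x u$, test against $\partial_x^2 u$ to obtain the damping term $\int_\T\big(|b|^2+\tfrac52\partial_x u\big)|\partial_x^2u|^2$, integrate the resistive source by parts and absorb $\varepsilon\|\partial_x^3u\|_{L^2}^2$ via Young's inequality, then conclude by Gr\"onwall using the bounds of Proposition \ref{prop:munchhausen}. Your only detour is the appeal to a two-sided $L^\infty$ decay of $\psi$ (which the paper merely sketches in a remark); it is unnecessary, since a positive $\psi$ only reinforces the damping, so the one-sided lower bound $\psi\geq -C_2\big(\|\psi_0\|_{L^\infty}e^{-c_0t/2}+\varepsilon\big)$ already provided by Proposition \ref{prop:munchhausen} suffices, and this is what the paper uses.
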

	\begin{proof}
	We  compute the  equation that $\partial_x u$ satisfies, that  equals 
	
	$$\partial_t (\partial_x u)+u\partial_x^2u+|b|^2\partial_x u-\int_\T(\partial_x u)^2=\varepsilon \partial_x^2(\partial_x u)+\varepsilon \int_\T(\partial_x b)^2-\varepsilon (\partial_x b)^2.$$
	
	We now compute an $x$ derivative and test the resulting equation against $\partial_x^2u$. This leads to 
	
	\begin{equation*} 
		\begin{split} 
			\frac{1}{2}\frac{d}{dt}\int_{\T}|\partial_x^2u|^2+\int_{\T} \left(|b|^2+\frac{5}{2}\partial_x u\right)|\partial_x^2u|^2&=-\varepsilon \int_{\T}(\partial_x^3u)^2-\varepsilon \int_\T \partial_x (|\partial_x b|^2)\partial_x^2u\\
			&=-\varepsilon\int_{\T}(\partial_x^3 u)^2+\varepsilon \int_{\T}|\partial_x b|^2\partial_x^3 u\\
			&\leq \frac{\varepsilon}{2}\|\partial_x b(t,\cdot)\|_{L^\infty}^4.
		\end{split}
	\end{equation*}

	The result then follows after using the bounds for $|b|^2$, $\partial_x u$ and $\|\partial_x b(t,\cdot)\|_{L^\infty}$ from Proposition \ref{prop:munchhausen}.
	\end{proof}

We can prove a similar result for the $L^2(\T)$ norm of the second derivative of $b$. 

\begin{prop}\label{prop:boundseconderivb}
	Let $b$ and $\delta$ be as in Proposition \ref{prop:munchhausen}. Then, for $\varepsilon <\varepsilon_0$ for $\varepsilon_0$ small enough, there is some $C_4$ such that, for every $t<\delta/\varepsilon$, $\|\partial_x^2b(t,\cdot)\|_{L^2}\leq C_4.$
\end{prop}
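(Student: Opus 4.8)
The plan is to run a continuation (bootstrap) argument on the energy identity satisfied by $\|\partial_x^2 b\|_{L^2}^2$, working in the \emph{original} time variable, the guiding principle being that every forcing term that appears is either of size $\varepsilon$ or exponentially decaying with a rate independent of $\varepsilon$, so that its time integral over the full interval $[0,\delta/\varepsilon]$ stays bounded. Differentiating \eqref{eq:magneticrelaxation} twice in $x$, testing against $\partial_x^2 b$ and integrating by parts (using $\partial_x^2 u=\tfrac12\partial_x|b|^2 = b\cdot\partial_x b$ and $\partial_x^3 u=\tfrac12\partial_x^2|b|^2$) yields an identity of the form
\begin{equation*}
    \tfrac12\tfrac{d}{dt}\|\partial_x^2 b\|_{L^2}^2+\tfrac52\int_\T\psi|\partial_x^2 b|^2+\tfrac12\|\partial_x^3 u\|_{L^2}^2+\varepsilon\|\partial_x^3 b\|_{L^2}^2=\tfrac52\int_\T\partial_x^3 u\,|\partial_x b|^2 .
\end{equation*}
Write $y(t)=\|\partial_x^2 b(t,\cdot)\|_{L^2}^2$. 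First I would dispose of the sign–indefinite term: Proposition \ref{prop:munchhausen} gives $\psi\ge -C_2(\|\psi_0\|_{L^\infty}e^{-c_0 t/2}+\varepsilon)$, hence $\tfrac52\int_\T\psi|\partial_x^2 b|^2\ge -\beta(t)y$ with $\beta(t):=\tfrac52 C_2(\|\psi_0\|_{L^\infty}e^{-c_0 t/2}+\varepsilon)$, and $\int_0^{\delta/\varepsilon}\beta\le \tfrac52 C_2(2\|\psi_0\|_{L^\infty}/c_0+\delta)$, bounded independently of $\varepsilon$.

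\textbf{Key step: the forcing term.} Integrating by parts once, $\tfrac52\int_\T\partial_x^3 u\,|\partial_x b|^2=-5\int_\T\partial_x^2 u\,(\partial_x b\cdot\partial_x^2 b)$, which by $\|\partial_x b\|_{L^\infty}^2\le C_1$ (Proposition \ref{prop:munchhausen}) is at most $5\sqrt{C_1}\,\|\partial_x^2 u\|_{L^2}\,\|\partial_x^2 b\|_{L^2}=5\sqrt{C_1}\,\|\partial_x^2 u\|_{L^2}\sqrt y$. The essential point is that the forcing now carries a factor $\sqrt y$: discarding the nonnegative terms $\tfrac12\|\partial_x^3 u\|_{L^2}^2+\varepsilon\|\partial_x^3 b\|_{L^2}^2$, the resulting inequality $\tfrac12 y'\le 5\sqrt{C_1}\|\partial_x^2 u\|_{L^2}\sqrt y+\beta(t)y$ becomes, for $w=\sqrt y$, the linear inequality $w'\le 5\sqrt{C_1}\|\partial_x^2 u\|_{L^2}+\beta(t)w$, and Grönwall gives $w(t)\le\big(w(0)+5\sqrt{C_1}\int_0^t\|\partial_x^2 u\|_{L^2}\big)\exp(\int_0^t\beta)$. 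Since $\exp(\int_0^t\beta)$ is bounded on $[0,\delta/\varepsilon]$, everything closes \emph{provided} $\int_0^{\delta/\varepsilon}\|\partial_x^2 u(s,\cdot)\|_{L^2}\,ds$ is bounded uniformly in $\varepsilon$; this needs $\|\partial_x^2 u(t,\cdot)\|_{L^2}$ to decay to order $\varepsilon$ (not merely $\sqrt\varepsilon$, which is what Proposition \ref{prop:decaysecondderivativeu} gives, since $\int_0^{\delta/\varepsilon}(e^{-\lambda s}+\sqrt\varepsilon)\,ds\sim\delta/\sqrt\varepsilon$, whereas $\int_0^{\delta/\varepsilon}(e^{-\lambda s}+\varepsilon)\,ds\le\lambda^{-1}+\delta$).

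\textbf{The improved decay of $\partial_x^2 u$, and closing the loop.} I would therefore sharpen Proposition \ref{prop:decaysecondderivativeu} along the bootstrap. Differentiating the equation for $\psi$ and testing against $\partial_x\psi=\partial_x^2 u$ produces (using $|b|^2\ge c_0/2$ and that $\|\psi\|_{L^\infty}=O(e^{-c_0 t/2}+\varepsilon)$) a damping $\ge\tfrac{c_0}4\|\partial_x^2 u\|_{L^2}^2$, together with forcing $-2\varepsilon\int_\T(\partial_x b\cdot\partial_x^2 b)\partial_x^2 u$. Instead of integrating this by parts (which leads only to the $O(\varepsilon^2)$ bound for the \emph{square}, i.e. $O(\sqrt\varepsilon)$ for the norm), I would estimate it directly by $2\varepsilon\sqrt{C_1}\|\partial_x^2 b\|_{L^2}\|\partial_x^2 u\|_{L^2}$; the factor $\|\partial_x^2 u\|_{L^2}$ again allows passing to the norm itself, yielding $\tfrac{d}{dt}\|\partial_x^2 u\|_{L^2}+\tfrac{c_0}8\|\partial_x^2 u\|_{L^2}\le 2\varepsilon\sqrt{C_1}\|\partial_x^2 b\|_{L^2}$, hence $\|\partial_x^2 u(t,\cdot)\|_{L^2}\le\|\partial_x^2 u_0\|_{L^2}e^{-c_0 t/8}+\tfrac{16\sqrt{C_1}}{c_0}\varepsilon\sup_{[0,t]}\|\partial_x^2 b\|_{L^2}$. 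This is $O(\varepsilon)$ decay, but coupled to $\sup\|\partial_x^2 b\|_{L^2}$. The two estimates are then run together à la Proposition \ref{prop:munchhausen}: let $t_\star\le\delta/\varepsilon$ be maximal with $\|\partial_x^2 b(\cdot)\|_{L^2}\le 2C_4$ on $[0,t_\star]$; inserting the above into the $w$–inequality on $[0,t_\star]$ gives $\|\partial_x^2 b(t,\cdot)\|_{L^2}\le\alpha+\beta C_4$ for constants $\alpha,\beta$ depending only on $c_0,\|b_0\|_{L^\infty},\|\partial_x b_0\|_{L^\infty},\delta$; shrinking $\delta$ and $\varepsilon_0$ so that $\beta<\tfrac12$ and setting $C_4:=2\alpha$, one gets $\|\partial_x^2 b\|_{L^2}<2C_4$ on $[0,t_\star]$, so by continuity $t_\star=\delta/\varepsilon$, which is the assertion.

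\textbf{Main obstacle.} The delicate point is exactly this self‑consistency: the improved decay of $\partial_x^2 u$ needs a bound on $\partial_x^2 b$, and vice versa, so one must verify that the Grönwall constant $\beta$ accumulated over the long interval $[0,\delta/\varepsilon]$ is genuinely strictly less than one. This requires careful bookkeeping of the dependence of the constants on $\delta$ (small, from Proposition \ref{prop:munchhausen}, and shrinkable if needed) and on $C_1$; all the remaining contributions — the transport terms, the commutators $\partial_x^2(\psi|b|^2)-|b|^2\partial_x^2\psi$, the lower–order pieces in the $\partial_x^2 u$ estimate — are routine, being controlled by the smallness of $\psi$ and $\partial_x\psi$ together with $|b|^2\ge c_0/2$, but verifying $\beta<1$ is the crux.
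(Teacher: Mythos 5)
Your energy identity and your treatment of the transport and damping terms coincide with the paper's: the paper also differentiates \eqref{eq:magneticrelaxation} twice, tests against $\partial_x^2 b$, and uses $\partial_x^3 u = |\partial_x b|^2 + b\cdot\partial_x^2 b$ to turn the worst term into the dissipative $\int_\T(\partial_x^3 u)^2$ plus the forcing $\tfrac52\int_\T\partial_x^3 u\,|\partial_x b|^2$ (your coefficient $\tfrac12$ in front of $\|\partial_x^3 u\|_{L^2}^2$ should be $1$, but this is immaterial since that term is discarded anyway). Where you part ways is the conclusion: the paper simply asserts that the result follows from Propositions \ref{prop:munchhausen} and \ref{prop:decaysecondderivativeu}, i.e.\ it bounds the forcing by $\|\partial_x^2 u\|_{L^2}\,\|\partial_x b\|_{L^\infty}\,\|\partial_x^2 b\|_{L^2}$ and invokes $\|\partial_x^2 u\|_{L^2}\lesssim e^{-c_0 t/4}+\sqrt{\varepsilon}$. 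You are right that this bound alone does not close the estimate on $[0,\delta/\varepsilon]$: the inequality for $w=\|\partial_x^2 b\|_{L^2}$ then only yields $w(t)\lesssim w(0)+\int_0^{\delta/\varepsilon}\|\partial_x^2 u\|_{L^2}\,{\rm d}s = O(\varepsilon^{-1/2})$, so the paper's proof, read literally, is incomplete at exactly the point you flag. Your repair — rerunning the $\partial_x^2 u$ estimate without the integration by parts, so that the forcing $2\varepsilon\|\partial_x b\|_{L^\infty}\|\partial_x^2 b\|_{L^2}$ acts on the norm itself and gives $\|\partial_x^2 u\|_{L^2}\lesssim e^{-c_0 t/8}+\varepsilon\sup\|\partial_x^2 b\|_{L^2}$, and then closing the two bounds by a continuation argument — is the natural missing ingredient, and it is consistent with the sharper decay $\|\partial_x^2 u\|\lesssim e^{-c_0 t/4}+\varepsilon$ that the paper implicitly uses downstream (for instance in the proof of Proposition \ref{prop:limiteq}, where $\partial_x\Psi_\varepsilon=\varepsilon^{-1}\partial_x^2 u_\varepsilon$ is treated as an $O(1)$ quantity at large times).

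Two caveats on your write-up. First, the statement fixes $\delta$ to be the constant of Proposition \ref{prop:munchhausen}, so you are not free to shrink it; this is easily repaired: choose a smaller $\delta'$ for which your bookkeeping gives the contraction factor $\beta<\tfrac12$, and iterate the bootstrap over the $\lceil\delta/\delta'\rceil$ consecutive subintervals of length $\delta'/\varepsilon$ (a number independent of $\varepsilon$), letting the constant grow by a fixed factor at each step; since $\|\partial_x^2 u\|_{L^2}$ restarts each subinterval from a value of size $O(\varepsilon)$ times the previous bound, the scheme propagates and the final constant is still independent of $\varepsilon$. Second, when you divide by $\|\partial_x^2 u\|_{L^2}$ to pass to the linear differential inequality for the norm, you should either regularize (work with $\sqrt{\|\partial_x^2 u\|_{L^2}^2+\eta}$ and let $\eta\to0$) or argue on the set where the norm is positive; this is routine. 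With these adjustments your argument is correct, and in fact more detailed than the proof given in the paper.
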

\begin{proof}
	We first derive an equation for the second derivative of $b$, by differentiating \eqref{eq:magneticrelaxation} twice, leading to 
	
	$$\partial_t (\partial_x^2b)+u\partial_x (\partial_x^2b)+3\partial_x u\,\partial_x^2b+3\partial_x^2u\,\partial_x b+\partial_x^3\, b=\varepsilon \partial_x^2(\partial_x ^2b).$$
	
	Now, we test against $\partial_x^2b$, leading to
	
	\begin{equation*}
		\frac{1}{2}\frac{d}{dt}\int_{\T}|\partial_x^2b|^2+\frac{5}{2}\int_{\T}\partial_x u|\partial_x^2b|^2+\frac{3}{2}\int_{\T}\partial_x^2u\partial_x (|\partial_x b|^2)+\int_{\T}\partial_x^3u (b\cdot \partial_x^2b)=-\varepsilon \int_{\T}(\partial_x^3b)^2.
	\end{equation*}

Using that $\partial_x^3u=|\partial_x b|^2+b\cdot\partial_x^2b$, we find that 

	\begin{equation*}
	\frac{1}{2}\frac{d}{dt}\int_{\T}|\partial_x^2b|^2+\frac{5}{2}\int_{\T}\partial_x u|\partial_x^2b|^2+\frac{5}{2}\int_{\T}\partial_x^2u\partial_x (|\partial_x b|^2)+\int_{\T}\partial_x^3u (b\cdot \partial_x^2b)=-\varepsilon \int_{\T}(\partial_x^3b)^2-\int_{\T}(\partial_x^3u)^2.
\end{equation*}

using Proposition \ref{prop:munchhausen} and Proposition \ref{prop:decaysecondderivativeu}, we conclude the result. 
\end{proof}

\subsection{Convergence to a Limit Object}
\begin{comment}
In this subsection we will make the formal arguments in \dani{put a formal argument somewhere or the statement of the limit system} to prove that, as $\varepsilon$ tends to zero, the solution of the magnetic relaxation system tends, up to times of order $1/\varepsilon$, to a magnetic field $b$ with almost constant modulus and with a phase satisfying \dani{Reference of the formal section}
\end{comment}
In this subsection we will prove that the solutions $b_\varepsilon$ (we include now the subscript to make explicit that the solution depends on $\varepsilon$) converge to a solution of the PDE system \ref{ieq:AngleVariable}. Due to the estimates that we computed in Propositions \eqref{prop:munchhausen}-\eqref{prop:boundseconderivb}, the velocity $u$ becomes of order $\varepsilon$ in the norm $L^\infty$ exponentially fast. Therefore, if we take a rescale of the time $\tau:=\varepsilon t$, it is natural to rescale the velocity $U_\varepsilon:=\frac{1}{\varepsilon}u_\varepsilon$, so that $U_\varepsilon$ becomes of order $1$ for times of order $\log(\varepsilon^{-1})$. We will show that the rescaled velocity $U_\varepsilon$ satisfies

$$U_\varepsilon\sim \int_\T (\partial_x \theta_\varepsilon)^2\,\dx-(\partial_x \theta_\varepsilon)^2 \quad \text{as }\varepsilon\rightarrow 0,$$
where $\theta$ is the phase of $b$ when written as $b_\varepsilon=R_\varepsilon e^{i\theta_\varepsilon}.$ We begin by stating a simple compactness result that will allow us to work with the non-linear terms of the equation. From now on, we take the new rescaled time variable $\tau:=\varepsilon t$, and all the functions will be assumed to depend on this time variable. Even though it is possible to prove compactness in $L^2$, the estimates we derived in the previous section are particularly useful to obtain compactness in spaces of continuous functions. Therefore, we include a lemma that will give us boundedness in Hölder spaces and, therefore, compactness in $C^0$. 

\begin{lemma}\label{lema:lemaregularity}
    Let $f\in L^2([0,T]\times \T)$. Then, the solution of the Heat Equation 

    $$\left\{\begin{array}{ll}
       \partial_t u-\partial_x^2 u=f  & t>0,\,x\in \T \\
        u(0,x)=0 & x\in \T
    \end{array}\right.$$
    given by the heat semigroup is in $C^{1/4,1/2}([0,T]\times\T)$. Furthermore, there is a constant $C>0$ independent of $f$ such that 

    $$[u]_{1/4,1/2}\leq C\|f\|_{L^2}$$
\end{lemma}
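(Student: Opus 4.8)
The plan is to represent $u$ through its spatial Fourier series, $u(t,x) = \int_0^t \sum_{k \in \Z} e^{-(2\pi k)^2(t-s)} e^{2\pi i k x}\, \what{f}(s,k)\, \ds$, and to bound the spatial and the temporal Hölder seminorms of this Duhamel integral separately; their sum controls $[u]_{1/4,1/2}$, and since $u$ is continuous with $u(0,\cdot)=0$ this yields $u \in C^{1/4,1/2}([0,T]\times\T)$. Everything rests on one elementary comparison: by monotonicity of $z \mapsto e^{-c z^2\sigma}$ on $(0,\infty)$, the discrete sums that appear can be bounded by Gaussian integrals, so that $\sum_{k \in \Z} |k|^{2m} e^{-c k^2\sigma} \lesssim \int_\R |z|^{2m} e^{-c' z^2\sigma}\, \dz \lesssim \sigma^{-m-1/2}$; this is the device that converts the heat kernel into negative powers of the relevant time increment. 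The estimate is a periodic-domain variant of the classical smoothing property of the heat semigroup.

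\emph{Spatial regularity.} Fix $t>0$ and $x,y \in \T$. Estimating the series by the Cauchy--Schwarz inequality in $s$ together with Parseval in $k$ (using $\sum_k \int_0^T |\what f(s,k)|^2 \ds = \|f\|_{L^2([0,T]\times\T)}^2$), the matter reduces to bounding $K := \int_0^t \sum_{k \in \Z\setminus\{0\}} e^{-2(2\pi k)^2(t-s)} \sin^2\!\big(\pi k (x-y)\big)\, \ds$. I would split the $s$-integral at $s = t - |x-y|^2$. On the near interval $(t-|x-y|^2, t)$ bound $\sin^2 \le 1$, compare the $k$-sum to $\int_\R e^{-2(2\pi z)^2(t-s)}\, \dz \sim (t-s)^{-1/2}$, and integrate to get $\lesssim |x-y|$. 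On the far interval $(0, t-|x-y|^2)$ carry out the $s$-integral first (extending it to $(-\infty, t-|x-y|^2)$), which produces the factor $k^{-2} e^{-2(2\pi k)^2|x-y|^2}$; then use $\sin^2(\pi k(x-y)) \le \pi^2 k^2 |x-y|^2$ followed by a Gaussian comparison to get $\lesssim |x-y|^2 \sum_k e^{-2(2\pi k)^2|x-y|^2} \lesssim |x-y|$. Hence $K \lesssim |x-y|$, so $|u(t,x)-u(t,y)| \le \|f\|_{L^2}\, K^{1/2} \lesssim \|f\|_{L^2}\, |x-y|^{1/2}$.

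\emph{Temporal regularity.} Fix $x \in \T$ and $t' > t \ge 0$, and split $u(t',x) - u(t,x) = (I) + (II)$, where $(I) = \int_t^{t'} \sum_k e^{-(2\pi k)^2(t'-s)} e^{2\pi i k x}\, \what f(s,k)\, \ds$ and $(II) = \int_0^t \sum_k \big(e^{-(2\pi k)^2(t'-s)} - e^{-(2\pi k)^2(t-s)}\big) e^{2\pi i k x}\, \what f(s,k)\, \ds$. For $(I)$, Cauchy--Schwarz in $s$ and $\sum_k e^{-2(2\pi k)^2(t'-s)} \lesssim (t'-s)^{-1/2}$ give $|(I)| \lesssim \|f\|_{L^2}\big(\int_t^{t'}(t'-s)^{-1/2}\, \ds\big)^{1/2} \lesssim \|f\|_{L^2}\, |t'-t|^{1/4}$. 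For $(II)$, write the difference of exponentials by the fundamental theorem of calculus as $\int_t^{t'}(2\pi k)^2 e^{-(2\pi k)^2(\sigma-s)}\, d\sigma$, interchange the order of integration, and apply Cauchy--Schwarz in $(s,k)$; the relevant sum is now $\sum_k k^4 e^{-2(2\pi k)^2(\sigma-s)} \lesssim (\sigma-s)^{-5/2}$, and performing first the $s$-integral (which is $\lesssim (\sigma-t)^{-3/2}$) and then the $\sigma$-integral yields $|(II)| \lesssim \|f\|_{L^2}\int_t^{t'}(\sigma-t)^{-3/4}\, d\sigma \lesssim \|f\|_{L^2}\, |t'-t|^{1/4}$.

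\emph{Conclusion and main difficulty.} Summing the three contributions gives $[u]_{1/4,1/2} \le C\|f\|_{L^2}$ with $C$ an absolute constant, and the continuity of $u$ (which follows from the same estimates, or from the standard $L^2$ parabolic theory) completes the proof. I expect the main obstacle to be purely computational: it lies in the kernel bounds, in particular the choice of the splitting point $t-|x-y|^2$ in the spatial increment and the careful bookkeeping of exponents when comparing the $k$-sums with Gaussian integrals, so that the near and far pieces both produce exactly the power $|x-y|$ (respectively $|t'-t|^{1/2}$) needed for the $1/2$ (respectively $1/4$) Hölder exponent. Since $f$ is only assumed to lie in $L^2$, every step must be routed through Parseval and Cauchy--Schwarz rather than through pointwise control of $\what f$.
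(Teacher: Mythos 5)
Your proposal is correct and follows essentially the same route as the paper's proof: the Fourier--Duhamel representation, Cauchy--Schwarz with Parseval, the splitting of the spatial kernel integral at $t-|x-y|^2$ with Gaussian comparison of the $k$-sums, and the decomposition of the time increment into the near piece and the piece handled by the fundamental theorem of calculus with the $\sum_k k^4 e^{-ck^2(\sigma-s)} \lesssim (\sigma-s)^{-5/2}$ bound. The exponents you record (including the $(\sigma-t)^{-3/4}$ that appears with a sign typo in the paper) are the correct ones, so no changes are needed.
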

\begin{proof}
    This is an adaptation of a theorem that can be found in \cite[Exercise 8.8.9]{Krylov1996}. An outline of the proof is included in the indicated reference, but we have not found a detailed proof anywhere in the literature, that is why we include it here. We begin proving the space regularity. To that end, we examine the regularity of $u$. Take $x,y \in \T$ arbitrary. the difference $u(t,x)-u(t,y)$ can be written as 

    \begin{align}  
    \left|u(t,x)-u(t,y)\right|&=\left|\int_0^t\sum_{k\in \Z}e^{-(2\pi k)^2(t-s)}e^{2\pi i kx}\left(1-e^{2\pi ik(y-x)}\right)\widehat{f}(s,k)\ds\right|\nonumber\\
    &\leq \int_0^t\sum_{k\in \Z\setminus \{0\}}e^{-(2\pi k)^2(t-s)}|\sin(\pi k(y-x))||\widehat{f}(s,k)|\ds\nonumber\\
    &\label{eq:integraltermembedding} \leq \|f\|_{L^2([0,T]\times \T)}\left(\int_{-\infty}^t\sum_{k\in \Z\setminus\{0\}}e^{-2(2\pi k)^2(t-s)}\sin^2(\pi k(x-y)) \ds\right)^{1/2}.\\
    &=\|f\|_{L^2(([0,T]\times \T)}\left(\int_{-\infty}^{t-(x-y)^2} [...]\ds +\int_{t-(x-y)^{2}}^t[...]\ds\right)^{1/2}
    \end{align}

    We examine now the last parenthesis in \eqref{eq:integraltermembedding}. This can be divided into the integral from $-\infty$ up to $t-(y-x)^2$, and the integral form $t-(x-y)^2$ up to $t$. We begin with the last one.

    \begin{equation*} 
    \begin{split} 
    \int_{t-(x-y)^2}^t\sum_{k\in \Z\setminus\{0\}}e^{-2(2\pi k)^2(t-s)}\sin^2(\pi k(x-y)) \ds&=2\int_{t-(x-y)^2}^t\sum_{k=1}^\infty e^{-2(2\pi k)^2(t-s)}\sin^2(\pi k(x-y))\ds\\
    &=2\int_{t-(x-y)^2}^t\sum_{k=1}^\infty e^{-2(2\pi k)^2(t-s)}\sin^2(\pi k(x-y))\int_{k-1}^k\dz \ds\\
    &\leq 2 \int_{t-(x-y)^2}^t\sum_{k=1}^\infty \int_{k-1}^ke^{-2(2\pi z)^2(t-s)}\dz\ds\\
    &\leq C\int_{t-(x-y)^2}^t (t-s)^{-1/2}\ds\\
    &\leq C|x-y|,
    \end{split}
    \end{equation*}
    where $C$ is a numerical constant independent of $x,y$ or $t$. On the other hand, the integral between $0$ and $t-(x-y)^2$ can be estimated by

    \begin{equation*} 
    \begin{split} 
    \int_{-\infty}^{t-(x-y)^2}\sum_{k\in \Z\setminus\{0\}}e^{-2(2\pi k)^2(t-s)}\sin^2(\pi k(x-y)) \ds&=2\int^{t-(x-y)^2}_{-\infty}\sum_{k=1}^\infty e^{-2(2\pi k)^2(t-s)}\sin^2(\pi k(x-y))\ds\\
    &\leq C \sum_{k=1}^\infty \sin^2(\pi k(x-y))\int_{-\infty}^{t-(x-y)^2}e^{-2(2\pi k)^2(t-s)} \ds\\
    &\leq C\sum_{k=1}^\infty \frac{e^{-2(2\pi k)^2 (x-y)^2}}{k^2}\sin^2(\pi k(y-x))\\
    & \leq C|x-y|^2\sum_{k=1}^\infty e^{-2(2\pi k)^2(x-y)^2}\\
    &\leq C|x-y|.
    \end{split}
    \end{equation*}

    Therefore, for every $t>0$, it holds that 

    $$|u(t,x)-u(t,y)|\leq C\|f\|_{L^2([0,t]\times \T)}|x-y|^{1/2}.$$

    We now prove the time regularity up to the boundary of the function $u$. This follows by examining $b^h(t',x)-b^h(t,x)$ for $t'>t\geq 0$, and $x\in \T$ arbitrary. This difference reads 

    \begin{equation*}
        \begin{split}
            |u(t',x)-u(t,x)|&\leq \left|\int_{t}^{t'}\sum_{k\in \Z}e^{-(2\pi k)^2(t'-s)}e^{2\pi ki x}\widehat{f}(k,s)\ds\right|\\
            &+\left|\int_0^{t}\sum_{k\in \Z}\left(e^{-(2\pi k)^2(t'-s)}-e^{-(2\pi k)^2(t-s)}\right)e^{2\pi i kx}\widehat{f}(s,k)\ds\right|=:(I)+(II).
        \end{split}
    \end{equation*}
    In order to bound $(I)$, we use H\"older's inequality again, so that 

    \begin{equation} 
    \begin{split} 
    |(I)|&\leq \|f\|_{L^2([0,T]\times \T)}\left(\int_t^{t'}\sum_{k\in\Z} e^{-2(2\pi k)^2(t'-s)}\ds\right)^{1/2}\\
    &\leq C\|f\|_{L^2([0,T]\times \T)}\left(\int_t^{t'}\int_{\R} e^{-2(2\pi z)^2(t'-s)}\dz\ds\right)^{1/2}\\
    &\leq C\|f\|_{L^2([0,T]\times \T)}\left(\int_t^{t'} (t'-s)^{-1/2}\ds\right)^{1/2}\\
    &\leq C\|f\|_{L^2([0,T]\times \T)} |t'-t|^{1/4}.
    \end{split}
    \end{equation}

    For $(II)$, we can use the Fundamental Theorem of Calculus followed by Hölder's inequality again, leading to 

    \begin{equation*} 
    \begin{split} 
    |(II)|&\leq \int_0^t\sum_{k\in \Z\setminus\{0\}}|\widehat{f}(s,k)|\int_{t}^{t'}(2\pi k)^2e^{-(2\pi k)^2(\sigma-s)}d\sigma \ds\\
    &\leq C\|f\|_{L^2([0,T]\times\T)}\int_{t}^{t'}\left(\int_0^t\sum_{k\in\Z\setminus\{0\}}k^4e^{-(2\pi k)^2(\sigma-s)}\ds\right)^{1/2}d\sigma\\
    &\leq C\|f\|_{L^2([0,T]\times\T)}\int_{t}^{t'}\left(\int_0^t\int_\R z^4e^{-(2\pi z)^2(\sigma-s)}\dz\ds\right)^{1/2}d\sigma\\
    &\leq C\|f\|_{L^2([0,T]\times\T)}\int_{t}^{t'}\left(\int_0^t(\sigma-s)^{-5/2}\ds\right)^{1/2}d\sigma\\
    &\leq C\|f\|_{L^2([0,T]\times\T)}\int_{t}^{t'}(\sigma-t)^{3/4}d\sigma\\
    &\leq C\|f\|_{L^2([0,T]\times\T)}|t'-t|^{1/4}.
    \end{split}
    \end{equation*}
\end{proof}

\begin{lemma}\label{lemma:unifconv}
    The sequence of functions $b_\varepsilon$ converge, up to subsequence, to a function b in $C^0_{loc}((0,\delta];C^1(\T))$. This converges takes place in the $C^0_{loc}((0,\delta];C^1(\T))$ topology. 
\end{lemma}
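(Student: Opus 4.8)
The plan is to pass to the rescaled time $\tau=\varepsilon t$, in which \eqref{eq:magneticrelaxation} reads
\begin{equation*}
    \partial_\tau b_\varepsilon-\partial_x^2 b_\varepsilon=-\partial_x(U_\varepsilon b_\varepsilon),\qquad U_\varepsilon:=\tfrac{1}{\varepsilon}u_\varepsilon,\qquad \partial_x U_\varepsilon=\tfrac{1}{\varepsilon}\psi_\varepsilon,
\end{equation*}
and to prove that $(b_\varepsilon)_\varepsilon$ is relatively compact in $C^0([\tau_0,\delta];C^1(\T))$ for every $\tau_0\in(0,\delta)$; the statement then follows by a diagonal argument letting $\tau_0\downarrow 0$.

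First I would gather the $\varepsilon$-uniform bounds provided by Propositions \ref{prop:munchhausen}, \ref{prop:decaysecondderivativeu} and \ref{prop:boundseconderivb}, fixing $\tau_0\in(0,\delta)$ and $\varepsilon$ small. The decisive observation is that in the variable $\tau$ the viscous transients $e^{-c_0t/2}=e^{-c_0\tau/(2\varepsilon)}$ appearing in Proposition \ref{prop:munchhausen} are $o(\varepsilon)$ uniformly for $\tau\geq\tau_0$; combining the lower bound on $\psi_\varepsilon$ there with the upper bound coming from Lemma \ref{lemma:decaypsi} and the identity $\int_\T\psi_\varepsilon=0$, one obtains $\|\psi_\varepsilon\|_{L^\infty([\tau_0,\delta]\times\T)}\leq C\varepsilon$, hence $\|\partial_x U_\varepsilon\|_{L^\infty}=\tfrac{1}{\varepsilon}\|\psi_\varepsilon\|_{L^\infty}\leq C$ and, since $U_\varepsilon(\tau,0)=0$, also $\|U_\varepsilon\|_{L^\infty}\leq C$. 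Together with $\|b_\varepsilon\|_{L^\infty}\leq\|b_0\|_{L^\infty}$ from \eqref{eq:apriorib}, $\|\partial_x b_\varepsilon\|_{L^\infty}\leq C$, and $\|b_\varepsilon(\tau,\cdot)\|_{H^2}\leq C$ (from $\|\partial_x^2 b_\varepsilon\|_{L^2}\leq C_4$ and the energy bound), this shows that the source term $f_\varepsilon:=-(\partial_x U_\varepsilon)\,b_\varepsilon-U_\varepsilon\,\partial_x b_\varepsilon$ is bounded in $L^\infty([\tau_0,\delta]\times\T)\subset L^2([\tau_0,\delta]\times\T)$, uniformly in $\varepsilon$.

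Next I would use the Duhamel representation $b_\varepsilon(\tau)=S(\tau-\tau_0)b_\varepsilon(\tau_0)+\int_{\tau_0}^\tau S(\tau-s)f_\varepsilon(s)\ds$, where $S$ denotes the heat semigroup. By Lemma \ref{lema:lemaregularity} the integral term is bounded in $C^{1/4,1/2}([\tau_0,\delta]\times\T)$ with seminorm controlled by $\|f_\varepsilon\|_{L^2}$; and since $b_\varepsilon(\tau_0,\cdot)$ is bounded in $H^2(\T)\hookrightarrow C^1(\T)$, the standard smoothing estimates for $S$ (such as $\|\partial_x S(r)g\|_{L^\infty}\lesssim r^{-1/2}\|g\|_{L^\infty}$ applied to $g=S(s)\partial_x b_\varepsilon(\tau_0)$) bound the homogeneous term in $C^{1/4,1/2}([\tau_0,\delta]\times\T)$ as well. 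Thus $(b_\varepsilon)_\varepsilon$ is bounded simultaneously in $C^{1/4,1/2}([\tau_0,\delta]\times\T)$ and in $L^\infty([\tau_0,\delta];H^2(\T))$. In particular $\|b_\varepsilon(\tau,\cdot)-b_\varepsilon(\tau',\cdot)\|_{L^\infty}\leq C|\tau-\tau'|^{1/4}$ while $\|b_\varepsilon(\tau,\cdot)\|_{C^{3/2}}\leq C$ by the one–dimensional Sobolev embedding; an elementary Gagliardo--Nirenberg inequality of the form $\|g'\|_{L^\infty}\lesssim\|g\|_{L^\infty}^{1/3}\|g\|_{C^{3/2}}^{2/3}$ then yields $\|b_\varepsilon(\tau,\cdot)-b_\varepsilon(\tau',\cdot)\|_{C^1(\T)}\leq C|\tau-\tau'|^{1/12}$, uniformly in $\varepsilon$. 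Since moreover $\{b_\varepsilon(\tau,\cdot)\}_\varepsilon$ is relatively compact in $C^1(\T)$ for each $\tau$ (by the compact embedding $H^2(\T)\hookrightarrow\hookrightarrow C^1(\T)$), the Arzel\`a--Ascoli theorem gives relative compactness of $(b_\varepsilon)_\varepsilon$ in $C^0([\tau_0,\delta];C^1(\T))$. Applying this with $\tau_0=\delta/n$ for $n\geq 2$ and extracting nested subsequences, a diagonal subsequence converges in $C^0([\delta/n,\delta];C^1(\T))$ for every $n$, hence in $C^0_{loc}((0,\delta];C^1(\T))$, to a limit $b$; this is the asserted convergence.

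The part requiring genuine care is the second step: converting the pointwise-in-time estimates of Section \ref{sec:Munchhausenestimates}, which are a superposition of an $O(\varepsilon)$ contribution and an exponentially decaying viscous transient, into honestly $\varepsilon$-uniform $L^\infty$ bounds for $U_\varepsilon$ and $\partial_x U_\varepsilon$ on a \emph{fixed} interval $[\tau_0,\delta]$ of rescaled time, and then exploiting the space--time H\"older bound together with the $L^\infty_\tau H^2_x$ bound to upgrade mere $C^0$-compactness to compactness in the $C^1$-in-space topology. The remaining ingredients — the Duhamel/parabolic-regularity argument and the diagonal extraction — are routine, and the identification of the limit $b$ as a solution of \eqref{ieq:LimitDynamics} is carried out separately in Proposition \ref{prop:limiteq}.
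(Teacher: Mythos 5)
Your argument is correct, and its first half coincides with the paper's own proof: the uniform bounds from Propositions \ref{prop:munchhausen}--\ref{prop:boundseconderivb} make the source $-\partial_x(U_\varepsilon b_\varepsilon)$ uniformly bounded in $L^2([\tau_0,\delta]\times\T)$, and Lemma \ref{lema:lemaregularity} (plus the easy estimate on the homogeneous part) then bounds $b_\varepsilon$ in $C^{1/4,1/2}$. You are in fact more careful than the printed proof on one point: Proposition \ref{prop:munchhausen} only provides the lower bound on $\psi_\varepsilon$, and the two-sided bound $\|\psi_\varepsilon\|_{L^\infty}\lesssim\varepsilon$ for $\tau\geq\tau_0$, hence $\|\partial_x U_\varepsilon\|_{L^\infty}\leq C$, indeed requires combining it with Lemma \ref{lemma:decaypsi} and $\int_\T\psi_\varepsilon=0$, exactly as in the remark following that lemma. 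Where you genuinely diverge is in upgrading to compactness in the $C^1(\T)$ topology: the paper differentiates the equation and applies Lemma \ref{lema:lemaregularity} a second time, using the $L^2$ bounds of Propositions \ref{prop:decaysecondderivativeu} and \ref{prop:boundseconderivb} on the differentiated source, so as to obtain a $C^{1/4,1/2}([a,\delta]\times\T)$ bound on $\partial_x b_\varepsilon$ and conclude with the scalar Arzel\`a--Ascoli theorem; you instead combine the time-H\"older bound $\|b_\varepsilon(\tau)-b_\varepsilon(\tau')\|_{L^\infty}\lesssim|\tau-\tau'|^{1/4}$ with the uniform $L^\infty_\tau H^2_x$ bound, interpolate via $\|g'\|_{L^\infty}\lesssim\|g\|_{L^\infty}^{1/3}\|g\|_{C^{3/2}}^{2/3}$ to get equicontinuity of $\tau\mapsto b_\varepsilon(\tau)$ in $C^1(\T)$, and invoke the compact embedding $H^2(\T)\hookrightarrow C^1(\T)$ together with the Banach-space-valued Arzel\`a--Ascoli theorem. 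Your route is slightly lighter: you never touch the differentiated equation and do not need Proposition \ref{prop:decaysecondderivativeu} for the compactness step, at the cost of a worse (but irrelevant) H\"older exponent $1/12$ in time; the paper's route yields, in addition, a uniform parabolic H\"older bound on $\partial_x b_\varepsilon$ itself, which is the quantity it reuses later. One small slip: the bound $\|b_\varepsilon\|_{L^\infty}\leq\|b_0\|_{L^\infty}$ does not follow from the $L^2$ energy identity \eqref{eq:apriorib} you cite, but from the maximum principle applied to $|b_\varepsilon|^2$ (or simply from the $L^\infty$ bounds already contained in Proposition \ref{prop:munchhausen}); this does not affect the argument.
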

\begin{proof}
    This follows as an application of Arzelà-Ascoli Theorem. 
    \begin{comment} 
    First of all, since $\|\partial_x ^2b(\tau,\cdot)\|_{L^2}^2\leq C$ in $[0,\delta]$, we infer that 

    $$\|\partial_x b_\varepsilon(\tau,\cdot)\|_{C^{\alpha}}\leq C.$$
    \end{comment}
    Recall that the equation for  $b_\varepsilon$ reads 

    \begin{equation}\label{eq:eqtonta}
    \partial_\tau b_\varepsilon-\partial_x^2b_\varepsilon=-\partial_x (U_\varepsilon b_\varepsilon).
    \end{equation}

    Due to Proposition \ref{prop:munchhausen}, we deduce that for $a>0$, there is a constant $C$ such that both $\partial_x b_\varepsilon$ and $\partial_x U_\varepsilon$ are bounded in $L^\infty([0,a]\times\T)$. As a result, $b_\varepsilon(t,x)$ is bounded in $L^\infty([a,\delta],C^{1/2}(\T))$ and we can use Lemma \ref{lema:lemaregularity} to prove that, actually, $b_\varepsilon$ is bounded in $C^{1/4,1/2}([a,\delta]\times\T)$. Furthermore, Proposition \ref{prop:boundseconderivb} implies also a bound on the $L^\infty([0,\delta];C^{1/2}(\T))$ norm of $\partial_x b(t)$ in $[a,\delta]$. By differentiating \eqref{eq:eqtonta} and using the $L^2$ bounds from Propositions \ref{prop:decaysecondderivativeu} and \ref{prop:boundseconderivb}, we obtain a bound on $C^{1/4,1/2}([a,\delta]\times \T)$ for $\partial_x b$ as well. Since $a>0$ is arbitrary (even though the bounds we have obtained depend on it), we can use Arzelà-Ascoli to prove that, after taking a diagonal subsequence, $b_\varepsilon$ converges in $C^0_{loc}((0,\delta);C^1(\T))$ to a function $b\in C^1_{loc}((0,\delta);C^1(\T))$. 
\end{proof}

The limit function $b$ is a priori not well defined in $t=0$. This is reasonable, as at time zero $U_\varepsilon$ will be generically of order $1/\varepsilon$, so the time derivative become very large in a (shrinking) neighborhood of $t=0$ as time $\varepsilon\rightarrow 0$. Therefore, no compactness in time in a uniform norm is to be expected at small times. However, by means of the hyperbolic system, \eqref{eq:hyperbolicSystem}, we will show in Proposition \ref{prop:initialvalue} that we can prove the existence of $\lim_{t\rightarrow 0}b(t,\cdot)$ if we take the limit in $L^2$.

\begin{obs}
    The limit function $b$ lies in $L^\infty((0,\delta)\times \T)$. 
\end{obs}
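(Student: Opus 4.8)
The plan is to produce a bound on $\|b_\varepsilon(\tau,\cdot)\|_{L^\infty(\T)}$ that is uniform both in $\tau\in(0,\delta)$ and in $\varepsilon>0$, and then to pass to the limit using the locally uniform convergence of Lemma~\ref{lemma:unifconv}.

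First I would recall that the energy identity \eqref{eq:apriorib}, which is unaffected by the time rescaling $\tau=\varepsilon t$, shows that $\tau\mapsto\|b_\varepsilon(\tau,\cdot)\|_{L^2(\T)}^2$ is non-increasing, hence $\|b_\varepsilon(\tau,\cdot)\|_{L^2(\T)}\le\|b_0\|_{L^2(\T)}$ for all $\tau\ge0$ and all $\varepsilon>0$. On the other hand, Proposition~\ref{prop:munchhausen} provides a constant $C_1$, depending only on $c_0$, $\|b_0\|_{L^\infty}$ and $\|\partial_x b_0\|_{L^\infty}$, such that $\|\partial_x b_\varepsilon(\tau,\cdot)\|_{L^\infty(\T)}^2\le C_1$ for every $\tau\le\delta$ after rescaling and every $\varepsilon\ge0$. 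Since $\T$ has unit measure, the elementary one-dimensional inequality $\|f\|_{L^\infty(\T)}\le\|f\|_{L^1(\T)}+\|\partial_x f\|_{L^1(\T)}\le\|f\|_{L^2(\T)}+\|\partial_x f\|_{L^\infty(\T)}$ then yields $\|b_\varepsilon(\tau,\cdot)\|_{L^\infty(\T)}\le\|b_0\|_{L^2}+\sqrt{C_1}$ for all $\tau\in[0,\delta]$ and all $\varepsilon>0$. Alternatively, the maximum-principle argument applied to $|b|^2$ in the proof of Proposition~\ref{prop:wellposednesshyp} carries over verbatim to the case $\varepsilon>0$ — the extra term $-2\varepsilon|\partial_x b|^2\le0$ only helps — and gives the sharper uniform bound $\|b_\varepsilon(\tau,\cdot)\|_{L^\infty}\le\|b_0\|_{L^\infty}$.

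It remains to pass to the limit. By Lemma~\ref{lemma:unifconv}, along the chosen subsequence $b_\varepsilon\to b$ in $C^0_{loc}((0,\delta];C^1(\T))$, so for each fixed $\tau\in(0,\delta)$ we have $b_\varepsilon(\tau,\cdot)\to b(\tau,\cdot)$ uniformly on $\T$, whence $\|b(\tau,\cdot)\|_{L^\infty(\T)}=\lim_{\varepsilon\to0}\|b_\varepsilon(\tau,\cdot)\|_{L^\infty(\T)}\le\|b_0\|_{L^2}+\sqrt{C_1}$. Taking the supremum over $\tau\in(0,\delta)$ gives $b\in L^\infty((0,\delta)\times\T)$ with $\|b\|_{L^\infty((0,\delta)\times\T)}\le\|b_0\|_{L^2}+\sqrt{C_1}$. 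There is no genuine obstacle here: the only point that requires attention is that every estimate entering the argument be uniform in $\varepsilon$, which is precisely what Proposition~\ref{prop:munchhausen} guarantees, so the remark is an immediate corollary of the estimates already established.
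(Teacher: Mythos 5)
Your argument is correct, and in fact it contains the paper's proof as your second, ``alternative'' route: the paper disposes of the remark purely by the maximum principle, observing that the constant $\|b_0\|_{L^\infty}$ acts as a barrier for the equation \eqref{eq:b2} satisfied by $|b_\varepsilon|^2$ (the extra dissipative term $-2\varepsilon|\partial_x b_\varepsilon|^2\le 0$ and the sign of $\partial_x u_\varepsilon$ at a spatial maximum both have the favourable sign), so that $|b_\varepsilon(t,x)|\le\|b_0\|_{L^\infty}$ for all times and all $\varepsilon$, after which the bound passes to the limit. Your primary route is different: you combine the monotonicity of $\|b_\varepsilon(\tau,\cdot)\|_{L^2}$ from \eqref{eq:apriorib} with the uniform gradient bound $\|\partial_x b_\varepsilon\|_{L^\infty}^2\le C_1$ of Proposition~\ref{prop:munchhausen} and the elementary embedding $\|f\|_{L^\infty(\T)}\le\|f\|_{L^1(\T)}+\|\partial_x f\|_{L^1(\T)}$ on the unit-measure torus. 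This is valid on $\tau\in[0,\delta]$, which is all the remark requires, but it invokes heavier machinery than needed and produces the cruder constant $\|b_0\|_{L^2}+\sqrt{C_1}$ instead of the sharp $\|b_0\|_{L^\infty}$; the maximum-principle argument is self-contained and does not rely on the gradient estimate at all. One genuinely useful point in your write-up is that you make the limit passage explicit, using the locally uniform convergence of Lemma~\ref{lemma:unifconv} together with the $\varepsilon$-uniformity of the bound, a step the paper leaves implicit.
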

\begin{proof}
    This follows after applying the maximum principle, since the function constant equal to $\|b_0\|_{L^\infty}$ is a subsolution of \eqref{eq:magneticrelaxation}. Therefore, $|b_\varepsilon(t,x)|^2\leq \|b_0\|_{L^\infty}^2$ for any positive time. 
\end{proof}

We now want to use this result to find a limiting equation for the object $b$. Before stating the theorem, we shall make use of a technical lemma that will prove useful in the proof of the convergence 

\begin{lemma}\label{lemma:approxidentity}
	Let $f\in C_{loc}^1((0,\delta]\times \T)\cap L^\infty((0,\delta)\times \T)$. Assume that $g:[0,\delta]\longrightarrow \R$ is a continuous function bounded from below by $c_0$. Then, 
	
	$$f^2(\tau)=g(\tau)\lim_{\varepsilon\rightarrow 0}\frac{1}{\varepsilon}\int_0^\tau\exp\left(-\frac{1}{\varepsilon}\int_\sigma^\tau g(\lambda)d\lambda\right)S(\tau-\sigma)f^2(\sigma)d\sigma \quad \text{in } L^1((0,\delta)\times \T).$$
\end{lemma}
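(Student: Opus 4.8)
The plan is to read the right-hand side, before the limit, as $f^2(\tau,x)$ averaged in time against an approximate identity concentrating at $\sigma=\tau$ on the scale $\varepsilon$, together with an extra spatial smoothing on the vanishing scale $\sqrt{\tau-\sigma}$. Write
$$I_\varepsilon(\tau,x):=g(\tau)\,\frac1\varepsilon\int_0^\tau\exp\Bigl(-\frac1\varepsilon\int_\sigma^\tau g(\lambda)\,d\lambda\Bigr)\bigl(S(\tau-\sigma)f^2(\sigma)\bigr)(x)\,d\sigma .$$
First I would record a uniform bound. Since $g\geq c_0>0$ on $[0,\delta]$, one has $\frac1\varepsilon\int_\sigma^\tau g\geq\frac{c_0}{\varepsilon}(\tau-\sigma)$, hence
$$\frac1\varepsilon\int_0^\tau\exp\Bigl(-\frac1\varepsilon\int_\sigma^\tau g\Bigr)d\sigma\leq\frac1\varepsilon\int_0^\tau e^{-\frac{c_0}{\varepsilon}(\tau-\sigma)}\,d\sigma\leq\frac1{c_0};$$
combining this with $\|S(\tau-\sigma)f^2(\sigma)\|_{L^\infty}\leq\|f\|_{L^\infty}^2$ and $g\leq\|g\|_{L^\infty}$ gives $\|I_\varepsilon\|_{L^\infty((0,\delta)\times\T)}\leq c_0^{-1}\|g\|_{L^\infty}\|f\|_{L^\infty}^2$, uniformly in $\varepsilon$. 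Since $(0,\delta)\times\T$ has finite measure, by dominated convergence it then suffices to prove the pointwise convergence $I_\varepsilon(\tau,x)\to f^2(\tau,x)$ for every $\tau\in(0,\delta]$ and every $x\in\T$.

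For the pointwise statement I would fix $\tau\in(0,\delta]$, $x\in\T$, and substitute $\sigma=\tau-\varepsilon s$ to obtain
$$I_\varepsilon(\tau,x)=g(\tau)\int_0^{\tau/\varepsilon}\exp\Bigl(-\frac1\varepsilon\int_{\tau-\varepsilon s}^{\tau}g(\lambda)\,d\lambda\Bigr)\bigl(S(\varepsilon s)f^2(\tau-\varepsilon s)\bigr)(x)\,ds .$$
Then one passes to the limit under the $s$-integral: for each fixed $s>0$, continuity of $g$ at $\tau$ gives $\frac1\varepsilon\int_{\tau-\varepsilon s}^{\tau}g\to g(\tau)\,s$; continuity of $f^2$ at $\tau$ (from $f\in C^1_{loc}$), combined with the strong continuity at time $0$ of the heat semigroup on $C(\T)$ (its Fourier multiplier is $e^{-(2\pi k)^2 t}$, equivalently convolution with the periodised Gaussian of total mass $1$), yields $S(\varepsilon s)f^2(\tau-\varepsilon s)\to f^2(\tau)$ uniformly on $\T$, in particular at $x$; and the integrand is dominated by $\|g\|_{L^\infty}\|f\|_{L^\infty}^2\,e^{-c_0 s}\in L^1((0,\infty))$ because $g\geq c_0$. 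Dominated convergence therefore gives
$$I_\varepsilon(\tau,x)\longrightarrow g(\tau)\int_0^\infty e^{-g(\tau)s}\,f^2(\tau,x)\,ds=f^2(\tau,x),$$
using $g(\tau)\geq c_0>0$. Feeding this back through the uniform $L^\infty$ bound and dominated convergence on the finite-measure set $(0,\delta)\times\T$ proves $I_\varepsilon\to f^2$ in $L^1((0,\delta)\times\T)$.

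The only genuinely delicate point is the behaviour near $\tau=0$: there $f$ is assumed merely $C^1_{loc}\cap L^\infty$, so one has no pointwise control of $I_\varepsilon$ on $\{\tau=0\}$ and no convergence uniform in $\tau$ can hold — which is precisely why the statement is an $L^1$ convergence and why the argument is routed through the uniform sup-bound rather than through a uniform-in-$(\tau,x)$ estimate. The two supporting facts to nail down are the strong continuity of the heat semigroup at $t=0$ on $C(\T)$ and the continuity of $f^2$ on the compact subsets of $(0,\delta]\times\T$ that appear; both are standard, so the heart of the proof is the approximate-identity computation above.
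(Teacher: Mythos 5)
Your argument is correct, and it reaches the conclusion by a cleaner packaging of the same basic ingredients rather than by the paper's route. The paper first uses the identity $\frac{1}{\varepsilon}\int_0^\tau\exp\bigl(-\frac{1}{\varepsilon}\int_\sigma^\tau g\bigr)g(\sigma)\,d\sigma=1-e^{-\mu(\tau)/\varepsilon}$ to rewrite $f^2(\tau,x)$, then splits the difference into four explicit terms (an exponentially small boundary term, a term controlled by the uniform continuity of $g$, a term controlled by the time-continuity of $f^2$ in sup norm, and a term controlled by the strong continuity of the heat semigroup, there taken in $L^2$), and sends each term to zero in $L^1$ by a ``fixed $\tau$ first, then dominated convergence in $\tau$'' scheme. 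You instead prove a uniform sup bound $\|I_\varepsilon\|_{L^\infty}\leq c_0^{-1}\|g\|_{L^\infty}\|f\|_{L^\infty}^2$, reduce to a pointwise statement on $(0,\delta]\times\T$, and obtain that pointwise limit through the substitution $\sigma=\tau-\varepsilon s$ and a single dominated-convergence passage in $s$ (with dominating function $\|g\|_{L^\infty}\|f\|_{L^\infty}^2e^{-c_0 s}$), concluding with a second application of dominated convergence on the finite-measure set. The ingredients are identical — concentration of the exponential kernel on the scale $\varepsilon$, continuity of $g$ at $\tau$, continuity of $f^2$ on compact subsets of $(0,\delta]\times\T$, and strong continuity of $S(t)$ at $t=0$ (you use it on $C(\T)$, the paper on $L^2(\T)$, which is immaterial) — but your rescaling avoids the four-term decomposition and yields a shorter, more transparent proof; the paper's decomposition has the minor advantage of making explicit which modulus of continuity controls which error term, a structure it reuses in the surrounding convergence arguments. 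The only small point worth spelling out in a written version is the two-term splitting behind ``$S(\varepsilon s)f^2(\tau-\varepsilon s)\to f^2(\tau)$ uniformly'', namely $S(\varepsilon s)\bigl(f^2(\tau-\varepsilon s)-f^2(\tau)\bigr)+(S(\varepsilon s)-\mathrm{Id})f^2(\tau)$, which you assert but do not display; with that made explicit the proof is complete.
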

\begin{proof}
	Note that the integral  
	
	$$\frac{1}{\varepsilon}\int_0^\tau \exp\left(-\frac{1}{\varepsilon}\int_\sigma^\tau g(\lambda)d\lambda\right)g(\sigma)d\sigma=\frac{1}{\varepsilon}\int_0^{\mu(\tau)}e^{-\frac{\mu(\tau)-\mu}{\varepsilon}}d\mu=1-e^{-\mu(\tau)/\varepsilon}$$
	with 
	
	$$\mu(\tau)=\int_0^\tau g(\lambda)d\lambda.$$
			
	As a result,
		
	\begin{equation*} 
		\begin{split} 
		f^2(x,\tau)&=e^{-\mu(\tau)/\varepsilon}f^2(\tau,x)+\frac{1}{\varepsilon}\int_0^\tau \exp\left(-\frac{1}{\varepsilon}\int_\sigma^\tau g(\lambda)d\lambda\right)g(\sigma)d\sigma f^2(\tau,x)\\
		\end{split}
	\end{equation*}

	Then, we can write  
	\small
	\begin{equation*}
		\begin{split}
			g(\tau)\frac{1}{\varepsilon}\int_0^\tau &\exp\left(-\frac{1}{\varepsilon}\int_\sigma^\tau g(\lambda)d\lambda\right)S(\tau-\sigma)f^2(\sigma)d\sigma-f^2(\tau)\\
			&=e^{-\mu(\tau)/\varepsilon}f^2(\tau,x)+\frac{1}{\varepsilon}\int_0^\tau \exp\left(-\frac{1}{\varepsilon}\int_\sigma^\tau g(\lambda)d\lambda\right)\int_{\T}k_{\tau-\sigma}(y)(g(\tau)f^2(\sigma,x-y)-g(\sigma)f^2(\tau,x))\dy {\rm d}\sigma\\
			&=e^{-\mu(\tau)/\varepsilon}f^2(\tau,x)+f^2(\tau,x)\frac{1}{\varepsilon}\int_0^\tau \exp\left(-\frac{1}{\varepsilon}\int_\sigma^\tau g(\lambda)d\lambda\right)(g(\tau)-g(\sigma))d\sigma\\
			&\phantom{asdfffffffffff}+g(\tau)\frac{1}{\varepsilon}\int_0^\tau \exp\left(-\frac{1}{\varepsilon}\int_\sigma^\tau g(\lambda)d\lambda\right)(S(\tau-\sigma)(f^2(\sigma)-f^2(\tau)) d\sigma\\
			&\phantom{asdfffffffffff}+g(\tau)\frac{1}{\varepsilon}\int_0^\tau \exp\left(-\frac{1}{\varepsilon}\int_\sigma^\tau g(\lambda)d\lambda\right)\left(S(\tau-\sigma)-Id\right)f^2(\tau) d\sigma\\
            &\phantom{asdfffffffffff}=(I)+(II)+(III)+(IV).
		\end{split}
	\end{equation*}
	\normalsize
	We now analyze each term separately. $(I)$ tends to zero in $L^1$. Indeed, $e^{-\mu(\tau)/\varepsilon}$, is bounded by one, and it converges to zero almost everywhere in $[0,\delta)$. As a result, by the Dominated Convergence Theorem, we find that 
	
	$$\int_{0}^\delta \int_{\T}e^{-\mu(\tau)/\varepsilon}f^2(\tau,x)\,\dx {\rm d}\tau=\|f\|^2_{L^\infty((0,\delta)\times \T)}\int_0^\delta e^{-\mu(\tau)/\varepsilon}{\rm d}\tau\xrightarrow{\varepsilon\rightarrow 0} 0.$$
	
	On the other hand, to bound $(II)$, we observe that
	
	\begin{equation}\label{eq:tendstozero}
		\begin{split} 
			\int_0^\delta\int_\T |f^2(\tau,x)|&\left|\frac{1}{\varepsilon}\int_0^\tau \exp\left(-\frac{1}{\varepsilon}\int_\sigma^\tau g(\lambda)d\lambda\right)(g(\tau)-g(\sigma))d\sigma\right|\,d\tau\\
			&\leq \|f\|_{L^\infty((0,\delta)\times \T)}\frac{1}{\varepsilon}\int_0^\delta\int_0^\tau \exp\left(-\frac{c_0}{\varepsilon}(\tau-\sigma)\right)|g(\tau)-g(\sigma)|d\sigma d\tau
		\end{split}
	\end{equation}

        Take $n\in \N$. Since $g$ is uniformly continuous, there exists some $h>0$ such that if $|\tau-\sigma|\leq h$, then $|g(\tau)-g(\sigma)|\leq \frac{1}{n}$. As a result, 

        $$\left|\int_0^\tau \exp\left(-\frac{c_0}{\varepsilon}(\tau-\sigma)\right)|g(\tau)-g(\sigma)|d\sigma\right|\leq C\left(\frac{1}{n}+\|g\|_{L^\infty}e^{-\frac{c_0}{\varepsilon}h}\right).$$
        As a result, we find that for every $\tau>0$, 

        $$\left|\int_0^\tau \exp\left(-\frac{c_0}{\varepsilon}(\tau-\sigma)\right)|g(\tau)-g(\sigma)|d\sigma\right|\longrightarrow 0$$
        as $\varepsilon\rightarrow 0$. By means of the Dominated Convergence Theorem, we conclude that the $L^1$ norm of $(I)$ tends to zero.

	In order to bound $(III)$, we can use that $f\in L^\infty((0,\delta);L^\infty)$, so  
	
	\begin{equation}\label{eq:tendstozero2} 
		\begin{split} 
			\Bigg\|g(\tau)\frac{1}{\varepsilon}\int_0^\tau&\exp\left(-\frac{1}{\varepsilon}\int_\sigma^\tau g(\lambda)d\lambda\right)(S(\tau-\sigma)(f^2(\sigma)-f^2(\tau)) d\sigma\Bigg\|_{L^1([0,\tau]\times \T)}\\
			&\leq \|g\|_{L^\infty}\frac{1}{\varepsilon}\int_0^\delta\int_0^\tau \exp\left(-\frac{1}{\varepsilon}\int_\sigma^\tau g(\lambda)d\lambda\right)\|f^2(\sigma)-f^2(\tau)\|_{L^\infty} d\sigma d\tau\\
			&\leq 2\|g\|_{L^\infty}\frac{1}{\varepsilon}\int_0^\delta\int_0^\tau \exp\left(-\frac{1}{\varepsilon}\int_\sigma^\tau g(\lambda)d\lambda\right)\|f^2(\sigma)-f^2(\tau)\|_{L^\infty} d\sigma d\tau\\
            &\leq2\|g\|_{L^\infty}\frac{1}{\varepsilon}\int_0^\delta\int_0^\tau \exp\left(-\frac{c_0}{\varepsilon}\sigma \right)\|f^2(\tau-\sigma)-f^2(\tau)\|_{L^\infty} d\sigma d\tau
		\end{split}
	\end{equation} 

	Now, since $\tau\mapsto f(\tau)$ is continuous in $(0,\delta)\times \T$ we can repeat a similar argument as in \eqref{eq:tendstozero} to prove that

    $$\frac{1}{\varepsilon}\int_0^\tau \exp\left(-\frac{c_0}{\varepsilon}\sigma \right)\|f^2(\tau-\sigma)-f^2(\tau)\|_{L^\infty} d\sigma\longrightarrow 0\quad \text{for all }\tau>0.$$

    Then, due to the Dominated Convergence Theorem, $(II)$ converges to zero in $L^1(I;L^1(\T))$. Finally, for $(IV)$,
	\begin{equation} \label{eq:finalboundasdf}
		\begin{split} 
	\Bigg\|g(\tau)\frac{1}{\varepsilon}\int_0^\tau &\exp\left(-\frac{1}{\varepsilon}\int_\sigma^\tau g(\lambda)d\lambda\right)\left(S(\tau-\sigma)-Id\right)f^2(\tau) d\sigma\Bigg\|_{L^1((0,\delta)\times \T)}\\
	&\leq |g(\tau)|\frac{1}{\varepsilon}\int_0^\delta\int_0^\tau \exp\left(-\frac{c_0}{\varepsilon}(\tau-\sigma)\right)\|\left(S(\tau-\sigma)-Id\right)f^2(\tau)\|_{L^2} d\sigma d\tau. 
		\end{split}
	\end{equation} 

	Notice that this function tends to zero as $\varepsilon\rightarrow 0$ for almost every $\tau\in (0,\delta)$. Indeed, since $f\in L^\infty((0,\tau);\T),$ the function $f^2(\tau)$ lies in $L^2$ for almost every $\tau$. As a result, due to the continuity properties of the heat semigroup, $(S(\tau-\sigma)-I)f^2$ tends to zero in $L^2$ as $\sigma\rightarrow \tau$ for almost every $\tau$. Furthermore,  \eqref{eq:finalboundasdf} is bounded. Thus, repeating the same arguments as  in \eqref{eq:tendstozero2}, it follows that \eqref{eq:finalboundasdf} tends to zero for almost every $\tau$. As a result, if we now integrate with respect to $\tau$, due to the Dominated Convergence Theorem, we obtain  
	
	\begin{equation} 
			\Bigg\|g(\tau)\frac{1}{\varepsilon}\int_0^\tau \exp\left(-\frac{1}{\varepsilon}\int_\sigma^\tau g(\lambda)d\lambda\right)\left(S(\tau-\sigma)-Id\right)f^2(\tau) d\sigma\Bigg\|_{L^1((0,\delta);L^1)}\longrightarrow 0. 
	\end{equation} 
	\normalsize
\end{proof}

We are now in a position to prove the convergence theorem for $b_\varepsilon$. 
\begin{prop}\label{prop:limiteq}
	The limit function $b=\lim_{\varepsilon\rightarrow 0}b_\varepsilon$ is a weak solution for the system of PDEs
	
	\begin{empheq}[left=\empheqlbrace]{alignat=3}
		\label{eq:limitsystem}	&\partial_t b+\partial_x(Ub)=\partial_x^2b & &\tau\in (0,\delta).\\
		\label{eq:limitsystem2}	&\partial_x U=\frac{1}{|b|^2}\left(\int_\T |\partial_x b|^2-|\partial_x b|^2\right) &\phantom{asfd}& \tau \in (0,\delta).
	\end{empheq}

More precisely, for every $\varphi\in C_c^\infty((0,\delta)\times \T)$, the function $b$ satisfies 

\begin{equation} \label{eq:weaksol}
\int_0^\delta \int_\T \left\{ b\partial_t \varphi+Ub\cdot \partial_x \varphi+ \partial_x b\cdot \partial_x \varphi\right\}\,\dx \dt=0
\end{equation} 

\end{prop}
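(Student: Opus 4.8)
The plan is to pass to the limit $\varepsilon\to0$ in the weak formulation of the rescaled equation \eqref{eq:eqtonta}, the only delicate point being the identification of the limit of the nonlinear term $U_\varepsilon b_\varepsilon$. First I would record the weak formulation satisfied by $b_\varepsilon$: testing \eqref{eq:eqtonta} against $\varphi\in C_c^\infty((0,\delta)\times\T)$ and integrating by parts gives
\begin{equation*}
\int_0^\delta\int_\T\Big\{b_\varepsilon\partial_t\varphi+U_\varepsilon b_\varepsilon\cdot\partial_x\varphi+\partial_x b_\varepsilon\cdot\partial_x\varphi\Big\}\dx\dt=0.
\end{equation*}
By Lemma \ref{lemma:unifconv}, along a subsequence $b_\varepsilon\to b$ and $\partial_x b_\varepsilon\to\partial_x b$ in $C^0_{loc}((0,\delta]\times\T)$, so the first and third terms pass to the limit immediately (the compact support of $\varphi$ in time keeps us away from $\tau=0$). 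It remains to show $U_\varepsilon b_\varepsilon\to Ub$ in, say, $L^1_{loc}$, where $U$ is characterized through \eqref{eq:limitsystem2}; equivalently, since $b_\varepsilon\to b$ uniformly on the support of $\varphi$ and $b$ has constant (in $x$) modulus $R(\tau)^2=\|b(\tau,\cdot)\|_{L^2}^2\ge c_0/2>0$ by Lemma \ref{lemma:unifconv} and Proposition \ref{prop:munchhausen}, it suffices to identify $\lim_\varepsilon \partial_x U_\varepsilon$.

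The key is the exact evolution equation for $\psi_\varepsilon=\partial_x u_\varepsilon$ derived in the proof of Lemma \ref{lemma:decaypsi}, which in the rescaled time $\tau=\varepsilon t$ and with $\partial_x U_\varepsilon=\frac1\varepsilon\psi_\varepsilon$ reads
\begin{equation*}
\varepsilon\partial_\tau(\partial_x U_\varepsilon)+u_\varepsilon\partial_x(\partial_x U_\varepsilon)+|b_\varepsilon|^2\,\partial_x U_\varepsilon-\int_\T|\partial_x U_\varepsilon|\,|b_\varepsilon|^2\cdots=\varepsilon\partial_x^2(\partial_x U_\varepsilon)+\Big(\int_\T|\partial_x b_\varepsilon|^2-|\partial_x b_\varepsilon|^2\Big),
\end{equation*}
or more usefully, integrating the equation $\partial_\tau(|b_\varepsilon|^2)+u_\varepsilon\partial_x(|b_\varepsilon|^2)+2\varepsilon\partial_x U_\varepsilon|b_\varepsilon|^2=\varepsilon^2\partial_x^2(|b_\varepsilon|^2)-2\varepsilon^2|\partial_x b_\varepsilon|^2$ along characteristics one writes $|b_\varepsilon|^2\partial_x U_\varepsilon$ as an explicit Duhamel-type integral against the heat semigroup $S(\cdot)$ and the exponential weight $\exp(-\tfrac1\varepsilon\int_\sigma^\tau |b_\varepsilon(\lambda,\cdot)|^2\,d\lambda)$. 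This is exactly the structure handled by the technical Lemma \ref{lemma:approxidentity}, applied with $f=\partial_x b_\varepsilon$ (or rather a fixed limit $\partial_x b$ after using uniform convergence to replace $f^2(\sigma)$ up to a $C^0$-small error) and $g(\tau)=R(\tau)^2=\|b(\tau,\cdot)\|_{L^2}^2$, which is continuous and bounded below by $c_0/2$; all the error terms coming from replacing $b_\varepsilon$ by $b$, from the corrections $\varepsilon\partial_x^2(\cdot)$ and $\varepsilon|\partial_x b_\varepsilon|^2$ in the equation (controlled by Propositions \ref{prop:decaysecondderivativeu} and \ref{prop:boundseconderivb}), and from the $u_\varepsilon\partial_x(\cdot)$ transport term (controlled since $\|u_\varepsilon\|_{L^\infty}=O(\varepsilon)$ by Proposition \ref{prop:munchhausen}) are $O(\varepsilon)$ or $o(1)$ in $L^1_{loc}((0,\delta)\times\T)$. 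Lemma \ref{lemma:approxidentity} then yields
\begin{equation*}
|b_\varepsilon|^2\,\partial_x U_\varepsilon\longrightarrow \Big(\int_\T|\partial_x b|^2-|\partial_x b|^2\Big)\quad\text{in }L^1_{loc}((0,\delta)\times\T),
\end{equation*}
and dividing by $|b|^2=\|b(\tau,\cdot)\|_{L^2}^2\ge c_0/2$ gives $\partial_x U_\varepsilon\to\partial_x U$ with $U$ as in \eqref{eq:limitsystem2}. Pinning the additive constant in $U_\varepsilon$ (recall $u_\varepsilon(\tau,0)=0$, hence $U_\varepsilon(\tau,0)=0$) shows $U_\varepsilon\to U$ in $L^1_{loc}$, and combined with the uniform boundedness $\|U_\varepsilon\|_{L^\infty}\le C$ and $b_\varepsilon\to b$ uniformly on $\operatorname{supp}\varphi$, we get $U_\varepsilon b_\varepsilon\to Ub$ there. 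Passing to the limit in the weak formulation above yields \eqref{eq:weaksol}, which is the claim.

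The main obstacle is precisely this identification of $\lim_\varepsilon \partial_x U_\varepsilon$: one cannot simply test and pass to the limit, because $\partial_x U_\varepsilon$ is only bounded, not compact, and its limit is defined implicitly through $b$. The whole point of the exponential-weight / heat-semigroup representation and of Lemma \ref{lemma:approxidentity} is to carry out this step; the delicate bookkeeping is checking that each of the several error terms (transport, the $\varepsilon$-regularization, and the substitution $b_\varepsilon\rightsquigarrow b$) genuinely vanishes in $L^1_{loc}$ as $\varepsilon\to0$, uniformly away from $\tau=0$, using the a priori bounds of Section \ref{sec:Munchhausenestimates}. Once that is in place, everything else is a routine passage to the limit in a linear weak formulation.
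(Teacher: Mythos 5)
Your plan is correct and follows essentially the same route as the paper's proof: reduce the problem to showing $\partial_x U_\varepsilon \to \frac{1}{|b|^2}\bigl(\int_\T|\partial_x b|^2-|\partial_x b|^2\bigr)$ in $L^1_{loc}$, represent $\Psi_\varepsilon=\partial_x U_\varepsilon$ by Duhamel's formula with the heat semigroup and the exponential weight built from $R^2\ge c_0/2$, invoke Lemma \ref{lemma:approxidentity} for the main term, and kill the transport, $O(\varepsilon)$-correction, and $b_\varepsilon\rightsquigarrow b$ substitution errors with the bounds of Section \ref{sec:Munchhausenestimates}. The stray powers of $\varepsilon$ in your intermediate displayed equations are immaterial to the argument, and your explicit recovery of $U_\varepsilon$ from $\partial_x U_\varepsilon$ via the normalization at $x=0$ is a detail the paper leaves implicit.
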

\begin{obs}
    As pointed out before, the function $b(\tau,\cdot)$ also has constant modulus, \textit{i.e.} $|b(\tau,x)|$ is a function only depending on time. Therefore, we can write $b=R(\tau)e^{i\theta(\tau,x)}$, with both $R$ and $\theta$ real valued, so \eqref{eq:limitsystem}-\eqref{eq:limitsystem2} can be written as 

    \begin{align}
        \partial_t R&=-R\int (\partial_x \theta)^2\dx.\\
        \partial_t \theta+U\partial_x \theta&=\partial_x^2\theta.\\
        \partial_x U&=\int_{\T} (\partial_x\theta)^2\,\dx-(\partial_x \theta)^2.
    \end{align}
\end{obs}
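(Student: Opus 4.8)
The statement is purely algebraic once the polar form is available, so the plan is to insert $b=R(\tau)e^{i\theta(\tau,x)}$ into \eqref{eq:limitsystem}--\eqref{eq:limitsystem2} and separate real and imaginary parts. First I would justify the polar decomposition itself. Since $|b(\tau,x)|=R(\tau)>0$ depends on $\tau$ alone (as recalled in the statement) and $b$ is continuous, nowhere vanishing, and $C^1$ in $x$, the unit-modulus map $x\mapsto b(\tau,x)/R(\tau)\in\mathbb{S}^1$ lifts to a single-valued argument $\theta(\tau,x)$ defined on the covering $\R$ of $\T=\R/\Z$, unique up to an additive constant in $2\pi\Z$; fixing this constant continuously in $\tau$ makes $\theta$ as regular as $b$, so that $\partial_x\theta$ and $\partial_t\theta$ are well-defined single-valued functions on $\T$. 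This is the only step that is not entirely mechanical, and it is where I would be careful, although it is standard for a nonvanishing $C^1$ field.

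With $\theta$ in hand, I would compute the relevant derivatives, treating $b$ as a complex number and using that $R$ does not depend on $x$. One finds $\partial_x b=iR\,\partial_x\theta\,e^{i\theta}$, hence $|\partial_x b|^2=R^2(\partial_x\theta)^2$. Substituting this last identity into the constraint \eqref{eq:limitsystem2} cancels the prefactor $|b|^2=R^2$ and yields immediately the third equation $\partial_x U=\int_\T(\partial_x\theta)^2\dx-(\partial_x\theta)^2$.

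For the first two equations I would plug the polar form into \eqref{eq:limitsystem}. A direct computation gives $\partial_x^2 b=Re^{i\theta}\bigl(i\partial_x^2\theta-(\partial_x\theta)^2\bigr)$, $\partial_t b=e^{i\theta}\bigl(R'+iR\,\partial_t\theta\bigr)$, and $\partial_x(Ub)=Re^{i\theta}\bigl(\partial_x U+iU\partial_x\theta\bigr)$. Dividing \eqref{eq:limitsystem} through by the nonvanishing factor $e^{i\theta}$ produces the identity $R'+iR\,\partial_t\theta+R\,\partial_x U+iRU\partial_x\theta=iR\,\partial_x^2\theta-R(\partial_x\theta)^2$. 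Separating into imaginary and real parts and dividing by $R>0$, the imaginary part is exactly the transport-diffusion equation $\partial_t\theta+U\partial_x\theta=\partial_x^2\theta$, while the real part reads $R'+R\,\partial_x U=-R(\partial_x\theta)^2$.

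Finally I would substitute the formula for $\partial_x U$ already obtained from the constraint into this real part, computing $R'=-R\,\partial_x U-R(\partial_x\theta)^2=-R\bigl(\int_\T(\partial_x\theta)^2\dx-(\partial_x\theta)^2\bigr)-R(\partial_x\theta)^2=-R\int_\T(\partial_x\theta)^2\dx$. This is the first equation, and I would emphasize that the right-hand side no longer depends on $x$, which is precisely the consistency required by $R$ being a function of $\tau$ alone. All three equations are thus identified, and apart from the construction of the lift $\theta$ the argument is a routine separation of real and imaginary parts.
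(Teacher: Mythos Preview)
Your derivation is correct and is exactly the computation the paper has in mind; the paper states this as a remark without proof, and your substitution of the polar form followed by separation of real and imaginary parts is the standard and intended verification.
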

\begin{proof}[Proof of Theorem \ref{prop:limiteq}]
	We know that $b_\varepsilon\longrightarrow b$ strongly in $C_{loc}^1((0,\delta]\times \T)$ as $\varepsilon\rightarrow0$. Therefore, it is enough to show that 
	
	$$\partial_x U_\varepsilon\longrightarrow \frac{1}{|b|^2}\left(\int|\partial_x b|^2-|\partial_x b|^2\right)$$
	in $L^1_{loc}(I;L^1(\T))$.  We find an equation for $\partial_xU_\varepsilon=:\Psi_\varepsilon$:
	
	$$\partial_\tau \Psi_\varepsilon+U_\varepsilon\partial_x\Psi_\varepsilon+\frac{R^2}{\varepsilon}\Psi_\varepsilon=\partial_x^2\Psi_\varepsilon+\frac{1}{\varepsilon}\left(\int_\T |\partial_x b_\varepsilon|^2-|\partial_xb_\varepsilon|^2\right)+\frac{R^2-R^2_\varepsilon}{\varepsilon}\Psi_\varepsilon+\int_\T \Psi_\varepsilon^2\,\dx,$$
	where $R^2_\varepsilon=|b_\varepsilon|^2$. 
	
	Using Duhamel's formula, we find that for any $a,\tau>0$, 
	
	\begin{equation}\label{eq:convergenceu}
		\begin{split}
			\Psi_\varepsilon(\tau+a,x)&=\exp\left(-\frac{1}{\varepsilon}\int_{0}^\tau\|R(\lambda+a)\|_{L^2}^2d\lambda\right)S(\tau+a)\Psi_\varepsilon(a)\\
			&+\frac{1}{\varepsilon}\int_{0}^\tau \exp\left(-\frac{1}{\varepsilon}\int_{\sigma}^\tau\|R(\lambda+a)\|_{L^2}^2d\lambda\right)S(\tau-\sigma)\left[\int_\T|\partial_x b_\varepsilon(\sigma+a,\cdot)|^2-|\partial_x b_\varepsilon(\sigma+a,\cdot)|^2\right]d\sigma\\
			&-\int_{0}^\tau \exp\left(-\frac{1}{\varepsilon}\int_{\sigma}^\tau\|R(\lambda+\alpha)\|_{L^2}^2d\lambda\right)S(\tau-\sigma)\left(U_\varepsilon(\sigma+a,\cdot)\partial_x\Psi_\varepsilon(\sigma+a,\cdot)\right)d\sigma\\
            &+\int_{0}^\tau \exp\left(-\frac{1}{\varepsilon}\int_{\sigma}^\tau\|R(\lambda+\alpha)\|_{L^2}^2d\lambda\right)S(\tau-\sigma)\left(\frac{R(\sigma+a)-R_\varepsilon(\sigma+a)}{\varepsilon}\Psi_\varepsilon(\sigma+a,\cdot)\right)d\sigma\\
			&=(I)+(II)+(III)+(IV).
		\end{split}
	\end{equation}

We now claim that $\Psi_\varepsilon(\tau+a,\cdot)$ tends in $L^1((0,\delta-a)\times \T)$ to 

$$\frac{1}{|b|^2}\left(\int_\T |\partial_x b(\tau+a,y)|^2\dy-|\partial_x b(\tau+a,x)|^2\right).$$

To that end, we estimate the three terms from \eqref{eq:convergenceu}. We begin with $(I)$. Note that,

$$\|(I)\|_{L^1((0,\delta-a);L^1(\T))}\leq \frac{C}{\varepsilon}\int_{a}^\delta e^{-\frac{c_0}{\varepsilon}\tau}d\tau\longrightarrow 0 \quad \text{as }\varepsilon\rightarrow 0$$

The estimate for $(III)$ can be derived in a similar way. We just need to use that 

$$\|S(\tau-\sigma)U\partial_x \Psi_\varepsilon\|_{L^1(\T)}\leq \|S(\tau-\sigma)U\partial_x \Psi_\varepsilon\|_{L^\infty(\T)}\leq \|U\partial_x \Psi_\varepsilon\|_{L^\infty(\T)}\leq \frac{C}{\varepsilon^{2}}\left(e^{-\frac{c_0}{4\varepsilon}\tau}+\varepsilon\right)^{2}.$$

This leads immediately to 

$$\|(III)\|_{L^1((0,\delta-a);L^1(\T))}\longrightarrow 0\quad \text{as }\varepsilon\rightarrow 0.$$

We now study then the term $(II)$. $\partial_x b_\varepsilon(\sigma+a) \longrightarrow \partial_x b(\sigma+a)$ as $\varepsilon\rightarrow 0$ in $C^0((0,\delta-a)\times\T)$ due to Lemma \ref{lemma:unifconv}. Now, due to Lemma \ref{lemma:approxidentity}, we find that 

\begin{equation*} 
\begin{split} 
\lim_{\varepsilon\rightarrow 0}\frac{1}{\varepsilon}\int_{0}^\tau \exp\left(-\frac{1}{\varepsilon}\int_{\sigma}^\tau R(\lambda+a)^2d\lambda\right)&S(\tau-\sigma)\left[\int_\T|\partial_x b(\sigma+a,\cdot)|^2-|\partial_x b(\sigma+a,\cdot)|^2\right]d\sigma\\
&=\frac{1}{R(\tau+a)}\left(\int|\partial_x b(\tau+a,\cdot)|^2-|\partial_x b(\tau+a,\cdot)|^2\right),
\end{split}
\end{equation*}
where the limit is taken in $L^1((0,\delta-a);L^1(\T))$.  As a result, we can write 

\begin{equation*}
	\begin{split}
		&\lim_{\varepsilon\rightarrow 0}\frac{1}{\varepsilon}\int_{0}^\tau \exp\left(-\frac{1}{\varepsilon}\int_{\sigma}^\tau R(\lambda+a)^2d\lambda\right)S(\tau-\sigma)\left[\int_\T|\partial_x b_{\varepsilon}(\tau+a,y)\dy|^2-|\partial_x b_{\varepsilon}(\tau+a,\cdot)|^2\right]d\sigma\\
        &\phantom{asdfasdfasdfasdf}-\frac{1}{R(\tau)}\left(\int_\T|\partial_x b(\tau+a,y)|^2\dy-|\partial_x b(\tau,x)|^2\right)\\
		=&\lim_{\varepsilon\rightarrow 0}\frac{1}{\varepsilon}\int_{0}^\tau \exp\left(-\frac{1}{\varepsilon}\int_{\sigma}^\tau R(\lambda+a)^2d\lambda\right)S(\tau-\sigma)\left[\int_\T\left(|\partial_x b_{\varepsilon}(\tau+a,y)|^2\dy-|\partial_x b(\tau+a,\cdot)|^2\right)\right.\\
        &\phantom{\int asdfasdfasdfasdf}\left.-\left(\int_\T|\partial_x b_{\varepsilon}(\sigma+a,y)|^2\dy-|\partial_x b(\sigma+a,\cdot)|^2\right)\right]d\sigma
	\end{split}
\end{equation*}

The $L^1(I;L^1(\T))$ norm of the term above is bounded by 

\begin{equation}\label{eq:annoying}
	\begin{split}
		\frac{1}{\varepsilon}\int_0^\delta\int_0^\tau e^{-\frac{c_0}{2\varepsilon}(\tau-\sigma)}\|\partial_x b_\varepsilon-\partial_x b\|_{L^2(\T)}d\sigma d\tau.
	\end{split}
\end{equation}

Again,

$$\frac{1}{\varepsilon}\int_0^\tau e^{-\frac{c_0}{2\varepsilon}(\tau-\sigma)}\|\partial_x b_\varepsilon(\sigma+a,\cdot)-\partial_x b(\sigma+a,\cdot)\|_{L^2(\T)}d\sigma \longrightarrow 0$$
as $\varepsilon\rightarrow 0$ for almost every $\tau>0$. Therefore, due to the Dominated Convergence Theorem, we conclude that \eqref{eq:annoying} tends to zero. 

Now, using that $\|R_\varepsilon\|_{L^2}\longrightarrow \|R\|_{L^2}$, we infer that $$\partial_x U_\varepsilon\longrightarrow \frac{1}{|b|^2}\left(\int_\T |\partial_x b|^2-|\partial_x b|^2\right)$$ in $L^1_{loc}(I;L^1(\T))$. The result then follows.
\end{proof}

We prove now that we can make sense of $b(0,\cdot)$ as $\lim_{t\rightarrow 0}b(t,\cdot)$, where the limit is taken with respect to the $L^2$ norm. We also prove that such limit equals $S(b_0)$, where such function is given by Definition \ref{defi:nonlinearoperator}. We establish first a series of auxiliary lemmas.

\begin{lemma}\label{lemma:initialvalue}
	Consider   $\tilde{b}$  the solution of \eqref{eq:hyperbolicSystem}. Then, if the initial data $b_0:=b_\varepsilon(0,\cdot)=\tilde{b}(0,\cdot)\in H^2(\T)$ satisfies $|b_0|\geq c_0/2$, then there exists a constant $C>0$ such that for $t\leq \delta/\varepsilon$ (\textit{c.f. Proposition \ref{prop:munchhausen}}) it holds that 
	
	$$\|b_\varepsilon(t,\cdot)-\tilde{b}(t,\cdot)\|_{L^2}\leq C\varepsilon t.$$
\end{lemma}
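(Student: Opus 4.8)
Set $w := b_\varepsilon - \tilde b$, which satisfies $w(0,\cdot) = 0$ since $b_\varepsilon(0,\cdot) = \tilde b(0,\cdot) = b_0$. Writing the first equations of \eqref{eq:magneticrelaxation} and \eqref{eq:hyperbolicSystem} and subtracting, and using that both velocities are given by $\partial_x u = \tfrac12(|b|^2 - \|b\|_{L^2}^2)$ together with the common normalisation $u(t,0)=0$, one gets
$$\partial_\tau w + \partial_x(u_\varepsilon w) + \partial_x\big((u_\varepsilon - \tilde u)\tilde b\big) = \varepsilon\,\partial_x^2 b_\varepsilon, \qquad w(0,\cdot)=0.$$
(I keep the original time variable, so $\tau$ here denotes $t$.) The plan is to run an $L^2$ energy estimate on $w$ on the interval $[0,\delta/\varepsilon]$, treating $\varepsilon\,\partial_x^2 b_\varepsilon$ as a source of size $O(\varepsilon)$ and everything else as a perturbation with a time-integrable Grönwall coefficient, then close with Grönwall (or a short bootstrap). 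The key inputs are: Proposition \ref{prop:munchhausen} (and its corollary) giving $|b_\varepsilon|^2\geq c_0/2$ and $\|\psi_\varepsilon\|_{L^\infty}=\|\partial_x u_\varepsilon\|_{L^\infty}\leq C(e^{-\mu t}+\varepsilon)$ for $t\leq \delta/\varepsilon$; Propositions \ref{prop:decayunondiff}--\ref{prop:binfty} giving the analogous exponential decay $\|\partial_x\tilde u\|_{L^\infty}\leq Ce^{-\mu t}$ together with the uniform bounds $\|\tilde b\|_{L^\infty}+\|\partial_x\tilde b\|_{L^\infty}\leq C$; and Proposition \ref{prop:boundseconderivb} giving $\|\partial_x^2 b_\varepsilon(t,\cdot)\|_{L^2}\leq C_4$ for $t\leq \delta/\varepsilon$.

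\textbf{The energy estimate.} Testing against $w$ and integrating by parts, the transport term becomes $-\int_\T \partial_x(u_\varepsilon w)\cdot w = -\tfrac12\int_\T (\partial_x u_\varepsilon)\,|w|^2$, which is $\leq \tfrac12\|\partial_x u_\varepsilon\|_{L^\infty}\|w\|_{L^2}^2 \leq C(e^{-\mu t}+\varepsilon)\|w\|_{L^2}^2$. The source obeys $\varepsilon\int_\T\partial_x^2 b_\varepsilon\cdot w \leq \varepsilon\|\partial_x^2 b_\varepsilon\|_{L^2}\|w\|_{L^2}\leq C_4\,\varepsilon\|w\|_{L^2}$. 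For the stretching/transport discrepancy one expands $\partial_x\big((u_\varepsilon-\tilde u)\tilde b\big)=(\partial_x u_\varepsilon-\partial_x\tilde u)\tilde b+(u_\varepsilon-\tilde u)\partial_x\tilde b$ and uses two complementary bounds: on the one hand $\partial_x u_\varepsilon-\partial_x\tilde u=\tfrac12\big((b_\varepsilon+\tilde b)\cdot w - \overline{(b_\varepsilon+\tilde b)\cdot w}\big)$ gives $\|\partial_x u_\varepsilon-\partial_x\tilde u\|_{L^2}\leq C\|w\|_{L^2}$ and (since $u_\varepsilon(t,0)=\tilde u(t,0)=0$, so $u_\varepsilon-\tilde u=\int_0^x(\partial_x u_\varepsilon-\partial_x\tilde u)$) also $\|u_\varepsilon-\tilde u\|_{L^\infty}\leq C\|w\|_{L^2}$; on the other hand $\|u_\varepsilon-\tilde u\|_{L^\infty}\leq \|u_\varepsilon\|_{L^\infty}+\|\tilde u\|_{L^\infty}\leq \|\psi_\varepsilon\|_{L^\infty}+\|\partial_x\tilde u\|_{L^\infty}\leq C(e^{-\mu t}+\varepsilon)$, and likewise for $\|\partial_x u_\varepsilon-\partial_x\tilde u\|_{L^2}$. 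Since $\|\tilde b\|_{L^2}$ and $\|\partial_x\tilde b\|_{L^2}$ are bounded, both cross-terms are bounded by $C\min\!\big(\|w\|_{L^2},\,e^{-\mu t}+\varepsilon\big)\,\|w\|_{L^2}$. Altogether
$$\tfrac12\tfrac{d}{dt}\|w\|_{L^2}^2 \;\leq\; C(e^{-\mu t}+\varepsilon)\,\|w\|_{L^2}^2 \;+\; C\Big(\varepsilon + \min\big(\|w\|_{L^2},\,e^{-\mu t}+\varepsilon\big)\Big)\|w\|_{L^2}.$$

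\textbf{Conclusion.} Dividing by $\|w\|_{L^2}$ and using that $\int_0^{\delta/\varepsilon}(e^{-\mu s}+\varepsilon)\,ds\leq \tfrac1\mu+\delta$ is bounded independently of $\varepsilon$, Grönwall's lemma gives $\|w(t,\cdot)\|_{L^2}\leq e^{C(1/\mu+\delta)}\int_0^t\big(C_4\varepsilon + C\min(\|w(s,\cdot)\|_{L^2},e^{-\mu s}+\varepsilon)\big)\,ds$. A short bootstrap on the ansatz $\|w(s,\cdot)\|_{L^2}\leq C'\varepsilon s$ then shows $\int_0^t\min(\|w(s,\cdot)\|_{L^2},e^{-\mu s}+\varepsilon)\,ds\leq C\varepsilon t$ (the integrand is $\leq C'\varepsilon s$ on $[0,s_1]$ and $\leq e^{-\mu s}+\varepsilon$ afterwards, where $s_1$ is the crossover, and both pieces integrate to $O(\varepsilon t)$ on $t\leq\delta/\varepsilon$), so the ansatz is recovered with a larger constant and one obtains $\|b_\varepsilon(t,\cdot)-\tilde b(t,\cdot)\|_{L^2}\leq C\varepsilon t$ for $t\leq\delta/\varepsilon$. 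The main obstacle is precisely the cross-term $(u_\varepsilon-\tilde u)\,\partial_x\tilde b\cdot w$: bounded naively by $\|u_\varepsilon-\tilde u\|_{L^\infty}\|\partial_x\tilde b\|_{L^2}\|w\|_{L^2}\leq C\|w\|_{L^2}^2$ it carries an $O(1)$ coefficient (as $\|\partial_x\tilde b\|_{L^2}$ does not decay), which would produce an amplification factor $e^{Ct}$ useless on the $\delta/\varepsilon$ scale. The resolution — and the heart of the argument — is that $u_\varepsilon-\tilde u$ is simultaneously controlled by $\|w\|_{L^2}$ (hence small for small $t$, since $w(0)=0$) and by the exponentially decaying mean-free energy densities $\|\psi_\varepsilon\|_{L^\infty}+\|\partial_x\tilde u\|_{L^\infty}$ (hence $O(\varepsilon)$ for $t$ of order $\log(\varepsilon^{-1})$ and beyond), which is exactly what makes the Grönwall coefficient uniformly integrable.
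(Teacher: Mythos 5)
Your setup is fine: the equation for $w=b_\varepsilon-\tilde b$, the $O(\varepsilon)$ treatment of the source via Proposition \ref{prop:boundseconderivb}, and the two complementary bounds $\|u_\varepsilon-\tilde u\|_{L^\infty}\lesssim\|w\|_{L^2}$ and $\|u_\varepsilon-\tilde u\|_{L^\infty}\lesssim e^{-\mu t}+\varepsilon$ are all correct. The gap is in the last step: the ``short bootstrap on the ansatz $\|w(s)\|_{L^2}\le C'\varepsilon s$'' does not close. Under that ansatz the crossover $s_1$ between the two branches of the minimum, defined by $C'\varepsilon s_1=e^{-\mu s_1}+\varepsilon$, occurs at $s_1\sim\mu^{-1}\log(1/\varepsilon)$, so the first piece contributes $\int_0^{s_1}C'\varepsilon s\ds\sim C'\varepsilon\log^2(1/\varepsilon)$ and the tail of the second contributes $e^{-\mu s_1}/\mu\sim C'\varepsilon s_1/\mu$; at times $t$ of order $\log(1/\varepsilon)$ (well inside $[0,\delta/\varepsilon]$) the recovered bound is of size $C'\varepsilon t\log(1/\varepsilon)$, so the constant degenerates as $\varepsilon\to0$ and no continuity argument returns the same $C'$. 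The failure is structural rather than bookkeeping: the inequality you arrive at,
\begin{equation*}
\frac{d}{dt}\|w\|_{L^2}\;\le\; C(e^{-\mu t}+\varepsilon)\|w\|_{L^2}+C\varepsilon+C\min\bigl(\|w\|_{L^2},\,e^{-\mu t}+\varepsilon\bigr),\qquad w(0)=0,
\end{equation*}
admits solutions that grow like $\varepsilon(e^{Ct}-1)$ (the min selects the $\|w\|$ branch with an $O(1)$ coefficient) until they reach the level $e^{-\mu t}$, i.e.\ a plateau of height $\approx\varepsilon^{\mu/(C+\mu)}$ at time $\approx\frac{1}{C+\mu}\log(1/\varepsilon)$, and $\varepsilon^{\mu/(C+\mu)}\gg\varepsilon t$ there. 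Hence no Grönwall-type manipulation of this inequality alone can yield $\|w(t)\|_{L^2}\le C\varepsilon t$.

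What is missing is a bound on the velocity difference whose $w$-independent part is $O(\varepsilon)$ rather than $O(e^{-\mu t})$, and this is exactly what the paper's proof supplies by working first at the level of the velocities. Subtracting the equations satisfied by $\partial_x u_\varepsilon$ and $\partial_x\tilde u$ and doing an $L^2$ estimate, the damping term $|\tilde b|^2(\partial_x u_\varepsilon-\partial_x\tilde u)$ with $|\tilde b|^2\ge c_0$ is exploited, while every forcing term carries either a factor $\varepsilon$ or a factor $\|\partial_x u_\varepsilon\|_{L^\infty}\lesssim e^{-c_0t/2}+\varepsilon$ multiplying $\|b_\varepsilon-\tilde b\|_{L^2}$; Grönwall for this subsidiary quantity gives, schematically, $\|\partial_x u_\varepsilon-\partial_x\tilde u\|_{L^2}(t)\lesssim\varepsilon+\int_0^te^{-\frac{c_0}{2}(t-s)}\bigl(e^{-\frac{c_0}{2}s}+\varepsilon\bigr)\|w(s)\|_{L^2}\ds$. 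Feeding this into the energy estimate for $w$ produces an integral inequality in which the kernel multiplying $\|w(s)\|_{L^2}$ decays exponentially and the pure source is $C\varepsilon t$, so the integral Grönwall closes with an $\varepsilon$-uniform constant. In your notation: the true estimate is $\|u_\varepsilon-\tilde u\|\lesssim\varepsilon+(\text{exponentially decaying kernel})\ast\|w\|$, which is strictly stronger than $\min(\|w\|,e^{-\mu t}+\varepsilon)$ precisely at the intermediate times $1\lesssim t\lesssim\log(1/\varepsilon)$ where your argument loses control. To repair your proof you would need to add this intermediate damped estimate for $\partial_x u_\varepsilon-\partial_x\tilde u$ (or an equivalent device) before running the Grönwall argument on $w$.
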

\begin{proof}
	Subtracting the equations of both $u_\varepsilon$ and $\tilde{u}$ we obtain 
	
	\begin{equation*}
		\begin{split}
			&\partial_t(\partial_x u_\varepsilon)+u_\varepsilon\partial_x ^2u_\varepsilon+|b_\varepsilon|^2\partial_x u_\varepsilon-\int_\T(\partial_x u_\varepsilon)^2=\varepsilon \partial_x ^2(\partial_x u_\varepsilon)\,+\varepsilon\int_\T|\partial_x b_\varepsilon|^2-\varepsilon|\partial_x b_\varepsilon|^2\\
			&\partial_t(\partial_x \tilde{u})\phantom{_\varepsilon}+\tilde{u}\partial_x ^2\tilde{u}\phantom{_\varepsilon}\phantom{_\varepsilon}+|\tilde{b}|^2\partial_x \tilde{u}\phantom{_\varepsilon}\phantom{_\varepsilon}-\int_\T(\partial_x \tilde{u})^2\phantom{_\varepsilon}=0
		\end{split}
	\end{equation*}

As a result, we obtain an estimate for the $L^2$ norm of $\partial_x u_\varepsilon-\partial_x \tilde{u}$ as

\begin{equation*}
	\begin{split}
		\frac{1}{2}\frac{d}{dt}\int_\T|\partial_x u_\varepsilon-\partial_x \tilde{u}|^2&-\frac{1}{2}\int_\T\partial_x \tilde{u}|\partial_x u_\varepsilon-\partial_x \tilde{u}|^2+\int_\T|\tilde{b}|^2|\partial_x u_\varepsilon-\partial_x \tilde{u}|^2\\
		&+\int_\T(u_\varepsilon-\tilde{u})(\partial_x u_\varepsilon-\partial_x \tilde{u})\partial_x^2u_\varepsilon+\int_\T(|b_\varepsilon|^2-|\tilde{b}|^2)\partial_x u_\varepsilon(\partial_x u_\varepsilon-\partial_x \tilde{u})\\
		&=-\varepsilon\int_\T\partial_x^2 u_\varepsilon(\partial_x^2\tilde{u}-\partial_x^2 u_\varepsilon)-\varepsilon\int_\T|\partial_x b|^2(\partial_x \tilde{u}-\partial_x u_\varepsilon)
	\end{split}
\end{equation*}

Now, since $H^1(\T)$ embedds into $ L^\infty(\T)$ in dimension 1,  we have

$$\left|\int_\T(u_\varepsilon-\tilde{u})(\partial_x u_\varepsilon-\partial_x \tilde{u})\partial_x^2u_\varepsilon\right|\leq \|\partial_x \tilde{u}(t,\cdot)-\partial_x u_\varepsilon(t,\cdot)\|_{L^2}^2\|\partial_x^2 u_\varepsilon(t,\cdot)\|_{L^2}.$$

On the other hand, 

$$\int(|b_\varepsilon|^2-|\tilde{b}|^2)\partial_x u_\varepsilon(\partial_x u_\varepsilon-\partial_x \tilde{u})\leq \|\partial_x u_\varepsilon\|_{L^\infty}\|\partial_x u_\varepsilon-\partial_x \tilde{u}\|_{L^2}^2+(\|b_\varepsilon\|_{L^2}^2-\|\tilde{b}\|_{L^2}^2)\|\partial_x u_\varepsilon\|_{L^2}\|\partial_x u_\varepsilon-\partial_x \tilde{u}\|_{L^2}.$$

Now, using the estimates derived in  Theorem \ref{prop:munchhausen} for solutions of the system \eqref{eq:magneticrelaxation}; as well as the estimates obtained in Proposition \ref{prop:decayunondiff} for \eqref{eq:hyperbolicSystem}, and since $\log(\varepsilon^{-1})=o(\varepsilon^{-1})$ as $\varepsilon\rightarrow 0$, there exists $\varepsilon_0$ such that, for $\varepsilon<\varepsilon_0$, 

$$\frac{1}{2}\frac{d}{dt}\|\partial_x u_\varepsilon-\partial_x \tilde{u}\|_{L^2}+\left(c_0-Ce^{-c_0t}\right)\|\partial_x u_\varepsilon-\partial_x \tilde{u}\|_{L^2}\leq C\left(\varepsilon+\|\tilde{b}-b_{\varepsilon}\|_{L^2}\right)\left(e^{-\frac{c_0}{2}t}+\varepsilon\right).$$

By means of Gronwall's inequality, and since $\partial_x u_\varepsilon=\partial_x \tilde{u}$ at $t=0$, we find that 
\small
\begin{equation} 
	\begin{split} 
		\|\partial_x u_\varepsilon(t,\cdot)-\partial_x \tilde{u}(t,\cdot)\|_{L^2}&\leq C\int_0^t\exp\left(-c_0(t-s)+e^{\frac{C}{c_0}e^{-c_0s}}-e^{\frac{C}{c_0}e^{-c_0t}}\right)\left(e^{-\frac{c_0}{2}s}+\varepsilon\right)\left(\varepsilon+\|\tilde{b}-b_{\varepsilon}\|_{L^2}\right)\ds\\
		&\leq C\int_0^te^{-c_0(t-s)}\left(e^{-\frac{c_0}{2}s}+\varepsilon\right)\left(\varepsilon+\|\tilde{b}-b_{\varepsilon}\|_{L^2}\right)\ds\\
		&\leq C\varepsilon \left(e^{-\frac{c_0}{2}t}+\varepsilon\right)+C\varepsilon+e^{-\frac{c_0}{2}t}\int_0^te^{-\frac{c_0}{2}(t-s)}  \|\tilde{b}(s,\cdot)-b_{\varepsilon}(s,\cdot)\|_{L^2} \ds
	\end{split}
\end{equation}
\normalsize
We now study the difference between $b_\varepsilon$ and $\tilde{b}$ for large values of $b$. It is easy to check that the difference $b_\varepsilon-\tilde{b}$ satisfies 

$$\frac{1}{2}\frac{d}{dt}\int_\T|\tilde{b}-b_\varepsilon|^2+\int_\T \partial_x(ub_\varepsilon)(b_\varepsilon-\tilde{b})-\int_\T\partial_x(\tilde{u}\tilde{b})(b_\varepsilon-\tilde{b})=\varepsilon \int_\T\partial_x^2 b_\varepsilon(b_\varepsilon-\tilde{b}).$$

Making use of this estimate for the difference of the velocities, we find that 

$$\frac{1}{2}\|\tilde{b}(t,\cdot)-b_\varepsilon(t,\cdot)\|_{L^2}\leq C\left(\int_0^te^{-\frac{c_0}{2}t}\|b_\varepsilon(s,\cdot)-\tilde{b}(s,\cdot)\|_{L^2}\ds+\varepsilon t\right)$$

Finally, a direct application of the integral version of Gronwall's inequality leads to the result. 
\end{proof}

\begin{prop}\label{prop:initialvalue}
	The limit function $b$ has the property that $\lim_{t\rightarrow 0^+}b(t,\cdot)$ exists in $L^2$, and it equals $S(b_0)$, with $S$ given by Definition \ref{defi:nonlinearoperator}.
\end{prop}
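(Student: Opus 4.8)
The plan is to compare $b_\varepsilon$, written in the rescaled time variable $\tau$, with the solution $\tilde b$ of the perfectly conducting system \eqref{eq:hyperbolicSystem} (with the same datum $b_0$) at the corresponding large time $t=\tau/\varepsilon$, and then to pass to the limit $\varepsilon\rightarrow 0^+$. The point is that, by construction, the rescaled solution equals the original one evaluated at $t=\tau/\varepsilon$, and the range $0<\tau\le\delta$ corresponds exactly to $0<t\le\delta/\varepsilon$, which is the window on which Proposition \ref{prop:munchhausen}, and hence Lemma \ref{lemma:initialvalue}, applies. Since $b_0\in H^2(\T)\hookrightarrow C^1(\T)$ and $|b_0|>c_0>c_0/2$, Lemma \ref{lemma:initialvalue} gives, after undoing the rescaling,
\begin{equation*}
    \| b_\varepsilon(\tau,\cdot) - \tilde b(\tau/\varepsilon,\cdot) \|_{L^2(\T)} \;\le\; C\,\varepsilon\cdot\frac{\tau}{\varepsilon} \;=\; C\tau, \qquad 0<\tau\le\delta,
\end{equation*}
with $C$ depending only on $b_0$.

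Next I would invoke the exponential relaxation of the perfectly conducting flow, Proposition \ref{prop:binfty}: since $|b_0|\ge c_0>0$, one has $\|\tilde b(t,\cdot)-S(b_0)\|_{L^\infty(\T)}\le C e^{-c_0 t}$, where $S(b_0)$ is the limit from Definition \ref{defi:nonlinearoperator}. Evaluating at $t=\tau/\varepsilon$ and combining with the previous estimate through the triangle inequality,
\begin{equation*}
    \| b_\varepsilon(\tau,\cdot) - S(b_0) \|_{L^2(\T)} \;\le\; C\tau + C\, e^{-c_0\tau/\varepsilon}, \qquad 0<\tau\le\delta .
\end{equation*}
Fixing $\tau\in(0,\delta]$ and letting $\varepsilon\rightarrow 0^+$ along the subsequence of Lemma \ref{lemma:unifconv}, the left-hand side tends to $\|b(\tau,\cdot)-S(b_0)\|_{L^2(\T)}$, because $b_\varepsilon(\tau,\cdot)\rightarrow b(\tau,\cdot)$ in $C^1(\T)\hookrightarrow L^2(\T)$, while $e^{-c_0\tau/\varepsilon}\rightarrow 0$. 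Hence $\|b(\tau,\cdot)-S(b_0)\|_{L^2(\T)}\le C\tau$ for every $0<\tau\le\delta$, and therefore $b(\tau,\cdot)\rightarrow S(b_0)$ in $L^2(\T)$ as $\tau\rightarrow 0^+$, which is the assertion.

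The proof is essentially a reassembly of previously established facts, so there is no serious obstacle; the only point requiring genuine care is the time-rescaling bookkeeping — that the $O(\varepsilon t)$ error of Lemma \ref{lemma:initialvalue} becomes an $O(\tau)$ error \emph{uniform in $\varepsilon$} once the diffusive scaling $t=\tau/\varepsilon$ is inserted, and that this substitution is legitimate exactly on $\tau\in(0,\delta]$ (where all the a priori bounds of Section \ref{sec:Munchhausenestimates} feeding Lemma \ref{lemma:initialvalue} are valid). I would finally remark that the bound $C\tau$ does not depend on the chosen subsequence, so that, together with uniqueness for the limit system \eqref{eq:limitsystem}--\eqref{eq:limitsystem2} with datum $S(b_0)$ (see Theorem \ref{it:Limit}), the full family $(b_\varepsilon)_{\varepsilon>0}$ converges to the same $b$ with $b(0^+,\cdot)=S(b_0)$; this supplies precisely the initial condition needed to conclude the proof of Theorem \ref{teor:main}.
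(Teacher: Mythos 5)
Your argument is correct: Lemma \ref{lemma:initialvalue} is valid for all $t\leq\delta/\varepsilon$, so applying it at $t=\tau/\varepsilon$ with $\tau\in(0,\delta]$ does give the $\varepsilon$-uniform error $C\varepsilon\cdot\tau/\varepsilon=C\tau$, and combining this with the exponential relaxation $\|\tilde b(\tau/\varepsilon,\cdot)-S(b_0)\|_{L^\infty}\leq Ce^{-c_0\tau/\varepsilon}$ from Proposition \ref{prop:binfty} and the convergence $b_\varepsilon(\tau,\cdot)\to b(\tau,\cdot)$ of Lemma \ref{lemma:unifconv} yields $\|b(\tau,\cdot)-S(b_0)\|_{L^2}\leq C\tau$, whence the claim. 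The paper uses the same three ingredients but assembles them differently: it introduces the intermediate time $\tau_\varepsilon=-\tfrac{4}{c_0}\varepsilon\log\varepsilon$, applies Lemma \ref{lemma:initialvalue} only there (error $O(\varepsilon|\log\varepsilon|)$, where $\tilde b(\tau_\varepsilon/\varepsilon)$ is already within $O(\varepsilon^4)$ of $S(b_0)$), and then bridges from $\tau_\varepsilon$ to the fixed time $\omega$ by an $L^2$ estimate on $\partial_\tau b_\varepsilon$ obtained by testing the equation with $\partial_\tau b_\varepsilon$ and invoking Proposition \ref{prop:munchhausen}; this produces a bound of the form $C(\varepsilon+\omega)^{1/2}$ plus vanishing terms before sending $\varepsilon\to0$ and then $\omega\to0$. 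Your route is shorter: it dispenses with the time-derivative estimate and the intermediate-time splitting altogether, and gives the cleaner linear rate $\|b(\tau,\cdot)-S(b_0)\|_{L^2}\leq C\tau$; the paper's splitting would only be forced if the comparison with the perfectly conducting flow were available merely on the fast time scale $t\sim\log(\varepsilon^{-1})$, which is not the case here since Lemma \ref{lemma:initialvalue} holds up to $t=\delta/\varepsilon$. Your closing remark on subsequence-independence and uniqueness for the limit system, giving convergence of the full family and the correct initial condition for Theorem \ref{teor:main}, matches the paper's own remark following Proposition \ref{prop:main}.
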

\begin{proof}
	Take $\gamma>0$ and $\omega>0$. Now, using the bounds for $\partial_x b_\varepsilon$ and $\partial_x u_\varepsilon$ from Proposition \eqref{prop:munchhausen}, we can multiply \eqref{eq:magneticrelaxation} times $\partial_\tau b$ to infer

    $$\|\partial_\tau b(\tau,x)\|_{L^2}^2-\frac{d}{d\tau}\int_\T |\partial_x b_\varepsilon(\tau,x)|^2\dx\leq \frac{C}{\varepsilon^2}\left(e^{-\frac{c_0}{4\varepsilon}\tau}+\varepsilon\right)^2.$$

    Furthermore, $-\varepsilon\log(\varepsilon)\rightarrow 0$, for $\varepsilon$ small enough, $-\varepsilon\log(\varepsilon)<\omega$. Therefore, 

    \begin{equation} 
    \begin{split}
    \left\|b_\varepsilon\left(-\frac{4}{c_0}\varepsilon\log(\varepsilon),\cdot\right)-b_\varepsilon(\omega,\cdot)\right\|_{L^2}&\leq \int_{-\frac{4}{c_0}\varepsilon\log(\varepsilon)}^\omega \|\partial_\tau b_\varepsilon(s,\cdot)\|_{L^2}^2ds\\
    &\leq C(\varepsilon +\omega)
    \end{split}
    \end{equation}

    On the other hand, we showed in Lemma \ref{lemma:initialvalue} that 

    $$\left\|b_\varepsilon \left(-\frac{4}{c_0}\varepsilon\log(\varepsilon),\cdot\right)-\tilde{b}\left(-\frac{4}{c_0}\varepsilon\log(\varepsilon),\cdot\right)\right\|_{L^2}\leq -C\varepsilon \log(\varepsilon),$$
    and in Proposition \ref{prop:binfty} we proved that 

    $$\|\tilde{b}(\tau)(\cdot)-S(b_0)(\cdot)\|_{L^\infty}\leq Ce^{-\frac{c_0}{\varepsilon}\tau}.$$

    As a result, we obtain that 

    $$\|b(\omega,\cdot)-S(b_0)(\cdot)\|_{L^2}\leq \|b_\varepsilon(\omega,\cdot)-b(\omega,\cdot)\|_{L^2}+C|\varepsilon+\omega|^{1/2}+C\varepsilon\log(\varepsilon)+C\varepsilon.$$

    Using that $b_\varepsilon(\omega,\cdot)$ tends to $b(\omega,\cdot)$ uniformly as $\varepsilon$ tends to zero, the proposition follows.
\end{proof}
\begin{obs}
    It is inmediate that, since $b\in L^\infty(I;H^1(\T))\cap C(I;L^2(\T))$, $b$ actually belongs to $C(I;H^s(\T))$ for every $s\in [0,1)$ due to interpolation.
\end{obs}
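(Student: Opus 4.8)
The plan is to deduce the remark from the elementary interpolation inequality $\|f\|_{H^s}\le\|f\|_{L^2}^{1-s}\|f\|_{H^1}^{s}$, valid for $s\in(0,1)$, combined with a short weak-compactness argument that turns the a priori bound $b\in L^\infty(I;H^1(\T))$ into a pointwise-in-time bound.

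First I would record the interpolation inequality on the torus: starting from $\|f\|_{H^s}^2=\sum_{k\in\Z}(1+k^2)^s|\what f(k)|^2$ and applying H\"older's inequality with conjugate exponents $1/s$ and $1/(1-s)$ to the factorization $(1+k^2)^s|\what f(k)|^2=\big[(1+k^2)|\what f(k)|^2\big]^{s}\big[|\what f(k)|^2\big]^{1-s}$, one gets $\|f\|_{H^s}^2\le\|f\|_{H^1}^{2s}\|f\|_{L^2}^{2(1-s)}$. Next, with $M$ the $L^\infty(I;H^1(\T))$ norm of $b$, I would verify that $b(t,\cdot)\in H^1(\T)$ with $\|b(t,\cdot)\|_{H^1}\le M$ for \emph{every} $t\in I$, not merely for a.e.\ $t$: given $t\in I$, choose $t_n\to t$ with $\|b(t_n,\cdot)\|_{H^1}\le M$; a subsequence of $(b(t_n,\cdot))_n$ converges weakly in $H^1(\T)$, while $b\in C(I;L^2(\T))$ forces $b(t_n,\cdot)\to b(t,\cdot)$ in $L^2(\T)$, so the weak limit must be $b(t,\cdot)$, and weak lower semicontinuity of the $H^1$ norm yields $\|b(t,\cdot)\|_{H^1}\le M$. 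Finally, for $t,t'\in I$ I would apply the interpolation inequality to $f=b(t,\cdot)-b(t',\cdot)$, obtaining $\|b(t,\cdot)-b(t',\cdot)\|_{H^s}\le(2M)^{s}\|b(t,\cdot)-b(t',\cdot)\|_{L^2}^{1-s}$; letting $t'\to t$ and invoking continuity of $b$ in $L^2(\T)$ shows that $t\mapsto b(t,\cdot)$ is continuous into $H^s(\T)$ for each $s\in(0,1)$, while the endpoint $s=0$ is the hypothesis itself.

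There is no genuine obstacle here; the only point deserving a word of care is that the hypothesis $b\in L^\infty(I;H^1)$ a priori controls $b(t,\cdot)$ only for a.e.\ $t$, which is exactly what the weak-compactness step repairs. One could also note that the estimates of Section~\ref{sec:Munchhausenestimates} in fact place $b$ in $L^\infty(I;H^2(\T))$, so the same reasoning would give $b\in C(I;H^s(\T))$ for every $s<2$.
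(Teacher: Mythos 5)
Your argument is correct and is exactly the interpolation argument the paper has in mind: the Fourier-side inequality $\|f\|_{H^s}\le\|f\|_{H^1}^{s}\|f\|_{L^2}^{1-s}$ applied to differences $b(t,\cdot)-b(t',\cdot)$, with the $L^\infty_t(H^1)$ bound upgraded to a genuine pointwise-in-time bound via weak compactness and the $C(I;L^2)$ continuity. The weak lower semicontinuity step is a welcome piece of care that the paper leaves implicit, but the route is the same, so nothing further is needed.
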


So far, we have proved that the sequence of time-rescaled functions $b_\varepsilon(\tau,x)$ converges to a solution $b$ of \eqref{eq:limitsystem}-\eqref{eq:limitsystem2} with initial condition $S(b_0)$. This implies that the system \eqref{eq:limitsystem}-\eqref{eq:limitsystem2} with initial condition $S(b_0)$ admits a solution in $C((0,\delta);L^2)\cap L^2((0,\delta);H^2)$. 

Note that, so far, we have only obtained that there is a subsequence $\varepsilon_n\rightarrow 0$ such that $b_{\varepsilon_n}\longrightarrow b$ in $C_{loc}^0((0,\delta]\times C^1(\T)).$ However, we do not know if we can have two different subsequences $\varepsilon_n$ and $\varepsilon'_k$ so that $b_{\varepsilon_n}$ and $b_{\varepsilon'_k}$ converge to two different solutions $b$ and $b'$. As we will see in Theorem \ref{t:existencelimit}, the limit problem is well posed, so this possibility is ruled out. Therefore, there is a unique limit function $b\in C([0,\delta);L^2(\T))\cap C^0_{loc}((0,\delta]\times \T)$ with $\partial_x b\in L^\infty((0,\delta)\times \T)$.  

Note, further, that the solution $b$ might be extended for times beyond $\delta$. The question is then whether we can still use the function $b_\varepsilon (\tau,x)$ to approximate $b(\tau,x)$ for times up to the maximal existence time of \eqref{eq:limitsystem}-\eqref{eq:limitsystem2}. This is the main theorem of this section, where it is shown that the solution of the limit system \eqref{eq:limitsystem}-\eqref{eq:limitsystem2} is a good approximation for $b_\varepsilon (\tau,x)$ as long as a solution for \eqref{eq:limitsystem}-\eqref{eq:limitsystem2} exists. The precise statement is the following: 

\begin{prop}\label{prop:main}
    Consider $b_0\in H^2(\T)$ with $|b_0|^2\geq c_0>0$. Now, assume that there is a unique weak solution $b\in C([0,T^\star);L^2(\T))\cap L^4(I;H^4(\T))$ for the equations \eqref{eq:limitsystem}-\eqref{eq:limitsystem2} with initial condition $S(b_0)$, and let $T^\star$ be its maximal time of existence.  Let $\overline{T}<T^\star$ and assume further that there exist $d_0,\,K>0$ such that 

    \begin{equation}\label{eq:condicionesconvergencia}
        |b(\tau,x)|\geq d_0\quad\text{and}\quad  \|\partial_x b\|_{C^0([0,T]\times \T)}\leq K.
    \end{equation}

    Then, the sequence $(b_\varepsilon)_{\varepsilon>0}$ converges, as $\varepsilon\rightarrow 0$, in $C^{0}_{loc}((0,\overline{T}]\times C^1(\T))$ to $b$. 
\end{prop}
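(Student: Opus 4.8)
The strategy is a continuation (bootstrap) argument: we have already established convergence $b_\varepsilon \to b$ on the time interval $(0,\delta]$ via Propositions \ref{prop:munchhausen}--\ref{prop:limiteq} and \ref{prop:initialvalue}, and we want to propagate this up to any $\overline T < T^\star$. The key point is that the obstruction to extending the estimates of Proposition \ref{prop:munchhausen} past $\delta/\varepsilon$ (in the original time variable) was the possible growth of $\|\partial_x b_\varepsilon\|_{L^\infty}$ and the possible collapse of $|b_\varepsilon|$; but now the hypothesis \eqref{eq:condicionesconvergencia} on the limit solution $b$ gives us \emph{a priori} control of exactly these quantities for the limit object. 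So the plan is: (i) set up a maximal time $\tau_\varepsilon$ up to which $b_\varepsilon$ stays in a tube around $b$ — say $\min_x |b_\varepsilon(\tau,x)|^2 \geq d_0^2/2$ and $\|\partial_x b_\varepsilon(\tau,\cdot)\|_{L^\infty}^2 \leq 4K^2 + 1$; (ii) show that on $[0,\tau_\varepsilon]$ the Münchhausen-type estimates of Lemmas \ref{lemma:lowerboundb}--\ref{lemma:boundpartialb} and Propositions \ref{prop:decaysecondderivativeu}--\ref{prop:boundseconderivb} still hold (with constants now depending on $d_0, K, \overline T$ rather than on the initial data alone, since $\psi_\varepsilon$ has already decayed to $O(\varepsilon)$ by time $\delta$); (iii) run the compactness argument of Lemma \ref{lemma:unifconv} and the limit-equation identification of Proposition \ref{prop:limiteq} on $[\delta, \tau_\varepsilon]$ to conclude that any subsequential limit of $b_\varepsilon$ on this interval solves \eqref{eq:limitsystem}--\eqref{eq:limitsystem2} with the value $b(\delta,\cdot)$ at time $\delta$; (iv) invoke uniqueness of the limit system (Theorem \ref{t:existencelimit}) to identify this limit with $b$ itself; (v) conclude by a continuity/contradiction argument that $\tau_\varepsilon \geq \overline T$ for $\varepsilon$ small, because if $\tau_\varepsilon < \overline T$ then at $\tau_\varepsilon$ one of the tube inequalities is saturated, yet the convergence $b_\varepsilon \to b$ on $[\delta,\tau_\varepsilon]$ together with $|b| \geq d_0$ and $\|\partial_x b\|_{C^0} \leq K$ forces $b_\varepsilon$ to be strictly inside the tube there, a contradiction.

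More concretely, for step (ii) I would revisit Lemma \ref{lemma:lowerboundb}: with $\|\partial_x b_\varepsilon\|_{L^\infty}^2 \leq C_\varepsilon := 4K^2+1$ on $[0,\tau_\varepsilon]$ and the fact that at time $\delta$ (rescaled) we already have $|b_\varepsilon|^2 \geq c_0/2$ and $\psi_\varepsilon = O(\varepsilon)$, the subsolution argument gives $|b_\varepsilon(\tau,x)|^2 \geq |b_\varepsilon(\delta,x)|^2 - C\varepsilon(\tau-\delta) \geq d_0^2 - C\varepsilon \overline T$, which is $\geq d_0^2/2$ for $\varepsilon$ small; here the crucial gain is that in the rescaled time variable the "bad" term $2\varepsilon|\partial_x b_\varepsilon|^2$ in the equation for $|b_\varepsilon|^2$ is now genuinely $O(\varepsilon)$. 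The analogue of Lemma \ref{lemma:decaypsi} then gives $\psi_\varepsilon \geq -C(\|\psi_\varepsilon(\delta)\|_{L^\infty} e^{-c_0(\tau-\delta)/(2\varepsilon)} + \varepsilon) = O(\varepsilon)$ uniformly on $[\delta,\tau_\varepsilon]$, and Lemma \ref{lemma:boundpartialb} upgrades this to a uniform bound $\|\partial_x b_\varepsilon\|_{L^\infty}^2 \leq \|\partial_x b_\varepsilon(\delta,\cdot)\|_{L^\infty}^2 \exp(C\varepsilon\overline T + C\varepsilon) \leq (4K^2) e^{C\varepsilon \overline T}$, strictly below $4K^2+1$. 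Propositions \ref{prop:decaysecondderivativeu} and \ref{prop:boundseconderivb} then give the $L^2$ bounds on $\partial_x^2 u_\varepsilon$ and $\partial_x^2 b_\varepsilon$ needed for the compactness of Lemma \ref{lemma:unifconv}, with constants depending only on $d_0, K, \overline T$.

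For step (iii)--(iv), the argument of Lemma \ref{lemma:unifconv} is local in time and only uses the bounds just established, so it applies verbatim on $[\delta + a, \tau_\varepsilon]$ for any $a>0$; a diagonal extraction produces $b_{\varepsilon_n} \to \bar b$ in $C^0_{loc}((\delta, \sup \tau_{\varepsilon_n}]; C^1(\T))$, and the proof of Proposition \ref{prop:limiteq} (which again only uses Münchhausen-type estimates and Lemma \ref{lemma:approxidentity}) shows $\bar b$ is a weak solution of \eqref{eq:limitsystem}--\eqref{eq:limitsystem2} with $\bar b(\delta,\cdot) = b(\delta,\cdot)$ (the latter because $b_\varepsilon \to b$ already on $[0,\delta]$). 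By the uniqueness part of Theorem \ref{t:existencelimit}, $\bar b = b$ on their common interval of existence; since the whole sequence has the same unique limit, $b_\varepsilon \to b$ in $C^0_{loc}((0, \liminf \tau_\varepsilon]; C^1(\T))$. The remaining step (v) is the standard continuity argument: suppose $\tau_\varepsilon^{(n)} < \overline T$ along some subsequence; by the convergence just obtained, $|b_{\varepsilon_n}(\tau_{\varepsilon_n}, \cdot)|^2 \to |b(\tau, \cdot)|^2 \geq d_0^2 > d_0^2/2$ and $\|\partial_x b_{\varepsilon_n}(\tau_{\varepsilon_n},\cdot)\|_{L^\infty}^2 \to \|\partial_x b(\tau,\cdot)\|_{L^\infty}^2 \leq K^2 < 4K^2+1$ (passing to a further subsequence so that $\tau_{\varepsilon_n} \to \tau \leq \overline T$), contradicting the saturation of one of the defining inequalities of $\tau_\varepsilon$ at $\tau = \tau_\varepsilon$. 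Hence $\tau_\varepsilon \geq \overline T$ for all small $\varepsilon$, and the convergence holds on $(0,\overline T]$.

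The main obstacle I anticipate is the bookkeeping in step (ii): one must carefully check that all the constants produced by the chain of maximum-principle lemmas, when re-run starting from the time-$\delta$ data (where $\psi_\varepsilon$ is already $O(\varepsilon)$ but $\partial_x b_\varepsilon$ is merely bounded by the constant $C_1$ from Proposition \ref{prop:munchhausen}), depend only on $d_0, K, \overline T$ and not on $\varepsilon$, and in particular that the exponential factors $\exp(C\varepsilon \overline T)$ stay below the tube thresholds — this is what forces the choice of the slightly generous thresholds $d_0^2/2$ and $4K^2+1$ and requires $\varepsilon$ small. A secondary subtlety is ensuring the hypothesis $b \in L^4(I; H^4(\T))$ is actually used where needed (it enters through the well-posedness/uniqueness statement of Theorem \ref{t:existencelimit} and guarantees $\|\partial_x \theta\|_{L^2}^4$ is integrable on $[0,\overline T]$, which is what keeps $R(\tau)$ bounded below and hence is consistent with $|b| \geq d_0$).
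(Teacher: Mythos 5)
Your proposal is correct and follows essentially the same continuation/bootstrap strategy as the paper's proof: introduce the set of times on which the Münchhausen-type bounds hold for $b_\varepsilon$, re-run the estimates of Proposition \ref{prop:munchhausen} and the convergence/identification argument of Proposition \ref{prop:limiteq} on that set, identify the limit with $b$ via uniqueness for the limit system, and use the hypotheses $|b|\geq d_0$ and $\|\partial_x b\|_{C^0}\leq K$ to force the exit time beyond $\overline{T}$ for small $\varepsilon$. The only minor difference is how the contradiction is closed: the paper steps back by half of the uniform local existence time $\delta^\star$ from the $\liminf$ of the exit times and reapplies Proposition \ref{prop:munchhausen} there, whereas you argue saturation directly at the (moving) exit times using the uniform Hölder equicontinuity; both closures rest on the same estimates.
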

\begin{obs}
    Here we have proven that accumulation points $b$ of the sequence $(b_\varepsilon)_{\varepsilon > 0}$ solve the limit system \eqref{eq:limitsystem}-\eqref{eq:limitsystem2}. However, we will see in Theorem \ref{t:existencelimit} below (whose proof is independent) that the initial data problem can only have one (weak) solution in the class $L^4_t(H^1_x) \cap C^0_t(H^{-1}_x)$ if the initial datum is $\dot{H}^{1/2}$. Notice that any accumulation point $b$ belongs to the space $L^\infty_t(H^1_x)\cap C^0_t(L^2_x)$, with initial value $S(b_0) \in W^{1, \infty} \subset \dot{H}^{1/2}$. The initial data conditions can be written in the sense of distributions, as the solutions are continuous in the time variables. Consequently, there can only one such accumulation point, and we deduce that the sequence $(b_\varepsilon)_{\varepsilon > 0}$ actually converges without extraction.
\end{obs}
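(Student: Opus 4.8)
The plan is to bootstrap the convergence already obtained on the fixed window $(0,\delta]$ — which follows by combining the compactness of Lemma \ref{lemma:unifconv}, the limit identification of Proposition \ref{prop:limiteq}, the identification of the initial trace in Proposition \ref{prop:initialvalue}, and the uniqueness of the limit system (Theorem \ref{t:existencelimit}) — into convergence on all of $(0,\overline T]$, by a continuation argument in the rescaled time $\tau$. Two features make this possible: the primitive system \eqref{eq:magneticrelaxation} is autonomous, and the a priori estimates of Section \ref{sec:Munchhausenestimates} are self-improving, with constants that depend only on a lower bound for $|b_\varepsilon|$, a $C^0$ bound for $\partial_x b_\varepsilon$, and $\|b_0\|_{L^\infty}$.

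The first step is a \emph{restart lemma}: Proposition \ref{prop:munchhausen} (with Propositions \ref{prop:decaysecondderivativeu} and \ref{prop:boundseconderivb}) applies equally to the solution $\sigma\mapsto b_\varepsilon(\tau_0+\sigma,\cdot)$ for any $\tau_0>0$, since $b_\varepsilon(\tau_0,\cdot)$ is smooth (hence in $H^2$), while $\|b_\varepsilon(\tau_0,\cdot)\|_{L^\infty}\le\|b_0\|_{L^\infty}$ and $\|\partial_x u_\varepsilon(\tau_0,\cdot)\|_{L^\infty}\le\|b_0\|_{L^\infty}^2$ by the maximum principle. Hence, if $\min_x|b_\varepsilon(\tau_0,x)|^2\ge c'$ and $\|\partial_x b_\varepsilon(\tau_0,\cdot)\|_{L^\infty}\le K'$, there exist $\delta'>0$ and constants depending only on $c'$, $K'$, $\|b_0\|_{L^\infty}$ (not on $\tau_0$ or $\varepsilon$) such that on $[\tau_0,\tau_0+\delta']$ one has $|b_\varepsilon|^2\ge c'/2$, $\|\partial_x b_\varepsilon\|_{L^\infty}\le C$, and (for $\tau$ bounded away from $\tau_0$, $\varepsilon$ small) $\|\partial_x U_\varepsilon\|_{L^\infty}\le C$, $\|\partial_x^2 U_\varepsilon\|_{L^2}\le C$, $\|\partial_x^2 b_\varepsilon\|_{L^2}\le C$. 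These are exactly the bounds needed to rerun the compactness argument of Lemma \ref{lemma:unifconv} (via the heat regularity Lemma \ref{lema:lemaregularity} and Arzelà--Ascoli with a diagonal extraction) and the limit identification of Proposition \ref{prop:limiteq} on a window of the \emph{same} length $\delta'$.

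The second step is a finite induction on $\tau$. Because the limit solution satisfies $|b(\tau,x)|\ge d_0$ and $\|\partial_x b(\tau,\cdot)\|_{L^\infty}\le K$ uniformly on $[0,\overline T]$, the restart lemma can always be invoked with the \emph{fixed} parameters $c'=d_0^2/2$, $K'=K+1$, producing a single increment $h:=\delta'(d_0^2/2,K+1,\|b_0\|_{L^\infty})>0$. One then proves: for every $\tau_1\in[\delta,\overline T)$, if $b_\varepsilon\to b$ in $C^0_{loc}((0,\tau_1];C^1(\T))$ then the same holds on $(0,\min(\tau_1+h/2,\overline T)]$. Indeed, pick $\tau_0\in(0,\tau_1)$ close enough to $\tau_1$ that $\tau_1-\tau_0<h/2$; the inductive hypothesis gives $b_\varepsilon(\tau_0,\cdot)\to b(\tau_0,\cdot)$ in $C^1(\T)$, so for $\varepsilon$ small $\min_x|b_\varepsilon(\tau_0,x)|^2\ge d_0^2/2$ and $\|\partial_x b_\varepsilon(\tau_0,\cdot)\|_{L^\infty}\le K+1$; apply the restart lemma on $[\tau_0,\tau_0+h]\supset[\tau_0,\tau_1+h/2]$. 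Every subsequential limit $\widetilde b$ extracted on this window is a weak solution of \eqref{eq:limitsystem}--\eqref{eq:limitsystem2} in $L^\infty_t(H^1_x)\cap C^0_t(L^2_x)$ agreeing with $b$ on $(\tau_0,\tau_1]$ by the inductive hypothesis; viewing $\tau_1$ as a new initial time, at which $\widetilde b(\tau_1,\cdot)=b(\tau_1,\cdot)\in H^1\subset\dot{H}^{1/2}$, the uniqueness part of Theorem \ref{t:existencelimit} in the class $L^4_t(H^1_x)\cap C^0_t(H^{-1}_x)$ forces $\widetilde b=b$ on $[\tau_1,\tau_1+h/2]$. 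Since every subsequence of $(b_\varepsilon)$ has a further subsequence converging to $b$, the whole family converges there, which closes the step; starting from the known case $\tau_1=\delta$ and iterating $\lceil 2(\overline T-\delta)/h\rceil$ times (the smallness threshold on $\varepsilon$ at each step being the minimum of finitely many thresholds) gives convergence on $(0,\overline T]$.

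The crux, and the only place where the hypotheses $|b|\ge d_0$ and $\|\partial_x b\|_{C^0}\le K$ on the limit are essential, is preventing the continuation increment $h$ from degrading along the induction: these bounds are exactly what guarantees that at each restart time $\tau_0$ the solution $b_\varepsilon$ re-enters the hypotheses of the restart lemma with the \emph{same} parameters, so the same $h$ is recovered; without them the lower bound for $|b_\varepsilon|^2$ would halve at each step and the increments would be summable, stopping the continuation short of $\overline T$. A secondary technical point is the gluing of consecutive windows, where well-posedness of the limit system must be invoked to match the freshly extracted subsequential limit with the previously constructed $b$; this is also the mechanism that, as recorded in the remark following the statement, upgrades subsequential convergence to convergence of the whole family $(b_\varepsilon)_{\varepsilon>0}$.
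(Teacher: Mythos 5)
Your proposal is correct, but it proves more than the remark asks and by a heavier route: it essentially re-derives Proposition \ref{prop:main} (whose proof in the paper is precisely your continuation argument, phrased through the sets $\mathcal{B}_\varepsilon$ and the liminf $\Gamma$ of exit times) and then extracts the no-extraction claim along the way. The paper's own justification of the remark is a one-shot uniqueness argument: any accumulation point of $(b_\varepsilon)$ is a weak solution of \eqref{eq:limitsystem}--\eqref{eq:limitsystem2} on the whole interval, belongs to $L^\infty_t(H^1_x)\cap C^0_t(L^2_x)\subset L^4_t(H^1_x)\cap C^0_t(H^{-1}_x)$, and attains the single initial value $S(b_0)\in W^{1,\infty}\subset\dot H^{1/2}$ in the distributional sense (Proposition \ref{prop:initialvalue} together with time continuity), so Theorem \ref{t:existencelimit} identifies all accumulation points with $b$; precompactness of the family (the uniform bounds behind Lemma \ref{lemma:unifconv} and the restart estimates) then yields convergence of the whole family. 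You instead invoke uniqueness repeatedly from interior times $\tau_1>0$ inside a window-by-window induction built on a restarted Proposition \ref{prop:munchhausen} and Proposition \ref{prop:limiteq}. What your route buys: uniqueness is only ever used from smooth $H^1$ interior data, and the non-degeneracy of the time increment $h$ is transparently tied to the assumed bounds $|b|\ge d_0$ and $\|\partial_x b\|_{C^0}\le K$, exactly the monotonicity of $\delta^\star$ the paper records. What it costs: redundancy with Proposition \ref{prop:main}, plus two points where you are looser than the paper — at a restart time you only keep the crude bound $\|\partial_x u_\varepsilon(\tau_0,\cdot)\|_{L^\infty}\le\|b_0\|_{L^\infty}^2$, so you must argue (as you hint) that the $O(\varepsilon\log\varepsilon^{-1})$ boundary layer in $U_\varepsilon$ collapses before the portion of the window you actually use, whereas the paper simply carries $\|\psi_\varepsilon(\tau_0,\cdot)\|_{L^\infty}\le C\varepsilon$ from the previous window; and applying Theorem \ref{t:existencelimit} to $b$ requires passing to the radius-angle variables, which is legitimate since $|b|$ stays bounded below but should be said. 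Neither is a genuine gap.
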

\begin{proof}[Proof of Proposition \ref{prop:main}]
    We will make use of a similar argument as the one we used in Proposition \ref{prop:munchhausen}. Notice that, since the problem \eqref{eq:limitsystem}-\eqref{eq:limitsystem2} admits a unique solution, $b_{\varepsilon}\rightarrow b$ as $\varepsilon\rightarrow 0$, and no resorting to a subsequence is needed.
    
    Let $0<a<\delta\leq \overline{T}$ fixed, but arbitrary, with $\delta$ as the one in Proposition \ref{prop:munchhausen}. Assume that there is some $\varepsilon_0>0$ and a constant $A>0$ such that 

    \begin{equation}\label{eq:condicionsforconvergence}
    |b_\varepsilon(\tau,x)|\geq d_0/4,\quad \|\partial_x b_\varepsilon(\tau,\cdot)\|_{L^\infty}\leq K+1 \quad \|\partial_x u_\varepsilon(\tau,\cdot)\|_{L^\infty}\leq A\left(e^{-\frac{d_0}{4\varepsilon}\tau}+\varepsilon\right)
    \end{equation}
    for $\tau\in [0,\overline{T}]$ and for every $\varepsilon<\varepsilon_0$. Then, we can repeat the arguments in Proposition \ref{prop:limiteq} to show that $b$ converges in $C^{0}([a,\overline{T}];C^1(\T))$ to the solution $b$ of \eqref{eq:limitsystem}-\eqref{eq:limitsystem2}. As a result, we only need to show that the estimates in \eqref{eq:condicionsforconvergence} hold. 

    We begin noticing that, due to Proposition \ref{prop:munchhausen}, if $B_0$ is an $H^2(\T;\R^2)$ function satisfying  $|B_0|\geq d_0/2$ for every $x\in \T$, then there exists some $\delta^\star$ and constants $\overline{K}$, $\overline{A}$ such that for $\tau\in [0,\delta^\star]$, the solution $B$ of \eqref{eq:magneticrelaxation} with initial data $B_0$ satisfies 

    $$|B(\tau,x)|\geq d_0/4,\quad \|\Psi(\tau,\cdot)\|_{L^\infty}\leq \overline{A}\left(\|\Psi_0\|_{L^\infty}e^{-\frac{d_0}{4\varepsilon}\tau}+\varepsilon\right)\text{ and } \|\partial_x B(\tau,\cdot)\|_{L^\infty}\leq \overline{K},$$
    with $\Psi:=\left(|B|^2-\|B\|_{L^2}^2\right)/2$, and $\overline{K}$, $\overline{A}$ and $\delta^\star$ depending only on $\|B_0\|_{C^1(\T)}$ and $d_0$. Furthermore, $\delta^\star$ can be taken as a function $\delta^\star=\delta^\star\left(\frac{1}{d_0},\|B_0\|_{L^\infty},\|\partial_xB_0\|_{L^\infty},\|\Psi_0\|_{L^\infty}\right)$, non-increasing on each of its arguments.

    Now we define, for every $\varepsilon>0$, the set 
    \small
    \begin{equation}\label{eq:Bepsilon}
    \mathcal{B}_\varepsilon:=\left\{\tau\in [0,T^\star]\,: |b_\varepsilon(\tau,x)|\geq d_0/4 ,\,\|\partial_x b_\varepsilon(\tau,\cdot)\|_{L^\infty}\leq K+1 \text{ and }\|\partial_x u_\varepsilon(\tau,\cdot)\|_{L^\infty}\leq (A+1)\left(e^{-\frac{d_0}{4\varepsilon}\tau}+\varepsilon\right)\right\},
    \end{equation}
    \normalsize
    and take $\Gamma$ to be

    $$\Gamma:=\liminf_{\varepsilon\rightarrow 0}\left(\sup\left\{\sigma \in [0,T^\star]\,:\, [0,\sigma]\subset \mathcal{B}_\varepsilon\right\}\right).$$
    It is then enough to show that $\Gamma> \overline{T}$. To prove it, we argue by contradiction. It is clear that, due to Proposition \ref{prop:munchhausen}, $\Gamma>0$.

    Now, take $\delta^\star$ to be the one corresponding to an initial data satisfying the bounds in \eqref{eq:Bepsilon}. By definition of $\liminf$, there is some $\varepsilon_0$ such that $[0,\Gamma-\delta^\star/2]\subset \mathcal{B}_\varepsilon$ for every $\varepsilon<\varepsilon_0$. Due to Proposition \ref{prop:limiteq}, $b_\varepsilon(\tau,x)$ converges uniformly to $b(\tau,x)$ in $[a,\Gamma-\delta^\star/2]$. Since $\Gamma-\delta^\star/2< \overline{T}$, there is $\varepsilon_0'$ small enough so that 
    
    $$|b_\varepsilon(\Gamma-\delta^\star/2,x)|\geq d_0/2 \quad \text{and}\quad \text{and}\quad\|\partial_x b_\varepsilon(\Gamma-\delta^\star/2,\cdot)\|_{L^\infty}\leq K+1.$$

    Now, we can apply Proposition \ref{prop:munchhausen} taking $b_\varepsilon(\Gamma-\delta^\star/2,x)$ as an initial data. Thus, if we denote $\tau_0:=\Gamma-\delta^\star/2$,  there is some $\varepsilon_0'$ so that for every $\varepsilon<\varepsilon_0'$ and $\tau\in [\tau_0,\tau_0+\delta^\star],$
    
    $$|b_\varepsilon(\tau,x)|\geq d_0/4,\quad \|\psi_\varepsilon(\tau,\cdot)\|_{L^\infty}\leq \overline{A}\left(\|\psi_\varepsilon(\tau_0,\cdot)\|_{L^\infty}e^{-\frac{d_0}{4\varepsilon}(\tau-\tau_0)}+\varepsilon\right)\quad\text{ and } \quad \|\partial_x b(\tau,\cdot)\|_{L^\infty}\leq \overline{K}.$$
    
    Since $\|\psi(\tau_0,\cdot)\|\leq C\varepsilon$, $b_\varepsilon$ satisfies the estimates in \eqref{eq:condicionsforconvergence}, and the function $b_\varepsilon(\tau,x)$ converges in $C^{0}_{loc}((0,\Gamma+\delta^\star/2)\times \T)$ to $b$. Thus, for $\varepsilon$ small enough, $b_\varepsilon$ satisfies \eqref{eq:condicionsforconvergence}. This contradicts the definition of $\Gamma$. 
    
\end{proof}

\section{Local Well-Posedness of the Limit System and Finite Time Blow-Up}\label{sec:limiteqandblowup}
\begin{comment}
\dimitri{I believe we have not introduced the angular variables so far. Maybe I should add a few lines about that in this paragraph.}
\end{comment}
In this paragraph, we study the well-posedness theory for the limit system \eqref{eq:limitsystem}-\eqref{eq:limitsystem2}, which we recast in radius-angle variable, \textit{i.e.} if $b = R e^{i \theta}$ then we get the equations
\begin{equation}\label{eq:AngleVariablePDE}
    \begin{cases}
        \partial_t \theta + u \partial_x \theta = \partial_x^2 \theta \\
        \partial_x u = - (\partial_x \theta)^2 + \int (\partial_x \theta)^2,
    \end{cases}
    \qquad \text{and} \qquad R'(t) = - R(t) \int (\partial_x \theta)^2.
\end{equation}
The existence of solutions on a short time interval is a consequence of the discussion above, since we have proven that the magnetic field $b_\varepsilon (\tau, x)$ converges to a function of the form $b(\tau, x) = R(\tau)e^{i \theta(\tau, x)}$ which solves this system. However, many questions remain, such as the uniqueness of solutions, or whether there are global solutions for some initial data and whether some solutions blow up in finite time.

%\dani{When we show that the magnetic field converges to a solution of the limit system, and since the constant modulus functions are solutions of the $\varepsilon=0$ system, (regardless of the angle), have we not shown indirectly the well posedness of the system? In principle, we should only prove that solution is unique, right? Would not this mean that we do not need to include this section?}

\medskip

As we have noted above, the angular variable has one important feature the angle $\theta(\tau, x)$ is not necessarily a periodic function in $x \in \R$, but is equal to a periodic function up to the addition of a linear function $2 \pi Nx$, where $\theta(1) - \theta(0) = 2\pi N \in 2 \pi \Z$ related to the number $N$ of turns the magnetic field $b = R e^{i \theta}$ makes around the complex origin. Let us show that this number $N$ is independent of time. We have
\begin{equation*}
    N'(t) = \frac{1}{2 \pi} \frac{\rm d}{\dt} \big( \theta(t, 1) - \theta(t, 0) \big) = \frac{1}{2 \pi} \frac{\rm d}{\dt} \int_0^1 \partial_x \theta \dx = \frac{1}{2 \pi} \int_0^1 \partial_t \partial_x \theta \dx.
\end{equation*}
By using the equation that $\theta$ solves and expressing $\partial_t \theta$ as a function of $u$ and $\theta$, we obtain
\begin{equation*}
    N'(t) = \frac{1}{2 \pi} \int_0^1 \Big( \partial_x^3 \theta - \partial_x (u \partial_x \theta) \Big) \dx
\end{equation*}
Now, the integrand above is the derivative of $\partial_x^2 \theta - u \partial_x \theta$, which is a periodic function. The integral is therefore zero, and we see that $N$ is constant throughout the evolution. In particular, we can express the angular variable as
\begin{equation*}
    \theta(t,x) = \zeta(t,x) + 2 \pi Nx,
\end{equation*}
where $\zeta : \R \times \T \longrightarrow \R$ is a periodic function of the space variable. Figure \ref{fig:MagneticTopology} illustrates this phenomenon in terms of the geometry of the magnetic field.

\begin{figure}[h!]
    \centering
    \includegraphics[width=0.5\linewidth]{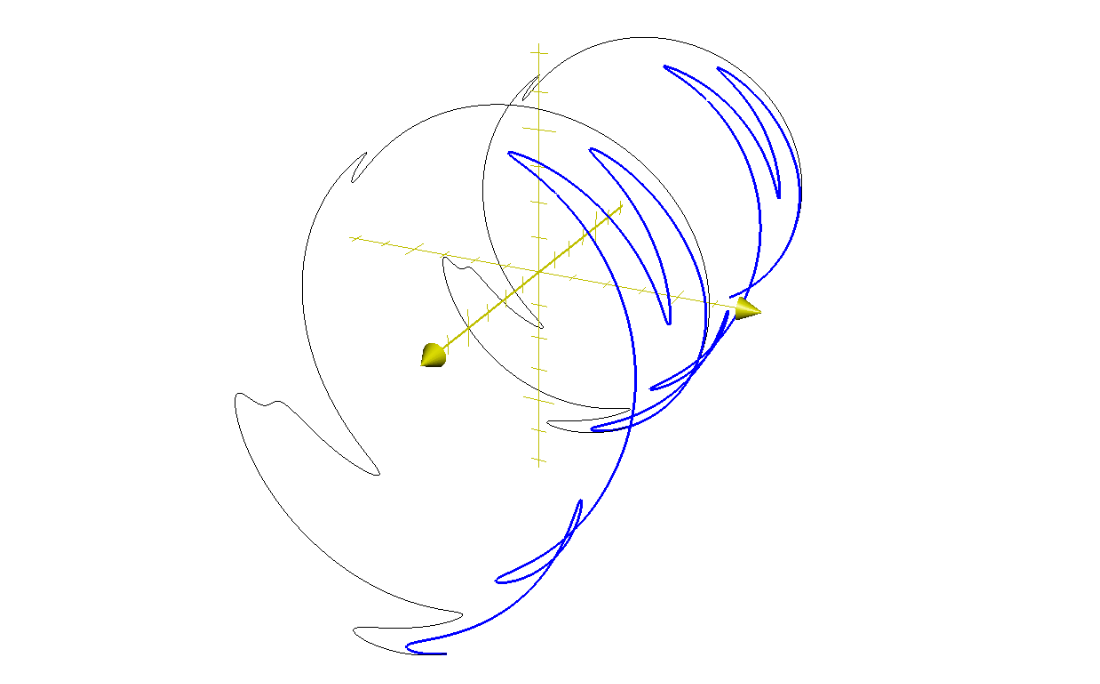}
    \caption{An illustration of two different values of the number of turns $N \in \Z$. As in Figure \ref{fig:Relaxation}, the magnetic field is represented as a parametrized curve $x \mapsto (0, b_1(x), b_2(x))$. The thick blue line represents a situation where $N = 0$ and the magnetic field is ``topologically trivial'' with respect to the axis $(1, 0, 0)$, and the thin black line represents the converse situation, where the magnetic field circles around the axis $N \neq 0$ (here $N = 1$).}
    \label{fig:MagneticTopology}
\end{figure}

Now, we make an important remark on how to determine the velocity field $u$ from the angular variables $\theta$. In the rest of the paper, we required the cancellation condition $u(t,0) = 0$ in order to uniquely write the velocity field as a function of the magnetic field. However, as we wish to work in a low regularity setting and with Fourier analysis, we see that this condition is much less convenient that cancellation of the mean value $\int u$. Therefore, we will prove existence and uniqueness of solutions using the condition
\begin{equation*}
    \forall t, \qquad \int_\T u(t,x) \dx = 0,
\end{equation*}
and then use a change of reference frames \eqref{ieq:Galileo} to go back to a solution with a velocity that cancels at $x = 0$. See Proposition \ref{p:Galileo}. This is the purpose of Proposition \ref{p:Galileo} below. We can then recast system \eqref{eq:AngleVariablePDE} as a PDE system involving $\zeta$ only,
\begin{equation}\label{eq:AnglePDERenormalized1}
    \partial_t \zeta - 8\pi^2 N^2 \left( \zeta - \int \zeta \right) - \partial_x^2 \zeta = 2 \pi N \left( \zeta - \int \zeta \right) \partial_x \zeta - 2 \pi N v - v \partial_x \zeta
\end{equation}
where the function $v$ is defined by the condition at $x = 0$,  $v(t,0) = 0$, and the identity
\begin{equation}\label{eq:AnglePDERenormalized2}
    \partial_x v = - (\partial_x \zeta)^2 + \int (\partial_x v)^2
\end{equation}

\begin{obs}\label{r:AngleAdditionInvariance}
    We observe that the equations \eqref{eq:AnglePDERenormalized1} are invariant under the addition of a real constant to the angle, that is,  for any fixed $\Theta \in \R$ and any solution $\theta$, the quantity $\vartheta(t, x) := \theta(t,x) + \Theta$ is also a solution. This is of course due to the invariance under rotations of the magnetic field $b$ in problem \eqref{eq:magneticrelaxation}. In practice, it means that any condition on the initial data can be achieved up the the addition of a constant (see \textit{e.g.} Remark \ref{r:HomogeneousSobolev} below).
\end{obs}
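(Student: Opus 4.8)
The plan is to verify the invariance directly at the level of the PDE and then to record its structural origin in the rotation symmetry of the primitive system \eqref{eq:magneticrelaxation}, which makes the verification essentially automatic.

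First I would observe that both the angle system \eqref{eq:AngleVariablePDE} and its renormalized form \eqref{eq:AnglePDERenormalized1}--\eqref{eq:AnglePDERenormalized2} depend on the unknown $\theta$ (resp.\ $\zeta$) only through quantities that are unchanged under the substitution $\theta \mapsto \theta + \Theta$ with $\Theta \in \R$ a fixed constant. Indeed, the terms $\partial_t \theta$, $\partial_x \theta$, $\partial_x^2 \theta$ are insensitive to the addition of a constant; the velocity field $u$ (respectively $v$) is reconstructed solely from $\partial_x \theta$ through $\partial_x u = -(\partial_x \theta)^2 + \int (\partial_x \theta)^2$ together with the normalization at $x = 0$, hence it too is unchanged; and in \eqref{eq:AnglePDERenormalized1} the only place where the unknown appears undifferentiated is inside the mean-free combination $\zeta - \int \zeta$, in which the constant $\Theta$ cancels against its own average. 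Substituting $\vartheta = \theta + \Theta$ (equivalently, replacing $\zeta$ by $\zeta + \Theta$ while keeping $N$ and $v$ fixed) into each equation and using these cancellations shows, term by term, that $\vartheta$ solves the same system, with the same radius $R$ and the same winding number $N$.

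Second, I would explain the conceptual reason, which removes any need to grind through the term-by-term check: problem \eqref{eq:magneticrelaxation} is invariant under the rotation $b \mapsto e^{i\Theta} b$ of the magnetic field in the $(b_1, b_2)$-plane (the velocity is untouched since $|e^{i\Theta} b| = |b|$). Writing a solution in polar form $b = R e^{i\theta}$, the rotated solution is $e^{i\Theta} b = R e^{i(\theta + \Theta)}$, so its angular variable is precisely $\theta + \Theta$; since the passage from $b$ to its polar data and the limit reduction leading to \eqref{eq:AngleVariablePDE} are both compatible with this rotation, the invariance of \eqref{eq:AngleVariablePDE}, and hence---after the reformulation $\theta = \zeta + 2\pi N x$---of \eqref{eq:AnglePDERenormalized1}, follows. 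This also justifies the final sentence of the remark: any affine normalization of the initial datum (for instance $\int_0^1 \zeta_0 = 0$, as used later) can be arranged by subtracting a suitable constant without affecting the dynamics.

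I do not expect a genuine obstacle here, since this is a symmetry statement rather than a theorem. The only mild point requiring care is bookkeeping: one must make sure that the winding number $N$ and the velocity reconstruction are genuinely left untouched by the shift---so that ``solution of the same system'' is understood with unchanged $v$ and $N$---and note that the homogeneous Sobolev framework invoked in Remark \ref{r:HomogeneousSobolev} is precisely why only the mean-free part of $\zeta$ enters the analysis. I would therefore present the result as a one-line verification, supplemented by the rotation-symmetry explanation, and leave the elementary cancellations to the reader.
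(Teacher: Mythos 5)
Your verification is correct and matches the paper's reasoning: the remark is justified there in one line by the rotation invariance $b \mapsto e^{i\Theta}b$ of \eqref{eq:magneticrelaxation}, which is exactly your second point, and your term-by-term check (constants killed by derivatives, $v$ built only from $\partial_x\zeta$, and $\Theta$ cancelling in the mean-free combination $\zeta - \int\zeta$) is just the elementary elaboration of that same observation. Nothing further is needed.
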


\subsection{Local Well-Posedness of the Limit System}

Before stating our result concerning the existence and uniqueness of solutions to \eqref{eq:AnglePDERenormalized1} and \eqref{eq:AnglePDERenormalized2}, let us comment on what we should expect based on the general form of the equation. First of all, keeping on the left-hand side only the linear terms in $\zeta$, we can rewrite equation \eqref{eq:AnglePDERenormalized1} as
\begin{equation*}
    \partial_t \zeta - 8 \pi^2 N^2 \left( \zeta - \int \zeta \dx \right) - \partial_x^2 \zeta = F,
\end{equation*}
where the quantity $F$ contains all non-linear terms, which are of order two and three in $\zeta$ and its derivatives. Two things may occur. Firstly, we could have $N = 0$, and the left-hand side of the above is a heat equation with a non-linear perturbation, and we can hope to prove local well-posedness of the equation, and even global-well-posedness for small enough initial data by fixed-point methods. Secondly, we could have $|N| \geq 1$ and the linear part of the equation has some unstable modes. In that case, the spectrum of the operator above $\{ 0 \} \cup \{ 4 \pi^2 (k^2 - 2N^2), \; k \neq 0 \}$ has a negative part. In particular, we cannot expect small initial data to remain small, and this compromises our effort to construct global solutions. Therefore, if $N\neq 0$, we will only derive local in time results.
\begin{teor}\label{t:existencelimit}
    Consider $N \in \Z$ an initial datum $\zeta_0 \in H^{1/2}(\T)$. Then the following assertions hold: 
    \begin{enumerate}
        \item There exists a $T > 0$ depending on that initial datum such that problem \eqref{eq:AnglePDERenormalized1} - \eqref{eq:AnglePDERenormalized2} has a unique solution in the space $L^4([0,T) ; H^1(\T))$. Such a solution also lies in $C^{0, \frac{1}{4}}([0, T) ; H^{-1}(\T))$.

        \item Any solution $\zeta \in L^4([0,T) ; H^1(\T))$ is smooth with respect to the time and space variables $\zeta \in C^\infty ((0,T) \times \T)$.

        \item If $N = 0$, there exists a absolute (numerical) constant $\eta_0 > 0$ such that if $\| \zeta_0 \|_{H^{1/2}} \leq \eta_0$ then the solution above is global.
    \end{enumerate}
\end{teor}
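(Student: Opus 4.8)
The plan is to solve the mild (Duhamel) formulation of \eqref{eq:AnglePDERenormalized1}--\eqref{eq:AnglePDERenormalized2} by a fixed-point argument in a subcritical parabolic space, bootstrap regularity, and then, for $N = 0$, upgrade the local argument to a scale-invariant one. Write $\mathcal{S}(t) = e^{t\partial_x^2}$ for the heat semigroup on $\T$. The equation is recast as
\[
\zeta(t) = \mathcal{S}(t)\zeta_0 + \int_0^t \mathcal{S}(t-s)\Big[\, 8\pi^2 N^2\big(\zeta - \textstyle\int\zeta\big) + 2\pi N\big(\zeta-\int\zeta\big)\partial_x\zeta - 2\pi N\, v - v\,\partial_x\zeta\,\Big](s)\, ds ,
\]
where $v = v[\zeta]$ is the primitive of $-(\partial_x\zeta)^2 + \int_\T(\partial_x\zeta)^2$ normalised by $\int_\T v = 0$ (the cancellation $v(t,0)=0$ is restored at the end through the change of frame \eqref{ieq:Galileo}, Proposition \ref{p:Galileo}, which alters neither regularity nor lifespan). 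The solution space is $E_T := L^4([0,T];H^1(\T)) \cap C^0([0,T];H^{1/2}(\T))$, and it is convenient to also carry the smoothing norm $L^2([0,T];H^{3/2})$. The heat-flow energy identity gives $\mathcal{S}(\cdot)\zeta_0 \in L^\infty_T\dot H^{1/2}\cap L^2_T\dot H^{3/2}$, hence $\mathcal{S}(\cdot)\zeta_0 \in L^4_T\dot H^1$ by interpolation, with norm $\lesssim \|\zeta_0\|_{H^{1/2}}$; we denote by $\Phi(\zeta)$ the full right-hand side above, and by Remark \ref{r:AngleAdditionInvariance} we may also assume $\int_\T\zeta_0 = 0$.

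The nonlinear estimates rest on one auxiliary bound: $\|(\partial_x\zeta)^2\|_{L^2_T L^1_x} \le \|\partial_x\zeta\|_{L^4_T L^2_x}^2 \lesssim \|\zeta\|_{L^4_T H^1}^2$, which combined with the one-dimensional embedding $W^{1,1}(\T)\hookrightarrow L^\infty(\T)$ yields $\|v[\zeta]\|_{L^2_T L^\infty_x} \lesssim \|\zeta\|_{L^4_T H^1}^2$. Using this, Hölder in time, the one-dimensional Gagliardo--Nirenberg inequalities, the parabolic smoothing $\|\mathcal{S}(t-s)F\|_{H^1} \lesssim (t-s)^{-\alpha}\|F\|_{H^{1-2\alpha}}$, and the Hardy--Littlewood--Sobolev endpoint $\big\|\int_0^t \mathcal{S}(t-s)F(s)\,ds\big\|_{L^4_T\dot H^1} \lesssim \|F\|_{L^{4/3}_T L^2_x}$, one checks that each Duhamel term maps $E_T$ into itself; in particular the cubic term obeys $\|v\,\partial_x\zeta\|_{L^{4/3}_T L^2_x} \le \|v\|_{L^2_T L^\infty_x}\|\partial_x\zeta\|_{L^4_T L^2_x} \lesssim \|\zeta\|_{E_T}^3$, while the linear and quadratic terms, each carrying an explicit factor $N$, come with a strictly positive power of $T$ in front. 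Hence for $T$ small $\Phi$ is a contraction on a ball of $E_T$; applying the same bounds to the difference of two solutions gives uniqueness (with a short continuation argument to cover the whole class $L^4_T H^1$). The Hölder-in-time statement follows by reading the PDE as $\partial_t\zeta = \partial_x^2\zeta + (\text{lower order})$, which places $\partial_t\zeta$ in $L^{4/3}_T H^{-1}$, so $\zeta \in W^{1,4/3}([0,T];H^{-1})\hookrightarrow C^{0,1/4}([0,T];H^{-1})$.

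For the regularity statement (2), start from a solution $\zeta\in L^4_T H^1$ and choose $t_0\in(0,T)$ with $\zeta(t_0)\in H^1$. Since the nonlinearity is polynomial in $\zeta$, $\partial_x\zeta$ and $v[\zeta]$, the two-derivative gain of $\mathcal{S}(t)$ against the at-most-one-derivative loss in it lets one reach $\zeta(t_1)\in H^2$ at some $t_1>t_0$; iterating produces an increasing sequence of times at which $\zeta$ lies in $H^k$, hence $\zeta\in L^\infty_{loc}((0,T);H^k)$ for every $k$. Parabolic Schauder estimates, exactly as used in Section \ref{sec:epsilonfix} for the $\varepsilon$-fixed problem, then upgrade this to $\zeta\in C^\infty((0,T)\times\T)$, and $v$ inherits the smoothness through $\partial_x v = -(\partial_x\zeta)^2 + \int_\T(\partial_x\zeta)^2$.

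Finally, for (3) with $N = 0$ the linear part of \eqref{eq:AnglePDERenormalized1} is exactly the heat operator and the nonlinearity reduces to $-\,v\,\partial_x\zeta$, which is purely cubic, so the equation is invariant under $\zeta(t,x)\mapsto \zeta(\lambda^2 t,\lambda x)$ and $\dot H^{1/2}$ is its critical space. The global-in-time heat estimates give $\mathcal{S}(\cdot)\zeta_0 \in X := L^4([0,\infty);\dot H^1)\cap L^\infty([0,\infty);\dot H^{1/2})\cap L^2([0,\infty);\dot H^{3/2})$ with norm $\lesssim \|\zeta_0\|_{\dot H^{1/2}}$, and the cubic estimate of the previous step is now \emph{scale invariant}: it yields $\|\Phi(\zeta) - \mathcal{S}(\cdot)\zeta_0\|_X \lesssim \|\zeta\|_X^3$ with \emph{no} factor of $T$. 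A continuity/bootstrap argument then shows that if $\|\zeta_0\|_{H^{1/2}}\le\eta_0$ for a small absolute constant $\eta_0$ (so that $\|\zeta_0\|_{\dot H^{1/2}}=\|\zeta_0\|_{H^{1/2}}$ after the mean normalisation), the fixed point exists on $[0,\infty)$ with $\|\zeta\|_X\lesssim\|\zeta_0\|_{\dot H^{1/2}}$; together with step (2) this gives a global smooth solution (and the radius ODE $R' = -R\int(\partial_x\theta)^2$ then also has a global, positive solution). The main obstacle is precisely this last point: the whole chain ($v\in L^2_t L^\infty_x$, $\partial_x\zeta\in L^4_t L^2_x$, product in $L^{4/3}_t L^2_x$, and the endpoint Duhamel bound $L^{4/3}_t L^2_x\to L^4_t\dot H^1_x$) must be exactly balanced so that no power of $T$ is generated; in steps (1)--(2) this difficulty is milder — any strictly subcritical norm works, at the cost of a power of $T$ — but one must still handle the nonlocal, normalisation-dependent definition of $v$ and keep track of the fact that $v$ is only $L^2$, not $L^\infty$, in time.
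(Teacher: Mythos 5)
Your proposal is correct and follows essentially the same route as the paper: a fixed point in $L^4_T(H^1)$ for $H^{1/2}$ data built on the endpoint parabolic bound $L^{4/3}_T(L^2)\to L^4_T(H^1)$, the key estimate $\|v\|_{L^2_T(L^\infty)}\lesssim\|\zeta\|_{L^4_T(H^1)}^2$ via $W^{1,1}(\T)\hookrightarrow L^\infty(\T)$, quadratic terms carrying a power of $T$ through the factor $N$, time continuity from $\partial_t\zeta\in L^{4/3}_T(H^{-1})$, a Sobolev bootstrap for smoothness, and $T$-uniform (scale-invariant) estimates giving small-data global existence when $N=0$. The only differences are cosmetic: you keep the $8\pi^2N^2(\zeta-\int\zeta)$ term inside the Duhamel integral and phrase the linear bound via heat-kernel smoothing and Hardy--Littlewood--Sobolev, whereas the paper absorbs it into the semigroup $S_N(t)$ and argues on Fourier coefficients with Young's convolution inequality.
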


\begin{obs}\label{r:HomogeneousSobolev}
    From Remark \ref{r:AngleAdditionInvariance}, we see that the smallness condition $\| \zeta_0 \|_{H^{1/2}}$ need only be achieved up the the addition of a constant to the initial data. In other words, we only need $\inf_{\Theta \in \R} \| \zeta_0 - \Theta \|_{H^{1/2}} \leq \eta_0$ to hold for solutions to be global, \textit{i.e.} we only need the homogeneous Sobolev condition $\| \zeta_0 \|_{\dot{H}^{1/2}} \leq \eta_0$.
\end{obs}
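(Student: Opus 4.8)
The plan is to combine the constant-shift invariance recorded in Remark \ref{r:AngleAdditionInvariance} with an elementary Fourier computation identifying the optimal shift, so that no new analytic input beyond part (3) of Theorem \ref{t:existencelimit} is required.

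First I would observe that, by Remark \ref{r:AngleAdditionInvariance}, for any constant $\Theta \in \R$ the function $\zeta - \Theta$ solves the same system \eqref{eq:AnglePDERenormalized1}--\eqref{eq:AnglePDERenormalized2} as $\zeta$, since only derivatives and mean-free combinations of $\zeta$ enter the equation (and the winding number $N$, which depends only on $\theta(t,1) - \theta(t,0)$, is left unchanged by the shift, so the case $N = 0$ is preserved). Consequently the solution issued from $\zeta_0$ is global if and only if the one issued from $\zeta_0 - \Theta$ is global. Applying part (3) of Theorem \ref{t:existencelimit} to the shifted datum, I conclude that global existence holds as soon as $\| \zeta_0 - \Theta \|_{H^{1/2}} \leq \eta_0$ for \emph{some} $\Theta \in \R$, that is, as soon as $\inf_{\Theta \in \R} \| \zeta_0 - \Theta \|_{H^{1/2}} \leq \eta_0$.

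Next I would compute this infimum explicitly. Since subtracting a constant modifies only the zeroth Fourier coefficient, namely $\what{(\zeta_0 - \Theta)}(0) = \what{\zeta_0}(0) - \Theta$ while $\what{(\zeta_0 - \Theta)}(k) = \what{\zeta_0}(k)$ for $k \neq 0$, and since the $H^{1/2}$ weight at $k = 0$ equals $(1+0)^{1/2} = 1$, one finds
\begin{equation*}
    \| \zeta_0 - \Theta \|_{H^{1/2}}^2 = |\what{\zeta_0}(0) - \Theta|^2 + \sum_{k \neq 0} (1 + k^2)^{1/2} |\what{\zeta_0}(k)|^2 .
\end{equation*}
The infimum over $\Theta$ is therefore attained precisely at $\Theta = \what{\zeta_0}(0)$, the mean of $\zeta_0$, giving
\begin{equation*}
    \inf_{\Theta \in \R} \| \zeta_0 - \Theta \|_{H^{1/2}}^2 = \sum_{k \neq 0} (1 + k^2)^{1/2} |\what{\zeta_0}(k)|^2 .
\end{equation*}

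Finally I would compare this with the homogeneous seminorm $\| \zeta_0 \|_{\dot{H}^{1/2}}^2 = \sum_{k \neq 0} |k|\, |\what{\zeta_0}(k)|^2$. Using the elementary bounds $|k| \leq (1 + k^2)^{1/2} \leq \sqrt{2}\, |k|$ valid for every $k \neq 0$, I obtain
\begin{equation*}
    \| \zeta_0 \|_{\dot{H}^{1/2}}^2 \leq \inf_{\Theta \in \R} \| \zeta_0 - \Theta \|_{H^{1/2}}^2 \leq 2 \, \| \zeta_0 \|_{\dot{H}^{1/2}}^2 .
\end{equation*}
Hence the condition $\inf_\Theta \| \zeta_0 - \Theta \|_{H^{1/2}} \leq \eta_0$ is equivalent, up to the harmless numerical factor $\sqrt{2}$ absorbed into the absolute constant $\eta_0$, to the purely homogeneous smallness condition $\| \zeta_0 \|_{\dot{H}^{1/2}} \leq \eta_0$, which is the assertion of the remark. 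There is no genuine difficulty in this argument; the only point worth stating carefully is that subtracting the mean removes exactly the zero-mode contribution $|\what{\zeta_0}(0)|^2$, which is precisely the single Fourier coefficient by which the full norm $H^{1/2}$ and the seminorm $\dot{H}^{1/2}$ differ.
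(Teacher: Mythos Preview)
Your proposal is correct and follows exactly the reasoning the paper intends: the remark in the paper is stated without proof, simply as an immediate consequence of the shift invariance in Remark~\ref{r:AngleAdditionInvariance}, and you have made that reasoning explicit via the Fourier computation. Your observation about the harmless factor $\sqrt{2}$ between $\inf_\Theta\|\zeta_0-\Theta\|_{H^{1/2}}$ and $\|\zeta_0\|_{\dot H^{1/2}}$ is a nice clarification of a point the paper leaves implicit.
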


\begin{proof}
        The proof relies on the parabolic structure of the system in order to implement a fixed point argument, and is divided in three steps. Firstly the obtention of estimates related to the linear part of the equation, secondly the fixed point argument, and thirdly showing the uniqueness of the solutions obtained.

    \medskip

    \textbf{STEP 1: Linear Estimates.} We begin by performing linear estimates so as to bound the solution by norms of $F_2$ and $F_3$. This is the purpose of this first lemma.

    \begin{lemma}\label{l:LinearEstimates}
        Consider a regular function $F$ and $w$ the unique regular solution of the following parabolic problem:
        \begin{equation}\label{eq:LinearParabolicN}
            \begin{cases}
                \partial_t w - 8 \pi^2 N^2 (w - \int w) - \partial_x^2 w = F \qquad \text{for } (t, x) \in \R_+ \times \T \\
                w(0,x) = w_0 \qquad \text{for } x \in \T.
            \end{cases}
        \end{equation}
        Then, it holds that for any time $T > 0$ and $1 \leq q \leq 2 \leq p \leq \infty$ and $s \in \R$, we have
        \begin{equation*}
            \| w \|_{L^p_T(H^{s+2/p})} \leq e^{8 \pi^2 N^2 T} \left( \| w_0 \|_{H^s} + \| F \|_{L^q_T(H^{s-2+2/q})} \right).
        \end{equation*}
    \end{lemma}

    \begin{proof}
    Since the parabolic equation is linear, it is enough to consider separately the cases where $w_0 = 0$ and $F = 0$, and then to sum both estimates. We start by assuming that $F = 0$. In that case, the Fourier coefficients of the solution $w$ of \eqref{eq:LinearParabolicN} read
        \begin{equation*}
            \what{w}(t, k) =
            \begin{cases}
                e^{(8\pi^2 N^2 - 4 \pi^2 k^2)t} \what{w_0}(k) \qquad \text{if } k \neq 0,\\
                \what{w_0}(0) = 0 \qquad \text{if } k = 0.
            \end{cases}
        \end{equation*}
        We then take the $L^p([0, T])$ norm of the coefficients (denoted $\| \, \cdot \, \|_{L^p_T}$, considered with respect to the time variable only), which can be bounded by
        \begin{equation*}
            \| \what{w}(k) \|_{L^p_T} \leq 
            \begin{cases}
                \frac{e^{8\pi^2 N^2 T}}{(4 \pi^2 k^2)^{1/p}} |\what{w_0}(k)| \qquad \text{if } k \neq 0 \\
                |\what{w_0}(k)| \qquad \text{if } k = 0.
            \end{cases}
        \end{equation*}
        We then square the above and sum with respect to $k \in \Z$ and multiply by $(1 + k^2)^{s+2/p}$ to obtain
        \begin{equation*}
            \sum_{k \in \Z} (1 + k^2)^{s+2/p} \| \what{w}(k) \|_{L^p_T}^2 \leq \left( e^{8 \pi^2 N^2 T} \right)^2 \sum_{k \in \Z} (1 + k^2)^s |\what{w_0}(k)|^2 = \left( e^{8 \pi^2 N^2 T} \| w_0 \|_{H^s} \right)^2.
        \end{equation*}
        Now, we remark that thanks to the assumption $p \geq 2$, the Minkowski inequality (Proposition 1.3 in \cite{BCD}) shows that the left-hand side is bounded by below by $\| w \|_{L^p_T(H^{s+2/p})}^2$. This gives the inequality
        \begin{equation}\label{eq:LemmaNInequality1}
            \| w \|_{L^p_T(H^{s+2/p})} \leq e^{8 \pi^2 N^2 T} \| w_0 \|_{H^s},
        \end{equation}
        which corresponds to the statement of the lemma when $F = 0$.

        \medskip

        We can now focus on the case where $w_0 = 0$. Duhamel's formula insures that the Fourier coefficients of the solution $w$ of \eqref{eq:LinearParabolicN} are given by
        \begin{equation*}
            \begin{split}
                \what{w}(t, k) & = \int_0^t e^{(8 \pi^2 N^2 - 4 \pi k^2)(t-s)} \what{F}(s, k) {\rm d}s \\
                & = \int_{- \infty}^{+ \infty} \mathbf{1}_{t-s \geq 0} e^{(8 \pi^2 N^2 - 4 \pi k^2)(t-s)} \mathbf{1}_{s \geq 0} \what{F}(s, k) {\rm d}s
            \end{split}
        \end{equation*}
        when $k \neq 0$ and
        \begin{equation*}
            \what{w}(t, 0) = \int_0^t \what{F}(s, 0) {\rm d} s
        \end{equation*}
        when $k = 0$. Observe that the integral for $k = 0$ is in fact a convolution product with respect to the time variable, so that we can use the Hausdorff-Young convolution inequality to bound its $L^p([0, T])$ norm. Consider $r \in [1, \infty]$ such that $1 + \frac{1}{p} = \frac{1}{q} + \frac{1}{r}$, which will always exist because we have assumed $1 \leq q \leq 2 \leq p \leq \infty$. We have
        \begin{equation*}
            \| \what{w}(k) \|_{L^p_T} \leq \frac{e^{8 \pi^2 N^2 T}}{\big(1 + k^2\big)^{1/r}} \| \what{F} (k) \|_{L^q_T}.
        \end{equation*}
        We perform \textit{mutatis mutandi} the same operations as above, \textit{i.e.} square the inequality above, multiply by $(1 + k^2)^{s + 2/p} = (1 + k^2)^{s + \frac{2}{r} - 2 + \frac{2}{q}}$, and sum over all $k \in \Z$ to obtain
        \begin{equation*}
            \sum_{k \in \Z} (1 + k^2)^{s + 2/p} \| \what{w}(k) \|_{L^p_T}^2 \leq e^{8 \pi^2 N^2 T} \sum_{k \in \Z} (1 + k^2)^{s -2 + \frac{2}{q}} \| \what{F}(k) \|_{L^q_T}^2.
        \end{equation*}
        Finally, we can apply Minkowski's inequality with $1 \leq q \leq 2 \leq p \leq \infty$ and get the desired inequality
        \begin{equation}\label{eq:LemmaNInequality2}
            \| w \|_{L^p_T(H^{s+2/p})} \leq e^{8 \pi^2 N^2 T} \| F \|_{L^q_T(H^{s - 2 + 2/q})}.
        \end{equation}
        The statement of the lemma can be recovered by using the linearity of the equation and summing \eqref{eq:LemmaNInequality1} and \eqref{eq:LemmaNInequality2}.
        
    \end{proof}

    \textbf{STEP 2: Fixed-Point Argument.} Now that we have linear estimates at our disposal, we are ready to implement a fixed-point argument. Before starting, we make a couple of important remarks. First of all, notice that the left-hand side in equation \eqref{eq:AnglePDERenormalized1} is linear with respect to the unknown, while the right-hand side can be decomposed into two terms
    \begin{equation}\label{eq:F2bilinear}
        F_2 := - 2 \pi N v + 4 \pi N \left( \zeta - \int \zeta \right) \partial_x \zeta,
    \end{equation}
    and
    \begin{equation}\label{eq:F3trilinear}
        F_3 := - v \partial_x \zeta,
    \end{equation}
    which are respectively of order two and three in the unknown $\zeta$. Secondly, the function $v$ is defined by the cancellation condition $v(t,0) = 0$ and identity \eqref{eq:AnglePDERenormalized2}. However, for equation \eqref{eq:AnglePDERenormalized1} to make sense, the solution should have regularity $\zeta \in L^p([0, T) ; H^1(\T))$ for some $p \geq 2$. This means that a natural space to look for solutions is of the form $L^p([0, T) ; H^1(\T))$ for some $p \geq 2$ to be fixed later on.

    \medskip

    In order to implement the fixed-point argument, we prove estimates of $F_2$ and $F_3$, considered as functions of $\zeta$. This is the object of the following lemma.

     \begin{lemma}\label{l:multilinearEstimates}
         Consider $p \geq 2$, a function $\zeta \in L^p([0, T) ; H^1(\T))$ and the quantities $F_2$ and $F_3$ given by \eqref{eq:F2bilinear} and \eqref{eq:F3trilinear}. Then the following inequalities hold:
         \begin{equation*}
            \begin{split}
                &\text{For } p \geq 3, \qquad \| F_3 \|_{L^{p/3}_T(L^2)} \leq \| \zeta \|_{L^p_T(H^1)}^3 \\
                &\text{For } p \geq 2, \qquad\| F_2 \|_{L^{p/2}_T(L^2)} \leq C|N| \| \zeta \|_{L^p_T(H^1)}^2,
            \end{split}
         \end{equation*}
         for some (numerical) constant $C > 0$.
     \end{lemma}

     \begin{proof}[Proof (of the lemma)]
         We begin by the estimate on $F_3$, which will be determinant, as it is the higher order non-linearity in the equation. By using the definition  of $v$ (\textit{c.f.}  \eqref{eq:AnglePDERenormalized2}), we see that
         \begin{equation*}
             \| \partial_x v \|_{L^{p/2}_T(L^1)} \leq \| \zeta \|_{L^p_T(H^1)}^2.
         \end{equation*}
         In other words, the velocity field is $L^{p/2}([0, T) ; W^{1, 1}(\T))$. We wish to make use of the embedding $W^{1, 1}(\T) \subset L^\infty(\T)$. Recall that $v$ satisfies the cancellation property $v(t,0) = 0$. The formula
         \begin{equation*}
             v(x) = \int_{0}^x \partial_y v(y) \dy
         \end{equation*}
         then implies that the inequality $\| v \|_{L^\infty} \leq \| \partial_x v \|_{L^1}$ holds. By using this, we deduce that
         \begin{equation*}
             \| F_3 \|_{L^{p/3}_T(L^2)} = \| v \partial_x \zeta \|_{L^{p/3}_T(L^2)} \leq \| v \|_{L^{p/2}_T (L^\infty)} \| \zeta \|_{L^p_T(L^2)} \leq \| \zeta \|_{L^p_T(H^1)}^3,
         \end{equation*}
         which is what we wanted to prove. We can now look at the second estimate. On the one hand, we have already proved that $\| v \|_{L^{p/2}_T(L^\infty)} \leq \| \zeta \|_{L^p_T(H^1)}^2$, so we only need to look at the second summand in $F_2$. Note that both the function $g = \zeta - \int \zeta$  has zero average and an integrable derivative, so that we can fix a $x_\star \in \T$ such that $g(x_\star) = 0$. The formula 
         \begin{equation*}
             g(x) = g(x_\star) + \int_{x_\star}^x \partial_x \zeta (y) \dy
         \end{equation*}
        means that the inequality $\| g \|_{L^\infty} \leq \| \partial_x \zeta \|_{L^1}$ holds, and hence
         \begin{equation*}
             \begin{split}
                 \| F_2 \|_{L^{p/2}(L^2)} & \lesssim |N| \| v \|_{L^{p/2}_T(L^\infty)} + |N| \| \partial_x \zeta \|_{L^p_T(L^2)} \left\| \zeta - \int \zeta \right\|_{L^p_T(L^\infty)} \\
                 & \lesssim |N| \| \zeta \|_{L^{p/2}(H^1)}^2.
             \end{split}
         \end{equation*}
         This ends the proof of the lemma.
     \end{proof}

    We are now ready to implement the fixed point argument. Consider a $s >0$ to be fixed later, an initial datum $\zeta_0 \in H^s(\T)$, and define the map $\Psi(\zeta) = w$ which associates a function $\zeta$ to the unique solution $w$ of 
    \begin{equation*}
        \begin{cases}
            \partial_t w - 8 \pi^2 N^2 \left( w - \int w \right) - \partial_x^2 w = F_2 + F_3 \\
            w(0) = \zeta_0,
        \end{cases}
    \end{equation*}
    where $F_2$ and $F_3$ are given as functions of $\zeta$ by \eqref{eq:F2bilinear} and \eqref{eq:F3trilinear}. For convenience of notation, we note $S_N(t)$ the semi-group associated with the linear parabolic equation \eqref{eq:LinearParabolicN}. Then, making use of the estimates in Lemma \ref{l:multilinearEstimates}, we may use Lemma \ref{l:LinearEstimates} to obtain the inequality
    \begin{equation*}
        \| \Psi(\zeta) \|_{L^p_T(H^1)} \leq e^{8 \pi^2 N^2 T} \left( \| S_N(t) w_0 \|_{L^p_T(H^1)} + \| F_2 \|_{L^{p/3}_T(L^2)} + \| F_3 \|_{L^{p/3}_T(L^2)} \right)
    \end{equation*}
    provided that the condition
    \begin{equation*}
        s + \frac{2}{p} = 1 \qquad \text{and} \qquad s - 2 + \frac{6}{p} = 0
    \end{equation*}
    hold for some $2 \leq p \leq 6$ (thus insuring that $p/3 \leq 2 \leq p$). This forces the choice $p = 4$ and $s = 1/2$. We then invoke Lemma \ref{l:multilinearEstimates} to write
    \begin{equation}\label{eq:FixedPointInequality}
        \begin{split}
            \| \Psi(\zeta) \|_{L^4_T(H^1)} & \leq e^{8 \pi^2 N^2 T} \Big( \| S_N (t) \zeta_0 \|_{L^4_T(H^1)} + \| F_2 \|_{L^{4/3}_T(L^2)} + \| F_3 \|_{L^{4/3}_T(L^2)} \Big) \\
            & \leq e^{8 \pi^2 N^2 T} \Big( \| S_N (t) \zeta_0 \|_{L^4_T(H^1)} + C|N| T^{1/4} \| \zeta \|_{L^4_T(H^1)}^2 + \| \zeta \|_{L^4_T(H^1)}^3 \Big).
        \end{split}
    \end{equation}
    In order to apply the Picard-Banach fixed-point theorem, we have to use this inequality to show that the map $\Psi : L^4([0, T) ; H^1(\T)) \longrightarrow L^4([0, T) ; H^1(\T))$ has a fixed point (at least for some $T > 0$ small enough). Consider a radius $R > 0$. We want to show that the map $\Psi$ is a contraction of the closed ball $B_R := B(0, R) \subset L^4([0, T) ; H^1(\T))$ centred at zero and of radius $R$ onto itself. 

    \medskip

    We start by checking that the ball is %stable 
    invariant under the action of the fixed-point map $\Psi : B_R \longrightarrow B_R$. Consider $\zeta \in B_R$. We have by \eqref{eq:FixedPointInequality},
    \begin{equation*}
        \| \Psi(\zeta) \|_{L^4_T(H^1)} \leq e^{8 \pi^2 N^2 T} \Big( a(T) + C|N|T^{1/4} R^2 + R^3 \Big),
    \end{equation*}
    where we have set $a(T) = \| S(t) \zeta_0 \|_{L^4_T(H^1)}$. The right hand side in the inequality above will be smaller than $R$ under the sufficient condition that
    \begin{equation}\label{eq:ContractionRadius}
        e^{8 \pi^2 N^2 T} a(T) \leq \frac{1}{2}R \qquad \text{and} \qquad C|N|T^{1/4} R + \frac{3}{2} R^2 < \frac{1}{2}e^{-8\pi^2 N^2 T},
    \end{equation}
    Note that the factor $3/2$ appearing in the second inequality is not necessary for proving that $B_R$ is invariant under the action of $\Psi$. However, we include it, as we will see it will also insure that $\Psi$ is a contraction. In any case, there exists a $R > 0$ such that the two inequalities hold if the following stronger condition is satisfied:
    \begin{equation}\label{eq:RadiusRelation}
        a(T) < \frac{1}{6} e^{- 8 \pi^2 N^2 T} \left( -C|N|T^{1/4} + \sqrt{C^2 N^2 T^{1/2} + 3 e^{8 \pi^2 N^2 T}} \right)
    \end{equation}
    holds. Observe that this condition is fulfilled in the two following cases
    \begin{enumerate}
        \item If $T > 0$ is small enough. Indeed, by Lemma \ref{l:LinearEstimates}, the function $S_N(t) \zeta_0$ is belongs to the space $L^4([0, T) ; H^1(\T))$ if $\zeta_0 \in H^{1/2}(\T)$, so that the quantity $\| S_N(t) \zeta_0 \|_{L^4_T(H^1)}$ tends to zero as $T \rightarrow 0^+$. On the other hand, the upper bound of \eqref{eq:RadiusRelation} tends to $\frac{1}{6}\sqrt{3}$ as $T \rightarrow 0^+$.

        \item If $N = 0$ and $\| \zeta_0 \|_{H^{1/2}} \leq \frac{1}{6}\sqrt{3} := \eta_0$. This is because Lemma \ref{l:LinearEstimates} provides in that case $\| S_N(t) \zeta_0 \|_{L^4_T(H^1)} \leq \| \zeta_0 \|_{H^{1/2}}$ while the upper bound of \eqref{eq:RadiusRelation} reduces to $\frac{1}{6}\sqrt{3}$.
    \end{enumerate}
    Under these conditions, we deduce that $\Psi : B_R \longrightarrow B_R$ is a well-defined map. We now have to examine under which conditions this map is a contraction. For this, we use the following lemma, whose proof is similar to that of Lemma \ref{l:multilinearEstimates}.

    \begin{lemma}\label{l:MultilinearStability}
        As above, $B_R$ is the ball of radius $R$ centred at zero in $L^p([0, T) ; H^1(\T))$ for some $p \geq 2$.  Consider $\zeta_1, \zeta_2 \in B_R$, and let $F_2(\zeta_1)$ and $F_3(\zeta_2)$ be the quadratic and cubic quantities obtained by replacing $\zeta$ by $\zeta_1$ and $\zeta_2$ in equations \eqref{eq:F2bilinear} and \eqref{eq:F3trilinear}. Then 
        \begin{equation*}
            \| F_2(\zeta_2) - F_2(\zeta_1) \|_{L^{p/2}_T(L^2)} \leq 2C|N|R \| \zeta_2 - \zeta_1 \|_{L^p_T(H^1)}
        \end{equation*}
        and
        \begin{equation*}
            \| F_3(\zeta_2) - F_3(\zeta_1) \|_{L^{p/3}_T(L^2)} \leq 3 R^2 \| \zeta_2 - \zeta_1 \|_{L^p_T(H^1)},
        \end{equation*}
        where $C > 0$ is the same constant as in Lemma \ref{l:multilinearEstimates}
    \end{lemma}

    Let us use this lemma to find estimates on $w := \Psi(\zeta_2) - \Psi(\zeta_1)$. By definition of $\Psi$, the function solves the PDE problem (using the notation )
    \begin{equation*}
        \begin{cases}
            \partial_t w - 8 \pi^2 N^2 \left( w - \int w \right) - \partial_x^2 w = F_2(\zeta_2) - F_2(\zeta_1) + F_3(\zeta_2) - F_3(\zeta_1)\\
            w(0) = 0,
        \end{cases}
    \end{equation*}
    so that Lemma \ref{l:LinearEstimates} provides the bound
    \begin{equation*}
        \| w \|_{L^4_T(H^1)} \leq e^{8 \pi^2 N^2 T} \Big( \| F_2(\zeta_1) - F_2(\zeta_1) \|_{L^{4/3}_T(L^2)} + \| F_3(\zeta_1) - F_3(\zeta_1) \|_{L^{4/3}_T(L^2)} \Big).
    \end{equation*}
    We may then use Lemma \ref{l:MultilinearStability} with $p = 4$ to bound the differences in the right hand side of the formula above, so that
    \begin{equation}\label{eq:StabilityBound}
        \| w \|_{L^4_T(H^1)} \leq e^{8 \pi^2 N^2 T} \big( 2C|N| T^{1/4} R + 3R^2 \big) \| \zeta_2 - \zeta_1 \|_{L^4_T(H^1)}.
    \end{equation}
    We have shown that $\Psi : B_R \longrightarrow B_R$ is a contraction is the constant in the right-hand side above is smaller than $1$. This is surely the case if the second inequality in \eqref{eq:ContractionRadius} is true, and therefore will hold for some $R > 0$ provided that \eqref{eq:RadiusRelation} is true.

    \medskip

    Let us summarize. If either $T > 0$ is small enough, or if $N = 0$ and $\| \zeta_0 \|_{H^{1/2}} \leq \eta_0$, then there exists a radius $R > 0$ such that the map $\Psi : B_R \longrightarrow B_R$ is a well-defined contraction of the closed ball $B_R \subset L^4([0, T) ; H^1(\T))$. The fixed-point theorem therefore provides the existence of a fixed-point $\Psi(\zeta) = \zeta$ (unique in $B_R$) which is a solution of problem \eqref{eq:AnglePDERenormalized1} - \eqref{eq:AnglePDERenormalized2} for the initial datum $\zeta_0$.
    
    \medskip

    \textbf{STEP 3: Uniqueness of Solutions.} The fixed-point argument shows that the solution is unique among all solutions lying in the ball $B_R \subset L^4([0, T) ; H^1(\T))$, but not among all possible solutions sharing the same initial data. To prove this, we will resort to energy bounds. Let $\zeta_{1, 0}, \zeta_{2, 0} \in H^{1/2}(\T)$ and consider two corresponding solutions $\zeta_1, \zeta_2 \in L^4([0, T) ; H^1(\T))$. Then the difference $w = \zeta_2 - \zeta_1$ solves the equation
    \begin{equation*}
        \partial_t w - 8 \pi^2 N^2 \left( w - \int w \right) - \partial_x^2 w = F_2(\zeta_2) - F_2(\zeta_1) + F_3(\zeta_2) - F_3(\zeta_1),
    \end{equation*}
    so that by multiplying by $w$ and integrating on $[0, T] \times \T$, we obtain
    \begin{equation}\label{eq:StabilityUniqueness}
        \int w^2 \dx + \int_0^T \int (\partial_x w)^2 \dx \dt \leq \| w_0 \|_{L^2}^2 + C|N| \int_0^T \int w^2 \dx \dt + \int_0^T \int w \Big( F_2(\zeta_2) - F_2(\zeta_1) + F_3(\zeta_2) - F_3(\zeta_1) \Big) \dx \dt.
    \end{equation}
    Proceeding as in the proof of Lemma \ref{l:multilinearEstimates}%(and as we left the reader do for Lemma \ref{l:MultilinearStability})
    , we see that the quadratic and cubic terms in \eqref{eq:StabilityUniqueness} are bounded by
    \begin{equation*}
        \| F_2(\zeta_2) - F_2(\zeta_1) \|_{L^2} + \| F_3(\zeta_2) - F_3(\zeta_1) \|_{L^2} \lesssim \| \zeta_2 - \zeta_1 \|_{H^1} \Big( \| \zeta_1 \|_{H^1} + \| \zeta_1 \|_{H^1}^2 + \| \zeta_2 \|_{H^1} + \| \zeta_2 \|_{H^1}^2 \Big).
    \end{equation*}
    By using this bound in \eqref{eq:StabilityUniqueness} and applying Young's inequality $ab \leq \delta a^2 + \frac{1}{\delta}b^2$, we obtain the estimate
    \begin{equation*}
        \begin{split}
            \| w \|_{L^2}^2 + \int_0^T \int (\partial_x w)^2 \dx \dt & \lesssim \| w_0 \|_{L^2}^2 + \int_0^T \| w \|_{L^2} \| w \|_{H^1} \Big( \| \zeta_1 \|_{H^1} + \| \zeta_1 \|_{H^1}^2 + \| \zeta_2 \|_{H^1} + \| \zeta_2 \|_{H^1}^2 \Big) \dt \\
            & \lesssim \| w_0 \|_{L^2}^2 + \frac{1}{2} \| w \|_{L^2_T(H^1)}^2 \dt + 2 \int_0^T \| w \|_{L^2}^2 \Big( \| \zeta_1 \|_{H^1}^2 + \| \zeta_1 \|_{H^1}^4 + \| \zeta_2 \|_{H^1}^2 + \| \zeta_2 \|_{H^1}^4 \Big) \dt.
        \end{split}
    \end{equation*}
    In the upper bound above, the term $\frac{1}{2} \| w \|_{L^2_T(H^1)}^2$ can be absorbed in the left-hand side. It is then enough to apply Grönwall's lemma to  deduce the %stability 
    estimate
    \begin{equation*}
        \| w \|_{L^2} \leq \| w_0 \|_{L^2} \exp \left( C(N) (1 + T) \big( \| \zeta_1 \|_{L^4_T(H^1)} + \| \zeta_2 \|_{L^4_T(H^1)} \big) \right),
    \end{equation*}
    which gives uniqueness of solutions.

    \medskip

    \textbf{STEP 4: Smoothness of Solutions.} We now prove that the unique solution $\zeta \in L^4([0, T) ; H^1(\T))$ is smooth on $(0,T) \times T$. For this, we will use a recursion argument to improve the regularity estimates.

    \medskip

    Consider any $0 < T_1 < T$. Because $\zeta \in L^4([0, T) ; H^1(\T))$, there exists a $t_1 < T_1/2$ such that $\zeta(t_1) \in H^1(\T)$. We therefore define the interval $I_1 = [t_1, T)$ and first show that $\theta \in L^\infty(I_1 ; H^1(\T)) \cap L^2(I_0 ; H^2(\T))$. By multiplying equation \eqref{eq:AnglePDERenormalized1} by $({\rm Id}- \partial_x^2) \theta$ and integrating, we get the estimate
    \begin{equation*}
        \begin{split}
            \frac{1}{2} \frac{\rm d}{\dt} \| \zeta \|_{H^1}^2 + \| \zeta \|_{H^2}^2 & \leq 8 \pi^2 N^2 \| \zeta \|_{H^1}^2 + \big( \| F_2 \|_{L^2} + \| F_3 \|_{L^2} \big) \| \zeta \|_{\dot{H}^2} \\
            & \lesssim 8 \pi^2 N^2 \| \zeta \|_{H^1}^2 + \frac{1}{\delta} \| F_2 \|_{L^2}^2 + \frac{1}{\delta} \| F_3 \|_{L^2}^2 + \delta\| \zeta \|_{H^2}^2,
        \end{split}
    \end{equation*}
    where the last line is due to Young's inequality $ab \leq \delta a^2 + \frac{1}{\delta}b^2$ and holds for all $\delta > 0$. By choosing $\delta$ sufficiently small, this allows to subtract $\delta \| \zeta \|_{H^2}^2$ in both terms and obtain, after integration in time
    \begin{equation*}
        \| \zeta (t) \|_{H^1}^2 + \int_{t_1}^t \| \zeta \|_{H^2}^2 \lesssim \| \zeta(t_1) \|_{H^1}^2 +  \int_{t_1}^t \big( 8 \pi^2 N^2 \| \zeta \|_{H^1}^2 + \| F_2 \|_{L^2}^2 + \| F_3 \|_{L^2}^2\big).
    \end{equation*}
    Note that in all this paragraph, the implicit constants may depend on $N$, but that is not relevant%unimportant 
    for the matter of regularity. Now, in order to use the fact that $\zeta \in L^4([0, T) ; H^1(\T))$, we decompose the cubic term as $\| F_3 \|_{L^2}^2 = \| F_3 \|_{L^2}^{4/3} \| F_3 \|_{L^2}^{2/3}$, so that, by Lemma \ref{l:multilinearEstimates}, we have
    \begin{equation*}
        \| F_2 \|_{L^2(I_1 ; L^2)} \lesssim \| \zeta \|_{L^4_T(H^1)}^2, \qquad \| F_3(t) \|_{L^2}^{2/3} \leq \| \zeta(t) \|_{H^1}^2 \qquad \text{and} \qquad \| F_3 \|_{L^{4/3}(I_1 ; L^2)} \leq \| \zeta \|_{L^4_T(H^1)}.
    \end{equation*}
    Plugging this in our integral inequality, we see that
    \begin{equation}\label{eq:order0Reg}
        \| \zeta (t) \|_{H^1}^2 + \int_{t_1}^t \| \zeta \|_{H^2}^2 \lesssim \| \zeta(t_1) \|_{H^1}^2 + \| \zeta \|_{L^4_T(H^1)}^2 + \int_{t_1}^t \big( 1 + \| F_3 \|_{L^2}^{4/3} \big) \| \zeta \|_{H^1}^2.
    \end{equation}
    By using Grönwall's inequality in \eqref{eq:order0Reg} above, we derive estimates for $\|\zeta\|_{L^\infty(I_1 ; H^1)}$, and consequently, using these estimates in \eqref{eq:order0Reg}, we also get that $\zeta \in L^2(I_1 ; H^2(\T))$.

    \medskip

    We now use the fact that $\zeta \in L^\infty(I_1 ; H^1(\T)) \cap L^2(I_1 ; H^2(\T))$ as the first step of an induction argument. We can consider $t_1 < t_2 < T_1 \left( \frac{1}{2} + \frac{1}{2^2} \right)$ such that $\zeta(t_2) \in H^2(\T)$. We then set $s = 2$ and multiply equation \eqref{eq:AnglePDERenormalized1} by $({\rm Id} - \partial_x^2)^s\zeta$, so as to obtain
    \begin{equation*}
        \frac{1}{2} \frac{\rm d}{\dt} \| \zeta \|_{H^s}^2 + \| \zeta \|_{H^{s+1}}^2 \leq \big( \| F_2 + F_3 \|_{H^{s-1}} + 8 \pi^2 N^2 \| \zeta \|_{H^{s-1}} \big) \| \zeta \|_{H^{s+1}}.
    \end{equation*}
    Once again, we use Young's inequality $ab \leq \delta a^2 + \frac{1}{\delta}b^2$ to absorb the $\| \zeta \|_{H^{s+1}}$ factor in the left-hand side. This yields
    \begin{equation*}
        \frac{1}{2} \frac{\rm d}{\dt} \| \zeta \|_{H^s}^2 + \| \zeta \|_{H^{s+1}}^2 \lesssim \| \zeta \|_{H^{s-1}}^2 + \| F_2 + F_3 \|_{H^{s-1}}^2.
    \end{equation*}
    Observe that $s -1 = 1 > \frac{1}{2}$, so that the Sobolev space $H^{s-1}(\T)$ is a Banach algebra. This implies that we can bound both the bilinear $F_2$ from \eqref{eq:F2bilinear} by
    \begin{equation*}
        \| F_2 \|_{H^{s-1}} \lesssim \| \zeta \|_{H^{s-1}} \| \zeta \|_{H^s} \lesssim \| \zeta \|_{H^s}^2
    \end{equation*}
    Note that, in the above, the implicit constant depends on $N$. In order to estimate the trilinear term $F_3$, we note that, even though $H^{s-3/2}(\T)$ is not a Banach algebra, the pointwise function product is bounded from $H^{s-3/2}(\T) \times H^{s-3/2}(\T) \longrightarrow H^{s-2}(\T)$ (see for example Theorems 2.82 and 2.85 in \cite{BCD}). Therefore
    \begin{equation*}
        \begin{split}
            \| F_3 \|_{H^{s-1}} & \lesssim \| \zeta \|_{H^s} \| u \|_{H^{s-1}} \lesssim \| \zeta \|_{H^s} \| (\partial_x \zeta)^2 \|_{H^{s-2}} \\
            & \lesssim \| \zeta \|_{H^s} \| \partial_x \zeta \|_{H^{s-3/2}}^2\\
            & \lesssim \| \zeta \|_{H^s} \| \zeta \|_{H^{s - 1/2}}^2.
        \end{split}
    \end{equation*}
    By integrating in time, we get the integral inequality
    \begin{equation*}
        \| \zeta \|_{H^s}^2 + \int_{t_2}^t \| \zeta \|_{H^{s+1}}^2 \lesssim \| \zeta(t_2) \|_{H^s}^2 + \int_{t_2}^t \| \zeta \|_{H^s}^2 \big( 1 + \| \zeta \|_{H^{s - 1/2}}^4 \big).
    \end{equation*}
    Finally, because we have already proven that $\zeta \in L^\infty(I_1 ; H^{s-1}(\T)) \cap L^2(I_1 ; H^s(\T))$, we know that $\zeta \in L^4(I_2 ; H^{s - \frac{1}{2}}(\T))$. This means that we can use Grönwall's lemma in the previous inequality, and deduce that $\zeta \in L^\infty(I_2 ; H^s(\T)) \cap L^2(I_2 ; H^{s+1}(\T))$.

    \medskip

    We can repeat this argument on a set of nested intervals $I_1 \subset I_2 \subset I_3 \cdots \subset [T_1 ; T) := I$, where $I_k  = [t_k, T)$ and $t_{k-1} < t_k < \left( \frac{1}{2} + \frac{1}{2^2} + \cdots \frac{1}{2^k} \right)T_1 < T_1$, and deduce that $\zeta \in L^\infty(I ; H^{s + k}(\T))$ for any $k \geq 0$, which gives smoothness with respect to the space variable. To get smoothness with respect to the time variable, we use the equation:
    \begin{equation*}
        \partial_t \zeta = \partial_x^2 \zeta - F_2 + F_3 + 8 \pi^2 N^2 \zeta \in L^\infty(I ; H^{s + k - 2}(\T))
    \end{equation*}
    so that $\zeta \in W^{1, \infty}_{\rm loc}(I ; H^{s+k - 2}(\T))$ for any $k \geq 0$. We can repeat this argument a finite number of times $\ell$ to show that $\zeta \in W^{\ell, \infty}(I ; H^{s + k - 2\ell}(\T))$ for any $k$ sufficiently large, and hence $\zeta \in C^\infty(I \times \T)$. Since $T_1 > 0$ is arbitrary, we deduce the result, that $\zeta \in C^\infty((0,T) \times \T)$.

    \medskip

    \textbf{STEP 5: Time Continuity.} Finally, we have to make sure that the solution is continuous in time up to $t = 0$ in some topology, so as to make sure the initial data can be written in the sense of distributions. According to Lemma \ref{l:multilinearEstimates} and the computations from Step 2, we have $F_2 + F_3 \in L^{4/3}([0, T) ; L^2(\T))$, so that
    \begin{equation*}
        \begin{split}
            \partial_t \zeta  = \, & \partial_x^2 \zeta + 8\pi^2N^2 \left( \zeta - \int_\T \zeta \dx \right) + F_2 + F_3 \\
            & \in L^{4/3}_{loc}([0, T) ; H^{-1}(\T)).
        \end{split}
    \end{equation*}
    This provides Hölder continuity for the solution with respect to the time variable $\zeta \in C^{0, \frac{1}{4}}([0, T) ; H^{-1}(\T))$, and therefore makes sure that the initial data condition can be written in the sense of distributions.
\end{proof}

As a final remark, we have show that the solution $(\theta, u)$ we have constructed can be used to obtain a solution where the velocity field cancels at $x = 0$ by means of the change of reference frames \eqref{ieq:Galileo}. For this, we assume some little extra regularity on the initial data.

\begin{prop}\label{p:Galileo}
    Consider $s > 1/2$ and let $\theta_0 \in H^s(\T)$ be some initial datum. Then problem \eqref{eq:AngleVariablePDE} has a unique solution $(\theta, u)$ such that
    \begin{equation*}
        \theta \in L^4([0, T) ; H^1) \qquad \text{and} \qquad u(t,0) = 0.
    \end{equation*}   
\end{prop}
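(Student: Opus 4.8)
The plan is to derive the proposition from the well-posedness result Theorem \ref{t:existencelimit} for the renormalized system, and then to restore the cancellation condition $u(t,0)=0$ by means of a Galilean change of frame \eqref{ieq:Galileo}. Since $s > 1/2$, the embedding $H^s(\T) \hookrightarrow C^0(\T)$ holds, so that — viewing the angular variable as a quasi-periodic function on the covering $\R$ of $\T$ — the winding number $N := \frac{1}{2\pi}\big(\theta_0(1) - \theta_0(0)\big) \in \Z$ is well defined, and $\zeta_0 := \theta_0(x) - 2\pi N x$ is a genuinely periodic function lying in $H^s(\T) \hookrightarrow H^{1/2}(\T)$. Theorem \ref{t:existencelimit} applied to $(\zeta_0, N)$ then produces, for some $T>0$, a unique solution $\zeta \in L^4([0,T) ; H^1(\T))$ of \eqref{eq:AnglePDERenormalized1}--\eqref{eq:AnglePDERenormalized2}, which is smooth on $(0,T)\times\T$ and belongs to $C^{0,1/4}([0,T) ; H^{-1}(\T))$. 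Setting $\theta(t,x) := \zeta(t,x) + 2\pi N x$ and letting $\tilde u$ be the velocity field obtained from $\zeta$, $N$ and the function $v$ of \eqref{eq:AnglePDERenormalized2} through the change of unknowns used to derive the renormalized system, the pair $(\theta, \tilde u)$ satisfies $\partial_t \theta + \tilde u\,\partial_x \theta = \partial_x^2 \theta$ together with $\partial_x \tilde u = -(\partial_x \theta)^2 + \int_\T (\partial_x \theta)^2 \dx$, but with a normalization inherited from \eqref{eq:AnglePDERenormalized2} rather than the cancellation $\tilde u(t,0)=0$.

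Next I would perform the change of frame. Because $\zeta \in L^4([0,T) ; H^1(\T))$, the relation $\partial_x \tilde u = -(\partial_x \theta)^2 + \int_\T (\partial_x \theta)^2$ gives $\partial_x \tilde u \in L^2([0,T) ; L^1(\T))$, hence, using $W^{1,1}(\T) \hookrightarrow C^0(\T)$, one has $\tilde u \in L^2([0,T) ; C^0(\T)) \cap C^\infty((0,T)\times\T)$; in particular $t \mapsto \|\tilde u(t,\cdot)\|_{L^\infty}$ lies in $L^1([0,T))$. Consequently, the ODE $g'(t) = -\tilde u(t, g(t))$ with $g(0) = 0$ (as in the paragraph following \eqref{ieq:Galileo}) admits an absolutely continuous solution $g$ on $[0,T)$, since its right-hand side is dominated, uniformly in $g$, by the $L^1_t$ function $\|\tilde u(t,\cdot)\|_{L^\infty}$. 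Applying the transformation \eqref{ieq:Galileo} to $(\theta, \tilde u)$ with this $g$ produces a solution $(\theta^\sharp, u)$ of \eqref{eq:AngleVariablePDE} with $u(t,0)=0$; moreover $\theta^\sharp \in L^4([0,T) ; H^1(\T))$ since spatial translation is an isometry of $H^1(\T)$, and $\theta^\sharp(0,\cdot) = \theta_0$ because $g(0)=0$. This settles existence.

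For uniqueness, let $(\theta_1, u_1)$ and $(\theta_2, u_2)$ both solve \eqref{eq:AngleVariablePDE} with $\theta_i \in L^4([0,T) ; H^1(\T))$, $u_i(t,0)=0$, and common initial datum $\theta_0$. By continuity of $\theta_i$ in $x$ and the conservation of the winding number established just before \eqref{eq:AnglePDERenormalized1} (which is rigorous here thanks to the smoothing of Theorem \ref{t:existencelimit} for $t>0$ and time-continuity down to $t=0$), the two solutions share the same $N$, and $\theta_i - 2\pi N x$ is periodic. Reversing the change of frame that passes from the normalization of \eqref{eq:AnglePDERenormalized2} to the normalization $u_i(t,0)=0$ — a change uniquely determined by $u_i$ — yields for each $i$ a solution $\zeta_i \in L^4([0,T) ; H^1(\T))$ of \eqref{eq:AnglePDERenormalized1}--\eqref{eq:AnglePDERenormalized2} with data $\zeta_0$. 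The uniqueness part of Theorem \ref{t:existencelimit} forces $\zeta_1 = \zeta_2$, hence $\theta_1 = \theta_2$ and $u_1 = u_2$.

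I expect the single delicate point to be the solvability and regularity of the change-of-frame ODE $g'(t) = -\tilde u(t, g(t))$: the velocity $\tilde u(t,\cdot)$ is only of class $W^{1,1}$ in the space variable near $t=0$ (not Lipschitz), so Picard--Lindelöf is unavailable and one must argue by a Carathéodory-type existence statement and check that the resulting $g$ is absolutely continuous — exactly what is needed for \eqref{ieq:Galileo} to define a genuine solution. The bookkeeping around the velocity normalization (matching the $v$ of \eqref{eq:AnglePDERenormalized2} with a $u$ satisfying $u(t,0)=0$) is routine but must be done carefully so that no additive function of time is lost.
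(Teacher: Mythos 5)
Your overall strategy coincides with the paper's: obtain $\zeta$ from Theorem \ref{t:existencelimit}, set $\theta = \zeta + 2\pi N x$, and restore the cancellation $u(t,0)=0$ through the change of reference frames \eqref{ieq:Galileo}, with uniqueness obtained by undoing the shift and invoking the uniqueness part of Theorem \ref{t:existencelimit}. The gap is exactly at the point you flag and then leave open: the solvability of the shift ODE. From $\zeta \in L^4([0,T);H^1)$ alone you only get $\partial_x \tilde u \in L^2_T(L^1)$, hence a velocity that is merely continuous in $x$ (via $W^{1,1}(\T)\hookrightarrow L^\infty(\T)$); a Carath\'eodory/Peano argument then yields existence of \emph{some} $g$, but not uniqueness, so the construction is not canonical and the argument is not closed at this regularity. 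The decisive observation in the paper --- and the reason the proposition assumes $s>1/2$, a hypothesis your proof uses only to define the winding number --- is a propagation-of-regularity step: feeding the $L^4_T(H^1)$ solution back into Lemmas \ref{l:LinearEstimates} and \ref{l:multilinearEstimates} gives $\| \zeta \|_{L^2_T(H^{s+1})} \lesssim \| \zeta_0 \|_{H^s} + C(N,T)\| \zeta \|_{L^4_T(H^1)}^2 + \| \zeta \|_{L^4_T(H^1)}^3$, and since $s+1 > 3/2$ the space $H^{s+1}(\T)$ is a Banach algebra embedded in $W^{1,\infty}(\T)$, so the velocity defined from $\theta$ lies in $L^2([0,T);W^{1,\infty}(\T))$, i.e.\ it is Lipschitz in space with a time-integrable Lipschitz constant. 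Cauchy--Lipschitz (in Carath\'eodory form) then produces a unique absolutely continuous $g$, after which your construction of $(\theta,u)$ goes through. Without this bootstrap, the ``single delicate point'' you identify is a genuine missing step, not a technicality.

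A smaller remark: your uniqueness argument by reversing the frame change is a legitimate variant of the paper's (which deduces uniqueness of $(\theta,u)$ directly from uniqueness of the ODE), and it is in a sense more robust, since passing from a solution with $u_i(t,0)=0$ back to the mean-free normalization only requires the quadrature $g_i'(t) = -\int_\T u_i(t,x)\dx$, which needs no Lipschitz condition. But this does not repair the existence half: there the shift is determined by the relation $g'(t) = -w(t,\,\cdot\,)$ evaluated along the unknown trajectory, a genuine ODE in $g$, and it is precisely the regularity upgrade above --- the place where $s>1/2$ must enter --- that makes it uniquely solvable.
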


\begin{proof}
    Consider $\zeta \in L^4([0, T) ; H^1(\T))$ be the unique solution of \eqref{eq:AnglePDERenormalized1}-\eqref{eq:AnglePDERenormalized2} given by Theorem \ref{t:existencelimit} with $N = \int_0^1 \theta_0 \dx$, and set $\vartheta(t,x) := \zeta(t, x) + 2\pi Nx$. In order to define a suitable change of reference frames, we consider the ODE problem
    \begin{equation}\label{eq:ODEChange}
        g'(t) + w \big( t, g(t) \big) = 0 \qquad \text{and} \qquad g(0) = w(0, 0),
    \end{equation}
    where $w$ is the unique solution of
    \begin{equation}\label{eq:VelocityLL}
        \partial_x w = - (\partial_x \vartheta)^2 + \int_0^1 (\partial_x \vartheta)^2 \dx, \qquad \text{and} \qquad \int_\T w \dx = 0.
    \end{equation}
    We will show that the ODE problem \eqref{eq:ODEChange} has a unique solution. For this, we check that the velocity field $w$ has at least $L^1([0, T) ; W^{1, \infty}(\T))$ regularity. By Theorem \ref{t:existencelimit}, we know that $\zeta \in L^4([0, T) ; H^1(\T))$, and therefore we can apply Lemmas \ref{l:LinearEstimates} and \ref{l:multilinearEstimates} to show that
    \begin{equation*}
        \| \zeta \|_{L^2_T(H^{s + 1})} \lesssim \| \zeta_0 \|_{H^s} + C(N, T) \| \zeta \|_{L^4_T(H^1)}^2 + \| \zeta \|^3_{L^4_T(H^1)}.
    \end{equation*}
    In particular, the solution $\zeta$ is Lipschitz in space $H^{s + 1} \subset W^{1, \infty}$, and the space $H^{s + 1}$ is a Banach algebra. This implies that the velocity $w$ defined in \eqref{eq:VelocityLL} lies in $L^2([0, T) ; W^{1, \infty}(\T))$. The Cauchy-Lipschitz then applies to show that the ODE problem \eqref{eq:ODEChange} has a unique solution. Therefore, by setting
    \begin{equation*}
        \theta(t, x) = \vartheta \big( t, x - g(t) \big) \qquad \text{and} \qquad u(t,x) = w \big( t, x - g(t) \big) + g'(t),
    \end{equation*}
    we see that $(\theta, u)$ is a solution of \eqref{eq:AngleVariablePDE} such that $u(t,0) = 0$ at every time $t \in [0, T)$. Moreover, this is the only such solution, by uniqueness of solutions of \eqref{eq:ODEChange}.
\end{proof}

\subsection{Global Well-Posedness for Initial Data with Small Oscillations}

The paragraph above shows that solutions of \eqref{eq:AnglePDERenormalized1}-\eqref{eq:AnglePDERenormalized2} can be global if $N = 0$, under a smallness condition on the initial datum $\| \theta_0 \|_{H^{1/2}} \leq \eta_0$. In this case, the magnetic field is ``topologically trivial'', in other words, the curve $x \in \T \longmapsto (b_1(x), b_2(x))$ never performs a complete circle around the origin $(0, 0)$. In that case, $\theta = \zeta$ and the problem reduces to study \eqref{eq:AngleVariablePDE}.

\medskip

In this paragraph, we go one step further, and prove that solutions are global if $N = 0$, provided the (not necessarily small) initial datum $\theta_0 \in H^{1/2}$ fulfils a small oscillation condition
\begin{equation*}
    \max_{x \in \T} \theta_0(x) - \min_{x \in \T} \theta_0(x) \leq \kappa_0.
\end{equation*}

\begin{comment}
In this subsection we focus in the case of $N=0$, \textit{i.e.} topologically trivial magnetic fields. We prove a global posedness result for the case for the case $\text{osc}\, \theta_0$ is small enough. 

In the case $N=0$, equation \eqref{eq:AnglePDERenormalized1} is a transport equation, and therefore it satisfies a maximum principle. Then, due to Theorem \ref{prop:existencelimit}, the solution $\theta(t,x)$ exhibits a blow up at time $t=T$ if and only if 

$$\limsup_{t\rightarrow T}\|\partial_x \zeta\|_{L^2}=\infty.$$

We shall then prove that if the initial data $\theta\in L^\infty$ satsifies $\text{osc}\,\theta_0$ is small enough, then the solution is global. 
\end{comment}

\begin{teor}\label{t:smallOscillations}
    Assume that $N = 0$ and let $\theta_0\in H^{s}(\T)$ for $s>1/2$ be an initial datum $\theta_0$ such that we have the following oscillation bound:
    \begin{equation}\label{eq:smallOscillation}
            \max_{x \in \T} \theta_0(x) - \min_{x \in \T} \theta_0(x) \leq \frac{1}{\sqrt{3}} \approx 0.577,
    \end{equation}
    then the unique solution $\theta \in L^4([0, T) ; H^1(\T))$ of \eqref{eq:AngleVariablePDE} is global in time.
\end{teor}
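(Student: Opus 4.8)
The plan is to exploit the fact that when $N = 0$, the angular equation in \eqref{eq:AngleVariablePDE} is a transport-diffusion equation $\partial_t \theta + u \partial_x \theta = \partial_x^2 \theta$ for which the maximum principle applies, so that the oscillation $\mathrm{osc}(\theta(t,\cdot)) = \max_x \theta(t,x) - \min_x \theta(t,x)$ is non-increasing in time. In particular $\mathrm{osc}(\theta(t,\cdot)) \leq \mathrm{osc}(\theta_0) \leq \frac{1}{\sqrt{3}}$ for all $t$. Since the solution is smooth for positive times by Theorem \ref{t:existencelimit}, and since the blow-up criterion \eqref{ieq:BlowUpL4} (or rather its contrapositive, via the local well-posedness theory) tells us that the solution can be continued as long as $\| \partial_x \theta \|_{L^2}$ does not blow up in $L^4_t$, it suffices to derive an \emph{a priori} bound on $\| \partial_x \theta(t,\cdot) \|_{L^2}^2$ that holds on the whole lifespan. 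The key is that the damping term controlled by the oscillation will beat the destabilizing nonlinear term precisely under the threshold $\frac{1}{\sqrt 3}$.

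First I would perform the basic energy estimate on $\partial_x\theta$. Differentiating the equation, multiplying by $\partial_x\theta$ and integrating over $\T$ gives, after integration by parts and using $\partial_x u = -(\partial_x\theta)^2 + \int (\partial_x\theta)^2$,
\begin{equation*}
    \frac{1}{2}\frac{\mathrm d}{\dt}\int_\T (\partial_x\theta)^2\dx + \int_\T (\partial_x^2\theta)^2\dx = -\frac{3}{2}\int_\T \partial_x u\,(\partial_x\theta)^2\dx = \frac{3}{2}\int_\T (\partial_x\theta)^4\dx - \frac{3}{2}\left(\int_\T(\partial_x\theta)^2\dx\right)^2,
\end{equation*}
so discarding the last (favourable, negative) term,
\begin{equation*}
    \frac{1}{2}\frac{\mathrm d}{\dt}\|\partial_x\theta\|_{L^2}^2 + \|\partial_x^2\theta\|_{L^2}^2 \leq \frac{3}{2}\|\partial_x\theta\|_{L^4}^4.
\end{equation*}
The point is now to absorb the right-hand side using the dissipation. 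Since $N=0$ the function $\zeta = \theta$ is periodic and $\partial_x\theta$ has zero mean, so we can use the Poincaré–Wirtinger / Agmon-type interpolation on the torus: $\|\partial_x\theta\|_{L^\infty}^2 \lesssim \|\partial_x\theta\|_{L^2}\|\partial_x^2\theta\|_{L^2}$, hence $\|\partial_x\theta\|_{L^4}^4 \leq \|\partial_x\theta\|_{L^\infty}^2 \|\partial_x\theta\|_{L^2}^2 \lesssim \|\partial_x\theta\|_{L^2}^3\|\partial_x^2\theta\|_{L^2}$. But a cleaner route — and the one that produces exactly the constant $\frac1{\sqrt3}$ — is to control $\|\partial_x\theta\|_{L^4}^4$ or rather $\int (\partial_x\theta)^4$ by the oscillation: write $\int_\T (\partial_x\theta)^4 = \int_\T (\partial_x\theta)^2 (\partial_x\theta)^2$ and integrate by parts one factor against $\partial_x(\theta\,\partial_x\theta\cdots)$; more efficiently, use that for a zero-mean periodic $\partial_x\theta$ one has $\int_\T (\partial_x\theta)^3\dx = \int_\T \partial_x\theta\,\partial_x\left(\tfrac12(\partial_x\theta)^2\right)\cdot(\cdots)$, and crucially integrate by parts moving a derivative onto $\theta - \overline\theta$ whose $L^\infty$ norm is bounded by $\frac12\mathrm{osc}(\theta_0)$. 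Concretely, $\int_\T (\partial_x\theta)^4\dx = -\int_\T (\theta - \overline\theta)\,\partial_x\big((\partial_x\theta)^2\,\partial_x\theta\big)\dx = -3\int_\T (\theta-\overline\theta)(\partial_x\theta)^2\partial_x^2\theta\dx \leq 3\cdot\tfrac12\mathrm{osc}(\theta_0)\,\|\partial_x\theta\|_{L^4}^2\|\partial_x^2\theta\|_{L^2}$, which by Young gives $\int_\T (\partial_x\theta)^4 \leq \tfrac{9}{4}\mathrm{osc}(\theta_0)^2\|\partial_x^2\theta\|_{L^2}^2$ after absorbing $\|\partial_x\theta\|_{L^4}^2$. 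Plugging in,
\begin{equation*}
    \frac{1}{2}\frac{\mathrm d}{\dt}\|\partial_x\theta\|_{L^2}^2 + \left(1 - \frac{3}{2}\cdot\frac{9}{4}\mathrm{osc}(\theta_0)^2\right)\|\partial_x^2\theta\|_{L^2}^2 \leq 0,
\end{equation*}
so under a condition of the form $\mathrm{osc}(\theta_0)^2 \leq \frac{1}{3}$ (up to optimizing the constants in the integration-by-parts chain to land exactly on $\frac1{\sqrt3}$) the dissipation coefficient is non-negative, $\|\partial_x\theta(t,\cdot)\|_{L^2}$ is non-increasing, and in particular bounded on the whole lifespan. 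This \emph{a priori} bound, combined with the continuation criterion from Theorem \ref{t:existencelimit} (the solution exists as long as $\|\partial_x\theta\|_{L^4_t L^2_x}$ — equivalently, via the smoothing, $\|\partial_x\theta\|_{L^\infty_t L^2_x}$ — stays finite), forces $T^\star = +\infty$.

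The main obstacle, and the step requiring the most care, is getting the constant right: the threshold $\frac{1}{\sqrt 3}$ must come out of the precise interplay between the factor $\tfrac32$ in front of $\int(\partial_x\theta)^4$ in the energy identity, the use of $\|\theta - \overline\theta\|_{L^\infty} \leq \tfrac12\mathrm{osc}(\theta)$, and the Young inequality split; the candidate computation above gives a slightly worse constant, so I would need to either sharpen the integration-by-parts estimate (e.g. keep the favourable $-\tfrac32(\int(\partial_x\theta)^2)^2$ term, or use $\int(\partial_x\theta)^4 = \int(\partial_x\theta)^2\cdot(\partial_x\theta)^2$ more symmetrically, or use $\|\theta-\overline\theta\|_{L^\infty}\le \mathrm{osc}(\theta)$ against $\|\partial_x\theta\|_{L^1}$-type bounds in a way tuned to the quartic term) or accept a condition like $3\,\mathrm{osc}(\theta_0)^2 \leq 1$ which is exactly \eqref{eq:smallOscillation}. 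Two secondary points to handle rigorously: (i) justifying the maximum principle for $\theta$ on $(0,T^\star)$ — this is immediate since $u \in L^2_t W^{1,\infty}_x$ by Proposition \ref{p:Galileo}, so $\theta$ is a smooth solution of a genuine transport-diffusion equation and the oscillation is non-increasing; and (ii) since the theorem assumes $\theta_0 \in H^s$ for $s > 1/2$ we have $\theta_0 \in L^\infty(\T)$ so $\mathrm{osc}(\theta_0)$ makes sense, and $\mathrm{osc}(\theta(t,\cdot))$ depends only on $\theta - \overline\theta = \zeta$ up to the harmless constant $N$-term which vanishes since $N = 0$; moreover by Remark \ref{r:AngleAdditionInvariance} and Remark \ref{r:HomogeneousSobolev} the smallness/oscillation hypotheses are invariant under adding a constant, consistent with the setup. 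Finally I would note that the same bound shows $R(t) = R_0 \exp\big(-\int_0^t \|\partial_x\theta\|_{L^2}^2\big)$ stays strictly positive for all finite times (though it may tend to $0$ as $t \to \infty$), which is consistent with the heuristic justification in the introduction.
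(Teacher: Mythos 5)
Your proposal follows essentially the same route as the paper's proof: maximum principle to propagate the oscillation bound, then an $L^2$ energy estimate for $\varphi=\partial_x\theta$ in which the quartic term is rewritten by integration by parts against $\Gamma=\theta-\int_\T\theta$, bounded through $\|\Gamma\|_{L^\infty}$ and absorbed into the dissipation, and finally the continuation criterion from the local theory. Two constant-level slips in your sketch should be corrected, since the whole point is the threshold. First, for the scalar transport--diffusion equation satisfied by $\varphi$ (namely $\partial_t\varphi+\partial_x(u\varphi)=\partial_x^2\varphi$) the energy identity is
\begin{equation*}
    \frac{1}{2}\frac{\rm d}{\dt}\int_\T\varphi^2+\frac{1}{2}\Big(\int_\T\varphi^2\Big)^2+\int_\T(\partial_x\varphi)^2=\frac{1}{2}\int_\T\varphi^4,
\end{equation*}
with coefficient $\tfrac12$, not $\tfrac32$; the factor $\tfrac32$ you quote belongs to the estimate \eqref{eq:aprioriparb} for the vector equation $\partial_t b+\partial_x(ub)=\varepsilon\partial_x^2 b$, where the extra term $\partial_x^2 u\, b$ contributes. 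Second, the bound $\|\theta-\overline\theta\|_{L^\infty}\le\tfrac12\,\mathrm{osc}(\theta)$ is false in general (the mean can sit arbitrarily close to the minimum, e.g.\ for a narrow spike), so you cannot use it to improve constants; the paper only uses $\|\Gamma\|_{L^\infty}\le\max_x\theta-\min_x\theta\le\kappa_0$, obtained by choosing $x_\star$ with $\theta(t,x_\star)=\int_\T\theta$. With the correct coefficient $\tfrac12$ and the full-oscillation bound, the computation is exactly the paper's Step 2: $\tfrac12\int\varphi^4=-\tfrac32\int\Gamma\varphi^2\partial_x\varphi\le\tfrac32\kappa_0\|\varphi\|_{L^4}^2\|\partial_x\varphi\|_{L^2}$, then Young's inequality to absorb $\|\varphi\|_{L^4}^4$, leading to $\tfrac12\frac{\rm d}{\dt}\|\varphi\|_{L^2}^2+(1-3\kappa_0^2)\|\partial_x\varphi\|_{L^2}^2\le 0$ and hence the global a priori bound under \eqref{eq:smallOscillation}; the remainder of your argument (continuation via the $L^4_t H^1_x$ criterion and the positivity of $R$) is fine.
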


\begin{obs}
    Observe that the smallness condition on $\max \theta_0 - \min \theta_0$ is invariant under the addition of a constant $\Theta_0 \in \R$ to the initial datum, in accordance with Remark \ref{r:AngleAdditionInvariance} (compare also with Remark \ref{r:HomogeneousSobolev}).
\end{obs}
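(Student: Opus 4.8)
The plan is to observe that the oscillation functional $\theta_0 \mapsto \max_{x \in \T} \theta_0(x) - \min_{x \in \T} \theta_0(x)$ is a difference of two order-preserving functionals, each of which is \emph{equivariant} (rather than invariant) under the addition of a constant. First I would set $\vartheta_0 := \theta_0 + \Theta_0$ for an arbitrary fixed $\Theta_0 \in \R$, and remark that since $x \mapsto x + \Theta_0$ is an increasing bijection of $\R$, the supremum and infimum simply translate:
\begin{equation*}
    \max_{x \in \T} \vartheta_0(x) = \max_{x \in \T} \theta_0(x) + \Theta_0 \qquad \text{and} \qquad \min_{x \in \T} \vartheta_0(x) = \min_{x \in \T} \theta_0(x) + \Theta_0.
\end{equation*}
Subtracting these two identities, the additive constant $\Theta_0$ cancels exactly, which yields
\begin{equation*}
    \max_{x \in \T} \vartheta_0(x) - \min_{x \in \T} \vartheta_0(x) = \max_{x \in \T} \theta_0(x) - \min_{x \in \T} \theta_0(x),
\end{equation*}
so that the smallness hypothesis \eqref{eq:smallOscillation} holds for $\vartheta_0$ if and only if it holds for $\theta_0$. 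This is the entire content of the invariance claim.

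The only remaining point is to record \emph{why} this algebraic invariance is relevant for the statement of Theorem \ref{t:smallOscillations}, and here I would appeal to Remark \ref{r:AngleAdditionInvariance}. That remark establishes that if $\theta$ solves the angular system, then so does $\theta + \Theta_0$, because the PDE \eqref{eq:AnglePDERenormalized1} involves $\theta$ only through its derivatives and through the mean-free combination $\zeta - \int \zeta$, both of which annihilate constants. Consequently, a global solution for the datum $\theta_0$ produces, by the addition of $\Theta_0$, a global solution for $\vartheta_0 = \theta_0 + \Theta_0$; combined with the oscillation identity above, this shows the global-existence conclusion of Theorem \ref{t:smallOscillations} genuinely depends only on the oscillation of the initial datum and not on any particular normalization of its mean or its absolute level.

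There is no real obstacle here: the argument is purely a cancellation of the common shift in $\max$ and $\min$, and the supremum/infimum over the compact torus $\T$ are attained since $\theta_0 \in H^s(\T) \subset C^0(\T)$ for $s > 1/2$, so all quantities are finite and well defined. The one thing worth stating explicitly, for consistency with Remark \ref{r:HomogeneousSobolev}, is that this is the oscillation-norm analogue of the homogeneous-Sobolev invariance used there: just as $\| \theta_0 \|_{\dot H^{1/2}}$ ignores the zero Fourier mode, the oscillation $\max \theta_0 - \min \theta_0$ ignores the additive constant, both being seminorms that vanish on constants.
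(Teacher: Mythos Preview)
Your argument is correct. The paper does not supply a proof for this remark at all: it is stated as a self-evident observation immediately following Theorem \ref{t:smallOscillations}, with no accompanying justification beyond the cross-references to Remarks \ref{r:AngleAdditionInvariance} and \ref{r:HomogeneousSobolev}. Your writeup simply spells out the elementary cancellation that the paper leaves implicit, and your closing analogy with the homogeneous Sobolev seminorm is exactly the parallel the paper intends by citing Remark \ref{r:HomogeneousSobolev}.
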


\begin{proof}
    The proof is in two steps. First we show by means of maximum principle that the small oscillation condition holds for any time, provided it does for the initial datum. Then, we show that this enables us to obtain energy estimates on the derivative $\varphi := \partial_x \theta$.

    \medskip

    \textbf{STEP 1: Maximum Principle.} We assume that the initial datum satisfies \eqref{eq:smallOscillation} for some $\kappa_0$, which we will fix later on, and we consider the unique associated (maximally extended) solution $\theta \in C^\infty((0, T) \times \T)$ given by Theorem \ref{t:existencelimit}. From the equation
    \begin{equation*}
        \partial_t \theta + u \partial_x \theta = \partial^2_x \theta,
    \end{equation*}
    observe that the maximum principle implies that the maximum $\max_x \theta(t,x)$ and minimum $\min_x \theta(t,x)$ of the solution are, respectively, non-increasing and non-decreasing functions of time. Therefore, we deduce that the total oscillation of the solution is also a non-increasing function of time and
    \begin{equation*}
        \max_{x \in \T} \theta (t, x) - \min_{x \in \T} \theta (t, x) \leq \max_{x \in \T} \theta_0(x) - \min_{x \in \T} \theta_0(x) \leq \kappa_0,
    \end{equation*}
    for any $0 < t < T$, which is what we were aiming at in this step.

    \medskip

    \textbf{STEP 2: Higher Order Energy Estimates.} Set $\varphi = \partial_x \theta$, and consider $0 < t_1 < T$. Since the solution $\theta \in C^\infty ((0, T) \times T)$ is smooth, the function $\varphi(t_1)$ also is, and the quantity $\varphi$ is a smooth solution to the following PDE problem:
    \begin{equation}\label{eq:FirstDerivativeEquation}
        \begin{cases}
            \partial_t \varphi + \partial_x (u \varphi) = \partial_x^2 \varphi \\
            \partial_x = - \varphi^2 + \int_\T \varphi^2,
        \end{cases}
        \qquad \text{with datum } \varphi_{t = t_1} =  \varphi(t_1).
    \end{equation}
    We may therefore perform an energy estimate on this PDE. Multiplying the top equation by $\varphi$ and integrating in the space variable, we get
    \begin{equation}\label{eq:OscillationEnergy}
        \frac{1}{2} \frac{\rm d}{\dt} \int_\T \varphi^2 + \int_\T (\partial_x \varphi)^2 = - \int_\T \varphi \partial_x(u \varphi).
    \end{equation}
    Let us focus on the right-hand side of this equation. Integrating by parts repeatedly, we see that 
    \begin{equation}
        - \int_\T \varphi \partial_x(u \varphi) = \int_\T u \varphi \partial_x \varphi = \frac{1}{2} \int_\T u \partial_x (\varphi^2) = - \frac{1}{2} \int_\T \varphi^2 \partial_x u.
    \end{equation}
    We then replace $\partial_x u$ by its value as a function of $\varphi$ from the second equation in \eqref{eq:FirstDerivativeEquation}, and obtain the estimate
    \begin{equation}\label{eq:OscillationPreperation}
        \frac{1}{2} \frac{\rm d}{\dt} \int_\T \varphi^2 + \frac{1}{2} \left( \int_\T \varphi^2 \right)^2 + \int_\T (\partial_x \varphi)^2 = \frac{1}{2} \int_\T \varphi^4.
    \end{equation}
    Now, we wish to make the oscillation appear in this equation. To that end, we introduce the mean-free part of the angular variable, 
    \begin{equation*}
        \Gamma := \theta - \int_\T \theta
    \end{equation*}
    and note that $\partial_x \Gamma = \varphi$, so that, integrating by parts one more time, we may continue the computation in \eqref{eq:OscillationPreperation} as
    \begin{equation*}
        \frac{1}{2} \int_\T \varphi^4 = - \frac{3}{2} \int_\T \Gamma \varphi^2 \partial_x \varphi \leq \frac{3}{2} \| \Gamma \|_{L^\infty} \| \varphi \|_{L^4}^2 \| \partial_x \varphi \|_{L^2}.
    \end{equation*}
    We now use Young's inequality $ab \leq \frac{1}{2\lambda}a^2 + \frac{1}{2}\lambda b^2$ to obtain a bound for $\| \varphi \|_{L^4}$. For any $\lambda > 0$,
    \begin{equation*}
        \frac{1}{2} \int_\T \varphi^4 \leq \frac{1}{2 \lambda} \int_\T \varphi^4 + \frac{3}{4}\lambda \| \Gamma \|_{L^\infty}^2 \| \partial_x \varphi \|_{L^2}^2,
    \end{equation*}
    and so, provided $0 < \frac{1}{\lambda} < \frac{1}{2}$,
    \begin{equation}\label{eq:L4varPhiIneq}
        \begin{split}
            \frac{1}{2} \int_\T \varphi^4 & \leq \frac{3 \lambda / 4}{1 - 1/\lambda} \| \Gamma \|_{L^\infty}^2 \| \partial_x \varphi \|_{L^2}^2 \\
            & \leq 3 \| \Gamma \|_{L^\infty}^2 \| \partial_x \varphi \|_{L^2}^2 \qquad \text{with } \lambda = 2.
        \end{split}
    \end{equation}
    Our goal is now to bound the right-hand side of this inequality, and show that if $\kappa_0$ is small enough, then it can be absorbed in the $\| \partial_x \varphi \|_{L^2}^2$ term in the left-hand side of \eqref{eq:OscillationEnergy}. First of all, we note that for any $t_1 < t < T$, then the mean value theorem implies there exists a $x_\star \in \T$ such that $\int \theta(t) = \theta(t,x_\star)$, and so
    \begin{equation*}
        |\Gamma| = |\theta(t,x) - \theta(t,x_\star)| \leq \max_x \theta(t) - \min_x \theta(t) \leq \kappa_0.
    \end{equation*}
    Therefore, by using this in inequality \eqref{eq:L4varPhiIneq}, and plugging it in equation \eqref{eq:OscillationPreperation}, we obtain
    \begin{equation*}
        \frac{1}{2} \frac{\rm d}{\dt} \| \varphi \|_{L^2}^2 + \big( 1 - 3 \kappa_0^2 \big) \| \partial_x \varphi \|_{L^2}^2 \leq 0,
    \end{equation*}
    and deduce that $\varphi \in L^\infty((t_1, \infty) ; L^2)$ provided that $\kappa_0 \leq \frac{1}{\sqrt{3}}$.
\end{proof}

\subsection{Blow Up for Solutions of the Limit System}
	In this section we show that there are non-negative solutions of the equation 
	
	\begin{equation}\label{equation}
		\left\{\begin{array}{ll}
			\partial_t \varphi+\partial_x(u\varphi)=\partial_x^2\varphi & \text{in }(0,T)\times\T\\
			\partial_x u=\|\varphi\|_{L^2}^2-\varphi^2(x)& \text{in }(0,T)\times\T\\
			\varphi(x,0)=\varphi_0& \text{in }\T
		\end{array}\right. 
	\end{equation}
that blow up in finite time, assuming that the initial data satisfies a concentration condition. We begin by stating some lemmas that allow us to make the subsequent construction, and that are nothing  else than an application of a maximum principle and of local well posedness of the equation. 
	
\begin{lemma}\label{positividad}Let $\varphi_0$ be a non-negative smooth function. Then, consider $\varphi$ to be the unique, local-in-time solution of \eqref{equation} with initial value $\varphi_0$. Then, the following holds: 

\begin{itemize}
    \item $\varphi(t,x)\geq 0$ for every $t>0,\,x\in \T$.
    \item If $\varphi_0$ is even, then $\varphi(t,\cdot)$ remains even for every $t>0$.
    \item The total mass remains constant, \textit{i.e.}
    $$\int_\T \varphi(t,x)=\int_\T \varphi_0(x)\dx\quad \text{for all }t>0.$$
    \item If $\varphi_0$ is decreasing in $(0,1/2)$, then it remains decreasing in such interval. 
\end{itemize}
\end{lemma}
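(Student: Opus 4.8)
The plan is to establish the four claims in turn, all of them resting on the fact that, by local well-posedness and the smoothing effect (apply Theorem \ref{t:existencelimit} to the angle $\theta$, of which $\varphi = \partial_x\theta$ is the derivative), the solution is smooth, $\varphi \in C^\infty((0,T)\times\T)$, so that $u$ and $\partial_x u = \|\varphi\|_{L^2}^2 - \varphi^2$ are bounded on every slab $[0,T']\times\T$ with $T' < T$; we keep the normalisation $u(t,0) = 0$ used elsewhere in the paper. For the positivity statement I would rewrite \eqref{equation} in non-divergence form $\partial_t \varphi + u\partial_x\varphi + (\partial_x u)\varphi = \partial_x^2\varphi$ and apply a minimum principle: setting $w = e^{-\lambda t}\varphi$ with $\lambda > \|\partial_x u\|_{L^\infty([0,T']\times\T)}$ gives $\partial_t w - \partial_x^2 w = -u\partial_x w - (\lambda + \partial_x u)w$ with $\lambda + \partial_x u > 0$; if $w$ had a negative minimum at some $(t_\star, x_\star)$ with $t_\star > 0$, then $\partial_t w \le 0$, $\partial_x w = 0$, $\partial_x^2 w \ge 0$ there, while the right-hand side equals $-(\lambda + \partial_x u)w > 0$, a contradiction, so $w \ge 0$ because $\varphi_0 \ge 0$.

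For the evenness claim the idea is a symmetry-plus-uniqueness argument. If $\varphi_0$ is even, then $\partial_x u = \|\varphi\|_{L^2}^2 - \varphi^2$ is even, so its primitive vanishing at $0$, namely $u(t,\cdot)$, is odd. Consequently $\tilde\varphi(t,x) := \varphi(t,-x)$ and $\tilde u(t,x) := -u(t,-x)$ satisfy $\partial_x \tilde u = \|\tilde\varphi\|_{L^2}^2 - \tilde\varphi^2$ and, by the chain rule, $\partial_t\tilde\varphi + \partial_x(\tilde u\tilde\varphi) - \partial_x^2\tilde\varphi = 0$ with $\tilde\varphi(0,\cdot) = \varphi_0$; uniqueness for \eqref{equation} then forces $\tilde\varphi = \varphi$, i.e. $\varphi(t,\cdot)$ is even for every $t$.

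Mass conservation is immediate: integrating the first equation of \eqref{equation} over $\T$ and noting that $\partial_x^2\varphi$ and $\partial_x(u\varphi)$ are exact derivatives of periodic functions gives $\frac{\rm d}{\dt}\int_\T \varphi\,\dx = 0$, hence $\int_\T \varphi(t,\cdot)\,\dx = \int_\T \varphi_0\,\dx$. For the monotonicity statement, suppose moreover $\varphi_0$ is even and non-increasing on $(0,1/2)$ and set $\psi := \partial_x\varphi$. Differentiating \eqref{equation} in $x$ and substituting $\partial_x u = \|\varphi\|_{L^2}^2 - \varphi^2$ yields the linear parabolic equation $\partial_t \psi + u\partial_x\psi + (2\|\varphi\|_{L^2}^2 - 4\varphi^2)\psi = \partial_x^2\psi$. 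By the evenness just proved, $\varphi(t,\cdot)$ is symmetric about both $x = 0$ and $x = 1/2$, so $\psi(t,0) = \psi(t,1/2) = 0$ for all $t$. Thus $\psi$ solves this equation on the cylinder $[0,T']\times[0,1/2]$ with homogeneous Dirichlet lateral conditions and initial datum $\psi(0,\cdot) = \partial_x\varphi_0 \le 0$, and the same maximum-principle argument as in the positivity step (again neutralising the sign-indefinite zeroth-order term by the $e^{-\lambda t}$ weight) gives $\psi \le 0$ on $[0,T']\times[0,1/2]$, i.e. $\varphi(t,\cdot)$ stays non-increasing on $(0,1/2)$.

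None of the steps is a real obstacle; the only point requiring a little care is the monotonicity claim, where one must first observe that evenness of the solution converts the statement into a homogeneous Dirichlet problem for $\partial_x\varphi$ on the half-torus $(0,1/2)$, and that the zeroth-order coefficient $2\|\varphi\|_{L^2}^2 - 4\varphi^2$ has no definite sign — which is harmless thanks to the standard exponential rescaling. One should also make sure the boundedness of $u$ and $\partial_x u$ needed for the maximum principle is legitimately available, which it is on any $[0,T']\times\T$ with $T' < T$ by the interior smoothness of $\varphi$.
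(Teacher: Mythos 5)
Your proof is correct and follows exactly the route the paper indicates for this lemma (whose detailed proof it omits, saying only that it is ``an application of a maximum principle and of local well posedness''): an exponentially rescaled minimum principle for $\varphi$ in non-divergence form, reflection plus uniqueness for evenness, integration over $\T$ for mass conservation, and a rescaled maximum principle for $\psi=\partial_x\varphi$ with Dirichlet conditions at $x=0,1/2$ for monotonicity, using the equation $\partial_t\psi+u\partial_x\psi+(2\|\varphi\|_{L^2}^2-4\varphi^2)\psi=\partial_x^2\psi$, which is the correct computation. The only caveat is that your argument for the last bullet invokes evenness of $\varphi_0$ (needed to get $\psi(t,0)=\psi(t,1/2)=0$), a hypothesis not literally stated in that bullet but always satisfied where the lemma is applied in Theorem \ref{prop:blowup}, so this is a reasonable and harmless strengthening.
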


We are now in a position to prove the main theorem.

\begin{teor}\label{prop:blowup}
	Let $\varphi_0$ be a $C^2(\T)$, 1-periodic, non-negative, even function (when considered as a function $\varphi_0:\R\rightarrow\R$) which is not identically zero, and let $\varphi$ be the solution of $\eqref{equation}$ with initial value $\varphi_0$. Denote the total mass and the second moment of $\varphi$ by 
	
	\begin{equation} \label{eq:meanvariance}
    M:=\int_{-1/2}^{1/2} \varphi_0(x)\dx \quad \text{and } V(t):=\int_{-1/2}^{1/2}x^2\varphi(t,x)\dx,
    \end{equation}
    respectively. Then, there is a constant $C>0$ such that, if 

    \begin{equation}\label{eq:blowupcondition} 
    M> C\cdot V(0),
    \end{equation}
    $\varphi$ blows up in finite time. 
\end{teor}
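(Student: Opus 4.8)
The plan is to use the standard virial / second-moment argument for a transport equation with a damping-type nonlinearity, exploiting the non-negativity and evenness guaranteed by Lemma \ref{positividad}. First I would derive the evolution equation for the second moment $V(t) = \int_{-1/2}^{1/2} x^2 \varphi(t,x) \dx$. Using the equation $\partial_t \varphi = \partial_x^2 \varphi - \partial_x(u\varphi)$ and integrating by parts (the boundary terms at $x = \pm 1/2$ vanish by periodicity, since $\varphi$ is smooth and $1$-periodic), one gets
\begin{equation*}
    V'(t) = \int_{-1/2}^{1/2} x^2 \partial_x^2 \varphi \dx - \int_{-1/2}^{1/2} x^2 \partial_x(u\varphi) \dx = 2 \int_{-1/2}^{1/2} \varphi \dx + 2 \int_{-1/2}^{1/2} x u \varphi \dx = 2M + 2 \int_{-1/2}^{1/2} x u \varphi \dx.
\end{equation*}
The mass $M$ is conserved by Lemma \ref{positividad}. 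The key point is to control the sign and size of the last integral. Since $\varphi_0$ is even, $\varphi(t,\cdot)$ stays even, and the normalization $v(t,0)=0$ together with $\partial_x u = \|\varphi\|_{L^2}^2 - \varphi^2$ forces $u$ to be odd; hence $x u \varphi$ is even and $\int_{-1/2}^{1/2} x u \varphi \dx = 2 \int_0^{1/2} x u \varphi \dx$. Writing $u(x) = \int_0^x (\|\varphi\|_{L^2}^2 - \varphi(y)^2) \dy$, one sees that on $(0,1/2)$ the sign of $u$ is governed by the competition between the (large) mass concentration and the spread.

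Next I would estimate $\int_0^{1/2} x u \varphi \dx$ from above in terms of $V$ and $M$. The natural mechanism is: because $\varphi$ is concentrated near the origin (its second moment $V$ is small relative to $M$), the quantity $\varphi^2$ is large near $0$, so $u(x) < 0$ for $x$ away from a small neighborhood of the origin, which makes $\int x u \varphi$ negative and large in absolute value. More precisely, I expect a bound of the shape
\begin{equation*}
    \int_{-1/2}^{1/2} x u \varphi \dx \leq C_1 M^{3/2} V^{1/2} - c_1 M^2 + (\text{lower order}),
\end{equation*}
or something similar, obtained by: (i) controlling $\|\varphi\|_{L^2}^2$ via $\|\varphi\|_{L^2}^2 \leq \|\varphi\|_{L^\infty} M$ and decreasingness of $\varphi$ on $(0,1/2)$ (Lemma \ref{positividad}), which also gives $\varphi(x) \leq M/(2x)$ by monotonicity; (ii) Chebyshev-type estimates: most of the mass sits in $\{|x| \lesssim (V/M)^{1/2}\}$, so for $x \gtrsim (V/M)^{1/2}$ one has $u(x) \approx - \int_0^x \varphi^2 \lesssim -cM^2/x$-ish, hence the contribution of the bulk of the mass to $\int x u \varphi$ is $\lesssim -cM^2$. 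Plugging back, $V'(t) \leq 2M - c M^2 + C M^{3/2} V^{1/2}$, which is negative as long as $V$ stays below a threshold of order $M/(\text{const})$ — and this is exactly where the hypothesis $M > C \cdot V(0)$ enters: it ensures $V'(0) < 0$ and, by a continuity/bootstrap argument, $V(t)$ decreases, so the inequality $V'(t) \leq -\tfrac{c}{2}M^2 < 0$ persists. But then $V(t) \leq V(0) - \tfrac{c}{2}M^2 t$ would become negative in finite time, contradicting $V(t) \geq 0$. Hence the smooth solution cannot exist past $T^\star \leq 2V(0)/(cM^2)$, i.e. blow-up occurs.

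To make the contradiction clean, after establishing $V'(t) \le -\kappa < 0$ for all $t$ in the (maximal) interval of existence $[0,T)$, one concludes $T < \infty$; and by the regularity theory of Theorem \ref{t:existencelimit} (smoothness on $(0,T)$ and the blow-up criterion, compare \eqref{ieq:BlowUpL4}), finiteness of $T$ forces $\|\partial_x\theta\|_{L^2} = \|\varphi\|_{L^2}$ to be non-integrable in $L^4_t$, i.e. genuine blow-up. I would phrase the argument so that the constant $C$ in \eqref{eq:blowupcondition} is precisely the threshold making $2M - cM^2 + CM^{3/2}V(0)^{1/2} < 0$ solvable, i.e. $C \sim c/2$ up to the lower-order correction $2M$ which is negligible for $M$ large (and one may also need to assume $M$ is not too small, or absorb this via the statement's constant).

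\textbf{Main obstacle.} The delicate step is step two: turning "small second moment" into a quantitatively useful lower bound $-\int_0^{1/2} x u \varphi \dx \gtrsim M^2$ on the damping. The sign of $u$ is not pointwise obvious, and one must carefully localize — split $(0,1/2)$ into an inner region where the mass lives and an outer region where $u$ is safely negative, using the monotonicity of $\varphi$ from Lemma \ref{positividad} to make both the Chebyshev estimate and the pointwise bound $\varphi(x) \le M/(2x)$ rigorous, and then optimize the split. A secondary technical nuisance is handling the $\|\varphi\|_{L^2}^2$ term in $u$, which is a priori unbounded as blow-up approaches; but since $u(x) - u(0) = x\|\varphi\|_{L^2}^2 - \int_0^x \varphi^2$ and we only integrate $x u \varphi$ against the mass, the $\|\varphi\|_{L^2}^2$ contribution is $\|\varphi\|_{L^2}^2 \int x^2 \varphi = \|\varphi\|_{L^2}^2 V$, which is controlled precisely because $V$ is small — so the structure is self-consistent. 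One should double-check the book-keeping of constants so that the single constant $C$ in \eqref{eq:blowupcondition} comes out of the final inequality.
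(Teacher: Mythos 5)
Your overall strategy coincides with the paper's: track the second moment $V(t)$, use Lemma \ref{positividad} (non-negativity, evenness, monotonicity on $(0,1/2)$, conservation of mass) to derive a differential inequality forcing $V$ to become negative in finite time, contradicting $\varphi \ge 0$; the first step is fine (note only that $\int x^2 \partial_x^2 \varphi \dx = 2M - 2\varphi(t,1/2) \le 2M$, an inequality rather than an equality, which is harmless). The genuine gap is in the key quantitative step, exactly where you flag the ``secondary technical nuisance''. Writing $2\int x u \varphi = 2\|\varphi\|_{L^2}^2 V - 4\int_0^{1/2} x \varphi(x) \bigl(\int_0^x \varphi^2\bigr) \dx$, the term $2\|\varphi\|_{L^2}^2 V$ is \emph{not} lower order and is \emph{not} controlled by smallness of $V$: superpose a broad bump with a thin spike at the origin carrying negligible mass but huge $L^2$ norm; then $M$ and $V$ stay of order one while $\|\varphi\|_{L^2}^2 V$ is as large as you like, in particular much larger than $M^{5/2}V^{1/2}$ and of the \emph{same} order as the negative transport term, so the sign of $V'$ hinges on an exact cancellation between these two contributions rather than on orders of magnitude. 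The paper supplies this cancellation: inside $2\bigl(\int_0^{1/2}x^2\varphi\bigr)\bigl(\int_0^{1/2}\varphi^2\bigr)$ it splits $\int_0^{1/2}\varphi^2 = \int_0^x \varphi^2 + \int_x^{1/2}\varphi^2$, bounds the $y \ge x$ piece by the monotonicity estimate $\varphi(y) \le \tfrac{1}{y}\int_0^{1/2}\varphi$ (yielding the harmless $M^{5/2}V^{1/2}$), and absorbs the $y \le x$ piece into the negative term using $2x^2 \le x$ on $[0,1/2]$, leaving the combination $-2\int_0^{1/2}x(1/2-x)\varphi\bigl(\int_0^x\varphi^2\bigr)\dx$. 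This absorption step is absent from your sketch, and without it your claimed inequality $V' \le 2M - cM^2 + CM^{3/2}V^{1/2}$ does not follow.

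A second, related gap is your proposed lower bound for the damping: the Chebyshev localization gives $\int_0^x\varphi^2 \gtrsim M^2/x$ only for $x$ outside the concentration region, and integrating $x\varphi(x)\cdot M^2/x$ there yields $M^2\int_{\rm outer}\varphi$, which is negligible precisely in the concentrated regime you need; so the $-c_1 M^2$ term is not actually obtained. The paper instead uses the pointwise Cauchy--Schwarz bound $\int_0^x\varphi^2 \ge \tfrac{1}{x}\bigl(\int_0^x\varphi\bigr)^2$ together with the identity $\varphi\bigl(\int_0^x\varphi\bigr)^2 = \tfrac{1}{3}\partial_x\bigl(\int_0^x\varphi\bigr)^3$ and an integration by parts against the weight $(1/2-x)$, producing the definite negative term $\tfrac{4}{3}\bigl(\tfrac12 V^{1/2}M^{1/2}-\tfrac14 M\bigr)^3$, i.e. a damping of size $M^3$ expressed purely in terms of $M$ and $V$; only after that does the continuation argument you describe (RHS monotone in $V$, hence $V'(t)\le \kappa < 0$ persists, contradiction with $V \ge 0$) go through, as in the paper. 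So the skeleton of your proposal matches the paper's proof, but the two estimates you leave as ``expected'' are precisely where the proof lives, and as sketched they would fail.
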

\begin{obs}
    Heuristically, the quantity $V(t)$ in \eqref{eq:meanvariance} measures the concentration around $0$. Therefore, the condition \eqref{eq:blowupcondition} implies that if the initial data is very concentrated in comparison with its total mass, then there is a blow up in finite time. 
\end{obs}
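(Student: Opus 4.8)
The plan is to derive a differential inequality for the second moment $V(t)$ that forces it to reach zero in finite time, which (given that the total mass $M$ is conserved and positive) is incompatible with $\varphi$ remaining a smooth, bounded, positive function; hence blow-up must occur before that. First I would compute $V'(t)$ by differentiating under the integral sign and using the equation $\partial_t\varphi = \partial_x^2\varphi - \partial_x(u\varphi)$. Integrating by parts on the torus (being careful that $\varphi$ is even and $1$-periodic, so boundary terms at $x=\pm 1/2$ either vanish or cancel), the heat term $\int x^2 \partial_x^2\varphi$ contributes $2\int \varphi = 2M$, and the transport term $-\int x^2 \partial_x(u\varphi) = 2\int x u \varphi$. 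Thus
\begin{equation*}
    V'(t) = 2M + 2\int_{-1/2}^{1/2} x\, u(t,x)\, \varphi(t,x)\dx.
\end{equation*}
The key structural fact to exploit is the sign of $u$: since $\partial_x u = \|\varphi\|_{L^2}^2 - \varphi^2$ and $\varphi_0$ is even and decreasing on $(0,1/2)$, Lemma \ref{positividad} guarantees $\varphi(t,\cdot)$ stays even and decreasing on $(0,1/2)$, so $\varphi^2 - \|\varphi\|_{L^2}^2$ changes sign exactly once on $(0,1/2)$, being positive near $0$ and negative near $1/2$. Together with $u$ odd and $u(t,0)=0$ (the mean-zero/cancellation normalization forces $u(t,0)=0$ by symmetry), this should give $u(t,x) \le 0$ for $x \in (0,1/2)$, hence $x u(t,x)\varphi(t,x) \le 0$ throughout, and so $V'(t) \le 2M$ is not by itself enough — I need a genuinely negative upper bound.

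To get that, I would estimate $\int x u \varphi$ from above by a negative multiple of $M^{?}$ using the explicit formula $u(t,x) = \int_0^x (\|\varphi\|_{L^2}^2 - \varphi(t,y)^2)\dy$. The idea is that $\varphi$ concentrated near $0$ with small $V$ forces $\|\varphi\|_{L^2}$ and hence $u$ to be large in modulus away from the concentration region. Concretely, by Jensen/Cauchy–Schwarz-type arguments one bounds $\|\varphi\|_{L^2}^2 \ge$ (something like) $M^2$ only trivially; the sharper route is: for $x$ not too small, $u(t,x) \le \|\varphi\|_{L^2}^2 x - \int_0^x \varphi^2 \le \|\varphi\|_{L^2}^2 x$ when that is the dominant piece, while for the full negativity one uses $\int_{-1/2}^{1/2} x u \varphi = -\int_{-1/2}^{1/2} (\partial_x^{-1}\text{of }\ldots)$; more cleanly, integrate by parts once more: $2\int x u \varphi = \int x u \partial_x(2\Phi)$ where it is easiest to write everything in terms of the mass distribution $m(t,x) = \int_{-1/2}^x \varphi$. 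With $m$ increasing, $m(\pm 1/2) = M$ (half-mass by symmetry equal to $M/2$ on each side), one gets a clean expression. I would then use the concentration hypothesis $V(0)$ small relative to $M$ to show $V'(t) \le 2M - cM^{3}/(\text{something involving }V)$ or, more robustly, a bound of the form $V'(t) \le 2M - c\, M^2 \sqrt{M}$ / etc., valid as long as $V(t) \le V(0)$ — and since $V$ is then strictly decreasing it stays $\le V(0)$, so the inequality persists and $V$ hits $0$ in time $\le V(0)/(c M^{\alpha} - 2M)$.

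The main obstacle I expect is making the bound on $\int x u \varphi$ quantitative and of the right homogeneity so that the concentration condition $M > C\cdot V(0)$ is exactly what is needed. The scaling heuristic is: if $\varphi$ lives on a scale $\ell$ with mass $M$, then $\varphi \sim M/\ell$ there, $\|\varphi\|_{L^2}^2 \sim M^2/\ell$, $u$ at distance $O(1)$ is $\sim M^2/\ell$, so $\int x u \varphi \sim M^2/\ell \cdot M = M^3/\ell$, while $V \sim M\ell^2$, i.e. $\ell \sim \sqrt{V/M}$, giving $\int x u\varphi \sim M^3/\sqrt{V/M} = M^{7/2} V^{-1/2}$. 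Hence $V' \le 2M - c M^{7/2} V^{-1/2}$, so $V' < 0$ once $V < (cM^{5/2}/2)^2 = c' M^5$, i.e. once $V(0) < c' M^5$ — which is a concentration condition of the form $M > C V(0)^{1/5}$ rather than $M > C V(0)$; to match the stated linear condition one rescales $\varphi$ or restricts to a normalized setting, or absorbs the discrepancy into how $C$ depends on $\|\varphi_0\|$. I would therefore carry the computation with explicit constants, verify the ODE comparison $V'(t) \le 2M - c M^{7/2} V(t)^{-1/2}$ rigorously on the maximal smooth interval, conclude $V$ would vanish in finite time, and close by contradiction: vanishing of $V$ with $M>0$ conserved contradicts $\varphi(t,\cdot)$ being a bounded continuous function (a bounded positive function with fixed positive mass cannot have arbitrarily small second moment unless its $L^\infty$ norm blows up), so the smooth solution cannot exist past that time, which by the blow-up criterion in Theorem \ref{t:existencelimit} means $\int_0^{T^\star}\|\partial_x\theta\|_{L^2}^4\,dt = +\infty$.
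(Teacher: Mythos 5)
Your overall strategy is the same as the paper's: a virial/second-moment argument in which one computes $V'(t)$, uses the symmetry and positivity facts of Lemma \ref{positividad}, bounds the diffusive contribution by $2M$, and shows that the transport term $2\int xu\varphi\,\dx$ is negative enough that $V$ would be forced below zero (or to zero) in finite time, contradicting $\varphi\geq 0$. The sign analysis you give for $u$ on $(0,1/2)$ is correct. The gap is in the only step that actually matters quantitatively: you never prove the claimed differential inequality, and the scaling heuristic you use to guess it is wrong. You estimate $u$ ``at distance $O(1)$'' to be of size $M^2/\ell$ and then multiply by the full mass $M$, but the integrand $xu\varphi$ carries the weight $\varphi$, which is supported precisely where $u$ is small: for a profile of mass $M$ concentrated on a scale $\ell$ one has $u(x)\approx -xM^2/\ell^2$ on the support, so $\int xu\varphi \approx -cM^3$, not $-cM^{7/2}V^{-1/2}$. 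Indeed the inequality $V'\leq 2M-cM^{7/2}V^{-1/2}$ with a universal $c$ is false (take $\varphi\equiv M$ constant: then $u\equiv 0$ and the transport term vanishes, while your bound would force it to be strictly negative), and even as a conditional inequality it would have to be re-derived from scratch. This also explains why your condition comes out with the wrong homogeneity ($M\gtrsim V(0)^{1/5}$), which you propose to ``absorb'' rather than fix.

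The paper closes exactly this gap with three concrete ingredients you would need: the pointwise bound $\varphi(t,x)\leq x^{-1}\int_0^{1/2}\varphi$ coming from monotonicity of $\varphi$ on $(0,1/2)$, the Cauchy--Schwarz estimate $\int_0^{1/2}y\varphi\leq V^{1/2}M^{1/2}$, and an additional integration by parts of $\int x(1/2-x)\varphi\left(\int_0^x\varphi^2\right)\dx$ that produces the decisive negative cubic term $\tfrac43\left(V^{1/2}M^{1/2}-\tfrac14 M\right)^3$; this yields the inequality \eqref{eq:estimatevprime}, whose right-hand side is $\approx -M^3/48$ in the concentrated regime, and a short ODE/continuity argument then gives $V'(t)\leq\kappa<0$ for all time, so $V$ becomes negative, contradicting $\varphi\geq0$ directly (no separate ``bounded function with small second moment'' argument is needed). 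Your concluding step, invoking the criterion of Theorem \ref{t:existencelimit} to translate loss of smoothness into \eqref{ieq:BlowUpL4}, is fine, but as written the core quantitative estimate is missing and the inequality you would base it on does not hold.
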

\begin{proof}[Proof of Proposition \ref{prop:blowup}]
	We will prove that, if $\varphi$ does not blow up in finite time, then $V(t)$ becomes negative for some finite positive time. This, however, contradicts Lemma \eqref{positividad}, and it finishes the proof. 
	
	To that end, we shall study the function 
	
	$$\int_{-1/2}^{1/2}x^2\varphi(t,x)\,\dx.$$
	
	We take the derivative and use \eqref{equation} to obtain 
	
	\begin{equation}\label{eq:I1I2}
		\begin{split}
			\frac{d}{dt}\int_{-1/2}^{1/2}x^2\varphi(t,x)\dx=\int_{-1/2}^{1/2}x^2\partial_t\varphi(t,x)\dx=\int_{-1/2}^{1/2}x^2\partial_x^2\varphi(t,x)\dx-\int_{-1/2}^{1/2}x^2\partial_x(u\varphi)\dx=I_1+I_2
		\end{split}
	\end{equation}

We now analyse each term in \eqref{eq:I1I2} separately. 

$$I_1=\int_{-1/2}^{1/2}\partial_x(x^2\partial\varphi)\dx-2\int_{-1/2}^{1/2}x\partial_x\varphi(x)\dx=-2\int_{-1/2}^{1/2}x\partial_x\varphi(x)\dx,$$
where we have used that, as $\varphi$ is even and $1$ periodic, its derivative vanishes in $|x|=1/2$. On the other hand, the second term can be estimated as 

\begin{align*} -2\int_{-1/2}^{1/2}x\partial_x\varphi(x)\dx&=-4\int_0^{1/2}x\partial_x\varphi(t,x)\dx\\
	&=-4\int_0^{1/2}\partial_x\left(x\varphi(t,x)\right)\dx+4\int_{0}^{1/2}\varphi(t,x)\dx\\
	&\leq 2\int_{-1/2}^{1/2}\varphi_0(t)\dt,
\end{align*}
where in the first line we used that $x\partial_x \varphi(t,x)$ is even, and in the third line we used that $\varphi$ is non-negative, and that $\varphi$ is even. As a result, if we denote by $M$ to the total mass of $\varphi_0$, we conclude that 

$$I_1\leq 2 M.$$

We now estimate $I_2$ in \eqref{eq:I1I2}. Using that $u$ vanishes on $\pm 1/2$ due to the symmetry of $\varphi$, we can write 

\begin{align*} 
	I_2&=2\int_{-1/2}^{1/2}xu(t,x)\varphi(t,x)\,\dx\\
	&=2\left(\int_{-1/2}^{1/2}x^2\varphi(t,x)\,\dx\right)\left(\int_{-1/2}^{1/2}\varphi^2(t,x)\dx\right)-2\int_{-1/2}^{1/2}x\varphi(t,x)\left(\int_{0}^{x}\varphi^2(t,y)\dy\right)\dx\\
	&=8\left(\int_{0}^{1/2}x^2\varphi(t,x)\,\dx\right)\left(\int_{0}^{1/2}\varphi^2(t,x)\dx\right)-4\int_{0}^{1/2}x\varphi(t,x)\left(\int_{0}^{x}\varphi^2(t,y)\dy\right)\dx\\
	&=I_{21}+I_{22}.
\end{align*}

Note that, then, 

\begin{align} 
	2\left(\int_{0}^{1/2}x^2\varphi(t,x)\,\dx\right)\left(\int_{0}^{1/2}\varphi^2(t,x)\dx\right)&=2\int_{0}^{1/2}x^2\varphi(t,x)\left(\int_{x}^{1/2}\varphi^2(t,y)\dy\right)\dx\nonumber\\
	&\label{primersumando}+2\int_{0}^{1/2}x^2\varphi(t,x)\left(\int_{0}^{x}\varphi^2(t,y)\dy\right)\dx
\end{align}

For the first integral, we will use the following inequality, that relies on the fact that $\varphi$ is decreasing on $[0,1/2]$,

$$\varphi(t,x)\leq \frac{1}{x}\left(\int_0^{1/2}\varphi(t,x)\,\dx\right).$$ 

Therefore,

\begin{align*} 2\int_{0}^{1/2}x^2\varphi(t,x)\left(\int_{x}^{1/2}\varphi^2(t,y)\dy\right)\dx&\leq 2 \int_0^{1/2} x^2 \varphi(t, x) \left\{ \int_x^{1/2} \varphi(t, y) \left( \frac{1}{y} \int_0^{1/2} \varphi(t, z) \dz \right) \dy \right\} \dx
\\ & \leq \left(\int_0^{1/2}\varphi(t,x)\dx\right)^2\left(\int_0^{1/2}x\varphi(t,x)\dx\right)\\
&\leq\left(\int_0^{1/2}\varphi(t,x)\dx\right)^{5/2}\left(\int_0^{1/2}x^2\varphi(t,x)\dx\right)^{1/2}
\end{align*}

On the other hand, we would like to obtain a finer estimate for the second integral in \eqref{primersumando}. Since $x\leq 1/2$, we have that

$$2\int_{0}^{1/2}x^2\varphi(t,x)\left(\int_{0}^{x}\varphi^2(t,y)\dy\right)\dx-\int_{0}^{1/2}x\varphi(t,x)\left(\int_{0}^{x}\varphi^2(t,y)\dy\right)\dx$$
is smaller or equal than zero. The goal then is to give a quantitative estimate that allows us to prove blow-up. We can write the expression above as

$$-2\int_{0}^{1/2}x(1/2-x)\varphi(t,x)\left(\int_{0}^{x}\varphi^2(t,y)\dy\right)\dx.$$

As $\varphi$ is positive and $x\leq 1/2$, the integrand is non-negative. As a result, and due to the minus sign in the beginning of the expression, we can make use of Cauchy-Schwarz to estimate this integral by 

\begin{equation*} 
	\begin{split} 
		2\int_{0}^{1/2}x(1/2-x)\varphi(t,x)\left(\int_{0}^{x}\varphi^2(t,y)\dy\right)\dx&\geq 2\int_0^{1/2}(1/2-x)\varphi(t,x)\left(\int_0^x\varphi(t,y)\dy\right)^2\dx\\
		&\frac{2}{3}\int_0^{1/2}(1/2-x)\frac{\partial}{\partial x}\left(\left(\int_0^x\varphi(t,y)\dy\right)^3\right)\dx
	\end{split}
\end{equation*}

We now integrate by parts and, since the integrand vanishes in $x=0,1/2$, we get no boundary terms. As a result, we can write 

\begin{equation*} 
	\begin{split} 
		-2\int_{0}^{1/2}x(1/2-x)\varphi(t,x)\left(\int_{0}^{x}\varphi^2(t,y)\dy\right)\dx&\leq -\frac{2}{3}\int_0^{1/2}(1/2-x)\frac{\partial}{\partial x}\left(\left(\int_0^x\varphi(t,y)\dy\right)^3\right)\dx\\
		&=-\frac{2}{3}\int_0^{1/2}\left(\int_0^x\varphi(t,y)\dy\right)^3\dx\\
		&\leq -\frac{4}{3}\left(\int_0^{1/2}\left(\int_0^x\varphi(t,y)\right)\dx\right)^3\\
		&=-\frac{4}{3}\left(\int_0^{1/2}\varphi(t,y)\int_x^{1/2}\dx\dy\right)^3\\
		&=-\frac{4}{3}\left(\int_0^{1/2}\varphi(t,y)(1/2-y)\dy\right)^3\\
		&=\frac{4}{3}\left(\int_0^{1/2}\varphi(t,y)(y-1/2)\dy\right)^3.
	\end{split}
\end{equation*}

This quantity is strictly negative. Indeed, we can estimate it in terms of the second moment and the total mass, since 

$$\int_0^{1/2}y\varphi(t,y)\dy\leq \left(\int_0^{1/2} y^2\varphi(t,y)\dy\right)^{1/2}\left(\int_0^{1/2}\varphi(t,y)\dy\right)^{1/2}.$$
Therefore,

$$-2\int_{0}^{1/2}x(1/2-x)\varphi(t,x)\left(\int_{0}^{x}\varphi^2(t,y)\dy\right)\dx\leq \frac{4}{3}\left(\frac{1}{2}V^{1/2}M^{1/2}-\frac{1}{4}M\right)^3,$$
where the second moment $V$ and the mass $M$ are defined as 

$$V:=\int_{-1/2}^{1/2}x^2\varphi(t,x)\dx \quad \text{ and }\quad M:=\int_{-1/2}^{1/2}\varphi(t,x)\dx.$$

As a result, the second moment $V(t)$ satisfies the following differential inequality

\begin{equation} \label{eq:estimatevprime} V'(t)\leq 2M+\frac{1}{4}M^{5/2}V^{1/2}(t)+\frac{4}{3}\left(V^{1/2}(t)M^{1/2}-\frac{1}{4}M\right)^3.
\end{equation}

Note that we have not indicated the dependence of $M$ on time because it is a constant that only depends on the initial value.

Denote now $V_0:=V(0)$. We now show that there exists $\overline{M}>0$ and $\kappa<0$, depending on $V_0$ such that, if $M\geq \overline{M}$, $V'(t)\leq \kappa<0$ for every $t\geq 0$. This implies that $V(t)$ becomes negative in finite time, but this is impossible because $V(t)$ is always non-negative. 
 
Notice that the leading order in the right hand side of \eqref{eq:estimatevprime} is $-M^3$. Therefore, there is an $\overline{M}>0$ depending on $V_0$, such that if $M>\overline{M}$, $V'(0):=\kappa<0$. Consider then  

$$t_\star:=\sup\,\{t\geq 0\;:\, V'(s)\leq \kappa\text{ for every } s\leq t\}.$$

Due to the discussion above, this set is not empty and $t_\star\geq 0$. Now, assume that $t_\star<\infty$. Since $\kappa<0$, there exists some $\alpha>0$ such that $V'(t)\leq 0$ in $[0,t_\star+\alpha]$. Notice that the right hand side in \eqref{eq:estimatevprime} is increasing as a function of $V$, and $V(t)$ is decreasing in $[0,t_\star+\alpha]$. Therefore, 

$$V'(t)\leq V'(0)=\kappa \quad \text{for }t\in [0,t_{\star}+\alpha].$$

However, this contradicts the definition of $t_\star$ as a supremum. Therefore, $t_\star=\infty$. But then, $V'(t)\leq \kappa<0$ for every $t>0$. This implies that $V(t)$ becomes negative in finite time.
\end{proof}

\subsection{Numerical Experiments}\label{sec:numerics}

In this final section, we implement a numerical solver for the limit model, and use it to illustrate the possible blow-up or global existence of smooth solutions. For the reader's convenience, we reproduce the PDE here:
\begin{equation*}
    \begin{cases}
        \partial_t \theta + u \partial_x \theta = \partial_x^2 \theta \\
        u(x) = - \int_0^x (\partial_x \theta)^2 + x \int (\partial_x \theta)^2.
    \end{cases}
\end{equation*}
We restrict our attention to the case $N = 0$, where the magnetic field does not make a full turn around the origin. Therefore, the equations are set on the torus $\T$.

\subsubsection{Numerical Scheme}

For any large integral parameters $M_x, M_t \geq 1$, we introduce the space grid $x_j := 0, \delta x, 2 \delta x, ..., j \delta x, (M_x - 1) \delta x$ and the time discretization $t_n := 0, \delta t, 2 \delta t, ..., n \delta t, ..., (M_t - 1) \delta t$. The numerical approximation of the solution $\theta(t_n, x_j)$ is denoted $\theta_j^n$.

\medskip

We consider the following semi-implicit finite-difference scheme
\begin{equation*}
    \frac{\theta^{n+1}_j - \theta_j^n}{\delta t} + \frac{1}{2} u_j^n \frac{\theta^{n+1}_{j+1} - \theta^{n+1}_{j-1}}{2 \delta x} - \frac{1}{2} \frac{\theta_{j+1}^{n+1} + \theta^{n+1}_{j-1} - 2 \theta^{n+1}_j}{\delta x^2} = \frac{1}{2} u_j^n \frac{\theta^{n}_{j+1} - \theta^{n}_{j-1}}{2 \delta x} + \frac{1}{2} \frac{\theta^n_{j+1} + \theta^n_{j-1} - 2 \theta^n_j}{\delta x^2}
\end{equation*}
\begin{equation*}
    u_j^n = - \frac{1}{2} \sum_{k = 1}^j \left[ \left( \frac{\theta_{j+1}^n - \theta_{j-1}^n}{2\delta x} \right)^2 + \left( \frac{\theta_{j}^n - \theta_{j-2}^n}{2\delta x} \right)^2 \right] + \frac{j}{2N_x} \sum_{k = 1}^{M_x - 1} \left[ \left( \frac{\theta_{j+1}^n - \theta_{j-1}^n}{2\delta x} \right)^2 + \left( \frac{\theta_{j}^n - \theta_{j-2}^n}{2\delta x} \right)^2 \right],
\end{equation*}
with periodic boundary conditions $\theta_{M_x}^n = \theta_0^n$ and $u_{M_x}^n = u_0^n$. Note that the sum $\sum_{1}^0$ is zero by convention. Let us give some explanation about this numerical scheme.
\begin{enumerate}
    \item The velocity $u_j^n$ is computed by trapezoidal integration, explicitly from $\theta^n$. Using $u^{n+1}_j$ would significantly increase the complexity of computations, which is why we use an explicit velocity instead of computing it semi-implicitly, at the cost of a slight loss of precision in time ($O(\delta t)$ instead of $O(\delta t^2)$).

    \item Because we are interested in studying blow-up solutions, the velocity may become very large. To prevent the scheme from becoming unstable, it must be at least partially implicit. In order to limit numerical diffusion while retaining stability independently of the size of $u_j^n$, we resort to a semi-implicit method (Crank-Nicolson).

    \item We could have treated the diffusion fully explicitly under the appropriate CFL condition. However, the scheme, as it stands, is unconditionally stable.   
\end{enumerate}

\subsubsection{Numerical Experiments}

In this paragraph, we perform a few numerical experiments using the scheme defined above. The implementation is written with SciLab.

\medskip

\textbf{Blow-Up Scenario.} In our first experiment, we find evidence of singularity formation, even when the total number of turns of the magnetic field around the origin is zero $N = 0$. We consider the following initial datum (see Figure \ref{fig:BlowUp})
\begin{equation}\label{eq:NumericlInitial}
    \theta_0(x) = \frac{3}{5}\sin(2 \pi x) \cos(6 \pi x) \cos(2 \pi x) - \frac{12}{5} \cos(4 \pi x)\cos(2 \pi x) + 3 \sin (2 \pi x),
\end{equation}
which is $1$-periodic. The choice of this initial datum is simply linked to the numerical evidence of blow-up solutions that stem from $\theta_0$. However, this is not a unique behavior, as many of the initial data we tried lead to blow-up.

Note that this initial datum has nothing special about it, other than the fact that numerical evidence suggests it leads to blow-up. In fact, many of the initial data we tried lead to blow-up.

\medskip

Our simulation is performed with mesh size $\delta x = 2.5 \cdot 10^{-3}$ and $\delta t = 6.25 \cdot 10^{-7}$ up to time $T = 6.619 \cdot 10^{-4}$ (1060 iterations). We observe that the derivative of the solution $\partial_x \theta$ becomes very large as we approach $T$. 

\begin{obs}
    Although our numerical scheme is unconditionally stable (hence no CFL condition is required) the size of discretization $\delta t = 0.1 \cdot \delta x^2$ is simply designed to be consistent with the parabolic scaling of the problem. By taking $\delta t$ too large, we would risk not observing the blow-up which occurs at time $T^\star \approx 6 \cdot 10^{-4} \ll \delta x$. In addition, choosing small time steps $\delta t \ll \delta x$ allows to limit numerical diffusion of the scheme.
\end{obs}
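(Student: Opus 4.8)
This final statement is a \emph{Remark} justifying three separate choices in the numerical scheme, so the ``proof'' I would give is a self-contained discrete analysis (von Neumann plus a discrete energy estimate) together with a modified-equation heuristic; it uses none of the PDE theory of the preceding sections. The plan is to treat the three assertions --- unconditional stability, the parabolic scaling $\delta t \sim \delta x^2$, and the control of numerical diffusion --- one at a time.

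\textbf{Unconditional stability.} First I would freeze the velocity coefficient $u_j^n$ and write the one-step map in matrix form $(I + \tfrac{\delta t}{2}B)\theta^{n+1} = (I - \tfrac{\delta t}{2}B)\theta^n$, where $B = U^n D_x - L$, with $U^n = \mathrm{diag}(u_j^n)$, $D_x$ the central-difference matrix, and $-L \geq 0$ the positive semidefinite discrete Laplacian. Inserting a single Fourier mode $\theta_j = e^{ikj\delta x}$ and writing $\beta = u\sin(k\delta x)/\delta x$ (advection symbol) and $\gamma = 4\sin^2(k\delta x/2)/\delta x^2 \geq 0$ (diffusion symbol), the amplification factor becomes
\begin{equation*}
    g(k) = \frac{(1 - \tfrac{\delta t}{2}\gamma) - i\tfrac{\delta t}{2}\beta}{(1 + \tfrac{\delta t}{2}\gamma) + i\tfrac{\delta t}{2}\beta}, \qquad |g(k)|^2 = \frac{(1-\tfrac{\delta t}{2}\gamma)^2 + (\tfrac{\delta t}{2}\beta)^2}{(1+\tfrac{\delta t}{2}\gamma)^2 + (\tfrac{\delta t}{2}\beta)^2} \leq 1,
\end{equation*}
valid for every $\delta t, \delta x > 0$ precisely because $\gamma \geq 0$. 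This is exactly the statement that no CFL condition is required. To upgrade this frozen-coefficient computation to the genuine variable-coefficient scheme, I would perform a discrete $\ell^2$ energy estimate: testing the scheme against $\theta^{n+1} + \theta^n$ and summing over $j$, the skew-symmetric part of $D_x$ cancels, the term $-L$ produces a nonnegative dissipation, and the only obstruction is the commutator $\tfrac{1}{2}[U^n, D_x]$, whose size is controlled by $\|\partial_x u\|_{L^\infty} \lesssim \|\partial_x \theta\|_{L^\infty}^2$ via \eqref{eq:AnglePDERenormalized2}. A discrete Grönwall argument then yields $\|\theta^{n+1}\|_{\ell^2}^2 \leq (1 + C\delta t)\|\theta^n\|_{\ell^2}^2$, i.e. at most exponential-in-time growth with no restriction linking $\delta t$ and $\delta x$.

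\textbf{Parabolic scaling.} Since the physical diffusion coefficient is normalised to $1$, the only dimensionless grid parameter attached to the linear part of the equation is the diffusive number $\mu = \delta t/\delta x^2$. The scaling invariance of the heat equation under $(t, x) \mapsto (\lambda^2 t, \lambda x)$ forces $\mu$ to be held fixed if the time and space variables are to be resolved to comparable order; the choice $\delta t = 0.1\,\delta x^2$ simply fixes $\mu = 0.1$, which is what I would point to in order to justify this sentence.

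\textbf{Numerical diffusion.} Here I would carry out a formal modified-equation (truncation) computation. The Crank--Nicolson time discretization is second order and produces no dissipative error at leading order, whereas the explicitly frozen velocity and the operator splitting introduce a consistency error whose leading dissipative contribution is an artificial viscosity of size $\sim \delta t\, u^2$ multiplying $\partial_x^2 \theta$. As $t \to T^\star$ the velocity $u \sim \|\partial_x \theta\|_{L^2}^2$ diverges, so this spurious viscosity can be kept below the unit physical diffusion only by taking $\delta t$ very small; because the emerging gradient concentrates on length scales much finer than $\delta x$, the choice $\delta t = 0.1\,\delta x^2 \ll \delta x$ is exactly what allows the scheme to display, rather than smear out, the singularity at $T^\star \approx 6 \times 10^{-4}$. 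The main obstacle is that this last point is intrinsically heuristic: the cleanest rigorous statement, $|g| \leq 1$, is only a frozen-coefficient result, and the artificial-viscosity estimate depends on the (unknown, blowing-up) velocity, so quantifying how small $\delta t$ must be relative to the blow-up rate is precisely where a fully rigorous argument would stall.
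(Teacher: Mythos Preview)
The paper gives no proof of this statement: it is a \emph{Remark} (environment \texttt{obs}) inserted as commentary on the choice of discretization parameters, and the paper simply states it without any supporting analysis. There is nothing to compare your proposal against.

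Your write-up supplies a von Neumann computation, a discrete energy estimate, and a modified-equation heuristic. These are reasonable ways to flesh out the three claims (unconditional stability, parabolic scaling, numerical diffusion), and indeed the amplification-factor calculation you give is the standard justification for calling the Crank--Nicolson scheme unconditionally stable. But be aware that the paper treats all three points as informal numerical folklore rather than as assertions requiring proof; in particular, the claims about ``risk not observing the blow-up'' and ``limit numerical diffusion'' are heuristic, as you yourself note at the end. So your proposal is not wrong, but it is answering a question the paper does not pose: there is no ``paper's own proof'' here, only a remark.
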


\begin{figure}[h!]
    \centering
    \includegraphics[width=0.9\linewidth]{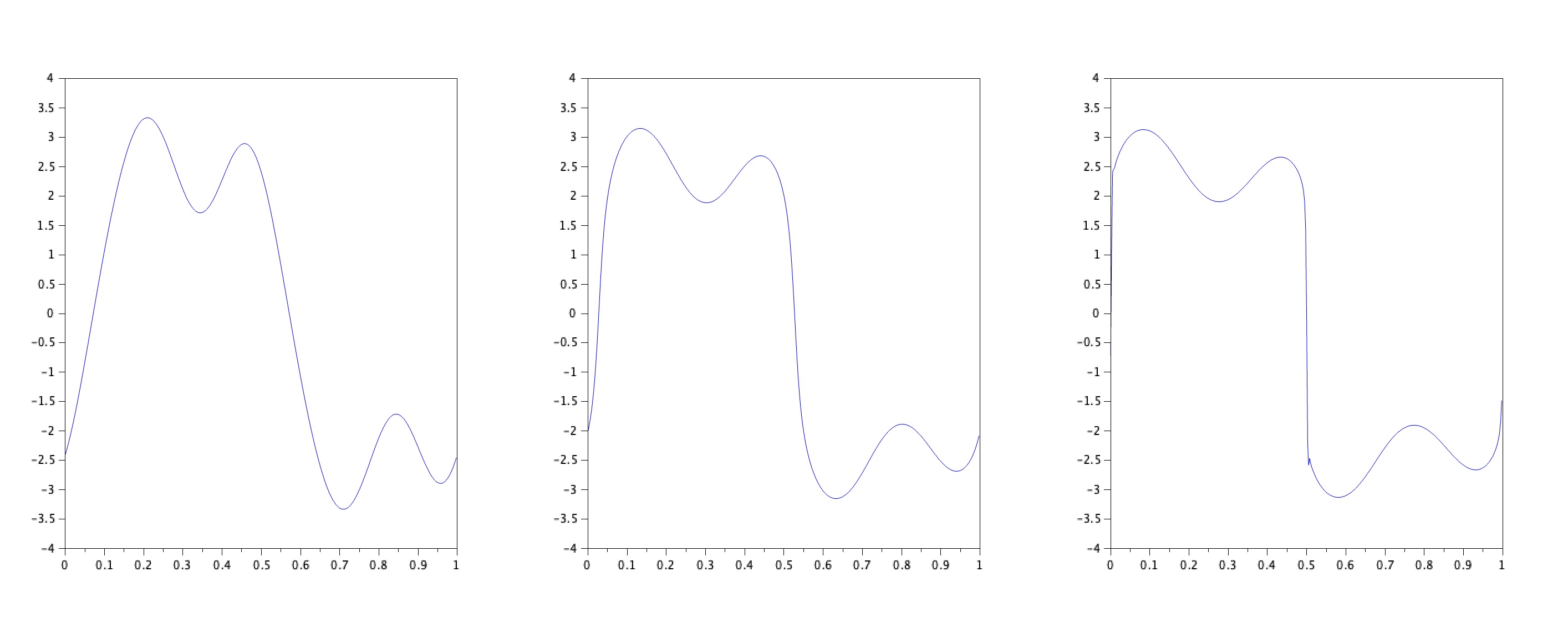}
    \caption{The first panel displays the initial datum $\theta_0(x)$ from \eqref{eq:NumericlInitial}. The second and third panels are the solution evaluated at time $T_1 = 5.625 \cdot 10^{-4}$ and $T_2 = 6.619 \cdot 10^{-4}$ just prior and after singularity formation.}
    \label{fig:BlowUp}
\end{figure}

\medskip

It may seem surprising that the blow-up time $T = 6.619 \cdot 10^{-4}$ is so small, especially given the numerical computations in Moffatt's paper \cite{Moffatt2015}, which observe no qualitative change of behaviour in the solution subsequent to viscosity driven relaxation, up to time $t = 50$ (for $\varepsilon = 0.001$). However, there are two points that should be kept in mind. First of all, the time variable in the PDE \eqref{ieq:AngleVariable} is rescaled $\tau = t / \varepsilon$, so that the blow-up happens, in original variables, at time $T/\varepsilon$. Secondly, this time is actually consistent with the form of the PDE, as a order of magnitude computation will show. Set again $\varphi = \partial_x \theta$, so that
\begin{equation*}
    \partial_\tau \varphi + u \partial_x \varphi - \partial_x^2 \varphi + \varphi \int_\T \varphi^2 = \varphi^3.
\end{equation*}
The blow-up is expected to be driven by the power $\varphi^3$, so that by ignoring all the other terms, we should capture (intuitively) the blow-up rate. We therefore consider the equation $\partial_\tau \varphi = \varphi^3$, whose integral is
\begin{equation*}
    \frac{-1}{2 \varphi^2} = \tau - \frac{1}{2 \varphi_0^2},
\end{equation*}
we see that the blow-up is expected to happen at time $\tau \approx \frac{1}{2 \varphi_0^2}$. In the case of our example \eqref{eq:NumericlInitial}, this is $\| \varphi_0 \|_{L^\infty} \approx 38.57$, so that the order of magnitude of the blow-up time should be $T^\star \approx 10^{-4}$. Therefore, our numerical illustration is not surprising. Likewise, similar order of magnitude computations from Figure 6 (c) of \cite{Moffatt2015} would suggest a value $\| \varphi_0 \|_{L^\infty} \approx 5$ and blow-up time of $\tau \approx 0.02$, or in other words, considering the given value $\varepsilon = 0.001$ given in \cite{Moffatt2015}, $t \approx 20$, which is the rough time scale of the simulations of Moffatt ($t \approx 50$). It is therefore quite possible that the simulations of \cite{Moffatt2015} do not observe any qualitative change in the solution due to blow-up of the limit system, although such a change would likely be observed by pushing the computations further.

\medskip

\textbf{Global Solutions.} As we have proved in Theorem \ref{t:existencelimit}, solutions are smooth for all times provided that the initial datum is small enough $\| \theta_0 \|_{H^{1/2}} \leq \eta_0$. This means that the datum $\lambda \theta_0(x)$ should give rise to a global solution if $\lambda > 0$ is small enough (where $\theta_0$ is given by \eqref{eq:NumericlInitial}). Indeed, by taking $\lambda = 1/3$, we cannot observe blow-up, and the solutions simply decays due to diffusion (see Figure \ref{fig:GlobalInitial}).

\begin{figure}[h!]
    \centering
    \includegraphics[width=0.3\linewidth]{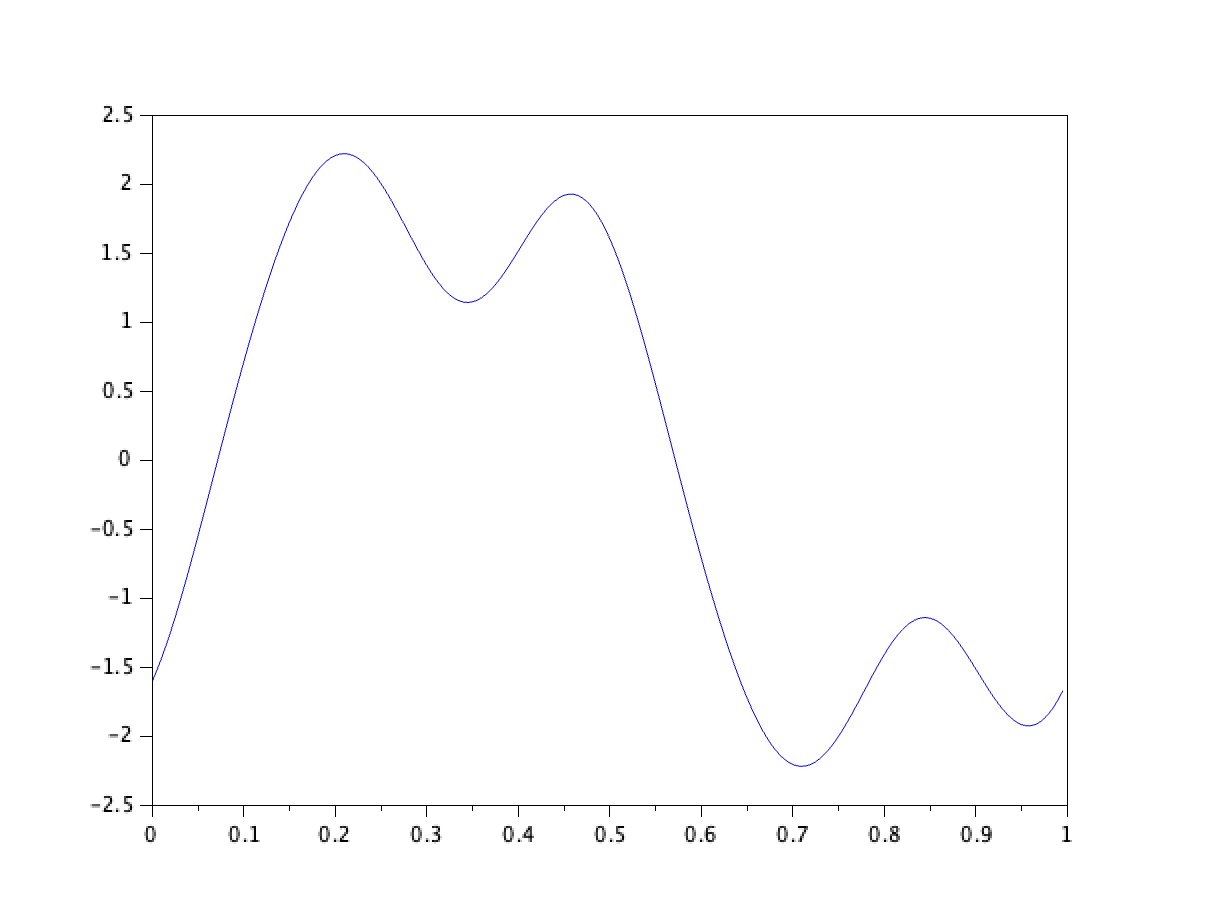}
    \caption{Initial datum $\lambda \theta_0(x)$, which gives rise to a global solution.}
    \label{fig:GlobalInitial}
\end{figure}

The computation was performed with mesh size $\delta x = 5 \cdot 10^{-3}$ and $\delta t = 2.5 \cdot 10^{-6}$ up to time $T = 10^{-2}$ (4000 iterations). See Figure \ref{fig:GlobalT}.

\begin{figure}[h!]
    \centering
    \includegraphics[width=0.9\linewidth]{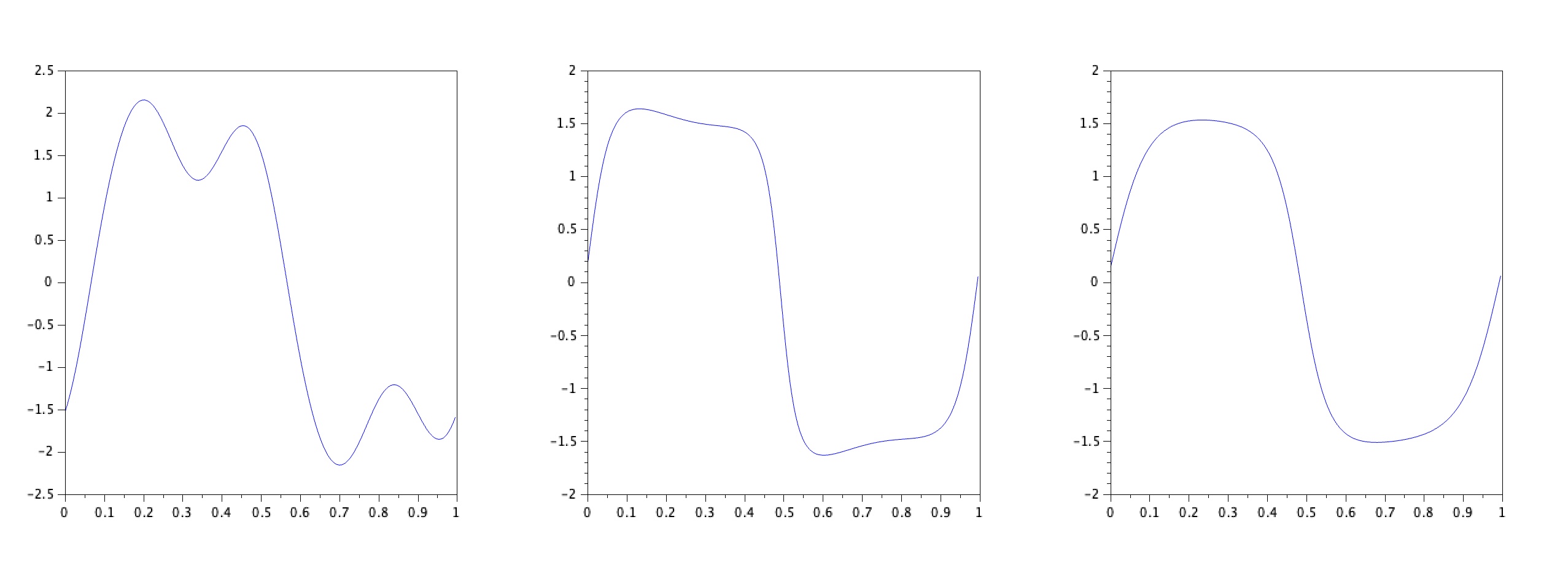}
    \caption{Numerical solution at times $T_1 = 2.5 \cdot 10^{-4}$, $T_2 = 5 \cdot 10^{-3}$ and $T_3 = 10^{-2}$. No formation of singularity is observed.}
    \label{fig:GlobalT}
\end{figure}

\medskip

\textbf{Initial Data with Small Oscillations.} Finally, as stated in Theorem \ref{t:smallOscillations}, solutions are still expected to be global when the quantity
\begin{equation*}
    \max_{x \in \T} \theta_0(x) - \min_{x \in \T} \theta_0(x)
\end{equation*}
is small enough, even if the norm $\| \theta_0 \|_{H^{1/2}}$ is large. We should expect to observe this behaviour in numerical solutions. We therefore consider the initial datum $\lambda \theta_0(x)+ \sin(40 \pi x)$ (see Figure \ref{fig:Oscillations}). The mesh size is $\delta x = 2.5 \cdot 10^{- 3}$ and $\delta t = 6.25 \cdot 10^{-7}$.

\medskip

As predicted by Theorem \ref{t:smallOscillations}, we observe that the term $\sin(40\pi x)$ does not prevent the solution from being global, since the high frequencies are very quickly dissipated (Figure \ref{fig:Oscillations}).

\begin{figure}[h!]
    \centering
    \includegraphics[width=0.9\linewidth]{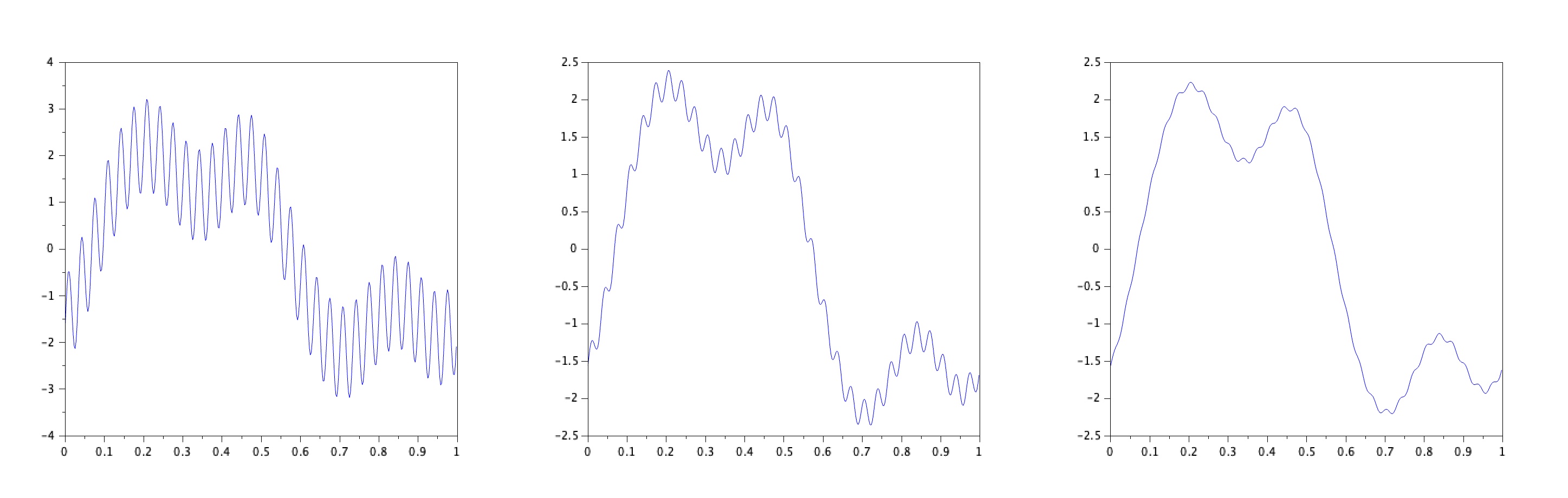}
    \caption{Initial datum with large $H^{1/2}$ norm but small enough $\theta_{\max} - \theta_{\rm min}$ give global solutions. On the left is the initial datum $\lambda \theta_0(x)+ \sin(40 \pi x)$, and then the solution evaluated at times $T_1 = 5 \cdot 10^{-5}$ and $T_2 = 9.375 \cdot 10^{-5}$.}
    \label{fig:Oscillations}
\end{figure}

\section{Further Discussion and Conclusion}

In this paper we prove rigorously some properties of the solutions of \eqref{eq:magneticrelaxation}, some of which are motivated by the  numerical solutions computed in \cite{Moffatt2015}. In particular, we show the existence of a two stage process, corresponding to a fast dynamics for $t$ of order $\log(\varepsilon^{-1})$, dominated by the viscosity, in which the equation behaves roughly like the solutions of \eqref{eq:hyperbolicSystem} and relaxes to a state of $|b|$ constant. Then, the resistivity starts influencing the system, entering a second stage of evolution up to times of order $O(\varepsilon^{-1})$. In this situation, the magnetic field remains in a state of $|b(\tau,\cdot)|$ roughly constant, and the dynamics of the phase is given by the effective equations \eqref{eq:limitsystem}-\eqref{eq:limitsystem2}.

\medskip

The effective equations which we uncover allow to understand the dynamics of solutions on a large time scale $t \sim 1/\varepsilon$, and our findings confirm the numerical observations of Moffatt, in particular equation (4.3) of \cite{Moffatt2015} which is equivalent to the second equation in \eqref{eq:AngleVariablePDE} above.

\medskip

However, we also show that the solutions of the effective equations \eqref{eq:limitsystem}-\eqref{eq:limitsystem2} might blow up in finite time, and this should have an observable effect on the solutions of the primitive equations \eqref{eq:magneticrelaxation}. However, this phenomenon happens on a large time scale $t \sim 1/\varepsilon$, so it will only be noticeable in numerical simulations that run for a very long time. For example, in \cite{Moffatt2015} simulation go up to time $t = 50$ with $\varepsilon = 0.001$, so our arguments from Subsection \ref{sec:numerics} would suggest that qualitative change of behaviour of solutions could occur just outside the range of simulation.

\medskip

Many questions remain unanswered concerning the blow-up of the effective system, and what its significance for solutions of the primitive equations \eqref{eq:magneticrelaxation}, for example giving a characterization of such blow up points, their asymptotic behaviour, as well as the search for an effective model that remains valid at and after the time of blow-up of \eqref{eq:limitsystem}-\eqref{eq:limitsystem2}. This is ongoing work.

\medskip
Furthermore, in view of the complex behaviour we observe in spite of the simplicity of this model, the authors think that it is worth to study more complete systems, in which we take into account some of the quantities we disregarded in the derivation of \eqref{eq:magneticrelaxation}, such as the pressure or the density. This will also be the topic of future research.

\section{Acknowledgments}
All three authors have been funded by the Deutsche Forschungsgemeinschaft (DFG)
through the collaborative research centre ``The mathematics of emerging effects'' (CRC 1060, Project-ID. 211504053)

D. Sánchez-Simón del Pino and J. J. L. Velázquez have been also funded by the Deutsche Forschungsgemeinschaft (DFG, German
Research Foundation) under Germany’s Excellence Strategy - GZ 2047/1, Projekt-ID 390685813.

D. Sánchez-Simón del Pino further acknowledges the support of the Bonn International Graduate School of Mathematics (BIGS) at the Hausdorff Centre for Mathematics.

\newpage

%\addcontentsline{toc}{section}{References}

\end{document}